\documentclass[reqno,11pt]{amsart}

\textwidth16cm \textheight22.7cm \headheight12pt
\oddsidemargin.4cm \evensidemargin.4cm \topmargin0cm
%initially 1.14

\usepackage[colorlinks, linkcolor=blue, citecolor=red, urlcolor=cyan,pagebackref]{hyperref}
\usepackage{breakurl}

\usepackage{cleveref}

\usepackage[rgb]{xcolor} %デフォルトのカラーモデルをRGBに
\definecolor{mygreen}{rgb}{0,0.7,0.3}
\definecolor{myblue}{rgb}{0,0.50,1.20}%cyan={0,1.74,2.39}
\definecolor{orange}{rgb}{2.55,1.65,0}

\definecolor{fillred}{rgb}{1,0.9,0.9}
\definecolor{fillgreen}{rgb}{0.9,1,0.9}

 \definecolor{refkey}{rgb}{0,0.7,0.3}
 \definecolor{labelkey}{rgb}{1,0,0}

\usepackage{etex}
\usepackage{amsthm,amssymb,amscd}
\usepackage{mathrsfs}
\usepackage{bbm}
\usepackage%[dvipdfmx]
{graphicx}
\usepackage{epic,eepic}
\usepackage{tikz-cd}%for diagrams
\usepackage{stmaryrd}%for sslash

\usepackage{here}
\usepackage{bm}
\usepackage[all]{xy}

\usepackage{tikz}
\usetikzlibrary{calc}
\usetikzlibrary{positioning}
\usetikzlibrary{patterns}
\usetikzlibrary{decorations.pathmorphing, arrows, decorations.pathreplacing}

%%%%%%%%%%%%%%%version
\usepackage{version}
\newenvironment{NB}{\color{red}{\bf N.B.} \footnotesize}{}

\excludeversion{NB}
\excludeversion{NB2}
%%%%%%%%%%%%%%
\numberwithin{equation}{section}

\newtheorem{thm}{Theorem}[section]
\newtheorem{cor}[thm]{Corollary}
\newtheorem{lem}[thm]{Lemma}
\newtheorem{prop}[thm]{Proposition}

\newtheorem{introthm}{Theorem}
\newtheorem{introconj}[introthm]{Conjecture}
\newtheorem{introcor}[introthm]{Corollary}

\theoremstyle{definition}
\newtheorem{dfn}[thm]{Definition}
\newtheorem{ex}[thm]{Example}

\newtheorem{rem}[thm]{Remark}
\newtheorem{conv}[thm]{Notation}

%%cleveref%%
\crefname{thm}{Theorem}{Theorems}
\crefname{cor}{Corollary}{Corollaries}
\crefname{lem}{Lemma}{Lemmas}
\crefname{prop}{Proposition}{Propositions}
\crefname{dfn}{Definition}{Definitions}
\crefname{ex}{Example}{Examples}
\crefname{claim}{Claim}{Claims}
\crefname{conj}{Conjecture}{Conjectures}
\crefname{rem}{Remark}{Remarks}
\crefname{figure}{Figure}{Figures}
\crefname{section}{Section}{Sections}
\crefname{subsection}{Section}{Sections}
\crefname{appendix}{Appendix}{Appendices}
\crefname{assum}{Assumption}{Assumptions}
\crefname{conv}{Notation}{Notations}

\crefname{introthm}{Theorem}{Theorems}
\crefname{introcor}{Corollary}{Corollaries}
\crefname{introconj}{Conjecture}{Conjectures}

%%%%%%%%%%%%%%%%%%%%%%%%%%%

\def\C{{\mathbb C}}
\def\Z{{\mathbb Z}}
\def\ve{{\varepsilon}}
\def\P{{\mathcal{P} }}
\renewcommand{\mathbf}{\boldsymbol}

\newcommand{\std}{\mathrm{std}}
\newcommand{\Hom}{\mathop{\mathrm{Hom}}\nolimits}

\newcommand{\inn}{\mathrm{in}}
\newcommand{\out}{\mathrm{out}}
\newcommand{\uf}{\mathrm{uf}}
\newcommand{\tr}{\mathop{\mathrm{tr}}\nolimits}
\newcommand{\pr}{\mathop{\mathrm{pr}}\nolimits}

\newcommand\vw{{\overline{w_0}}}
\newcommand{\lie}{\mathfrak{g}}
\newcommand{\op}{\mathrm{op}}

\newcommand{\weight}{\mathop{\mathrm{wt}}}
\newcommand{\wtil}[1]{\widetilde{#1}}
\newcommand{\Rat}{{ \mathcal{K} }}

\newcommand\A{{\mathcal{A} }}
\newcommand\B{{\mathcal{B} }}
\newcommand\X{{\mathcal{X} }}
\newcommand{\cF}{\mathcal{F}}

\newcommand{\cU}{\mathcal{U}}
\newcommand{\fu}{\mathfrak{u}}
\newcommand{\cO}{\mathcal{O}}
\newcommand{\bA}{\mathbb{A}}
\newcommand\bs{{\boldsymbol{s}}}
\newcommand{\bD}{\boldsymbol{\Delta}}
\newcommand{\bJ}{\mathbf{J}}
\newcommand{\bT}{\mathbb{T}}
\newcommand{\bX}{\mathbf{X}}
\newcommand{\bG}{\mathbb{G}}
\newcommand{\bP}{\mathbb{P}}
\newcommand{\bL}{\mathbb{L}}
\newcommand{\sfb}{\mathsf{b}}
\newcommand{\sfS}{\mathsf{S}}
\newcommand{\sfT}{\mathsf{T}}
\newcommand{\sfu}{\mathsf{u}}
\newcommand{\sfh}{\mathsf{h}}
\newcommand{\wC}{\mathcal{C}}
\newcommand{\pF}{\mathbb{F}_{\mathrm{pos}}}
\newcommand{\ptF}[1]{\widetilde{\mathbb{F}}_{\mathrm{pos}, #1}}
\newcommand{\Grep}{$(\mathrm{G}_{\mathrm{rep}})$}

\def\L{{\mathcal{L}}}%
\newcommand\Conf{{\mathrm{Conf}}}%
\DeclareMathOperator{\Ad}{{\mathrm{Ad}}}

\newcommand{\GSuniv}{GS-universally}
\DeclareMathOperator{\Spec}{\mathrm{Spec}}

\makeatletter %添字を複数行にする. 左揃えなどが可能
\newcommand{\subalign}[1]{%
  \vcenter{%
    \Let@ \restore@math@cr \default@tag
    \baselineskip\fontdimen10 \scriptfont\tw@
    \advance\baselineskip\fontdimen12 \scriptfont\tw@
    \lineskip\thr@@\fontdimen8 \scriptfont\thr@@
    \lineskiplimit\lineskip
    \ialign{\hfil$\m@th\scriptstyle##$&$\m@th\scriptstyle{}##$\hfil\crcr
      #1\crcr
    }%
  }%
}
\makeatother

\newcommand{\dprod}{\mathop{\overrightarrow{\prod}}\limits}

%%Tikz%% 
\newcommand\dnode[1]{\draw(#1) circle[radius=0.15] node{\scalebox{0.8}{$2$}}}

\newcommand\qarrow[2]{\draw[->,shorten >=2pt,shorten <=2pt] (#1) -- (#2) [thick];} %arrow
\newcommand\qsarrow[2]{\draw[->,shorten >=3pt,shorten <=3pt] (#1) -- (#2) [thick];} %shortened arrow
\newcommand\qdarrow[2]{\draw[->,dashed,shorten >=2pt,shorten <=2pt] (#1) -- (#2) [thick];} %dashed arrow
\newcommand\qsharrow[2]{\draw[->,shorten >=4pt,shorten <=2pt] (#1) -- (#2) [thick];} %head shortened
\newcommand\qstarrow[2]{\draw[->,shorten >=2pt,shorten <=4pt] (#1) -- (#2) [thick];} %tail shortened

 %shortened dashed arrow
\newcommand\qshdarrow[2]{\draw[->,dashed,shorten >=4pt,shorten <=2pt] (#1) -- (#2) [thick];} %head shortened dashed arrow
\newcommand\qstdarrow[2]{\draw[->,dashed,shorten >=2pt,shorten <=4pt] (#1) -- (#2) [thick];} %tail shortened dashed arrow

\tikzset{
  % style to add an arrow in the middle of a path
  mid arrow/.style={postaction={decorate,decoration={
        markings,
        mark=at position .5 with {\arrow[#1]{stealth}}
      }}},
}

\tikzset{pics/.cd,
handle/.style={code={
%\draw (-2.6,-1.5) coordinate (-left) to [out=320, in=70] (-2,-4) %unterer Torus
%to [out=260, in=60] (-3,-6) 
%to [out=240, in=110] (-3,-8) 
%to [out=290,in=175] (0,-9) 
%to [out=5,in=250] (3,-8) 
%to [out=70,in=300] (3,-6) 
%to [out=120,in=280] (2,-4) 
%to [out=110,in=220] (2.6,-1.5)  coordinate (-right);
\draw (-0.72,0) to[bend left] (0.72,0);
\draw (-0.9,0.1) to[bend right] (0.9,0.1);
}}}

%--------------Quiver macros---------------------------------

%%A_3-quiver on a triangle
\newcommand{\quiverAthree}[3]{
    %Coordinates
    \path(#1) coordinate(x1);
    \path(#2) coordinate(x2);
    \path(#3) coordinate(x3);
    \foreach \j in {1,2,3}
    {
        \foreach \k in {1,2,3}
        {
            \foreach \l in {1,2,3}
            {
            \path($(x\j)!0.25*\l!(x\k)$) coordinate(x\j\k\l);
            }
        }
    }
    %vertices
    \foreach \i in {0,1} %s=3
        \draw($(x233)!\i!(x133)$) circle(2pt) coordinate(V3\i);
    \foreach \i in {0,1,2} %s=2
        \draw($(x232)!0.5*\i!(x132)$) circle(2pt) coordinate(V2\i);       
    \foreach \i in {0,1,2,3} %s=1
        \draw($(x231)!0.333*\i!(x131)$) circle(2pt) coordinate(V1\i);    
    {\color{myblue}
        \foreach \j in {1,2,3}
            \draw(x21\j) circle(2pt) coordinate(Y\j);
    }
    %horizontal arrows
    \foreach \s in {1,2,3}
        {
        {\color{red}
        \pgfmathsetmacro{\t}{3-\s}
        \foreach \i in {0,...,\t}
            {
            \pgfmathsetmacro{\j}{\i+1}
            \qarrow{V\s\j}{V\s\i};
            }
        }
        }
    %other arrows
    % \qdarrow{V20}{V30}
    \qarrow{V30}{V21}
    \qarrow{V21}{V31}
    
    % \qdarrow{V10}{V20}
    \qarrow{V20}{V11}
    \qarrow{V11}{V21}
    \qarrow{V21}{V12}
    \qarrow{V12}{V22}
    %frozen arrows
    {\color{myblue}
    \qarrow{V10}{Y1}
    \qarrow{Y1}{V11}
    \qarrow{V11}{Y2}
    \qarrow{Y2}{V12}
    \qarrow{V12}{Y3}
    \qarrow{Y3}{V13}
    % \qdarrow{Y2}{Y1}
    % \qdarrow{Y3}{Y2}
    }
}

%%C_3-quiver on a triangle
\newcommand{\quiverCthree}[3]{
    %Coordinates
    \path(#1) coordinate(x1);
    \path(#2) coordinate(x2);
    \path(#3) coordinate(x3);
    \foreach \j in {1,2,3}
    {
        \foreach \k in {1,2,3}
        {
            \foreach \l in {1,2,3}
            {
            \path($(x\j)!0.25*\l!(x\k)$) coordinate(x\j\k\l);
            }
        }
    }
    %vertices
    \foreach \i in {0,1,2,3}
    	\dnode{$(x233)!0.333*\i!(x133)$} coordinate(V3\i);
    \foreach \i in {0,1,2,3}
    {
        \foreach \s in {1,2}
        \draw($(x23\s)!0.333*\i!(x13\s)$) circle(2pt) coordinate(V\s\i); 
    }
    {\color{myblue}
        \foreach \j in {1,2}
            \draw(x21\j) circle(2pt) coordinate(Y\j);
        \dnode{x213} coordinate(Y3);
    }
    %horizontal arrows
    {\color{red}
    \foreach \i in {0,1,2}
        {\qsarrow{$(x233)!0.333*\i+0.333!(x133)$}{$(x233)!0.333*\i!(x133)$};
        \foreach \s in {1,2}
         \qarrow{$(x23\s)!0.333*\i+0.333!(x13\s)$}{$(x23\s)!0.333*\i!(x13\s)$};
        }
    }
    %other arrows
    % \qshdarrow{V20}{V30}
    \qstarrow{V30}{V21}
    \qsharrow{V21}{V31}
    \qstarrow{V31}{V22}
    \qsharrow{V22}{V32}
    \qstarrow{V32}{V23}
    % \qsharrow{V23}{V33}
    
    % \qdarrow{V10}{V20}
    \qarrow{V20}{V11}
    \qarrow{V11}{V21}
    \qarrow{V21}{V12}
    \qarrow{V12}{V22}
    \qarrow{V22}{V13}
    % \qarrow{V13}{V23}
    %frozen arrows
    {\color{myblue}
    \qarrow{V10}{Y1}
    \qarrow{Y1}{V11}
    \qarrow{V11}{Y2}
    \qarrow{Y2}{V12}
    \qsarrow{V32}{Y3}
    \qsarrow{Y3}{V33}
    % \qdarrow{Y2}{Y1}
    % \qstdarrow{Y3}{Y2}
    }
}

%%C_3-quiver, Y-frozen permuted
\newcommand{\quiverCthreeY}[3]{
    %Coordinates
    \path(#1) coordinate(x1);
    \path(#2) coordinate(x2);
    \path(#3) coordinate(x3);
    \foreach \j in {1,2,3}
    {
        \foreach \k in {1,2,3}
        {
            \foreach \l in {1,2,3}
            {
            \path($(x\j)!0.25*\l!(x\k)$) coordinate(x\j\k\l);
            }
        }
    }
    %vertices
    \foreach \i in {0,1,2,3}
    	\dnode{$(x233)!0.333*\i!(x133)$} coordinate(V3\i);
    \foreach \i in {0,1,2,3}
    {
        \foreach \s in {1,2}
        \draw($(x23\s)!0.333*\i!(x13\s)$) circle(2pt) coordinate(V\s\i); 
    }
    {\color{myblue}
        \foreach \j in {1,2}
        {   \pgfmathsetmacro{\k}{4-\j}
            \draw(x21\k) circle(2pt) coordinate(Y\j);
        }
        \dnode{x211} coordinate(Y3);
    }
    %horizontal arrows
    {\color{red}
    \foreach \i in {0,1,2}
        {\qsarrow{$(x233)!0.333*\i+0.333!(x133)$}{$(x233)!0.333*\i!(x133)$};
        \foreach \s in {1,2}
         \qarrow{$(x23\s)!0.333*\i+0.333!(x13\s)$}{$(x23\s)!0.333*\i!(x13\s)$};
        }
    }
    %other arrows
    % \qshdarrow{V20}{V30}
    \qstarrow{V30}{V21}
    \qsharrow{V21}{V31}
    \qstarrow{V31}{V22}
    \qsharrow{V22}{V32}
    \qstarrow{V32}{V23}
    % \qsharrow{V23}{V33}
    
    % \qdarrow{V10}{V20}
    \qarrow{V20}{V11}
    \qarrow{V11}{V21}
    \qarrow{V21}{V12}
    \qarrow{V12}{V22}
    \qarrow{V22}{V13}
    % \qarrow{V13}{V23}
    %frozen arrows
    {\color{myblue}
    \qarrow{V10}{Y1}
    \qarrow{Y1}{V11}
    \qarrow{V11}{Y2}
    \qarrow{Y2}{V12}
    \qsarrow{V32}{Y3}
    \qsarrow{Y3}{V33}
    % \qdarrow{Y2}{Y1}
    % \qstdarrow{Y3}{Y2}
    }
}

%%C_2-quiver on a triangle

%%C_2-quiver on a triangle, Y-frozen permuted

%%C_2-quiver on a triangle, V-vertices permuted

%-------------------------------------------------------------

\setcounter{tocdepth}{1}%

%%%%%%%%%%%%%%%%%%%%%%%%%%%%%%%%%%%%%%%%%%%%%%%%%%%%%%%%%%%

\begin{document}
\title[Wilson lines and their Laurent positivity]
{Wilson lines and their Laurent positivity}

\author[Tsukasa Ishibashi]{Tsukasa Ishibashi}
\address{Tsukasa Ishibashi, Mathematical Institute, Tohoku University, 
6-3 Aoba, Aramaki, Aoba-ku, Sendai, Miyagi 980-8578, Japan.}
\email{tsukasa.ishibashi.a6@tohoku.ac.jp}

\author[Hironori Oya]{Hironori Oya}
\address{Hironori Oya, Department of Mathematics, Tokyo Institute of Technology, 2-12-1 Ookayama, Meguro-ku, Tokyo 152-8551, Japan.}
\email{hoya@math.titech.ac.jp}

\date{\today}

\maketitle

\begin{abstract}
    For a marked surface $\Sigma$ and a semisimple algebraic group $G$ of adjoint type, we study the Wilson line morphism $g_{[c]}:\P_{G,\Sigma} \to G$ associated with the homotopy class of an arc $c$ connecting boundary intervals of $\Sigma$, which is the comparison element of pinnings via parallel-transport. The matrix coefficients of the Wilson lines give a generating set of the function algebra $\cO(\P_{G,\Sigma})$ when $\Sigma$ has no punctures. 
    The Wilson lines have the multiplicative nature with respect to the gluing morphisms introduced by Goncharov--Shen \cite{GS19}, hence can be decomposed into triangular pieces with respect to a given ideal triangulation of $\Sigma$. 
    %Moreover 
    We show that the matrix coefficients $c_{f,v}^V(g_{[c]})$ give Laurent polynomials with positive integral coefficients in the Goncharov--Shen coordinate system associated with any decorated triangulation of $\Sigma$, for suitable $f$ and $v$. 
\end{abstract}

\setcounter{tocdepth}{1}
\tableofcontents

\section{Introduction} 
%   #1 higher Teich theoryの重要性
%   #2 trace functionsの重要性
%   #3 positivity problem
The moduli space of $G$-local systems on a topological surface is a classical object of study, which has been investigated both from mathematical and physical viewpoints. \emph{Wilson loops} give a class of important functions (or gauge-invariant observables), which are obtained as the traces of the monodromies of $G$-local systems in some finite-dimensional representations of $G$.

For a marked surface $\Sigma$, Fock--Goncharov \cite{FG03} introduced two extensions $\A_{\widetilde{G},\Sigma}$ and $\X_{G,\Sigma}$ of the moduli space of local systems, each of which admits a natural \emph{cluster structure}. Here $\widetilde{G}$ is a simply-connected semisimple algebraic group, and $G=\widetilde{G}/Z(\widetilde{G})$ is its adjoint group. 
%Such a structure in particular allows one to consider the semifield-valued points of these moduli spaces, and in particular their positive real points give an analogue of the \emph{Hitchin component} of the moduli space of local systems on a closed surface, which has been intensively studied as a higher-rank generalization of the \Teich space. See \cite{Wie18} for a comprehensive survey. 
The cluster structures of these moduli spaces are distinguished collections of open embeddings of algebraic tori accompanied with weighted quivers, related by two kinds of \emph{cluster transformations}. The collection of weighted quivers is shared by $\A_{\widetilde{G},\Sigma}$ and $\X_{G,\Sigma}$, and thus they form a \emph{cluster ensemble} in the sense of \cite{FG09}. 
Such a cluster structure is first constructed by Fock--Goncharov \cite{FG03} when the gauge groups are of type $A_n$, by Le \cite{Le16} for type $B_n,C_n,D_n$ (and further investigated in \cite{IIO19}), and by Goncharov--Shen \cite{GS19} for all semisimple gauge groups, generalizing all the works mentioned above and giving a uniform construction. 

%Strictly speaking, however, the moduli space $\X_{G,\Sigma}$ misses the \emph{frozen coordinates}, which should be assigned to the vertices of the weighted quivers on the boundary when $\partial \Sigma \neq \emptyset$. Hence the dimension of $\X_{G,\Sigma}$ is strictly smaller than $\A_{\widetilde{G},\Sigma}$, which breaks the symmetry. 
In \cite{GS19}, Goncharov--Shen introduced a new moduli space $\P_{G,\Sigma}$ closely related to the moduli space $\X_{G,\Sigma}$, which possesses the frozen coordinates that are missed in the latter. When $\partial \Sigma=\emptyset$, we have $\P_{G,\Sigma}=\X_{G,\Sigma}$, and otherwise the former includes additional data called the \emph{pinnings} assigned to boundary intervals. The supplement of frozen coordinates turns out to be crucial in the quantum geometry of moduli spaces: for example, it is manifestly needed in the relation with the quantized enveloping algebra in their work. 
The data of pinnings also allow one to glue the $G$-local systems along boundary intervals in an unambiguous way, which leads to a \emph{gluing morphism}
\begin{align*}
    q_{E_1,E_2}: \P_{G,\Sigma} \to \P_{G,\Sigma'}.
\end{align*}
Here $\Sigma'$ is obtained from $\Sigma$ by gluing two boundary intervals $E_1$ and $E_2$ of $\Sigma$. 

\subsection{The Wilson lines}
Using the data of pinnings, we introduce a new class of $G$-valued morphisms
\begin{align*}
    g_{[c]}: \P_{G,\Sigma} \to G,
\end{align*}
which we call the \emph{Wilson line} 
along the homotopy classes $[c]$ of a curve connecting two boundary intervals called an \emph{arc class}. 
Roughly speaking, the Wilson line $g_{[c]}$ is defined to be the comparison element of the two pinnings assigned to the initial and terminal boundary intervals under the parallel-transport along the curve $c$. Our aim in this paper is a detailed study of these morphisms. 
Here are main features:
\begin{description}
\item[Multiplicativity] We will see that the Wilson lines have the multiplicative nature for the gluing morphisms. If we have two arc classes $[c_1]:E_1 \to E_2$ and $[c_2]:E'_2 \to E_3$ on $\Sigma$, then by gluing the boundary intervals $E_2$ and $E'_2$ we obtain another marked surface $\Sigma'$ equipped with an arc class $[c]:=[c_1]\ast [c_2]$, which is the concatenation of the two arcs. Then we will see that the Wilson line $g_{[c]}$ is given by the product of the Wilson lines $g_{[c_1]}$ and $g_{[c_2]}$. 
See \cref{prop:multiplicativity} and \cref{fig:multiplicativity}. 
\item[Open analogue of Wilson loops] 
Let
\begin{align*}
    \rho_{|\gamma|}:\P_{G,\Sigma} \to [G/\Ad G]
\end{align*}
be the morphism given by the monodromy along a free loop $|\gamma|$, which we call the \emph{Wilson loop} in this paper\footnote{In literature, the composition of this function with the trace in a finite-dimensional representation of $G$ is called a Wilson line. We call them the \emph{trace functions} in this paper. 
}.
Using the multiplicativity above, one can compute the Wilson loop from the Wilson line along the arc obtained by cutting the loop $\gamma$ along an edge. See \cref{prop:Wilson line-loop} and \cref{fig:multiplicativity_connected_case}. In this sense, the Wilson lines are \lq\lq open analogues'' of the Wilson loops. 
\item[Generation of the function algebra] The matrix coefficients of Wilson lines give rise to regular functions on $\P_{G,\Sigma}$. Moreover, we will see in \cref{subsec:generate} that the function algebra $\cO(\P_{G,\Sigma})$ is generated by these matrix coefficients when $\Sigma$ has no punctures. Therefore Wilson lines provide enough functions to study the function algebra $\cO(\P_{G,\Sigma})$. 
\item[Universal Laurent property] Shen \cite{Shen20} proved that the algebra $\cO(\P_{G,\Sigma})$ of regular functions on this moduli stack is isomorphic to the cluster Poisson algebra $\cO_{\mathrm{cl}}(\P_{G,\Sigma})$, which is by definition the algebra of regular functions on the corresponding cluster Poisson variety. Hence the matrix coefficients of Wilson lines belong to $\cO_{\mathrm{cl}}(\P_{G,\Sigma})$. In other words, they are \emph{universally Laurent polynomials}, meaning that they are expressed as Laurent polynomials in \emph{any} cluster chart (including those not coming from decorated triangulations). 
\end{description}
%Our goal in this paper is a detailed study of these coordinate expressions of matrix coefficients of Wilson lines, and moreover to prove that certain matrix coefficients give rise to Laurent polynomials with non-negative integer coefficients. 

%We remark that a special kind of Wilson lines already appeared in the context of the multiplicative canonical pairing (also known as the \emph{Fock--Goncharov duality map}) \cite{FG07} and a certain supersymmetric quantum field theory \cite{GMN}, where they consider (certain matrix coefficients of) the Wilson lines associated with collections of curves (or laminations) satisfying a certain boundary condition rather than a single curve, so that they are well-defined as functions on the moduli space $\X_{G,\Sigma}$. Our Wilson lines along any arc classes are well-defined on the moduli space $\P_{G,\Sigma}$, whose product for a suitable lamination (a collection of arc classes) descends to a function on $\X_{G,\Sigma}$. 
We remark here that the essential notion of Wilson lines has been appeared in many related works including \cite{FG03,GMN14,GS14,SS17,GS19,CS20} (mainly as a tool for the computation of Wilson loops), while our work would be the first on its systematic study in the setting of the moduli space $\P_{G,\Sigma}$. Via their coordinate expressions as we discuss below, the Wilson lines (loops) have been recognized as related to the spectral networks \cite{GMN14} and certain integrable systems \cite{SS17}.

\subsection{Laurent positivity of Wilson lines}
Our goal in this paper is a detailed study of the Laurent expressions of the matrix coefficients of Wilson lines in cluster charts on $\P_{G,\Sigma}$. Moreover, it will turn out that a special class of matrix coefficients give rise to Laurent polynomials with non-negative coefficients. 

\smallskip
\paragraph{\textbf{Fock--Goncharov's snake formula.}}
Coordinate expressions of Wilson loops (or the trace functions) have been studied by several authors. 
In the $A_1$ case, a combinatorial formula for the expressions of Wilson loops in terms of the cross ratio coordinates is given by Fock \cite{Fock94} (see also \cite{Penner,FG07}). It expresses the Wilson loop along a free loop $|\gamma|$ as a product of the elementary matrices 
\begin{align*}
    L=\begin{pmatrix}1 & 1\\ 0 & 1 \end{pmatrix},\quad 
    R=\begin{pmatrix}1 & 0\\ 1 & 1 \end{pmatrix},\quad 
    H(x)=\begin{pmatrix}x^{1/2} & 0\\ 0 & x^{-1/2}\end{pmatrix}\quad \in PGL_2,
\end{align*}
which are multiplied according to the turning pattern after substituting the cross ratio coordinates into $x$.  

In the $A_n$ case, Fock--Goncharov \cite{FG06} 
gave a similar formula called the \emph{snake formula}, which expresses the Wilson loops in the cluster coordinates associated with ideal triangulations (called the \emph{special coordinate systems}). In particular, the trace functions are positive Laurent polynomials (with fractional powers) in any special coordinate systems. 
%However, notice that there are many cluster charts other than the special coordinate systems, and it is still a difficult problem to show the positivity of Laurent expressions in general cluster charts. 

\smallskip
\paragraph{\textbf{Generalizations of the snake formula.}}
Generalizing the special coordinate systems, Goncharov--Shen \cite{GS19} gave a uniform construction of coordinate systems on $\P_{G,\Sigma}$ associated with \emph{decorated triangulations}\footnote{Actually, they described more coordinate systems geometrically: those along flips of ideal triangulations and along rotations of dots. We do not investigate these additional coordinate systems in this paper.}.
Let us call them the \emph{Goncharov--Shen coordinate systems} (\emph{GS coordinate systems} for short). 
%Here $\Delta_*$ is a dotted triangulation and $\bs_\Delta$ is a choice of a reduced word of the longest element $w_0 \in W(G)$ for each triangle. 
The special coordinate systems in the type $A_n$ case are special instances of the GS coordinate systems, where the choice of reduced words are the \lq\lq standard'' one (see \eqref{eq:std_word}). Unlike the special coordinate systems, however, a general GS coordinate system no longer have the cyclic symmetry on each triangle. The data of \lq\lq directions'' of coordinates is encoded in the data of decorated triangulations, as well as the choice of reduced words on each triangle. 

Locally, a natural generalization of the snake formula is given by the \emph{evaluation map} \cite{FG06} parametrizing the \emph{double Bruhat cells} of $G$. We will see that the ``basic'' Wilson lines $b_L,b_R$ on the configuration space $\Conf_3 \P_G$ ,which models the moduli space on the triangle, can be expressed using the evaluation maps. 
%, where the coweight parameters are identified with some of the Goncharov--Shen coordinates (\emph{GS coordinates} for short) on $\Conf_3 \P_G$. 
%The basic idea is that given an ideal triangulation, one can decompose the monodromy into a product of certain elements of double Bruhat cells related to triangle pieces which the loop traverses. 
Since the multiplicativity allows one to decompose Wilson lines into those on triangles, one can write the Wilson lines as a product of evaluation maps when the direction of GS coordinates agree with the direction that the arc class traverses on each triangle. This is basically the same strategy as Fock--Goncharov \cite{FG06}, but manipulations in the recently-innovated moduli space $\P_{G,\Sigma}$ makes the computation much clearer, thanks to the nice properties of the gluing morphism \cite{GS19}. 

% Moreover, the adjustment by cyclic shifts in the decomposition formula results in the cluster transformations of the GS coordinates, which gives the \emph{twisted chain of GS coordinates} $\hat{\mathbf{X}}_{[c]}=(\hat{X}_i^s[\nu])_{(i,s;\nu) \in I_1\ast \dots \ast I_M}$ along $[c]$ associated with a decorated triangulation $\bD$. Each $\hat{X}_i^s[\nu]$ is a positive ratonal function of the relevant GS coordinates. 

\smallskip
\paragraph{\textbf{Transformations by cyclic shifts.}}
In general, we need to transform the evalutation maps in the expression of Wilson lines by the \emph{cyclic shift automorphism} on $\Conf_3 \P_G$,
in order to match the directions of a given GS coordinate system with the direction of the arc class on each triangle. The cyclic shifts are known to be written as a composite of cluster transformations, which is computable in nature but rather a complicated rational transformation. 
While the matrix coefficients of $g_{[c]}$ are at least guaranteed to be Laurent polynomials as we discussed above, it is therefore non-trivial whether their coefficients are non-negative integers. 

Let us further clarify the problem which we will deal with. 
A function $f \in \cO(\P_{G,\Sigma})$ is said to be \emph{\GSuniv\ positive Laurent} if it is expressed as a Laurent polynomial with non-negative integral coefficients in the GS coordinate system associated with any decorated triangulation $\boldsymbol{\Delta}$. This is a 
straightforward generalization of \emph{special good positive Laurent polynomials} on $\X_{PGL_{n+1},\Sigma}$ in \cite{FG03}. 
Moreover, a morphism $F:  \P_{G,\Sigma}\to G$ is said to be \emph{\GSuniv\ positive Laurent} if for any finite-dimensional representation $V$ of $G$, there exists a basis $\mathbb{B}$ of $V$ such that 
 \[
 F^\ast c_{f, v}^V \in \cO(\P_{G,\Sigma})
 \]
is \GSuniv\ positive Laurent for all $v\in \mathbb{B}$ and $f\in \mathbb{F}$, where $\mathbb{F}$ is the basis of $V^{\ast}$ dual to $\mathbb{B}$. Our result is the following:
\begin{introthm}[\cref{t:Wilson_line_positivity}]\label{introthm:positivity_line}
	Let $G$ be a semisimple algebraic group of adjoint type, and assume that our marked surface $\Sigma$ has non-empty boundary. 
	Then, for any arc class $[c]:E_\inn \to E_\out$, the Wilson line $g_{[c]}:  \P_{G,\Sigma}\to G$ is a \GSuniv\ positive Laurent morphism.  
\end{introthm}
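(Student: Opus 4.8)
The plan is to combine the multiplicativity of Wilson lines with an explicit local positivity statement on a single triangle, and then to control the discrepancy of coordinate directions by the cyclic shift automorphism.

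\textbf{Reduction to a triangle.} Fix a decorated triangulation $\bD$ and an arc class $[c]:E_\inn \to E_\out$. Cutting $c$ along the edges of the underlying ideal triangulation presents it as a concatenation of elementary arcs, each traversing a single triangle from one side to another. By the multiplicativity of Wilson lines (\cref{prop:multiplicativity}), together with the compatibility of the GS coordinate system with the gluing morphisms, $g_{[c]}$ is a product of triangle-pieces, each of which is a pullback of a basic Wilson line $b_L$ or $b_R$ on $\Conf_3 \P_G$. For a basis $\mathbb{B}$ with dual basis $\mathbb{F}$ one has $c_{f,v}^V(g_1 g_2)=\sum_{w\in\mathbb{B}} c_{f,w}^V(g_1)\,c_{w^\vee,v}^V(g_2)$ with $w^\vee\in\mathbb{F}$, and since sums and products of positive Laurent polynomials are again positive Laurent, it suffices to prove the statement for each triangle-piece, i.e.\ for $b_L$ and $b_R$ in the GS coordinate system attached to the triangle.

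\textbf{Local positivity via the evaluation map.} On $\Conf_3 \P_G$ I would take $\mathbb{B}$ to be Lusztig's canonical basis of $V$ (and $\mathbb{F}$ the dual canonical basis). When the direction of the GS coordinate system agrees with the direction in which the arc traverses the triangle, the basic Wilson line factors through the evaluation map parametrizing the relevant double Bruhat cell, so that $b_L,b_R$ become products of Chevalley one-parameter subgroups $x_i(\cdot),y_i(\cdot)$ and torus elements whose arguments are monomials in the GS coordinates. Lusztig's positivity of the canonical basis — the operators $x_i(t),y_i(t)$ act on $\mathbb{B}$ by matrices with entries in $\Z_{\ge 0}[t]$, while torus elements act by Laurent monomials — then shows that every $c_{f,v}^V(b_\bullet)$ is a positive Laurent polynomial. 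This settles the direction-matching case.

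\textbf{Transformations by the cyclic shift.} In general the reduced word and the orientation recorded in $\bD$ force us to conjugate the evaluation-map expression by the cyclic shift automorphism $\sigma$ of $\Conf_3 \P_G$ before reading off matrix coefficients, and this is the crux of the argument. The cyclic shift is a composite of cluster Poisson mutations, hence subtraction-free; but a subtraction-free rational expression that happens to be a Laurent polynomial need not have non-negative coefficients, so positivity does not follow formally. The plan is instead to show that $\sigma^\ast b_L$ and $\sigma^\ast b_R$ are \emph{again} of evaluation-map type: one reexpresses the cyclically shifted basic Wilson line as a product of one-parameter subgroups and torus elements whose arguments are honest monomials in the GS coordinates of the rotated chart, so that Lusztig's positivity applies verbatim. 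Producing this normal form for $\sigma^\ast b_\bullet$ — equivalently, showing that $\sigma$ intertwines the two evaluation-map parametrizations through a monomial, hence manifestly positive, change of the torus variables — is the main technical obstacle, and is where the explicit combinatorics of reduced words and the geometry of the cyclic shift from \cite{GS19} enter.

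\textbf{Conclusion.} Assembling the three steps: for an arbitrary $\bD$, each triangle-piece of $g_{[c]}$ is brought into evaluation-map normal form, possibly after a cyclic shift; its canonical-basis matrix coefficients are positive Laurent by Lusztig's positivity; and the multiplicative decomposition propagates positivity to $g_{[c]}$ itself. As $\bD$ was arbitrary, $g_{[c]}$ is \GSuniv\ positive Laurent. I expect the cyclic-shift normal form to absorb essentially all of the difficulty, the remaining steps being formal consequences of multiplicativity and the positivity of the canonical basis.
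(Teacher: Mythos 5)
Your first two steps are essentially the paper's: the reduction via multiplicativity and the gluing morphism $q_\Delta$ to triangle-pieces $b_{\tau_\nu}\circ\mathcal{S}_3^{t_\nu}$ is exactly \cref{t:Wilson_line_regular}, and the direction-matched local positivity via $\psi_\bs^*b_L=ev^+_\bs$ (\cref{t:coweight-GS}) together with positivity of one-parameter subgroups acting on a good basis is sound. You also correctly identify that subtraction-freeness of the cyclic shift does not formally yield positivity. But your proposed resolution of the crux is where the argument breaks: it is \emph{false} in general that $\mathcal{S}_3$ intertwines the two evaluation-map parametrizations through a monomial change of torus variables, so $\mathcal{S}_3^\ast b_L$ and $\mathcal{S}_3^\ast b_R$ do not admit an evaluation-map normal form with monomial arguments in the rotated GS chart. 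Concretely, in the standard configuration the cyclic shift acts by $\mathcal{S}_3(\wC_3(h_1,h_2,u_+))=\wC_3(h_1',h_2',u_+')$ with $u_+'=\Ad_{h_1^{\ast}}(\eta_{w_0}(u_+)^{\ast})$ (\cref{l:t=1_cyclic}), where $\eta_{w_0}$ is the Berenstein--Fomin--Zelevinsky twist automorphism. Outside rank one, $\eta_{w_0}$ is a genuinely non-monomial (subtraction-free) rational transformation of the Lusztig coordinates — its effect on generalized minors is governed by the Chamber Ansatz and produces honest multi-term Laurent polynomials — so the "monomial, hence manifestly positive, change of the torus variables" you hope to produce does not exist, and Lusztig positivity cannot be applied verbatim after the shift.

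The paper circumvents this not by a normal form but by a basis-theoretic argument: one chooses a basis $\pF$ of $\cO(U^+_{\ast})$ that is homogeneous, stable as a set under $\eta_{w_0}^{\ast}$, stable under multiplication by the minor ratios $\Delta^+_{w_0,\xi}$, positive in every Lusztig chart, and in which the restricted matrix coefficients $c^{\lambda}_{G_\lambda(\sfb)^\vee, G^{\rm up}_\lambda(\sfb')}|_{U^+_{\ast}}$ expand with coefficients in $\Z_{\geq 0}$ (\cref{t:positivechoice}; this is the deep input, supplied by quiver Hecke algebra categorification rather than by Lusztig positivity of the $x_i(t)$-action alone). The induced basis $\ptF{T}$ of $\cO(\P_{G,T})$ is then independent of the dot — i.e.\ invariant under $\mathcal{S}_3^{\ast}$, precisely because the twist and the $\Delta^+_{w_0,\xi}$-multiplications permute $\pF$ (\cref{l:triangle_basis}) — and each of its elements is positive Laurent in every GS chart (\cref{t:triangle_GSpos}). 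Positivity of the matrix coefficients of $b_L,b_R$ then holds in \emph{all} rotated charts at once (\cref{t:matrixcoeff_pos}), and your multiplicative assembly goes through unchanged. So your outline is repairable, but only by replacing the nonexistent cyclic-shift normal form with the twist-invariant positive basis, which absorbs exactly the difficulty you located.
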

Since the Wilson loops $\rho_{|\gamma|}: \P_{G,\Sigma} \to [G/\Ad G]$ can be computed from the Wilson lines by \cref{prop:Wilson line-loop}, it immediately implies the following: 

\begin{introcor}[\cref{t:monodromy_positivity}]\label{introcor:positivity_loop}
	Let $G$ be a semisimple algebraic group of adjoint type, and $|\gamma| \in \widehat{\pi}(\Sigma)$ a free loop. Then, for any finite dimensional representation $V$ of $G$, the trace function $\tr_V(\rho_{|\gamma|}):=\rho_{|\gamma|}^\ast\tr_V \in \cO(\P_{G,\Sigma})$ is \GSuniv\ positive Laurent. 
\end{introcor}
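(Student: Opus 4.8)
The plan is to deduce the statement from the Wilson line positivity (\cref{introthm:positivity_line}) together with the loop--line comparison (\cref{prop:Wilson line-loop}), by cutting the loop open into an arc. Fix an arbitrary decorated triangulation $\bD$ of $\Sigma$; since $\bD$ is arbitrary it suffices to prove that $\tr_V(\rho_{|\gamma|})$ has non-negative integral coefficients in the GS coordinate system of $\bD$. Put $\gamma$ in minimal position with respect to $\bD$. As $|\gamma|$ is essential, it crosses some edge $E$ of $\bD$; cutting $\Sigma$ along $E$ produces a marked surface $\widetilde{\Sigma}$ in which $E$ splits into two boundary intervals $E_1,E_2$, and along which $\gamma$ opens up to an arc class $[c]\colon E_1 \to E_2$. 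Gluing $E_1$ to $E_2$ recovers $\Sigma$, giving a gluing morphism $q_{E_1,E_2}\colon \P_{G,\widetilde{\Sigma}} \to \P_{G,\Sigma}$, and the cut triangulation $\widetilde{\bD}$ of $\widetilde{\Sigma}$ (which has $E_1,E_2$ as boundary edges) is a decorated triangulation that restricts to $\bD$ away from the cut.

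Next I would reduce the trace function to matrix coefficients of the Wilson line. By \cref{prop:Wilson line-loop}, the composite $\rho_{|\gamma|}\circ q_{E_1,E_2}$ is the conjugacy class of $g_{[c]}$, so that $q_{E_1,E_2}^\ast\,\tr_V(\rho_{|\gamma|}) = \tr_V(g_{[c]}) = g_{[c]}^\ast \tr_V$. Now apply \cref{introthm:positivity_line} to the arc class $[c]$ on $\widetilde{\Sigma}$: there is a basis $\mathbb{B}$ of $V$, with dual basis $\mathbb{F}$, such that $g_{[c]}^\ast c^V_{f,v}$ is \GSuniv\ positive Laurent for all $v\in\mathbb{B}$, $f\in\mathbb{F}$. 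Since the trace is basis-independent we may compute it in $\mathbb{B}$, so that $\tr_V(g_{[c]}) = \sum_{v\in\mathbb{B}} g_{[c]}^\ast c^V_{f_v,v}$ is a finite sum of diagonal matrix coefficients, where $f_v\in\mathbb{F}$ is dual to $v$. As the Laurent polynomials with coefficients in $\mathbb{Z}_{\geq 0}$ are closed under addition, $\tr_V(g_{[c]})$ is \GSuniv\ positive Laurent on $\P_{G,\widetilde{\Sigma}}$; in particular it has non-negative integral coefficients in the $\widetilde{\bD}$-coordinates.

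It remains to transfer positivity across the gluing morphism, which I expect to be the main obstacle. The universal Laurent property (Shen's isomorphism $\cO(\P_{G,\Sigma})\cong\cO_{\mathrm{cl}}(\P_{G,\Sigma})$) already guarantees that $\tr_V(\rho_{|\gamma|})$ is \emph{some} Laurent polynomial $\sum_\beta d_\beta\,Y^\beta$ in the $\bD$-coordinates with $d_\beta\in\mathbb{Z}$; the task is to upgrade this to $d_\beta\geq 0$. Here I would invoke the explicit form of the gluing morphism in GS coordinates (the amalgamation formula of \cite{GS19}): $q_{E_1,E_2}^\ast$ sends each $\bD$-coordinate to a Laurent monomial in the $\widetilde{\bD}$-coordinates with coefficient $1$, the coordinates attached to the merged edge $E$ becoming monomials in the frozen coordinates on $E_1$ and $E_2$ while the remaining coordinates are preserved. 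The induced linear map $\phi$ on exponent lattices is injective, so $q_{E_1,E_2}^\ast$ carries distinct Laurent monomials to distinct Laurent monomials. Applying $q_{E_1,E_2}^\ast$ to $\sum_\beta d_\beta\,Y^\beta$ and matching it term-by-term against the manifestly positive expansion of $\tr_V(g_{[c]})$ from the previous paragraph then forces $d_\beta\geq 0$ for every $\beta$.

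Hence $\tr_V(\rho_{|\gamma|})$ is positive Laurent in the GS coordinates of $\bD$. As $\bD$ was an arbitrary decorated triangulation of $\Sigma$, the trace function is \GSuniv\ positive Laurent, completing the proof. The crux is the compatibility between the gluing morphism and the GS coordinate systems together with the injectivity of the resulting monomial substitution; once this amalgamation formula is in hand, the positivity of the loop follows from that of the line purely formally.
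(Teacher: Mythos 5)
Your architecture is the paper's: reduce the loop to a Wilson line via \cref{prop:Wilson line-loop}, expand the trace as a sum of diagonal matrix coefficients in the basis supplied by \cref{introthm:positivity_line}, and descend positivity along the gluing morphism (your injectivity-of-the-exponent-lattice argument is a correct and more explicit version of the mechanism the paper uses when it verifies positivity after pulling back by $q_\Delta$). However, there is a genuine gap at the opening step: the claim that after cutting $\Sigma$ along $E$ the loop ``opens up to an arc class $[c]\colon E_1 \to E_2$'' holds only when $\gamma$ meets $E$ exactly once in minimal position. If $\gamma$ crosses $E$ at $k \geq 2$ points, cutting severs $\gamma$ into $k$ arcs, and no single arc class on $\widetilde{\Sigma}$ has closure $|\gamma|$, so \cref{prop:Wilson line-loop} does not apply. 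This cannot be repaired by choosing $E$ more cleverly: the mod-$2$ intersection number of a loop with a properly embedded ideal arc is a free-homotopy invariant, and the closure of an arc class always meets the glued edge an odd number of times; hence any free loop whose mod-$2$ intersection with \emph{every} ideal arc vanishes --- e.g.\ a proper power $|\delta^2|$, or a separating simple loop when all marked points lie in one complementary component --- is not the closure of any arc class along any cut whatsoever. Worse, your descent step forces $E$ to be an edge of the \emph{given} $\bD$ (so that $\widetilde{\bD}$ restricts from $\bD$), and even a nice primitive loop may cross every edge of a fixed $\bD$ several times. As written, your proof covers only loops meeting some edge of $\bD$ exactly once.

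Two fixes are available within the paper's toolkit. (i) Extend \cref{prop:multiplicativity} to a loop crossing the glued edge $k$ times: taking the base point on $\widehat{E}$ and using the explicit presentation of the gluing in \cref{subsubsec:gluing} (connected case), one shows that $q_{E_1,E_2}^{\ast}\rho_{|\gamma|}$ is the conjugacy class of the product $g_{[c_1]}\cdots g_{[c_k]}$, where $c_1,\dots,c_k$ are the arc classes into which $E$ cuts $\gamma$; then inserting the basis $\mathbb{B}$ between consecutive factors expands $\tr_V$ of the product into a sum of products of matrix coefficients $c^V_{f,v}(g_{[c_j]})$, each \GSuniv\ positive Laurent by \cref{introthm:positivity_line}, and your descent paragraph finishes the argument. (ii) Alternatively, bypass the single-cut reduction altogether and prove the cyclic analogue of \cref{t:Wilson_line_regular}: $q_\Delta^{\ast}\widetilde{\rho}_{|\gamma|}$ is conjugate to $\mu_M\circ \prod_{\nu=1}^{M}(b_{\tau_\nu}\circ \mathcal{S}_3^{t_\nu}\circ \overline{f}_{n_\nu})$, the product running cyclically over the triangles traversed by $\gamma$; expanding the trace over closed chains $\sfb_0,\sfb_1,\dots,\sfb_M=\sfb_0$ exactly as in the paper's proof of \cref{t:Wilson_line_positivity} gives positivity directly in the amalgamated $\bD$-coordinates, so no transfer-back is even needed. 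Either way, also dispose of the degenerate case of a null-homotopic $\gamma$, where $\tr_V(\rho_{|\gamma|})=\dim V$ is trivially positive.
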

\cref{introcor:positivity_loop} is a generalization of \cite[Theorem 9.3, Corollary 9.2]{FG03}.

Here we briefly comment on the proof of \cref{introthm:positivity_line}. 
%When we consider the GS coordinate system, we fix a triangulation $\Delta$ of $\Sigma$, and use an amalgamation morphism $q_\Delta:  \prod_{T \in t(\Delta)} \P_{G,T}\to  \P_{G,\Sigma}$, where $t(\Delta)$ is the set of triangles of $\Delta$. 
By the construction of the GS coordinate system on $\P_{G,\Sigma}$ associated with a decorated triangulation, the Laurent positivity of a regular function on $\P_{G,\Sigma}$ can be deduced from the Laurent positivity of its pull-back via the gluing morphism $q_\Delta: \prod_{T \in t(\Delta)} \P_{G,T}\to  \P_{G,\Sigma}$ associated with the underlying ideal triangulation $\Delta$. 
In other words, we can investigate the Laurent positivity of a regular function on $\P_{G,\Sigma}$ by a local argument on triangles. Indeed, a key to the proof of \cref{introthm:positivity_line} is a construction of 
a basis $\ptF{T}$ of $\cO(\P_{G,T})$ consisting of \GSuniv\ positive Laurent elements, which is invariant under the cyclic shift and compatible with certain matrix coefficients. 

We show that such a nice basis is constructed whenever we have a nice basis $\pF$ of the coordinate ring $\cO(U^+_{\ast})$ of the \emph{unipotent cell} $U^+_{\ast}$ of $G$. In particular, the invariance of $\ptF{T}$ under the cyclic shift on $\P_{G,T}$ comes from the invariance of $\pF$ under \emph{the Berenstein-Fomin-Zelevinsky twist automorphism on $U^+_{\ast}$} \cite{BFZ96,BZ97}. 
An example of a basis of $\cO(U^+_{\ast})$ which satisfies the list of desired properties (see \cref{t:positivechoice}) is obtained from the theory of \emph{categorification of $\cO(U^{+}_{\ast})$ via quiver Hecke algebras}, which has been investigated, for example, in  \cite{KL:I,Rou08,KL:II,Rou12,KK:hw,KKKO:monoidal,KKOP:strata,KKOP:loc}. 
%Based on their results, we show that the basis arising from this categorification satisfies the desired properties. As an important step, we prove in \cref{t:qtwistcat} that the (quantum) Berenstein-Fomin-Zelevinsky twist automorphism on $U^+_{\ast}$ is categorified by using the left dualizing functor in a certain category $\widetilde{\mathscr{C}}_w$, which is constructed in \cite{KKOP:loc}. This in turn ensures the invariance of $\pF$ under the twist automorphism.

\bigskip

The \GSuniv\ positive Laurent property is weaker than the \emph{universal positive Laurent property} \cite{FG09}, which requires a similar positive Laurent property for all cluster charts. By replacing the \GSuniv\ positive Laurent property with universal positive Laurent property, we have the notion of \emph{universally positive Laurent morphisms}. Then, it would be natural to expect the following:

\begin{introconj}\label{conj:universally Laurent}
For any arc class $[c]:E_\inn \to E_\out$, the Wilson line $g_{[c]}:  \P_{G,\Sigma}\to G$ is a universally positive Laurent morphism. 
Moreover, the trace function $\tr_V(\rho_{|\gamma|}) \in  \cO(\P_{G,\Sigma})$ is universally positive Laurent.
\end{introconj}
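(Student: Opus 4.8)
The plan is to reduce the global statement to a purely local one on a single triangle, and then to resolve the local problem by importing a positive basis from the theory of unipotent cells. First I would invoke the multiplicativity of Wilson lines (\cref{prop:multiplicativity}): fixing a decorated triangulation $\bD$ whose underlying ideal triangulation $\Delta$ is traversed by $[c]$, the Wilson line $g_{[c]}$ pulls back along the gluing morphism $q_\Delta:\prod_{T\in t(\Delta)}\P_{G,T}\to\P_{G,\Sigma}$ to a product of Wilson lines associated with the pieces of $[c]$ on the individual triangles, and each such piece is a composite of the basic Wilson lines $b_L,b_R$ on $\Conf_3\P_G$. Since the GS coordinate system on $\P_{G,\Sigma}$ is assembled by gluing the coordinate systems on the factors $\P_{G,T}$, it suffices to show that each matrix coefficient of $b_L,b_R$ is a positive Laurent polynomial in the GS coordinate system on $\P_{G,T}$, for every decorated triangle $T$.

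When the ``direction'' recorded by the decoration of $T$ agrees with the direction in which $[c]$ crosses $T$, this is essentially the content of the evaluation map of \cite{FG06} parametrizing the double Bruhat cells: the basic Wilson lines factor through the evaluation map, whose entries are manifestly positive. The difficulty is that a general decorated triangulation prescribes directions that need not match, and realigning them forces one to apply the cyclic shift automorphism of $\Conf_3\P_G$. This cyclic shift is only a composite of cluster transformations, hence a complicated (subtraction-free but not term-by-term positive) rational map, so positivity is not visibly preserved. Controlling this is the crux of the argument.

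To circumvent tracking positivity through the cyclic shift directly, I would instead construct, once and for all, a distinguished basis $\ptF{T}$ of $\cO(\P_{G,T})$ whose elements are all \GSuniv\ positive Laurent, which is stable under the cyclic shift, and in which the matrix coefficients of $b_L,b_R$ expand with non-negative integral coefficients. The construction transports a good basis $\pF$ of the coordinate ring $\cO(U^+_{\ast})$ of a unipotent cell onto $\P_{G,T}$; here the stability of $\ptF{T}$ under the cyclic shift is arranged to correspond to the stability of $\pF$ under the Berenstein--Fomin--Zelevinsky twist automorphism of $U^+_{\ast}$ \cite{BFZ96,BZ97}. In this way the twist-invariance of the basis on the unipotent cell is precisely what replaces the unavailable term-by-term positivity under the cyclic shift.

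The final ingredient is the existence of a basis $\pF$ with the required list of properties --- positivity of cluster expansions, invariance under the twist, and compatibility with the relevant matrix coefficients --- as collected in \cref{t:positivechoice}. I would supply such a basis from the categorification of $\cO(U^+_{\ast})$ by quiver Hecke (KLR) algebras \cite{KL:I,Rou08,KL:II,Rou12,KK:hw,KKKO:monoidal,KKOP:strata,KKOP:loc}, whose dual canonical basis is positive in every cluster chart arising from a reduced word and is intertwined with the twist automorphism. The main obstacle, as indicated above, is exactly establishing that this basis on the unipotent cell descends to a cyclic-shift-invariant positive basis $\ptF{T}$ on $\P_{G,T}$ controlling the Wilson-line matrix coefficients; once this is in place, gluing the triangle-local statements along $q_\Delta$ and applying multiplicativity yields the \GSuniv\ positivity of $g_{[c]}$ for an arbitrary decorated triangulation.
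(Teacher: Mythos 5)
There is a genuine gap, and it is a global one: the statement you were asked to prove is \cref{conj:universally Laurent}, which asserts \emph{universal} positive Laurentness, i.e.\ positivity of the Laurent expansion in \emph{every} cluster chart of the cluster Poisson structure $\sfS_{G,\Sigma}$ --- including the infinitely many seeds obtained by arbitrary mutation sequences that do not arise from any decorated triangulation. Your argument, however, only controls the charts $\sfS(\bD)$ attached to decorated triangulations $\bD=(\Delta_*,\bs_\Delta)$: the decomposition along $q_\Delta$, the realignment by cyclic shifts, and the twist-invariant basis $\ptF{T}$ built from $\pF$ via \cref{t:positivechoice} are exactly the ingredients of the paper's proof of \cref{introthm:positivity_line} (\cref{t:Wilson_line_positivity}), and your own concluding sentence correctly claims only ``\GSuniv\ positivity.'' That property is strictly weaker: cluster transformations are subtraction-free, so a \GSuniv\ positive function is a positive \emph{rational} expression in any other chart, but nothing in your argument shows it remains a \emph{Laurent polynomial with non-negative coefficients} after an arbitrary sequence of mutations. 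In particular, \cref{t:triangle_GSpos} establishes \GSuniv\ positivity of $\ptF{T}$ only because \cref{l:GScoord_standard} and property (P2) give explicit positive formulas in the Lusztig/GS variables of each reduced word $\bs$; there is no analogous control at a generic seed in the mutation class.

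For this statement the honest assessment is that the paper itself does not prove it --- it is stated as a conjecture precisely because the passage from the GS atlas to all cluster charts requires fundamentally new input. The paper indicates what that input would be: in the continuing work \cite{IOS} it is shown that generalized minors of Wilson lines are cluster \emph{monomials}, whence universally positive Laurent by \cite{GHKK}; note that even this only covers the generalized minors $\Delta_{w\lambda,w'\lambda}(g_{[c]})$, not all matrix coefficients $c^{\lambda}_{f,v}(g_{[c]})$, and the trace functions remain open beyond type $A_1$ \cite{FG03}. So your proposal is a correct reconstruction of the proof of the theorem the paper actually proves, but as a proof of \cref{conj:universally Laurent} it fails at the first quantifier: ``any cluster chart'' cannot be reduced to ``any decorated triangulation'' by the gluing-plus-twist-invariant-basis mechanism alone.
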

Indeed, it is known that this conjecture on the trace functions holds true for type $A_1$ case \cite{FG03}. In our continuing work \cite{IOS} with Linhui Shen, it is shown that the generalized minors of Wilson lines are cluster monomials. In particular, they are known to be universally positive Laurent \cite{GHKK}. 

\subsection{Future directions}
%\subsubsection*{Basis of $\cO_{\mathrm{cl}}(\P_{G,\Sigma})$ with positivity.}
%In view of its definition \eqref{eq:triangle_basis} and \cref{c:LR}, the basis $\widetilde{\mathbb{F}}_{\mathrm{pos},T}$ of $\C[\P_{G,T}]=\cO_{\mathrm{cl}}(\P_{G,T})$ can be regarded as consisting of certain matrix coefficients of the Wilson lines along the three arc classes on the triangle $T$. Therefore it would be natural to expect a basis with positivity of $\cO_{\mathrm{cl}}(\P_{G,\Sigma})$ for a general marked surface $\Sigma$ which consists of some matrix coefficients of Wilson lines. This point will require further investigation.  

\subsubsection*{Poisson brackets of Wilson lines.}
%The Poisson brackets of the trace functions $\tr_V(\rho_{|\gamma|})$ in the natural representation with respect to the Atiyah--Bott--Goldman Poisson structure on the moduli space of $\widetilde{G}$-local systems on a punctured surface form the celebrated \emph{Goldman algebra} . 
The absolute values $|\tr_V(\rho_{|\gamma|})|$ of the trace functions in the vector representation are well-defined smooth functions on the positive-real part $\P_{G,\Sigma}(\mathbb{R}_{>0})$ (or $\X_{G,\Sigma}(\mathbb{R}_{>0})$), in spite of the fact that $V$ is not a representation of the adjoint group $G$. 
In the type $A_n$ case, Chekhov--Shapiro \cite{CS20} proved that the cluster Poisson brackets of these functions reproduce the Goldman brackets \cite{Go86}. 
Their argument is local in nature and seems to be applicable also to Wilson lines, and it can be expected that (absolute values of) certain matrix coefficients of the Wilson lines form an open analogue of the Goldman algebra. 

%the Laurent monomials in the snake formula expressing the trace function $\tr_V(\rho_{|\gamma|})$ in the fundamental representation can be parametrized by certain edge paths in a directed graph called the \emph{network} \cite{SS17}. This point of view is effectively utilized by Chekhov--Shapiro \cite{CS20} to define quantum lifts of trace functions and compute their commutation relations, which reproduces the Goldman bracket \cite{Go86} in the classical limit. This kind of description is possible for our setting, and the construction of quantum lifts of the Wilson lines and the computation of their commutation relations will be elaborated elsewhere. 

\subsubsection*{Quantum lifts of Wilson lines.}
Any cluster Poisson variety $\X$ admits a canonical quantization, namely a one-parameter deformation $\cO_q(\X)$ of the cluster Poisson algebra $\cO(\X)$ and its representation on a certain Hilbert space as self-adjoint operators \cite{FG08}. It will be an interesting problem to consider a quantum analogue of the matrix coefficients of the Wilson lines, which belong to $\cO_q(\P_{G,\Sigma})$ and recovers $c^V_{f,v}(g_{[c]})$ in the classical limit $q\to 1$. A special example is the Goncharov--Shen's realization of the quantum enveloping algebra $U_q(\mathfrak{b}^+)$ inside the quantum cluster Poisson algebra \cite[Section 11]{GS19}, whose generators are quantum lifts of certain matrix coefficients of the Wilson line along an arc class that encircles exactly one special point (see \cref{lem:Wilson_line_Borel}). In the type $A_n$ case, quantum lifts are also studied by Douglas \cite{Douglas21} and Chekhov--Shapiro \cite{CS20}.

A comparison with the quantization of the moduli stacks in terms of the factorization homology studied by \cite{JLSS} will also be an important problem.

\subsection*{Organization of the paper}
\begin{description}
\item[Geometric study of Wilson lines (Sections \ref{sec:coordinates}--\ref{sec:monodromy})]
After recalling basic notations in \cref{sec:coordinates}, we introduce the Wilson line morphisms in \cref{sec:monodromy} and study their properties from the geometric point of view. Some basic facts on the quotient stacks are summarized in \cref{sec:stacks}. 
In \cref{subsec:generate}, we prove the generation of the function algebra $\cO(\P_{G,\Sigma})$ by the matrix coefficients of  Wilson lines when $\Sigma$ has no punctures. 
We give the decomposition formulae for the Wilson lines in \cref{subsec:regularity_of_Wilson_lines_and_loops}, as a preparation for the study on the coordinate expressions.

\item[Coordinate expressions and Laurent positivity (Sections \ref{sec:coordinate expressions}--\ref{sec:positivity})] After recalling the Goncharov--Shen coordinates on the moduli space $\P_{G,\Sigma}$ and relevant coordinate systems on unipotent cells and double Bruhat cells in \cref{sec:coordinate expressions}, we study we study the coordinate expressions of the Wilson lines and prove \cref{introthm:positivity_line} in \cref{sec:positivity}. 
%as a preparation for the study of the coordinate expressions of the Wilson lines. 
In the course of the proof, we construct a basis of $\cO(\P_{G,T})$ for a triangle $T$ consisting of \GSuniv\ positive Laurent elements, which is invariant under the cyclic shift. 
Some basic notions on the cluster varieties, weighted quivers and their amalgamation procedure are recollected in \cref{sec:quivers}. 
\end{description}

\subsection*{Acknowledgments}
The authors' deep gratitude goes to Linhui Shen for his insightful comments on this paper at several stages and explaining his works with Alexander Goncharov. 
They are grateful to Tatsuki Kuwagaki and Takuma Hayashi for explaining some basic notions and backgrounds on Artin stacks, and giving valuable comments on a draft of this paper. They also wish to thank Ryo Fujita for helpful discussions on quiver Hecke algebras. The authors are grateful to the anonymous referee for the careful reading of the paper and the significant comments. 
T. I. would like to express his gratitude to his former supervisor Nariya Kawazumi for his continuous guidance and encouragement in the earlier stage of this work. 
T. I. is partially supported by JSPS KAKENHI Grant Numbers 18J13304 and 20K22304, and the Program for Leading Graduate Schools, MEXT, Japan.
H. O. is supported by JSPS Grant-in-Aid for Early-Career Scientists (No. 19K14515).

\section{Configurations of pinnings}\label{sec:coordinates}

Denote by $\bG_m=\Spec \C[t,t^{-1}]$ the multiplicative group scheme over $\C$. For an algebraic torus $T$ over $\C$, let $X^*(T):=\Hom(T,\mathbb{G}_m)$ be the lattice of characters, $X_*(T):=\Hom(\mathbb{G}_m, T)$ the lattice of cocharacters, and $\langle -,-\rangle$ the natural pairing 
\[
\langle -,-\rangle\colon~ X_*(T)\times X^*(T)\longrightarrow \Hom(\mathbb{G}_m, \mathbb{G}_m)\simeq \Z. 
\]
For $t\in T$ and $\mu\in X^*(T)$, the evaluation of $\mu$ at $t$ is denoted by $t^{\mu}$.

\subsection{Notations from Lie theory}
In this subsection, we briefly recall basic terminologies in Lie theory. See \cite{Jan} for the details.

Let $\widetilde{G}$ be a simply-connected connected simple algebraic group over $\mathbb{C}$. Let $\wtil{B}^+$ be a Borel subgroup of $\widetilde{G}$ and $\wtil{H}$ a maximal torus (a.k.a. Cartan subgroup) contained in $\wtil{B}^+$, respectively. Let $U^+$ be the unipotent radical of $\wtil{B}^+$. 
Let 
\begin{itemize}
    \item $X^*(\wtil{H})$ be the weight lattice and $X_*(\wtil{H})$ the coweight lattice; 
%     and $\langle -,-\rangle$ the natural pairing 
% \[
% \langle -,-\rangle\colon~ X_*(H)\times X^*(H)\longrightarrow \Hom(\mathbb{G}_m, \mathbb{G}_m)\simeq \Z; 
% \]
    \item $\Phi\subset X^*(\wtil{H})$ the root system of $(\widetilde{G}, \wtil{H})$; 
    \item $\Phi_+\subset \Phi$ the set of positive roots consisting of the $\wtil{H}$-weights of the Lie algebra of $U^+$;
    \item $\{\alpha_s\mid s\in S\}\subset \Phi_+$ the set of simple roots, where $S$ is the index set with $|S|=r$;
    \item $\{\alpha_s^\vee \mid s \in S\} \subset X_\ast(\wtil{H})$ the set of simple coroots.
    %\item $W=N_G(H)/H$ the Weyl group of $G$.
%    \item $\{r_s\mid s\in S\}\subset W$ the set pf simple reflections, where $r_s$ corresponds to $\alpha_s$. 
\end{itemize}
For $s\in S$, let $\varpi_s\in X^*(\wtil{H})$ be the $s$-th fundamental weight defined by  $\langle \alpha_t^{\vee}, \varpi_s\rangle=\delta_{st}$. Then we have $\alpha_t=\sum_{u \in S} C_{ut} \varpi_u$ for $t \in S$, where $C_{st}:=\langle \alpha_s^\vee, \alpha_t\rangle\in \mathbb{Z}$. 
We have \[X^*(\wtil{H})=\sum_{s\in S}\mathbb{Z}\varpi_s \quad\mbox{and}\quad X_*(\wtil{H})=\sum_{s\in S}\mathbb{Z}\alpha^\vee_s.\] 
The sub-lattice %$X^*(H') \subset X^*(H)$ is 
generated by $\alpha_s$ for $s\in S$ is  called the root lattice.

For $s\in S$, we have a pair of root homomorphisms $x_{s}, y_{s}\colon \bA^1\to \wtil{G}$ such that 
\[
hx_{s}(t)h^{-1}=x_{s}(h^{\alpha_s}t), \qquad hy_s(t)h^{-1}=y_s(h^{-\alpha_s}t)
\]
for $h\in \wtil{H}$. After a suitable normalization, we obtain a homomorphism 
$\varphi_{s}\colon SL_2\to \wtil{G}$
such that 
\[
\varphi_{s}\left( \begin{pmatrix}
    1&a\\
    0&1
\end{pmatrix}\right)=x_{s}(a), \qquad
\varphi_{s} \left(\begin{pmatrix}
1&0\\
a&1
\end{pmatrix}\right)= y_s(a), \qquad
\varphi_{s} \left(\begin{pmatrix}
a&0\\
0&a^{-1}
\end{pmatrix}\right)= \alpha_s^\vee(a).
\]

The group $G=\widetilde{G}/Z(\widetilde{G})$ is called the \emph{adjoint group}, where $Z(\widetilde{G})$ denotes the center of $\widetilde{G}$. Then $B^+:=\widetilde{B}^+/Z(\widetilde{G})$ is a Borel subgroup of $G$ and $H:=\widetilde{H}/Z(\widetilde{G})$ is a Cartan subgroup of $G$. Moreover the unipotent radical of $B^+$ is isomorphic to $U^+$ through the natural map $\wtil{G}\to G$, which we again denote by $U^+$. Then we have $B^+=HU^+$. The natural map $\wtil{H}\to H$ induces $\Z$-module homomorphisms 
% \[
% X^*(H)\to X^*(\wtil{H}),\qquad  X_*(\wtil{H})\to X_*(H)
% \]
\[
X^*(H)=\sum_{s\in S}\mathbb{Z}\alpha_s \hookrightarrow X^*(\wtil{H}) \quad\mbox{and}\quad X_*(\wtil{H})\twoheadrightarrow X_*(H)=\sum_{s\in S}\mathbb{Z}\varpi^\vee_s,
\]
%induced by $\wtil{H}\to H$, 
where $\varpi_s^\vee\in X_*(H)$ is the $s$-th fundamental coweight defined by $\langle \varpi_s^{\vee}, \alpha_t\rangle=\delta_{st}$; we tacitly use the same notations for the elements related by these maps. 
% Then we have 
% \[
% X^*(H)=\sum_{s\in S}\mathbb{Z}\alpha_s \quad\mbox{and}\quad X_*(H)=\sum_{s\in S}\mathbb{Z}\varpi^\vee_s.
% \]
The above mentioned one-parameter subgroups $x_s, y_s$ descend to the homomorphisms $x_{s}, y_{s}\colon \bA^1\to G$ with the same notation. There exists an anti-involution 
\[
\sfT:G\to G,\ g\mapsto g^\mathsf{T}
\]
of the algebraic group $G$ given by $x_s(t)^\mathsf{T}=y_s(t)$ and $h^\mathsf{T}=h$ for $s\in S, t\in\bA^1, h\in H$. This is called the \emph{transpose} in $G$. Let 
\begin{itemize}
    \item $B^-:=(B^+)^\mathsf{T}$ be the opposite Borel subgroup of $B^+$, and $U^-:=(U^+)^\mathsf{T}$; 
    \item $G_0:=U^-HU^+ \subset G$ the open subvariety of triangular-decomposable elements. 
\end{itemize}

\begin{dfn}
In $G$, define $\mathbb{E}^s:=x_s(1) \in U^+$ and $\mathbb{F}^s:=y_s(1) \in U^-$ for each $s \in S$. Let $H^s: \bG_m \to H$ be the one-parameter subgroup given by $H^s(a) = \varpi_s^\vee(a)$. 
\end{dfn}
%For the explicit form of $(\mathbb{E}^s,\mathbb{F}^s,H^s(x))$ for a specific Lie algebra of classical type, see the subsections in \cref{subsec:basic network}. 
% These elements will be the building blocks for evaluation maps parametrizing the double Bruhat cells, see \cref{subsec:DBcells}. 
% \begin{ex}[Type $A_1$]
% Let $\lie=\mathfrak{sl}_2$ be the Lie algebra of type $A_1$. The Chevalley generators are given by 
% \begin{align*}
%     e:= \begin{pmatrix}
%     0 & 1 \\ 0 & 0 
%     \end{pmatrix}, \quad
%     f:= \begin{pmatrix}
%     0 & 0 \\ 1 & 0 
%     \end{pmatrix}, \quad
%     \alpha^\vee:= \begin{pmatrix}
%     1 & 0 \\ 0 & -1 
%     \end{pmatrix}. 
% \end{align*}
% Since the Cartan matrix is given by $C= 
% \begin{pmatrix}
% 2 & -1 \\ -1 & 2
% \end{pmatrix}$, we have $\varpi^\vee = 
% \begin{pmatrix}
% 1/2 & 0 \\ 0 & -1/2
% \end{pmatrix}$. 
% The adjoint group of $\lie$ is $G=PGL_2$. We have:
% \begin{align*}
%     \mathbb{E}= \begin{pmatrix}
%     1 & 1 \\ 0 & 1 
%     \end{pmatrix}, \quad
%     \mathbb{F}= \begin{pmatrix}
%     1 & 0 \\ 1 & 1 
%     \end{pmatrix}, \quad
%     H(x)= \begin{pmatrix}
%     x^{1/2} & 0 \\ 0 & x^{-1/2} 
%     \end{pmatrix}. 
% \end{align*}
% \end{ex}

\paragraph{\textbf{Weyl groups}}
Let $W(\widetilde{G}):=N_{\widetilde{G}}(\widetilde{H})/\widetilde{H}$ denote the Weyl group of $\widetilde{G}$, where $N_{\widetilde{G}}(\widetilde{H})$ is the normalizer subgroup of $\widetilde{H}$ in $\widetilde{G}$. For $s \in S$, we set \[\overline{r}_{s}:=\varphi_s\left(
\begin{pmatrix}
    0&-1\\
    1&0
\end{pmatrix}
\right) \in N_{\widetilde{G}}(\widetilde{H}).\]
The elements $r_s:=\overline{r}_s\wtil{H} \in W(\wtil{G})$ have order $2$, and give rise to a Coxeter generating set for $W(\wtil{G})$ with the following presentation:
%It is known that the Weyl group is a Coxeter group, which is described as follows. Consider the group
\begin{align*}
    W(\wtil{G})= \langle r_s\ (s \in S) \mid (r_sr_t)^{m_{st}}=1\ (s,t \in S) \rangle,
\end{align*}
where $m_{st} \in \Z$ is given by the following table
\[
\begin{tabular}{rccccc}
$C_{st}C_{ts}:$ & $0$ & $1$ & $2$ & $3$  \\
$m_{st}:$         & $2$ & $3$ & $4$ & $6$ 
\end{tabular}.
\]
%Then we have a group isomorphism $W(\mathfrak{g}) \xrightarrow{\sim} W(\widetilde{G})$ extending $r_s \mapsto \overline{r}_s\widetilde{H}$ for $s \in S$. 
For a reduced word $\bs=(s_1,\dots,s_\ell)$ of $w \in W(\wtil{G})$, let us write $\overline{w}:=\overline{r}_{s_1}\dots\overline{r}_{s_\ell} \in N_{\widetilde{G}}(\widetilde{H})$, which does not depend on the choice of the reduced word. 
We have a left action of $W(\widetilde{G})$ on $X^*(\widetilde{H})$ induced from the (right) conjugation action of $N_{\widetilde{G}}(\widetilde{H})$ on $\widetilde{H}$. 
The action of $r_s$ is given by 
\begin{align*}
    r_s.\mu:=\mu - \langle \alpha_s^\vee, \mu \rangle \alpha_s
\end{align*}
for $s \in S$ and $\mu\in X^*(\wtil{H})$. 
%Hence we have a group isomorphism $W(\lie) \xrightarrow{\sim} N_G(H)/H,~ w \mapsto \overline{w}$, which makes \eqref{eq:character} a $W(\lie)$-equivariant isomorphism. By this identification, we obtain the left action of $W(\lie)$ on $H$ induced by conjugation, and it is denoted by $h\mapsto w(h)$ for $w\in W(\lie)$.  

For $w\in W(\wtil{G})$, write the length of $w$ as $l(w)$. Let $w_0\in W(\wtil{G})$ be the longest element of $W(\wtil{G})$, and set $s_G:=\overline{w_0}^2 \in N_{\widetilde{G}}(\widetilde{H})$. It turns out that $s_G\in Z(\widetilde{G})$, and $s_G^2=1$ (cf.~\cite[\S 2]{FG03}). We define an involution $S\to S, s\mapsto s^{\ast}$ by 
\begin{align*}
\alpha_{s^{\ast}}=-w_0\alpha_s. 
\end{align*}
We note that the Weyl group $W(G):=N_{G}(H)/H$ of $G$ is naturally isomorphic to the Weyl group $W(\widetilde{G})$ of $\widetilde{G}$, and we will frequently 
regard $\overline{w}$ as an element of $N_{G}(H)$ by abuse of notation. Remark that $s_G=\overline{w_0}^2=1$ in $G$. 

\paragraph{\textbf{Irreducible modules and matrix coefficients}}
Set $X^*(\widetilde{H})_+:=\sum_{s\in S}\mathbb{Z}_{\geq 0}\varpi_s\subset X^*(\widetilde{H})$ and $X^*(H)_+:=X^*(H)\cap X^*(\widetilde{H})_{+}$. For $\lambda\in X^*(H)_+$, let $V(\lambda)$ be the rational irreducible $G$-module of highest weight $\lambda$. 
A fixed highest weight vector of $V(\lambda)$ is denoted by $v_{\lambda}$. Set 
\begin{align*}
v_{w\lambda}:=\overline{w}.v_{\lambda}
\end{align*}
for $w\in W(G)$. There exists a unique non-degenerate symmetric bilinear form $(\ ,\ )_{\lambda}\colon V(\lambda)\times V(\lambda)\to \bA^1$ satisfying
	\begin{align*}
	(v_{\lambda}, v_{\lambda})_{\lambda}=1,\quad (g.v, v')_{\lambda}=(v, g^{\mathsf{T}}.v')_{\lambda}
	\end{align*}
	for $v, v'\in V(\lambda)$ and $g\in G$. For $v\in V(\lambda)$, we set 
\begin{align}
v^{\vee}:=(v'\mapsto (v, v')_{\lambda})\in V(\lambda)^{\ast}, \quad 
f_{w\lambda}:=v_{w\lambda}^{\vee}. \label{eq:vee}
\end{align}
Note that $(v_{w\lambda}, v_{w\lambda})_{\lambda}=1$ for all $w\in W(G)$. %For $\lambda\in X^*(\widetilde{H})_+$, we have $(\ ,\ )_{\lambda}=(\ ,\ )_{w_0\lambda}$ under the identification $V(\lambda)\simeq V(w_0\lambda)$. 

For a $G$-module $V$, the dual space $V^{\ast}$ is considered as a (left) $G$-module by 
\[
\langle g.f, v\rangle:=\langle f, g^{\mathsf{T}}.v\rangle
\]
for $g\in G$, $f\in V^{\ast}$ and $v\in V$. Note that, under this convention, the correspondence $v\mapsto v^{\vee}$ for $v\in V(\lambda)$ gives a $G$-module isomorphism $V(\lambda)\to V(\lambda)^{\ast}$ for $\lambda\in X^*(H)_+$. For $f\in V^{\ast}$ and $v\in V$, define the element $c_{f, v}^V\in \cO(G)$ by  
\begin{align}
g\mapsto \langle f, g.v\rangle\label{eq:mat_coeff}
\end{align}
for $g\in G$. An element of this form is called a \emph{matrix coefficient}. For $\lambda\in X^*(H)_+$, we simply write $c_{f, v}^{\lambda}:=c_{f, v}^{V(\lambda)}$. Moreover, for $w,w'\in W(G)$, the matrix coefficient
\begin{align}
\Delta_{w\lambda,w'\lambda}:=c_{f_{w\nu},v_{w'\lambda}}^{\lambda}.    \label{eq:minor}
\end{align}
is called a \emph{generalized minor}.

\paragraph{\textbf{The $\ast$-involutions}}
We conclude this subsection by recalling an involution on $G$ associated with a certain Dynkin diagram automorphism (cf. \cite[(2)]{GS16}). 
\begin{lem}\label{l:Dynkininv}
	Let $\ast: G\to G, g\mapsto g^{\ast}$ be a group automorphism defined by 
	\begin{align*}
	g\mapsto \overline{w}_0(g^{-1})^{\mathsf{T}}\overline{w}_0^{-1}.
	\end{align*}
Then $(g^{\ast})^{\ast}=g$ for all $g\in G$, and $x_s(t)^{\ast}= x_{s^{\ast}}(t)$, $y_s(t)^{\ast}= y_{s^{\ast}}(t)$ for $s\in S$. 
\end{lem}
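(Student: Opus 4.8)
The plan is to verify the three assertions—that $\ast$ is an automorphism, that it is an involution, and its effect on the root homomorphisms—in turn, reducing everything to two structural facts about the canonical representatives: the transpose identity $\overline{r}_s^{\mathsf T}=\overline{r}_s^{-1}$ and the multiplicativity $\overline{uv}=\overline{u}\,\overline{v}$ for length-additive products $l(uv)=l(u)+l(v)$. The automorphism property is immediate: the transpose $\mathsf T$ and inversion $g\mapsto g^{-1}$ are both anti-automorphisms of $G$, so their composite $\omega\colon g\mapsto (g^{-1})^{\mathsf T}$ is an automorphism (this is the Chevalley involution, sending $x_s(a)\mapsto y_s(-a)$, $y_s(a)\mapsto x_s(-a)$, and inverting $H$). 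Since $\ast=\Ad(\overline{w}_0)\circ\omega$ is the composite of $\omega$ with the inner automorphism $\Ad(\overline{w}_0)$, it is again an automorphism.

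Next I would record the key identity $\overline{w}_0^{\mathsf T}=\overline{w}_0^{-1}$. Starting from the rank-one decomposition $\overline{r}_s=x_s(-1)y_s(1)x_s(-1)$ and applying $\mathsf T$ gives $\overline{r}_s^{\mathsf T}=y_s(-1)x_s(1)y_s(-1)=\overline{r}_s^{-1}$; writing $\overline{w}_0=\overline{r}_{s_1}\cdots\overline{r}_{s_\ell}$ for a reduced word and using that $\mathsf T$ reverses products then yields $\overline{w}_0^{\mathsf T}=\overline{r}_{s_\ell}^{-1}\cdots\overline{r}_{s_1}^{-1}=\overline{w}_0^{-1}$. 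The involution property now follows by a direct manipulation: computing $(g^{\ast})^{-1}=\overline{w}_0\,g^{\mathsf T}\,\overline{w}_0^{-1}$ and then taking transpose gives $\big((g^{\ast})^{-1}\big)^{\mathsf T}=\overline{w}_0\,g\,\overline{w}_0^{-1}$, whence $(g^{\ast})^{\ast}=\overline{w}_0^2\,g\,\overline{w}_0^{-2}=g$, the last equality because $\overline{w}_0^2=s_G=1$ in the adjoint group $G$.

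For the third assertion I would reduce it to the two conjugation formulas
\begin{align*}
\overline{w}_0\, y_s(a)\,\overline{w}_0^{-1}=x_{s^{\ast}}(-a), \qquad \overline{w}_0\, x_s(a)\,\overline{w}_0^{-1}=y_{s^{\ast}}(-a).
\end{align*}
Granting these, since $x_s(a)^{\ast}=\overline{w}_0\,(x_s(-a))^{\mathsf T}\,\overline{w}_0^{-1}=\overline{w}_0\, y_s(-a)\,\overline{w}_0^{-1}=x_{s^{\ast}}(a)$, and similarly $y_s(a)^{\ast}=\overline{w}_0\, x_s(-a)\,\overline{w}_0^{-1}=y_{s^{\ast}}(a)$, the lemma follows. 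To prove the conjugation formulas I would factor $\overline{w}_0=\overline{r}_{s^{\ast}}\overline{w'}$, where $w':=r_{s^{\ast}}w_0$ satisfies $l(w_0)=l(w')+1$ (so the factorization is length-additive, giving $\overline{w}_0=\overline{r}_{s^{\ast}}\overline{w'}$) and $w'\alpha_s=r_{s^{\ast}}w_0\alpha_s=r_{s^{\ast}}(-\alpha_{s^{\ast}})=\alpha_{s^{\ast}}$, a positive simple root. Since $w'$ carries $\alpha_s$ to the simple root $\alpha_{s^{\ast}}$, conjugation by $\overline{w'}$ maps $U_{\alpha_s}$ onto $U_{\alpha_{s^{\ast}}}$ and in fact $\overline{w'}x_s(a)\overline{w'}^{-1}=x_{s^{\ast}}(a)$, $\overline{w'}y_s(a)\overline{w'}^{-1}=y_{s^{\ast}}(a)$; combining this with the rank-one identities $\overline{r}_{s^{\ast}}x_{s^{\ast}}(a)\overline{r}_{s^{\ast}}^{-1}=y_{s^{\ast}}(-a)$ and $\overline{r}_{s^{\ast}}y_{s^{\ast}}(a)\overline{r}_{s^{\ast}}^{-1}=x_{s^{\ast}}(-a)$ (both read off from the $SL_2$-computation $\overline{r}\,x(a)\,\overline{r}^{-1}=y(-a)$ transported through $\varphi_{s^{\ast}}$) yields the two displayed formulas.

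The main obstacle is pinning the normalization constant to exactly $1$ in the \emph{simple-to-simple} step $\overline{w'}x_s(a)\overline{w'}^{-1}=x_{s^{\ast}}(a)$: a priori one only knows $\overline{w'}x_s(a)\overline{w'}^{-1}=x_{s^{\ast}}(ca)$ for some scalar $c$, since conjugation respects the additive structure of the root subgroup $U_{\alpha_{s^{\ast}}}\cong\mathbb{G}_a$. I would fix $c=1$ via the multiplicativity of the canonical representatives: from $w'r_s=r_{s^{\ast}}w'$ together with the length-additivity of both sides one gets $\overline{w'}\,\overline{r}_s=\overline{w'r_s}=\overline{r_{s^{\ast}}w'}=\overline{r}_{s^{\ast}}\overline{w'}$, i.e.\ $\overline{w'}\,\overline{r}_s\,\overline{w'}^{-1}=\overline{r}_{s^{\ast}}$; expanding $\overline{r}_s=x_s(-1)y_s(1)x_s(-1)$ on both sides and comparing the resulting rank-one word against $\overline{r}_{s^{\ast}}=x_{s^{\ast}}(-1)y_{s^{\ast}}(1)x_{s^{\ast}}(-1)$ forces $c=1$. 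This is the only delicate point; everything else is a formal consequence of the definitions of $\mathsf T$, $\overline{w}_0$, and the involution $s\mapsto s^{\ast}$.
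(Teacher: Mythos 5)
Your overall architecture is sound, and it is worth noting that the paper itself gives no argument here — it outsources the proof to \cite[Lemma 5.3]{IIO19} — so you are supplying an actual proof rather than paralleling one. The easy parts all check out: $\ast=\Ad(\overline{w}_0)\circ\omega$ is an automorphism; $\overline{r}_s=x_s(-1)y_s(1)x_s(-1)$ gives $\overline{r}_s^{\mathsf T}=\overline{r}_s^{-1}$ and hence $\overline{w}_0^{\mathsf T}=\overline{w}_0^{-1}$; the involutivity computation $(g^{\ast})^{\ast}=\overline{w}_0^{2}\,g\,\overline{w}_0^{-2}=g$ correctly uses the paper's remark that $\overline{w}_0^{2}=s_G=1$ in the adjoint group; and the rank-one identities $\overline{r}\,x(a)\,\overline{r}^{-1}=y(-a)$, $\overline{r}\,y(a)\,\overline{r}^{-1}=x(-a)$ are right, so the lemma does reduce to the simple-to-simple statement $\overline{w'}\,x_s(a)\,\overline{w'}^{-1}=x_{s^{\ast}}(a)$ with $w'=r_{s^{\ast}}w_0$.

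However, the step you yourself flag as the delicate one has a genuine gap \emph{in the adjoint group}. Expanding $\overline{w'}\,\overline{r}_s\,\overline{w'}^{-1}$ with $\overline{w'}x_s(a)\overline{w'}^{-1}=x_{s^{\ast}}(ca)$, $\overline{w'}y_s(a)\overline{w'}^{-1}=y_{s^{\ast}}(c'a)$ gives (by the $SL_2$ computation) $c'=c^{-1}$ and $\overline{w'}\,\overline{r}_s\,\overline{w'}^{-1}=\alpha_{s^{\ast}}^{\vee}(c)\,\overline{r}_{s^{\ast}}$, so your comparison yields only $\alpha_{s^{\ast}}^{\vee}(c)=1$ — not $c=1$. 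In $G$ adjoint the cocharacter $\alpha_{s^{\ast}}^{\vee}$ need not be injective: since $X^{\ast}(H)$ is the root lattice, its kernel is $\mu_d$ with $d=\gcd\{\langle\alpha_{s^{\ast}}^{\vee},\alpha_t\rangle : t\in S\}$, and $d=2$ occurs, e.g.\ for the short simple root in adjoint type $C_2$ ($G=PSp_4$), where $\alpha_1^{\vee}(-1)=1$. In that case $c=-1$ is not excluded; note also that a letter-by-letter comparison of the rank-one words cannot save you, because $\varphi_{s^{\ast}}$ is then $2$-to-$1$ and one really has $x_{s^{\ast}}(1)y_{s^{\ast}}(-1)x_{s^{\ast}}(1)=\overline{r}_{s^{\ast}}=x_{s^{\ast}}(-1)y_{s^{\ast}}(1)x_{s^{\ast}}(-1)$ in $G$. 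The repair is short: the elements $x_s(a)$, $y_s(a)$, $\overline{r}_s$, $\overline{w}$ are all defined in the simply-connected cover $\widetilde{G}$ and descend, the projection $\widetilde{G}\to G$ is injective on root subgroups, and your multiplicativity identity $\overline{w'}\,\overline{r}_s\,\overline{w'}^{-1}=\overline{r}_{s^{\ast}}$ holds verbatim in $\widetilde{G}$; so run the comparison there, where the simple coroots form a $\mathbb{Z}$-basis of $X_{\ast}(\widetilde{H})$ and $\alpha_{s^{\ast}}^{\vee}(c)=1$ does force $c=1$, and then push the conclusion down to $G$. With that one-line amendment (or a citation of the standard simple-to-simple conjugation lemma for pinned groups) your proof is complete.
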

For a proof, see \cite[Lemma 5.3]{IIO19}.

\subsection{The configuration space $\Conf_k \P_G$}\label{subsec:config}

Let $G$ be an adjoint group. Here we introduce the configuration space $\Conf_k \P_G$ based on \cite{GS19}, which models the moduli space $\P_{G,\Pi}$ for a $k$-gon $\Pi$.

\begin{dfn}
The homogeneous spaces $\A_G:=G/U^+$ and $\B_G:=G/B^+$ are called the \emph{principal affine space} and the \emph{flag variety}, respectively. An element of $\A_G$ (resp. $\B_G$) is called a \emph{decorated flag} (resp. \emph{flag}). We have a canonical projection $\pi: \A_G \to \B_G$.
\end{dfn}
The principal affine space can be identified with the moduli space of pairs $(U,\psi)$, where $U \subset G$ is a maximal unipotent subgroup and $\psi: U \to \mathbb{A}^1$ is a non-degenerate character. See \cite[Section 1.1.1]{GS14} for a detailed discussion. The basepoint of $\A_G$ is denoted by $[U^+]$. 
The flag variety $\B_G$ will be identified with the set of connected maximal solvable subgroups of $G$ via $g.B^+\mapsto gB^+g^{-1}$. 

The Cartan subgroup $H$ acts on $\A_G$ from the right by $g.[U^+].h:=gh.[U^+]$ for $g \in G$ and $h \in H$, which makes the projection $\pi: \A_G \to \B_G$ a principal $H$-bundle.

% For $k \in \Z_{\geq 2}$, the configuration spaces are defined to be 
% \[
% \Conf_k \A_G := G\backslash \overbrace{\A_G \times \dots \times \A_G}^{k\text{ times}}, 
% \quad \text{ and } \quad 
% \Conf_k \B_G:= G\backslash \overbrace{\B_G \times \dots \times \B_G}^{k\text{ times}},
% \]
% where we consider the diagonal left action of $G$. These configuration spaces are elementary building blocks for the moduli spaces $\A_{G,\Sigma}$ and $\X_{G,\Sigma}$, respectively \cite{FG03}. 

% \begin{conv}
% For some left $G$-spaces $X_1,\dots,X_k$ and elements $x_i \in X_i$ for $i=1,\dots,k$, the $G$-orbit of the tuple $(x_1,\dots,x_k)$ is denoted by the square bracket $[x_1,\dots,x_k]$. 
% \end{conv}

A pair $(B_1,B_2)$ of flags is said to be \emph{generic} if there exists $g \in G$ such that
\[
g.(B_1,B_2) :=(g.B_1,g.B_2) =(B^+,B^-).
\]
%$g.(B_1,B_2) = (B^+,B^-)$. 
%A $k$-tuple of flags is said to be generic if they are pairwise generic.  The set of generic configurations is denoted by $\Conf_k^* \B_G$. 
Using the Bruhat decomposition $G = \bigcup_{w \in W(G)} U^+H \overline{w} U^+$, it can be verified that the $G$-orbit of any pair $(A_1,A_2) \in \A_G\times \A_G$ contains a point of the form $(h.[U^+], \overline{w}.[U^+])$ for unique $h \in H$ and $w \in W(G)$. 
%the configuration space $\Conf_2 \A_G$ is parametrized as
% \[
% \alpha_2: \coprod_{w \in W(G)} H \xrightarrow{\sim} \Conf_2 \A_G, \quad (h,w) \mapsto \left[h.[U^+], \overline{w}.[U^+]\right]. 
% \]
% We write the inverse map as $\alpha_2^{-1}(A_1,A_2) =: (h(A_1,A_2), w(A_1,A_2))$. 
Then the parameters 
\[
h(A_1,A_2):=h\quad \mbox{and} \quad w(A_1,A_2):=w
\]
are called the \emph{$h$-invariant} and the \emph{$w$-distance} of the pair $(A_1,A_2)$, respectively. Note that the $w$-distance only depends on the underlying pair $(\pi(A_1),\pi(A_2))$ of flags, and the pair is generic if and only if $w(A_1,A_2) = w_0$. The following lemma justifies the name ``$w$-distance'' and provides us a fundamental technique to define Goncharov--Shen coordinates.

\begin{lem}[{\cite[Lemma 2.3]{GS19}}]
Let $u, v \in W(G)$ be two elements such that $l(uv) = l(u) + l(v)$. Then the followings hold.
\begin{enumerate}
\item
If a pair $(B_1,B_2)$ of flags satisfies $w(B_1,B_2) = uv$, then there exists a unique flag $B'$ such that 
\[
w(B_1,B') = u, \quad w(B',B_2) = v.
\]
\item
Conversely, if we have $w(B_1,B') = u$ and $w(B',B_2) = v$, then $w(B_1,B_2) = uv$.
\end{enumerate}
\end{lem}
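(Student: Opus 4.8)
The plan is to translate the statement into the language of Bruhat double cosets and then apply the cell‑multiplication law. Write $C(w):=B^+\overline{w}B^+$ for $w\in W(G)$, and choose coset representatives $B_i=g_iB^+\in G/B^+$. Directly from the definition of the $w$-distance together with the Bruhat decomposition recalled above, one obtains the characterization
\[
w(B_1,B_2)=w \iff g_1^{-1}g_2\in C(w);
\]
indeed $w(B_1,B_2)=w$ means $gg_1\in B^+$ and $gg_2\in\overline{w}B^+$ for some $g\in G$, and eliminating $g\in B^+g_1^{-1}$ yields $g_1^{-1}g_2\in B^+\overline{w}B^+$, the converse being immediate. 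The key group‑theoretic input is the standard cell‑multiplication law in a group with a $BN$-pair: whenever $l(uv)=l(u)+l(v)$,
\[
C(u)\,C(v)=C(uv),
\]
proved by induction on $l(v)$ from the simple‑reflection case $C(s)C(w)=C(sw)$ (valid when $l(sw)>l(w)$), which is a Tits system axiom. I will also use that concatenating reduced words gives $\overline{uv}=\overline{u}\,\overline{v}$ under the hypothesis $l(uv)=l(u)+l(v)$.

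Granting these, part (2) is immediate: from $g_1^{-1}g'\in C(u)$ and $(g')^{-1}g_2\in C(v)$ we get $g_1^{-1}g_2=(g_1^{-1}g')\big((g')^{-1}g_2\big)\in C(u)C(v)=C(uv)$, so $w(B_1,B_2)=uv$. The existence half of part (1) is equally direct: since $g_1^{-1}g_2\in C(uv)=C(u)C(v)$, factor $g_1^{-1}g_2=pq$ with $p\in C(u)$ and $q\in C(v)$, and set $B':=g_1p\,B^+$. Then $g_1^{-1}(g_1p)=p\in C(u)$ and $(g_1p)^{-1}g_2=q\in C(v)$, giving $w(B_1,B')=u$ and $w(B',B_2)=v$.

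The remaining, and main, difficulty is the uniqueness of $B'$; the set‑level equality $C(u)C(v)=C(uv)$ is not enough, as one needs the factorization to be unique up to the right $B^+$-ambiguity. After normalizing $B_1=B^+$ (so $B_2=cB^+$ with $c\in C(uv)$), a candidate is $B'=xB^+$ with $x\in C(u)$ and $x^{-1}c\in C(v)$. Using the refined Bruhat decomposition $C(w)=U_w\overline{w}B^+$, where $U_w:=U^+\cap\overline{w}U^-\overline{w}^{-1}$ and the $U_w$-component is unique, I may take $x=n\overline{u}$ with $n\in U_u$, so that $B'$ is encoded by $n$. Writing $c\equiv m\,\overline{uv}\pmod{B^+}$ with $m\in U_{uv}$ and using $\overline{u}^{-1}\overline{uv}=\overline{v}$, the condition $x^{-1}c\in C(v)$ becomes a membership condition on $\overline{u}^{-1}(n^{-1}m)\overline{u}$.

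The crux is the direct‑product decomposition of unipotent radicals valid exactly when $l(uv)=l(u)+l(v)$,
\[
U_{uv}=U_u\cdot\bigl(\overline{u}\,U_v\,\overline{u}^{-1}\bigr),
\]
coming from the disjoint splitting of the inversion set $\Phi_+\cap uv\,\Phi_-$ into $\Phi_+\cap u\,\Phi_-$ and $u(\Phi_+\cap v\,\Phi_-)$. Writing $n^{-1}m=\alpha\beta$ with $\alpha\in U_u$ and $\beta\in\overline{u}U_v\overline{u}^{-1}$, the membership condition forces $\alpha=e$, which pins $n$ down to the $U_u$-part of $m$ and hence makes $B'$ unique. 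This is the step I expect to require the most care, since it rests on tracking how left multiplication by the $U^-$-piece $\overline{u}^{-1}U_u\overline{u}\subset U^-$ ejects an element from the cell $C(v)$. Equivalently, one shows that the convolution map $C(u)\times^{B^+}C(v)\to C(uv)$ is a bijection, whose injectivity is precisely this uniqueness assertion.
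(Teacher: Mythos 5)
Your proof is correct, and it is worth noting that the paper itself offers no argument for this lemma at all: it is imported verbatim from Goncharov--Shen (cited as [GS19, Lemma 2.3]), so your BN-pair argument is a genuinely self-contained substitute for a citation. All your reductions are sound: the translation $w(B_1,B_2)=w \iff g_1^{-1}g_2\in C(w)$, the cell-multiplication law $C(u)C(v)=C(uv)$ under length-additivity, existence via factoring, the unique normal form $C(w)=U_w\overline{w}B^+$, and the identification of uniqueness of $B'$ with injectivity of the convolution map. The one step you flag as delicate --- that the membership condition forces $\alpha=e$ --- does close, and in fact it requires no new input beyond the refined Bruhat uniqueness you have already invoked; in particular you can avoid tracking how the $U^-$-piece ejects an element from $C(v)$. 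Indeed, suppose $y\beta'\overline{v}\in C(v)$ with $y=\overline{u}^{-1}\alpha\overline{u}$, $\alpha\in U_u$, $\beta'\in U_v$. Write $y\beta'\overline{v}=n_2\overline{v}b$ with $n_2\in U_v$, $b\in B^+$, and multiply on the left by $\overline{u}$; using $\overline{u}\,\overline{v}=\overline{uv}$ this reads
\[
\alpha\,\bigl(\overline{u}\beta'\overline{u}^{-1}\bigr)\,\overline{uv}
=\bigl(\overline{u}n_2\overline{u}^{-1}\bigr)\,\overline{uv}\,b .
\]
Both $\alpha(\overline{u}\beta'\overline{u}^{-1})$ and $\overline{u}n_2\overline{u}^{-1}$ lie in $U_{uv}$ (the latter because $u(\Phi_+\cap v\Phi_-)\subset\Phi_+\cap uv\,\Phi_-$, which is exactly length-additivity), so both sides exhibit the same element of $C(uv)$ in the unique form $m\,\overline{uv}\,b'$ with $m\in U_{uv}$; comparing $U_{uv}$-components gives $\alpha(\overline{u}\beta'\overline{u}^{-1})=\overline{u}n_2\overline{u}^{-1}\in\overline{u}U_v\overline{u}^{-1}$, and the uniqueness of the factorization $U_{uv}=U_u\cdot(\overline{u}U_v\overline{u}^{-1})$ forces $\alpha=e$. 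This pins $n$ to the $U_u$-component of $m$, exactly as you predicted, and simultaneously proves bijectivity of the convolution map $C(u)\times^{B^+}C(v)\to C(uv)$.
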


\begin{cor}\label{c:flagchain}
Let $(B_l,B_r)$ be a pair of flags with $w(B_l,B_r) = w$. Every reduced word $\bs = (s_1,\dots, s_p)$ of $w$ gives rise to a unique chain of flags $B_l= B_0,B_1, \dots, B_p=B_r$ such that $w(B_{k-1},B_k) = r_{s_k}$.
\end{cor}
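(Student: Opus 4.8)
The plan is to prove both existence and uniqueness of the chain by induction on the length $p$ of the reduced word $\bs$, peeling off one simple reflection at a time and feeding the resulting subdivision into the preceding lemma (\cite[Lemma 2.3]{GS19}). The base case $p=1$ requires nothing: here $w=r_{s_1}$, so the two-term chain $B_0=B_l$, $B_1=B_r$ already satisfies $w(B_0,B_1)=r_{s_1}$.

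For the inductive step I would write $w=uv$ with $u:=r_{s_1}$ and $v:=r_{s_2}\cdots r_{s_p}$. Since $\bs$ is reduced we have $l(w)=p$, and because a terminal subword of a reduced word is again reduced, $(s_2,\dots,s_p)$ is a reduced word for $v$ with $l(v)=p-1$; hence $l(uv)=p=l(u)+l(v)$, so the length-additivity hypothesis of the lemma holds. Applying part (1) of the lemma to the pair $(B_l,B_r)$, whose $w$-distance is $w=uv$, produces a \emph{unique} flag $B_1$ with $w(B_l,B_1)=r_{s_1}$ and $w(B_1,B_r)=v$. The inductive hypothesis, applied to the pair $(B_1,B_r)$ together with the reduced word $(s_2,\dots,s_p)$ of $v$, then yields a unique chain $B_1,B_2,\dots,B_p=B_r$ with $w(B_{k-1},B_k)=r_{s_k}$ for $2\le k\le p$. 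Setting $B_0:=B_l$ and concatenating gives the desired chain.

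Uniqueness runs along the same induction. Given any chain $B_0=B_l,\dots,B_p=B_r$ with $w(B_{k-1},B_k)=r_{s_k}$, I would first show $w(B_1,B_p)=v$ by applying part (2) of the lemma repeatedly to the subchain $B_1,\dots,B_p$ (again using that $r_{s_2}\cdots r_{s_p}$ is reduced so that length-additivity holds at each amalgamation). Then, since $w(B_0,B_1)=u$, $w(B_1,B_p)=v$ and $w(B_0,B_p)=uv$, the uniqueness clause of part (1) forces $B_1$ to coincide with the flag constructed above, after which the inductive hypothesis pins down $B_2,\dots,B_{p-1}$.

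Since every step invokes only the already-established lemma, I do not anticipate a genuine obstacle; the argument is essentially bookkeeping. The only points that deserve a word of care are the elementary fact that terminal (and initial) subwords of a reduced word remain reduced, which is exactly what guarantees the hypothesis $l(uv)=l(u)+l(v)$ at each stage, and the need to run the uniqueness induction in a form that propagates the uniqueness of each intermediate flag to uniqueness of the whole chain.
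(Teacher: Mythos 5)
Your proof is correct and matches the paper's (implicit) argument: the paper states this as an immediate corollary of \cite[Lemma 2.3]{GS19} with no written proof, and the routine induction you spell out --- peeling off $r_{s_1}$, using that subwords of a reduced word are reduced to secure length-additivity, and invoking parts (1) and (2) of the lemma for existence and uniqueness --- is exactly the intended deduction.
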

Next we define an enhanced configuration space by adding extra data called \emph{pinnings}.

\begin{dfn}[pinnings]
A \emph{pinning} is a pair $p=(\widehat{B}_1,B_2) \in \A_G \times \B_G$ of a decorated flag and a flag such that the underlying pair $(B_1,B_2) \in \B_G \times \B_G$ is generic, where $B_1:=\pi(\widehat{B}_1)$. We say that $p$ is a pinning over $(B_1,B_2)$. 
\end{dfn}
An important feature is that the set $\P_G$ of pinnings is a principal $G$-space, and in particular $\P_G$ is an affine variety. 
In this paper, we fix the basepoint to be $p_\std:=([U^+], B^-)$, so that any pinning can be writen as $g.p_\std$ for a unique $g \in G$. 
%Then we have the isomorphism $G \xrightarrow{\sim} \P_G$, $g \mapsto g.p_\std$. 
The right $H$-action of $\A_G$ induces a right $H$-action on $\P_G$, which is given by $(g.p_\std).h=gh.p_\std$ for $g \in G$ and $h \in H$. 
Each fiber of the projection 
\begin{align}\label{eq:proj_pinning}
    (\pi_+,\pi_-): \P_G \to \B_G \times \B_G, \quad p=(\widehat{B}_1,B_2) \mapsto (B_1,B_2)
\end{align}
is a principal $H$-space. 

For $p=g.p_\std$, we define the \emph{opposite pinning} to be $p^\ast:=g\vw.p_\std$. We have $(g.p_\std.h)^\ast=g.p_\std^\ast.w_0(h)$ for $g \in G$ and $h \in H$. 

\begin{rem}
We have the following equivalent descriptions of a pinning. See \cite{GS19} for details.
\begin{enumerate}
\item
A pair $p=(\widehat{B}_1,\widehat{B}_2) \in \A_G \times \A_G$ of decorated flags such that $h(\widehat{B}_1,\widehat{B}_2)=e$ and the underlying pair of flags is generic (\emph{i.e.}, $w(\widehat{B}_1,\widehat{B}_2)=w_0$). The opposite pinning is given by $p^*=(\widehat{B}_2,\widehat{B}_1)$.
\item
A data $p=(B,B^{\op};(\xi^+_s(t))_{s \in S}, (\xi^-_s(t))_{s \in S})$, where $(B,B^{\op})$ is a pair of opposite Borel subgroups of $G$ and $(\xi^+_s(t))_s$, $(\xi^-_s(t))_s$ are one-parameter subgroups determined by a fundamental system for the root data with respect to the maximal torus $B \cap B^{\op}$. The opposite pinning is given by $p^*=(B^{\op},B;(\xi^-_s(-t))_{s \in S}, (\xi^+_s(-t))_{s \in S})$.

%Let $G$ be a simply-connected semisimple algebraic Lie group and $G'$ its adjoint group. Then we have a finite covering map $p: \Conf_k^* \A_G \to \widehat{\Conf}_k^* \B_{G'}$, where $k$-tuple $(\widehat{B}_i)_{i=1}^k$ of decorated flags corresponds to the $k$-tuple $(\widehat{B}_i, B_{i+1})$ of pinnings. The triple $(\Conf_k^* \A_G, \widehat{\Conf}_k^* \B_{G'}, p)$ forms a cluster ensemble. See \cite[Section 9.2]{GS19}.
\end{enumerate}
\end{rem}

For $k \in \Z_{\geq 2}$, we consider the configuration space
\[
\Conf_k \P_G:= [G\backslash \{ (B_1,\dots, B_k;p_{12},\dots,p_{k-1,k},p_{k,1})\}],
\]
where $B_i \in \B_G$, and $p_{i,i+1}$ is a pinning over $(B_i,B_{i+1})$ for cyclic indices $i \in \Z_k$. Here we use the notation for a quotient stack. See \cref{sec:stacks}. By \cref{lem:geometric_quotient}, $\Conf_k \P_G$ is in fact a geometric quotient, whose points are $G$-orbits of the data $(B_1,\dots, B_k;p_{12},\dots,p_{k-1,k},p_{k,1})$.   
%We have a dominant morphism $\Conf_k \P_G \to \Conf_k \B_G$ forgetting the pinnings, which is a principal $H^k$-bundle over its image. 

We will sometimes write an element of $\Conf_k \P_G$ (\emph{i.e.} a $G$-orbit) as $[p_{12},\dots,p_{k-1,k},p_{k,1}]$, since the remaining data of flags can be read off from it via projections. However, the reader is reminded that the tuples of pinnings must satisfy the constraints $\pi_-(p_{i-1,i})=\pi_+(p_{i,i+1})$ for $i \in \Z_k$.

%---------------------New Section---------------------------

\section{Wilson lines on the moduli space \texorpdfstring{$\P_{G,\Sigma}$}{}}\label{sec:monodromy}
In this section, we first recall the definition of the moduli space $\P_{G,\Sigma}$ for a marked surface $\Sigma$.
We give an explicit description of the structure of $\P_{G,\Sigma}$ as a quotient stack
%(which we call the \emph{Betti structure}) 
as an algebraic basis for the arguments in the subsequent sections. 
Then we introduce the Wilson line and Wilson loop morphisms on the stack $\P_{G,\Sigma}$ and study their basic properties. 
Finally we give their decomposition formula for a given ideal triangulation (or an ideal cell decomposition) of $\Sigma$. 

\subsection{The moduli space $\P_{G,\Sigma}$}\label{subsec:moduli}

\paragraph{\textbf{Topological setting.}} 
A \emph{marked surface} $(\Sigma,\mathbb{M})$ consists of a (possibly disconnected) compact oriented surface $\Sigma$ and a fixed non-empty finite set $\mathbb{M} \subset \Sigma$ of \emph{marked points}. See \cref{fig:marked surface} for an example. 
\begin{itemize}
    \item A marked point is called a \emph{puncture} if it lies in the interior of $\Sigma$, and \emph{special point} if it lies on the boundary. Let $\mathbb{P}=\mathbb{P}(\Sigma)$ (resp. $\mathbb{S}=\mathbb{S}(\Sigma)$) denote the set of punctures (resp. special points), so that $\mathbb{M}=\mathbb{P} \sqcup \mathbb{S}$.
    \item We call a connected component of the set $\partial \Sigma \setminus \mathbb{S}$ a \emph{boundary interval}. Let $\mathbb{B}=\mathbb{B}(\Sigma)$ denote the set of boundary intervals. 
By convention, we endow each boundary interval with the orientation induced from $\partial \Sigma$. 
\end{itemize}

\begin{figure}[ht]
\scalebox{0.7}{
\begin{tikzpicture}
% \draw(0,0) circle[x radius=5cm, y radius=3cm];
% %genus
% \begin{scope}[yshift=-1cm]
% \draw(1,0) arc[start angle=0, end angle=-180, radius=1cm];
% \draw(0.707,-0.707) to[out=135, in=45] (-0.707,-0.707);
% \end{scope}
% %left component
% \begin{scope}[xshift=-2cm, yshift=0.5cm]
% \fill[gray!30](0,0) circle(0.8cm);
% \draw(0,0) circle(0.8cm);
% \foreach \i in {0,120,240}
%     \fill(\i:0.8) circle(4pt) coordinate(A\i); 
% \draw(0:0.8) node[right]{\scalebox{1.2}{$m_1$}};
% \draw(120:0.8) node[above left]{\scalebox{1.2}{$m_2$}};
% \draw(240:0.8) node[below left]{\scalebox{1.2}{$m_3$}};
% \end{scope}

%\begin{scope}[xshift=12cm]
    \draw(0,0) circle[x radius=5cm, y radius=3cm];
    %genus
    {\begin{scope}[yshift=-1cm]
        \draw(1,0) arc[start angle=0, end angle=-180,     radius=1cm];
        \draw(0.707,-0.707) to[out=135, in=45] (-0.707,-0.707);
    \end{scope}}
    %left component
    {\begin{scope}[xshift=-2cm, yshift=0.5cm]
        \fill[gray!30](0,0) circle(0.8cm);
        \draw(0,0) circle(0.8cm);
        \draw[color=red,ultra thick](0:0.8) arc[start angle=0, end angle=120, radius=0.8] node[midway,above]{\scalebox{1.2}{$E$}};
        \foreach \i in {0,120,240}
	        \fill(\i:0.8) circle(4pt); 
	    \draw(0:0.8) node[right]{\scalebox{1.2}{$m_1$}};
	    \draw(120:0.8) node[above left]{\scalebox{1.2}{$m_2$}};
	    \draw(240:0.8) node[below left]{\scalebox{1.2}{$m_3$}};
    \end{scope}}
%right component
\begin{scope}[xshift=2cm, yshift=0.5cm]
    \draw(0,0) circle(4pt) coordinate(B) node[above=0.3em]{\scalebox{1.2}{$m_0$}};
\end{scope}
%\end{scope}
\end{tikzpicture}
}
    \caption{A marked surface $\Sigma$ with $\mathbb{P}=\{m_0\}$ and $\mathbb{S}=\{m_1,m_2,m_3\}$. A boundary interval $E \in \mathbb{B}$ is shown in red.}
    \label{fig:marked surface}
\end{figure}

Let $\Sigma^\ast:=\Sigma \setminus \mathbb{P}$. 
% We use the following notations.
% \begin{itemize}
% \item $g$: the genus of $\Sigma$,
% \item $b$: the number of boundary components of $\Sigma$,
% \item $\mu\geq 1$: the number of marked points,
% \item $p$: the number of punctures.
% \end{itemize}
We always assume the following conditions:
\begin{enumerate}
\item Each boundary component has at least one marked point.
\item $n(\Sigma):=-2\chi(\Sigma^*)+|\mathbb{S}|>0$.
%, and $\Sigma$ is not a disk with one or two special points.
\end{enumerate}
These conditions ensure that the marked surface $\Sigma$ has an ideal triangulation with $n(\Sigma)$ triangles, which is the isotopy class $\Delta$ of a triangulation of $\Sigma$ by a collection of mutually disjoint simple arcs connecting marked points. 
Each boundary interval belongs to any ideal triangulation of $\Sigma$. 
%The number $n(\Sigma)$ is the number of triangles of any ideal triangulation. 
Denote the set of triangles of $\Delta$ by $t(\Delta)$, and the set of edges by $e(\Delta)$. Let $e_{\mathrm{int}}(\Delta) \subset e(\Delta)$ be the subset of internal edges, so that $e(\Delta)=e_{\mathrm{int}}(\Delta) \sqcup \mathbb{B}$. 

In this paper, we only consider an ideal triangulation having no self-folded triangle (\emph{i.e.} a triangle one of its edges is a loop) for simplicity. Indeed, thanks to the condition $(2)$, our marked surface admits such an ideal triangulation. See, for instance, \cite{FST08} \footnote{Note that the number $n$ \emph{loc. cit.} is the number of \emph{interior} edges of an ideal triangulation.}. 
More generally, one can consider an \emph{ideal cell decomposition}: it is the isotopy class of a collection of mutually disjoint simple arcs connecting marked points such that each complementary region is a polygon.  
%Let $\mathbb{M}$ be the set of special points, and $\mathbb{P}$ the set of punctures. Note that $|\mathbb{M}|=\mu-p$ and $|\mathbb{P}|=p$.

\bigskip

\paragraph{\textbf{Framed $G$-local systems with pinnings.}}

Recall that a $G$-local system on a manifold $M$ is a principal $G$-bundle over $M$ equipped with a flat connection. 

Let $\L$ be a $G$-local system on $\Sigma^\ast$. A \emph{framing} of $\L$ is a flat section $\beta$ of the associated bundle $\L_\B:=\L \times_G \B_G$ on a small neighborhood of $\mathbb{M}$. Notice that a choice of any element $B \in \mathbb{B}_G$ determines a flat section $\beta_m$ near $m \in \mathbb{S}$, while only a $G$-invariant element $B$ corresponds to a flat section $\beta_m$ near $m \in \mathbb{P}$ (defined over a circular domain). 

\begin{dfn}[Fock-Goncharov \cite{FG03}]
Let $\X_{G,\Sigma}$ denote the set of gauge-equivalence classes of framed $G$-local systems $(\L,\beta)$. 
\end{dfn}
% For a description of $\X_{G,\Sigma}$ as a quotient stack, see \cref{subsec:alg_str_stack} soon below. We are going to mainly deal with a variant of this moduli stack obtained by adding the data of \emph{pinnings}.

A framing $\beta$ of $\L$ is said to be \emph{generic} if for each boundary interval $E=(m_E^+,m_E^-)$ with initial (resp. terminal) special point $m_E^+$ (resp. $m_E^-$), the associated pair $(\beta_E^+,\beta_E^-)$ is generic. Here $\beta_E^\pm$ is the section defined near $m_E^\pm$, and such a pair is said to be generic if the pair of flags obtained as the value at any point on $E$ is generic.  

Let $(\L,\beta)$ be a $G$-local system equipped with a generic framing $\beta$. A \emph{pinning} over $(\L,\beta)$ is a section $p$ of the associated bundle $\L_\P:=\L \times_G \P_G$ on the set $\partial \Sigma \setminus \mathbb{S}$ such that for each boundary interval $E \in \mathbb{B}$, the corresponding section $p_E$ is a pinning over $(\beta_E^+,\beta_E^-)$. Here the last sentence means that $p_E$ is projected to the pair $(\beta_E^+,\beta_E^-)$ via the bundle map 
\begin{align*}
    \L_\P|_E \xrightarrow{(\pi_+,\pi_-)} \L_\B|_E \times \L_\B|_E \to (\L_\B)_{m_E^+} \times (\L_\B)_{m_E^-},
\end{align*}
where the former map is induced by the projection \eqref{eq:proj_pinning}, and the latter is the evaluation at the points $m_E^\pm$. 
Since $\L_\P$ is a principal $G$-bundle, a pinning of $(\L,\beta)$ determines a trivialization of $\L$ near each boundary interval.
\begin{dfn}[Goncharov--Shen \cite{GS19}]
Let $\P_{G,\Sigma}$ denote the set of the gauge-equivalence classes $[\L,\beta;p]$ of the triples $(\L,\beta;p)$ as above.
\end{dfn}
% We will consider two stack structures on the set $\P_{G,\Sigma}$ and compare them. See \cref{subsec:alg_str_decomposition,subsec:alg_str_stack}. 

If the marked surface $\Sigma$ has empty boundary, we have $\P_{G,\Sigma} \cong \X_{G,\Sigma}$. In general we have a map $\P_{G,\Sigma} \to \X_{G,\Sigma}$ forgetting pinnings, which turns out to be a dominant morphism. The image $\X_{G,\Sigma}^0$ consists of the $G$-local systems with generic framings. 
For each boundary interval $E$, we have a natural action $\alpha_E: \P_{G,\Sigma} \times H \to \P_{G,\Sigma}$ given by the rescaling of the pinning $p_E$. Here recall that the set of pinnings over a given pair of flags is a principal $H$-space. Thus the dominant map $\P_{G,\Sigma} \to \X_{G,\Sigma}$ coincides with the quotient by these actions. 

The following variant of the moduli space is also useful. Let $\Xi \subset \mathbb{B}$ be a subset. A framed $G$-local system is said to be \emph{$\Xi$-generic} if the pair of flags associated with any boundary interval in $\Xi$ is generic. Then we define the notion of \emph{$\Xi$-pinning} over a $\Xi$-generic framed $G$-local system, where we only assign pinnings to the boundary intervals in $\Xi$. 

\begin{dfn}\label{def:moduli_partial_pinnings}
Let $\P_{G,\Sigma;\Xi}$ denote the set of gauge-equivalence classes of the triples $(\L,\beta,p)$, where $(\L,\beta)$ is a $\Xi$-generic framed $G$-local system and $p$ is a $\Xi$-pinning. 
\end{dfn}
Obviously we have $\P_{G,\Sigma;\emptyset}=\X_{G,\Sigma}$ and $\P_{G,\Sigma;\mathbb{B}}=\P_{G,\Sigma}$.

\subsubsection{The moduli space $\P_{G,\Sigma}$ as a quotient stack.}\label{subsec:alg_str_stack}
For simplicity, consider a connected marked surface $\Sigma$. 
Fix a basepoint $x \in \Sigma^\ast$. 
% A \emph{rigidified framed $G$-local system} on a based  marked surface $(\Sigma,x)$ consists of a framed $G$-local system $(\L,\beta)$ on $\Sigma^\ast$ together with a choice of a point $s \in \L_x$. 
A \emph{rigidified framed $G$-local system with pinnings} consists of a triple $(\L,\beta;p)$ together with a choice of $s \in \L_x$. The group $G$ acts on the isomorphism classes of rigidified framed $G$-local systems with pinnings $(\L,\beta,p;s)$ by fixing $(\L,\beta,p)$ and by $s \mapsto s.g$ for $g \in G$. 

A rigidified $G$-local system $(\L;s)$ determines its \emph{monodromy homomorphism} $\rho: \pi_1(\Sigma^\ast,x) \to G$. Given a based loop $\gamma$ at $x$, the element $\rho(\gamma) \in G$ gives the comparison of the fiber point $s \in \L_x$ with its parallel-transport along $\gamma$. 
It is a classical fact that the conjugacy class 
\begin{align*}
    [\rho] \in \Hom(\pi_1(\Sigma^\ast,x),G)/G 
\end{align*}
depends only on and uniquely determines the isomorphism class of $\L$. The quotient stack $\mathrm{Loc}_{G,\Sigma}:=[\Hom(\pi_1(\Sigma^\ast,x),G)/G]$ is called the (Betti) moduli stack of $G$-local systems on $\Sigma$.

In order to parametrize the isomorphisms classes of rigidified framed $G$-local systems, let us prepare some notations. 
See \cref{fig:choice of paths}. 
\begin{itemize}
    \item For each puncture $m \in \mathbb{P}$, let $\gamma_m \in \pi_1(\Sigma^\ast,x)$ denote a based loop encircling $m$. 
    \item Enumerate the connected components of $\partial\Sigma^\ast$ as $\partial_1,\dots,\partial_b$, and let $\delta_k \in \pi_1(\Sigma^\ast,x)$ be a based loop freely homotopic to $\partial_k$ and following its orientation for $k=1,\dots,b$. 
    \item For $k=1,\dots,b$, choose a distinguished marked point $m_k$ on the boundary component $\partial_k$. Let $E_1^{(k)},\dots,E_{N_k}^{(k)}$ be the boundary intervals on $\partial_k$ in this clockwise ordering so that $m_k$ is the initial marked point of $E_1^{(k)}$.
    %It turns the cyclic ordering on the boundary intervals on $\partial_k$ into a linear ordering, and denote them by $E_1^{(k)},\dots,E_{N_k}^{(k)}$ for this ordering. Here the distinguished marked point $m_k$ is the initial marked point of $E_1^{(k)}$.
    \item Take a path $\epsilon_1^{(k)}=\epsilon_{E_1^{(k)}}$ from $x$ to a point on the boundary interval $E_1^{(k)}$ for $k=1,\dots,b$, and let $\epsilon_j^{(k)}=\epsilon_{E_j^{(k)}}$ be a path from $x$ to $E_{j}^{(k)}$ such that the concatenation $\epsilon_{j,j-1}^{(k)}:=(\epsilon_{j}^{(k)})^{-1}\ast \epsilon_{j-1}^{(k)}$ is based homotopic to a boundary arc which contains exactly one marked point, the initial vertex of $E_j^{(k)}$ for $j=2,\dots,N_k$. 
\end{itemize}
In the pictures, the location of distinguished marked points is indicated by dashed lines. We will use the notation $\epsilon_{E,E'}:=\epsilon_E^{-1}\ast \epsilon_{E'}$ for two boundary intervals $E \neq E'$. 
%The choice of distinguished marked points is an auxiliary data which we will need to give an atlas $P_{G,\Sigma}^{(\{m_k\})}$ of the moduli space $\P_{G,\Sigma}$ (see \cref{d:Betti_P-space} below). 

\begin{figure}[h]
\begin{tikzpicture}

\draw(2.5,0) ellipse (3.8 and 2);
\fill[gray!30](0,0) circle(0.4cm);
%\fill[gray!30](5,0) circle(0.4cm);

\draw(0,0) circle(0.4cm);
	\draw(150:0.9) node{$\partial_k$};
\draw(5,0) circle(2pt) node[above]{$m$};
	%\draw(150:0.8)++(5,0) node{$m$};
\draw[dashed] (120:0.4) -- (120:1);;
\fill(0.4,0) circle(2pt);
\fill(120:0.4) circle(2pt);
\fill(240:0.4) circle(2pt);
\pic at (2.5,-0.3) {handle};

\node[red] at (60:0.6) {};
	\draw (50:2)  coordinate(x) node[above]{$x$};
%\draw[red] (x) ..controls (40:2) and (2,1.5).. (2.5,1.5) ..controls (4,1.5) and (4,0).. (4.6,0);
%	\draw[red] (3.6,1.5) node{\scalebox{0.9}{$\epsilon_0^{(2)}$}};
	
{\color{myblue}
	\draw (x) ..controls (50:1) and (50:0.8).. (60:0.4) node[above]{\scalebox{0.8}{$\epsilon_1^{(k)}$}};
	\draw (x) ..controls (-50:1) and (-50:0.6).. (-60:0.4) node[below]{\scalebox{0.8}{$\epsilon_2^{(k)}$}};
	\draw (x) ..controls (-80:3) and (-180:1).. (180:0.4) node[left]{\scalebox{0.8}{$\epsilon_3^{(k)}$}};
}
\draw[red] (x) ..controls ++(0,-4) and (-100:1.1).. (-120:1.1) ..controls (-140:1.1) and (180:2.3).. (x);
\draw[red] (-60:1.7) node{$\delta_k$};

\begin{scope}[xshift=5cm]
\draw[red] (x) ..controls ++(4,0) and (0:1.2).. (-60:0.6) ..controls (-120:1.2) and (130:1).. (x);
\draw[red] (-90:0.8) node{$\gamma_m$};
\end{scope}
\draw[red,fill] (50:2) circle(1.5pt) coordinate(x) node[above]{$x$};
\end{tikzpicture}
    \caption{Some of the curves in the defining data of the atlas of $\P_{G,\Sigma}$.}
    \label{fig:choice of paths}
\end{figure}
Notice that given a rigidified framed $G$-local system, 
\begin{itemize}
    \item the flat section of $\L_\B$ around $m \in \mathbb{P}$ gives an element $\lambda_m \in \B_G$ via the parallel-transport along the path from $m$ to $x$ surrounded by the loop $\gamma_m$ and the isomorphism $\B_G \xrightarrow{\sim} \L_x$, $g.B \mapsto s.g^{-1}$. 
    \item Similarly, the flat section of $\L_\P$ defined on a boundary interval $E$ gives an element $\phi_E \in\P_G$ via the parallel transport along the path $\epsilon_E$. 
\end{itemize}

% Then we have the following:
% \begin{lem}[{\cite[Definition 2.2]{FG03}, \cite[Lemma 4.2]{AB18}}]
% There is a bijection between the set of isomorphism classes of the rigidified framed $G$-local systems on $\Sigma$ and the set of points of the complex quasi-projective variety
% \begin{align*}
%     X^{(\{m_k\})}_{G,\Sigma}:= \Big\{(\rho,\lambda) \in \Hom(\pi_1(\Sigma^\ast,x),G)\times (\B_G)^{\mathbb{M}} ~\Big|~ \rho(\gamma_a).\lambda_{a} = \lambda_{a} \mbox{ for all $a \in \mathbb{P}$} \Big\}.
% \end{align*}
% \end{lem}
% The group $G$ acts on the isomorphism classes of rigidified framed $G$-local systems $(\L,\beta;x,s)$ by fixing $(\L,\beta;x)$ and by $s \mapsto s.g$ for $g \in G$. This action is interpreted to an action on the variety $X^{(\{m_k\})}_{G,\Sigma}$ defined as $(\rho,\lambda) \mapsto (g\rho g^{-1}, g.\lambda)$ for $g \in G$. 
% Therefore we define the \emph{moduli stack of framed $G$-local systems on $\Sigma$} to be the quotient stack
% \begin{align*}
%     \X_{G,\Sigma}:= [X^{(\{m_k\})}_{G,\Sigma}/G].
% \end{align*}
%Note that $x$ lies in a boundary interval $E \in \mathbb{B}$, and the value of the section $p_E$ determines a rigidification of the underlying framed local system $(\L,\beta)$. 
%Since the framings assigned to the special points are determined by the pinnings via the projection $\pi_+$, we get the following:
Let $m_j^{(k)} \in \mathbb{M}$ denote the initial marked point of the boundary interval $E_j^{(k)}$. By convention, $m_1^{(k)}=m_k$ for $k=1,\dots,b$. 
Then we have:

\begin{lem}\label{lem:rigidified-P}
There is a bijection between the set of isomorphism classes of the rigidified framed $G$-local systems with pinnings on $(\Sigma,x)$ and the set of points of the complex quasi-projective variety $P_{G,\Sigma}^{(\{m_k\})}$ consisting of triples $(\rho,\lambda,\phi) \in \Hom(\pi_1(\Sigma^\ast,x),G)\times (\B_G)^{\mathbb{M}} \times (\P_G)^{\mathbb{B}}$ which satisfy the following conditions:
% \begin{itemize}
%     \item $\rho(c_a)(\lambda_a) = \lambda_a$ for all $a \in \mathbb{P}$;
%     \item $\pi_+(\phi_E) = \pi_-(\phi_{E'})$ for $E,E' \in \mathbb{B}$ with $E_+=E'_-$.
% \end{itemize}
% \begin{align*}
%     P_{G,\Sigma}　\subset \Hom(\pi_1(\Sigma^\ast,x),G)\times (\B_G)^{\mathbb{M}} \times (\P_G)^{\mathbb{B}} 
% \end{align*}
% consisting of triples $(\rho,\lambda,\phi) \in \Hom(\pi_1(\Sigma^\ast,x),G)\times (\B_G)^{\mathbb{M}} \times (\P_G)^{\mathbb{B}}$ which satisfy the following conditions:
\begin{itemize}
    \item $\rho(\gamma_m).\lambda_{m} = \lambda_{m}$ for all $m \in \mathbb{P}$.
    \item $\pi_+(\phi_{E_j^{(k)}}) = \lambda_{m_j^{(k)}}$ and $\pi_-(\phi_{E_j^{(k)}}) = \lambda_{m_{j+1}^{(k)}}$ for $k=1,\dots,b$ and $j=1,\dots,N_k$, where we set $\lambda_{m_{N_k+1}^{(k)}}:=\rho(\delta_k).\lambda_{m_{1}^{(k)}}$.
\end{itemize}
Moreover the correspondence is $G$-equivariant, where the group $G$ acts on $P_{G,\Sigma}^{(\{m_k\})}$ by $(\rho,\lambda,\phi) \mapsto (g\rho g^{-1},g.\lambda,g.\phi)$ for $g \in G$. 
\end{lem}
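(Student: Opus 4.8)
The plan is to set up the standard monodromy (Riemann--Hilbert) dictionary and to translate each layer of the defining data---the local system, the framing, and the pinning---into the three factors $\rho$, $\lambda$, $\phi$ of a point of $P_{G,\Sigma}^{(\{m_k\})}$. Since $\overline{\Sigma}$ is a compact surface with boundary, it is homotopy equivalent to a finite wedge of circles, so a $G$-local system $\L$ is determined up to isomorphism by its holonomy, and the rigidification $s \in \L_x$ removes the remaining ambiguity. Concretely, I would use $s$ to trivialize the fiber via the isomorphism $\B_G \xrightarrow{\sim} \L_x$, $g.B^+ \mapsto s.g^{-1}$ (and the analogous one for $\P_G$), and define $\rho(\gamma) \in G$ by parallel transport along a loop $\gamma$. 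This assigns to $(\L,s)$ a well-defined $\rho \in \Hom(\pi_1(\overline{\Sigma},x),G)$, and conversely $\rho$ reconstructs $(\L,s)$ as the associated bundle $\widetilde{\overline{\Sigma}} \times_\rho G$ with its tautological section; these constructions are mutually inverse, and the action $s \mapsto s.g$ becomes $\rho \mapsto g\rho g^{-1}$.

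First I would encode the framing. A neighborhood of a special point $m \in \mathbb{S}$ is contractible, so a flat section of $\L_\B$ there is determined by its value at a single point; transporting to $x$ and applying the trivialization produces a flag $\lambda_m \in \B_G$. For a puncture $a \in \mathbb{P}$, the section is instead defined on the circle $C_a$, whose fundamental group is generated by $\gamma_a$; a flat section over $C_a$ exists precisely when its value is fixed by the monodromy, which yields the stated condition $\rho(\gamma_a).\lambda_a = \lambda_a$. This accounts for the first bullet and for the flag factor $(\B_G)^{\mathbb{M}}$.

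Next I would encode the pinning. Each boundary interval $E \in \mathbb{B}$ is contractible, so a flat section of $\L_\P = \L \times_G \P_G$ over $E$ is again a single element, which via parallel transport along $\epsilon_E$ and the trivialization gives $\phi_E \in \P_G$. The requirement that $p_E$ lie over $(\beta_E^+,\beta_E^-)$ translates, using that the projection is induced by the $G$-equivariant map \eqref{eq:proj_pinning}, into $\pi_+(\phi_E) = \lambda_{m_E^+}$ and $\pi_-(\phi_E) = \lambda_{m_E^-}$. The one delicate point---and the step I expect to require the most care---is the bookkeeping of the based homotopy classes of the paths $\epsilon_j^{(k)}$ and $\delta_k$: by the choice made before the lemma, the comparison paths $\epsilon_{j,j-1}^{(k)}$ run along boundary arcs carrying a single marked point, so consecutive flags along $\partial_k$ are compatibly defined; but after traversing the whole boundary component the framing of the distinguished marked point $m_k$ is transported by the loop $\delta_k$, which is exactly the twist recorded by the convention $\lambda_{m_{N_k+1}^{(k)}} := \rho(\delta_k).\lambda_{m_1^{(k)}}$. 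Verifying that this convention renders the terminal condition $\pi_-(\phi_{E_{N_k}^{(k)}}) = \lambda_{m_{N_k+1}^{(k)}}$ consistent with the cyclic identification of the boundary intervals is the heart of the argument.

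Finally, I would observe that all three assignments are natural in the rigidification, so the combined map is $G$-equivariant with $G$ acting by $(\rho,\lambda,\phi) \mapsto (g\rho g^{-1}, g.\lambda, g.\phi)$; bijectivity then follows because each step is invertible---$\rho$ reconstructs $(\L,s)$, the $\lambda_m$ reconstruct the framing as the unique flat sections with the prescribed values, and the $\phi_E$ reconstruct the pinning---so assembling these inverses yields a two-sided inverse to the correspondence.
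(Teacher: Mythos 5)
Your proposal is correct and takes essentially the same approach as the paper: the authors state this lemma without a written proof, but the paragraph immediately preceding it sets up exactly your dictionary (the trivialization $\B_G\xrightarrow{\sim}\L_x$, $g.B\mapsto s.g^{-1}$, and parallel transport along $\gamma_a$ and $\epsilon_E$), and your treatment of the delicate point---the $\rho(\delta_k)$-twist in the convention $\lambda_{m_{N_k+1}^{(k)}}:=\rho(\delta_k).\lambda_{m_1^{(k)}}$ arising from the choice of the paths $\epsilon_{j,j-1}^{(k)}$---agrees with the computation the paper records in the proof of \cref{lem:boundary_shift}. Nothing further is needed.
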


\begin{dfn}\label{d:Betti_P-space}
The \emph{moduli stack of framed $G$-local systems with pinnings on $\Sigma$} is defined to be the quotient stack
\begin{align*}
    \P_{G,\Sigma}:=[P_{G,\Sigma}^{(\{m_k\})}/G]
\end{align*}
over $\C$. 
\end{dfn}

\begin{lem}\label{lem:boundary_shift}
Suppose we replace the distinguished marked points as $m'_k:=m_2^{(k)}$ by a shift on a boundary component $\partial_k$, and $m'_{k'}:=m_{k'}$ for $k'\neq k$. Then we have a $G$-equivariant isomorphism
\begin{align*}
    P_{G,\Sigma}^{(\{m_k\})} \xrightarrow{\sim} P_{G,\Sigma}^{(\{m'_k\})}
\end{align*}
given by sending 
\begin{align*}
    (\lambda_1,\dots,\lambda_{N_k}) \mapsto (\lambda_2,\dots,\lambda_{N_k},\rho(\delta_k).\lambda_1), \quad (\phi_1,\dots,\phi_{N_k}) \mapsto (\phi_2,\dots,\phi_{N_k},\rho(\delta_k).\phi_1)
\end{align*}
and keeping the other data intact. Here $\lambda_j:=\lambda_{m_j^{(k)}}$ and $\phi_j:=\phi_{E_j^{(k)}}$ for $j=1,\dots,N_k$. 
\end{lem}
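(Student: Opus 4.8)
The plan is to verify directly that the stated assignment is a well-defined morphism into $P_{G,\Sigma}^{(\{m'_k\})}$, that it is $G$-equivariant, and that it has an algebraic inverse. Conceptually both varieties parametrize the same set of isomorphism classes of rigidified framed $G$-local systems with pinnings by \cref{lem:rigidified-P}, and the map realizes the change of bookkeeping caused by sliding the distinguished marked point; so the real content is to check that the explicit formula is the correct change of charts, the only delicate point being the appearance of the monodromy factor $\rho(\delta_k)$ in the last slot.

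First I would record the effect of the shift on the combinatorial data. Replacing $m_k = m_1^{(k)}$ by $m'_k = m_2^{(k)}$ re-enumerates the boundary intervals of $\partial_k$ as $E'_1 = E_2^{(k)}, \dots, E'_{N_k-1} = E_{N_k}^{(k)}, E'_{N_k} = E_1^{(k)}$ and the marked points as $m_2^{(k)}, \dots, m_{N_k}^{(k)}, m_1^{(k)}$. In terms of the connecting paths this amounts to taking $\epsilon'_j := \epsilon_{j+1}^{(k)}$ for $j < N_k$, whereas the wrapped-around interval $E'_{N_k} = E_1^{(k)}$ and its pinning are now recorded only after one full trip around $\partial_k$, so that the corresponding flags and pinning are transported by the boundary monodromy $\rho(\delta_k)$ relative to the original bookkeeping. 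This is exactly the normalization already encoded in the defining relation $\lambda_{m_{N_k+1}^{(k)}} := \rho(\delta_k).\lambda_{m_1^{(k)}}$, and it explains the factors $\rho(\delta_k).\lambda_1$ and $\rho(\delta_k).\phi_1$ in the stated map.

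Next I would verify the constraints defining $P_{G,\Sigma}^{(\{m'_k\})}$ for the image tuple $(\rho, \lambda', \phi')$, writing $\lambda_j := \lambda_{m_j^{(k)}}$ and $\phi_j := \phi_{E_j^{(k)}}$. For $1 \le i \le N_k - 2$ the identities $\pi_+(\phi'_i) = \lambda'_i$ and $\pi_-(\phi'_i) = \lambda'_{i+1}$ follow at once from those for the unprimed data, since $\phi'_i = \phi_{i+1}$ and $\lambda'_i = \lambda_{i+1}$. For $i = N_k - 1$ one needs in addition $\pi_-(\phi'_{N_k-1}) = \pi_-(\phi_{N_k}) = \lambda_{m_{N_k+1}^{(k)}} = \rho(\delta_k).\lambda_1 = \lambda'_{N_k}$, the wrap-around convention for the unprimed data. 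For $i = N_k$ one applies $\rho(\delta_k)$ to $\pi_+(\phi_1) = \lambda_1$ and $\pi_-(\phi_1) = \lambda_2$ and uses the $G$-equivariance of $\pi_\pm$ to get $\pi_+(\phi'_{N_k}) = \rho(\delta_k).\lambda_1 = \lambda'_{N_k}$ and $\pi_-(\phi'_{N_k}) = \rho(\delta_k).\lambda_2 = \rho(\delta_k).\lambda'_1 = \lambda'_{N_k+1}$, the last equality being the wrap-around convention for the primed data. The puncture conditions and all data on components $\partial_{k'}$ with $k' \neq k$ are left unchanged, as is $\rho$.

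Finally, $G$-equivariance follows from the intertwining $g.(\rho(\delta_k).v) = (g\rho g^{-1})(\delta_k).(g.v)$, valid for any flag or pinning $v$ because $(g\rho g^{-1})(\delta_k) = g\,\rho(\delta_k)\,g^{-1}$; this matches the action $(\rho,\lambda,\phi) \mapsto (g\rho g^{-1}, g.\lambda, g.\phi)$ slot by slot. Invertibility is seen by exhibiting the explicit inverse $(\lambda'_1, \dots, \lambda'_{N_k}) \mapsto (\rho(\delta_k)^{-1}.\lambda'_{N_k}, \lambda'_1, \dots, \lambda'_{N_k-1})$ on flags and analogously on pinnings, which is manifestly regular and a two-sided inverse; hence the assignment is an isomorphism of varieties. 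I expect no essential obstacle beyond keeping the monodromy convention consistent in the wrapped slot, which is exactly why I would fix that normalization before running the verification.
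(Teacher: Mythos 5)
Your proof is correct and follows essentially the same route as the paper: the paper's one-line proof is precisely your wrapped-slot verification, computing $\pi_\pm(\rho(\delta_k).\phi_1)$ via the $G$-equivariance of $\pi_\pm$ and matching it against the wrap-around convention $\lambda'_{N_k+1}:=\rho(\delta_k).\lambda'_1$, with the remaining slot-by-slot constraints, the $G$-equivariance, and the explicit inverse left as the routine checks you spell out. No gap; your write-up is simply a fuller version of the paper's argument.
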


\begin{proof}
Follows from $\pi_+(\rho(\delta_k).\phi_1)=\rho(\delta_k).\pi_+(\phi_1)=\rho(\delta_k).\lambda_1$ and $\pi_-(\rho(\delta_k).\phi_1)=\rho(\delta_k).\pi_-(\phi_1)=\rho(\delta_k)^2.\lambda_1=\rho(\delta_k).(\rho(\delta_k).\lambda_1)$.  
\end{proof}
Hence the quotient stack $\P_{G,\Sigma}$ is independent of the choice of distinguished marked points. 
When no confusion can occur, we simply write $P_{G,\Sigma}=P_{G,\Sigma}^{(\{m_k\})}$. 

% Suppose that the marked surface $\Sigma$ has non-empty boundary.
% Since the group $G$ freely acts on the space $\P_G$ of pinnings and the variety $P_{G,\Sigma}$ has at least one $\P_G$-factor, the $G$-action on $P_{G,\Sigma}$ is free. Then we get the following from \cref{lem:geometric_quotient}:

% \begin{lem}\label{lem:Betti_representable}
% If $\partial\Sigma \neq \emptyset$, then there exists a geometric quotient $P_{G,\Sigma}/G$ which represents the stack $\P_{G,\Sigma}$. 
% \end{lem}
% Therefore we can regard the moduli stack $\P_{G,\Sigma}$ as an algebraic variety. 
% When $\partial\Sigma=\emptyset$, the stack $\P_{G,\Sigma}=\X_{G,\Sigma}$ is no more representable. 

%The $G$-equivariant dominant morphism $P_{G,\Sigma} \to X_{G,\Sigma}$ forgetting the $\P_G$-factors induces a dominant morphism $\P_{G,\Sigma} \to \X_{G,\Sigma}$, which is a principal $\prod_{E \in \mathbb{B}} H$-bundle over its image. 

\bigskip
\paragraph{\textbf{Partially generic case}}
For any subset $\Xi \subset \mathbb{B}$, the moduli stack of $\Xi$-generic framed $G$-local systems with $\Xi$-pinnings (recall \cref{def:moduli_partial_pinnings}) is similarly defined as 
\begin{align}\label{eq:Betti_partial_pinnings}
    \P_{G,\Sigma;\Xi}:=[P_{G,\Sigma;\Xi}^{(\{m_k\})}/G],
\end{align}
where the algebraic variety $P_{G,\Sigma;\Xi}^{(\{m_k\})}$ is obtained from $P_{G,\Sigma}^{(\{m_k\})}$ by forgetting the $\P_G$-factors corresponding to $\mathbb{B}\setminus \Xi$. Here some of the distinguished marked points may be redundant to obtain the atlas. 
For $\Xi' \subset \Xi$, we have an obvious dominant morphism $\P_{G,\Sigma;\Xi} \to \P_{G,\Sigma;\Xi'}$. 
When $\Xi \neq \emptyset$, the stack $\P_{G,\Sigma;\Xi}$ is still representable. 

\bigskip
\paragraph{\textbf{Disconnected case}}
When the marked surface $\Sigma$ has $N$ connected components, we consider a rigidification of a framed $G$-local system (with pinnings) on each connected component. Then the atlas $P_{G,\Sigma}$ is defined to be the direct product of those for the connected components, on which $G^N$ acts. The moduli stack $\P_{G,\Sigma}$ is defined as the quotient stack for this $G^N$-action.

\subsection{Gluing morphisms}\label{subsubsec:gluing}\label{subsec:alg_str_decomposition}
An advantage of considering the moduli space $\P_{G,\Sigma}$, rather than $\X_{G,\Sigma}$, is its nice property under the gluing procedure of marked surfaces. Let us first give the \lq\lq topological'' definition of the gluing morphism. An explicit description as a morphism of stacks is given soon below.

Let $\Sigma$ be a (possibly disconnected) marked surface, and choose two distinct boundary intervals $E_L$ and $E_R$. Identifying the intervals $E_L$ and $E_R$, we get a new marked surface $\Sigma'$. Let $\mathrm{pr}:\Sigma \to \Sigma'$ be the corresponding projection, where $E':=\mathrm{pr}(E_L)=\mathrm{pr}(E_R)$ is a simple arc on $\Sigma'$. 
%Let $E'$ denote the common image of $E_L$ and $E_R$, which is a new interior edge. 

On the level of moduli spaces, given $(\L,\beta;p)$, the pinning $p_{E_Z}$ assigned to the boundary interval $E_Z$ determines a trivialization of $\L$ near $E_Z$ for $Z \in \{L,R\}$, since $\P_G$ is a principal $G$-space. Then there is a unique isomorphism beween the restrictions of $(\L,\beta)$ on $\Sigma$ to neighborhoods of $E_L$ and $E_R$ which identify the pinnings $p_{E_L}$ and $p_{E_R}^*$. In this way we get a framed $G$-local system with pinnings $q_{E_L,E_R}(\L,\beta;p)$ on $\Sigma'$. Note that the result is unchanged under the transformation $\alpha_{E_L,E_R^
{\mathrm{op}}}(h): (p_{E_L},p_{E_R}) \mapsto (p_{E_L}.h, p_{E_R}.w_0(h))$ for some $h \in H$. We get the \emph{gluing morphism} \cite[Lemma 2.12]{GS19}
\begin{align}\label{eq:gluing_morphism}
    q_{E_L,E_R}: \P_{G,\Sigma} \to \P_{G,\Sigma'},
\end{align}
which induces an open embedding $\overline{q}_{E_L,E_R}: \P_{G,\Sigma}/H \to \P_{G,\Sigma'}$, where $H$ acts on $\P_{G,\Sigma}$ via $\alpha_{E_L,E_R^
{\mathrm{op}}}$. 

The gluing operation is clearly associative. 
In particular, given an ideal triangulation $\Delta$ of $\Sigma$, we can decompose the moduli space $\P_{G,\Sigma}$ into a product of the configuration spaces $\Conf_3 \P_G$ as follows. 
%Denote the set of triangles of $\Delta$ by $t(\Delta)$, and the set of edges by $e(\Delta)$. Let $e_{\mathrm{int}}(\Delta) \subset e(\Delta)$ be the subset of internal edges (\emph{i.e.}, those except for boundary intervals). 
Let $H^\Delta$ denote the product of copies of Cartan subgroups $H$, one for each interior edge of $\Delta$. It acts on the product space $\widetilde{\P_{G,\Sigma}^\Delta}:=\prod_{T \in t(\Delta)} \P_{G,T}$ from the right via $\alpha_{E_L,E_R^
{\mathrm{op}}}$ for each glued pair $(E_L,E_R)$ of edges. 
%Namely, for an internal edge $E$ which is shared by two triangles $T_1$ and $T_2$, an element $h_E \in H$ assigned to $E \in e_{\mathrm{int}}(\Delta)$ acts on $\P_{G,T_1} \times \P_{G,T_2}$ via $(\alpha_{E_L},\alpha_{E_R})$. Here $E_i$ denotes the edge $E$ viewed as a boundary interval of the marked surface $T_i$ endowed with the orientation induced from the boundary for $i=1,2$.

\begin{thm}[{\cite[Theorem 2.13]{GS19}}]\label{t:GS decomposition}
Let $\Delta$ be an ideal triangulation of the marked surface $\Sigma$. Then we have the gluing morphism
\begin{align}\label{eq:decomposition_moduli}
    q_\Delta: \widetilde{\P_{G,\Sigma}^\Delta} \to \P_{G,\Sigma},
\end{align}
which induces an open embedding $\overline{q}_\Delta: [\widetilde{\P_{G,\Sigma}^\Delta}/ H^{\Delta}] \to \P_{G,\Sigma}$. 
\end{thm}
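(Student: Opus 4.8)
The plan is to prove the statement by induction on the number $|e_{\mathrm{int}}(\Delta)|$ of interior edges, using the single-edge gluing morphism \eqref{eq:gluing_morphism} as the elementary building block together with the associativity of gluing. The guiding observation is that $(\Sigma,\Delta)$ is obtained from the disjoint union $\bigsqcup_{T\in t(\Delta)} T$ of its triangles by successively gluing the pairs of boundary intervals corresponding to the interior edges; in the sense of the disconnected case, $\widetilde{\P_{G,\Sigma}^\Delta}=\prod_{T\in t(\Delta)}\P_{G,T}$ is precisely the moduli space of $\bigsqcup_T T$.

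First I would dispose of the base case $|e_{\mathrm{int}}(\Delta)|=0$: here $\Sigma$ is a single triangle $T$, so $\widetilde{\P_{G,\Sigma}^\Delta}=\P_{G,T}=\P_{G,\Sigma}$, the group $H^\Delta$ is trivial, and one takes $q_\Delta=\overline{q}_\Delta=\mathrm{id}$. For the inductive step, I would choose an interior edge $\widehat{E}$ and cut $\Sigma$ along it to obtain a (possibly disconnected) marked surface $\Sigma_0$ carrying two new boundary intervals $E_1,E_2$, the two copies of $\widehat{E}$, whose gluing recovers $\Sigma$. The triangulation $\Delta$ descends to a triangulation $\Delta_0$ of $\Sigma_0$ with the same triangles but one fewer interior edge, so that $\widetilde{\P_{G,\Sigma_0}^{\Delta_0}}=\widetilde{\P_{G,\Sigma}^\Delta}$ and $H^\Delta=H^{\Delta_0}\times H_{\widehat{E}}$, the last factor being the copy of $H$ attached to $\widehat{E}$. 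By the induction hypothesis one has $q_{\Delta_0}$ and the open embedding $\overline{q}_{\Delta_0}\colon[\widetilde{\P_{G,\Sigma}^\Delta}/H^{\Delta_0}]\to\P_{G,\Sigma_0}$, and I would set $q_\Delta:=q_{E_1,E_2}\circ q_{\Delta_0}$, which by associativity agrees with the simultaneous gluing along all interior edges and furnishes the morphism \eqref{eq:decomposition_moduli}.

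To obtain the open-embedding property I would exploit that $\overline{q}_{\Delta_0}$ is equivariant for the residual $H_{\widehat{E}}$-action: on the source this is the action $\alpha_{E_1,E_2^{\op}}$ on the two triangle factors adjacent to $\widehat{E}$, while on the target it rescales the pinnings $p_{E_1},p_{E_2}$ of $\Sigma_0$. Passing to quotient stacks by this action preserves open embeddings, yielding an open embedding $[\widetilde{\P_{G,\Sigma}^\Delta}/H^\Delta]\to[\P_{G,\Sigma_0}/H_{\widehat{E}}]$; composing it with the open embedding $\overline{q}_{E_1,E_2}\colon[\P_{G,\Sigma_0}/H_{\widehat{E}}]\to\P_{G,\Sigma}$ coming from \eqref{eq:gluing_morphism}, and using that a composite of open embeddings is an open embedding, produces $\overline{q}_\Delta$ and closes the induction.

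The hard part will be the bookkeeping of the various $H$-actions. One must verify that the residual $H_{\widehat{E}}$-action on the source of $\overline{q}_{\Delta_0}$ is genuinely intertwined with the pinning-rescaling action on $\P_{G,\Sigma_0}$ under the identification $H^\Delta=H^{\Delta_0}\times H_{\widehat{E}}$, so that the quotient-stack open embedding in the final step is legitimate; one should also confirm the associativity of iterated gluing, i.e.\ that $q_{E_1,E_2}\circ q_{\Delta_0}$ is independent of the order in which interior edges are glued. Once these compatibilities are in place, the open-embedding claim follows formally from the single-edge result.
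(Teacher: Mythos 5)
Your induction on interior edges---the single-edge gluing \eqref{eq:gluing_morphism} plus associativity, with the residual $H_{\widehat{E}}$-equivariance checked so that the quotient-stack open embeddings compose---is exactly the route the paper takes, which derives the decomposition from the elementary gluing morphism $q_{E_1,E_2}$ and the observation that gluing is associative (citing \cite[Theorem 2.13]{GS19} for the statement itself). The only cosmetic fix: since cutting along an edge can disconnect the surface, the base case should be ``$\Sigma$ is a disjoint union of triangles,'' where $\widetilde{\P_{G,\Sigma}^\Delta}=\prod_{T}\P_{G,T}=\P_{G,\Sigma}$ by the paper's convention for disconnected marked surfaces, and trivial $H^\Delta$.
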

The image of $q_\Delta$ is denoted by $\P_{G,\Sigma}^\Delta \subset \P_{G,\Sigma}$, which consists of framed $G$-local systems with pinnings such that the pair of flags associated with each interior edge of $\Delta$ is generic. 
%We call a section $\delta: \P_{G,\Sigma}^{\Delta} \to \prod_{T \in t(\Delta)} \P_{G,T}$ of the gluing morphism \eqref{eq:decomposition_moduli} a \emph{decomposition} of the moduli space $\P_{G,\Sigma}$. 

% Let us briefly discuss how to describe the gluing morphism \eqref{eq:gluing_morphism} with respect to the Betti structures\footnote{The authors thank Linhui Shen for explaining this argument.}. 
% To simplify the discussion, let us consider the case where $E_L$ and $E_R$ belongs to distinct boundary components. 
% We may choose the basepoint $x$ of $\Sigma$ to be a point of $E_L$, which is also regarded as the basepoint of $\Sigma'$ after the gluing. Fix distinguished marked points of $\Sigma$ and consider the variety $P_{G,\Sigma}^{(\{m_k\})}$. After the gluing, the boundary components containing $E_L$ and $E_R$ are merged. We adopt the marked point on the former component as the distinguished marked point of $\Sigma'$, and those on the other boundary components are naturally inherited to $\Sigma'$. Let $P_{G,\Sigma'}^{(\{m'_k\})}$ denote the corresponding atlas. Then the morphism
% \begin{align*}
%     \widetilde{q}_{E_L,E_R}: P_{G,\Sigma}^{(\{m_k\})} \to P_{G,\Sigma'}^{(\{m'_k\})}
% \end{align*}
% is defined as follows. 

\smallskip
\paragraph{\textbf{Presentation of the gluing morphism}}
Let us give an explicit presentation $\widetilde{q}_{E_L,E_R}:P_{G,\Sigma}^{(\{m_k\})} \to P_{G,\Sigma'}^{(\{m'_k\})}$ of the gluing morphism \eqref{eq:gluing_morphism} for some atlases for later use.  For simplicity, we assume that the resulting marked surface $\Sigma'$ is connected. Choose a basepoint $x' \in \Sigma'$ on the arc $E'$, and write $\mathrm{pr}^{-1}(x')=\{x_L,x_R\}$ with $x_L \in E_L$ and $x_R \in E_R$. 
We also use the identifications $\mathbb{M}(\Sigma')=\mathbb{M}(\Sigma) \setminus \{m_{E_R}^\pm\}$ and $\mathbb{B}(\Sigma')=\mathbb{B}(\Sigma)\setminus \{E_L,E_R\}$. 

Then we distinguish the two cases: (1) $\Sigma$ has two connected components $\Sigma_L$ and $\Sigma_R$ containing $E_L$ and $E_R$, respectively, or (2) $\Sigma$ is also connected. See \cref{fig:multiplicativity,fig:multiplicativity_connected_case}. 
For example, the gluing morphism in \cref{t:GS decomposition} is obtained by succesively applying the gluings of the first type. 

\begin{description}
\item[(1) The disconnected case]
In this case, we have the van Kampen isomorphism $\pi_1({\Sigma'}^\ast,x') \cong \pi_1(\Sigma_L^\ast,x_L) \ast \pi_1(\Sigma_R^\ast,x_R)$. %We also use the identifications $\mathbb{M}(\Sigma')=\mathbb{M}(\Sigma_L)\sqcup (\mathbb{M}(\Sigma_R) \setminus \{m_{E_R}^\pm\})$ and $\mathbb{B}(\Sigma')=\mathbb{B}(\Sigma)\setminus \{E_L,E_R\}$. 
For simplicity, we assume that the distinguished marked points on the boundary components containing $E_L$ and $E_R$ are identified under the gluing. 
The other cases are then obtained by composing the coordinate transformations given in \cref{lem:boundary_shift}. 

Given $(\rho_L,\lambda_L,\phi_L;\rho_R,\lambda_R,\phi_R) \in P_{G,\Sigma}=P_{G,\Sigma_L}\times P_{G,\Sigma_R}$, let us write $(\phi_L)_{E_L}=g_L.p_\std$ and $(\phi_R)_{E_R}=g_R.p_\std$. 
Define $(\rho',\lambda',\phi') \in P_{G,\Sigma'}$ by
\begin{align*}
    \rho'(\gamma)&:=\begin{cases}
        \rho_L(\gamma) & \mbox{if $\gamma \in \pi_1(\Sigma_L^\ast,x_L)$},\\
        \Ad_{g_L\vw g_R^{-1}}(\rho_R(\gamma))& \mbox{if $\gamma \in \pi_1(\Sigma_R^\ast,x_R)$}, 
    \end{cases} \\
    \lambda'_m&:=\begin{cases}
        (\lambda_L)_m & \mbox{if $m \in \mathbb{M}(\Sigma_L)$},\\
        g_L\vw g_R^{-1}.(\lambda_R)_m & \mbox{if $m \in \mathbb{M}(\Sigma_R)\setminus \{m_{E_R}^\pm\}$},
    \end{cases} \\
    \phi'_E&:=\begin{cases}
        (\phi_L)_E & \mbox{if $E \in \mathbb{B}(\Sigma_L) \setminus \{E_L\}$},\\
        g_L\vw g_R^{-1}.(\phi_R)_E & \mbox{if $E \in \mathbb{B}(\Sigma_R)\setminus \{E_R\}$}.
    \end{cases} 
\end{align*}
Then $\rho'$ is extended as a group homomorphism $\pi_1({\Sigma'}^\ast,x') \to G$. In terms of the rigidified framed $G$-local systems, we have chosen the rigidification data given on $\Sigma_L$ as that for $\Sigma'$. 

\item[(2) The connected case]
In this case, $\pi_1({\Sigma'}^\ast,x')$ is generated by the image of the inclusion $\pi_1(\Sigma^\ast,x_L)\to \pi_1({\Sigma'}^\ast,x')$ induced by $\Sigma\simeq\Sigma'\setminus E' \to \Sigma'$ and %$\mathrm{rank}\pi_1(\Sigma')=\mathrm{rank}\pi_1(\Sigma)+1$. 
%Choose the basepoint $x$ on the new edge $E'$.  
the based loop $\alpha:=\mathrm{pr}(\epsilon_{E_L}^{-1}\ast \epsilon_{E_R})$. 
%We use the identifications $\mathbb{M}(\Sigma')=\mathbb{M}(\Sigma)\setminus \{m_{E_R}^\pm\}$ and $\mathbb{B}(\Sigma')=\mathbb{B}(\Sigma)\setminus \{E_L,E_R\}$. 
When $E_L$ and $E_R$ belong to distinct boundary components, we assume that their distinguished marked points are identified under the gluing. 
The other cases are then obtained by composing the coordinate transformations given in \cref{lem:boundary_shift}. 

Given $(\rho,\lambda,\phi) \in P_{G,\Sigma}$, let us write $\phi_{E_L}=g_L.p_\std$ and $\phi_{E_R}=g_R.p_\std$. 
Define $(\rho',\lambda',\phi') \in P_{G,\Sigma'}$ by
\begin{align*}
    \rho'(\gamma)&:=\begin{cases}
        \rho(\gamma) & \mbox{if $\gamma \in \pi_1(\Sigma^\ast,x_L)$},\\
        g_L\vw g_R^{-1}& \mbox{if $\gamma=\alpha$}, 
    \end{cases} \\
    \lambda'&:=\lambda|_{\mathbb{M}(\Sigma')},\\
    \phi'&:=\phi|_{\mathbb{M}(\Sigma')}.
\end{align*}
Then $\rho$ is extended as a group homomorphism $\pi_1({\Sigma'}^\ast,x') \to G$. 
\end{description}

\begin{lem}
The morphism $\widetilde{q}_{E_L,E_R}:P_{G,\Sigma}^{(\{m_k\})} \to P_{G,\Sigma'}^{(\{m'_k\})}$ given above descends to a morphism $\P_{G,\Sigma} \to \P_{G,\Sigma'}$, which agrees with the topological definition of the gluing morphism \eqref{eq:gluing_morphism}. 
\end{lem}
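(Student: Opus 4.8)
The plan is to prove the two assertions of the lemma in turn: first that $\widetilde{q}_{E_1,E_2}$ is equivariant for the relevant group actions and therefore descends to a morphism of quotient stacks, and second that the descended morphism computes the topological gluing \eqref{eq:gluing_morphism} under the identification of \cref{lem:rigidified-P}. Throughout I would reduce to the special configuration in which the distinguished marked points on the boundary components carrying $E_1$ and $E_2$ are identified under the gluing, the general case being obtained by pre- and post-composing with the coordinate changes of \cref{lem:boundary_shift}, exactly as in the two displayed cases defining $\widetilde q_{E_1,E_2}$.

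For the descent I would recall that a homomorphism of groups $\phi$ together with a $\phi$-equivariant map of atlases induces a morphism of the associated global quotient stacks, and then check equivariance case by case. In the connected case (2) a direct substitution shows $\widetilde q_{E_1,E_2}$ is $G$-equivariant: under $h\in G$ one has $g_\nu\mapsto hg_\nu$, so $\rho'(\alpha)=g_1\vw g_2^{-1}\mapsto h\,g_1\vw g_2^{-1}\,h^{-1}$ while $\rho'(\gamma)=\rho(\gamma)\mapsto h\rho(\gamma)h^{-1}$ for $\gamma\in\pi_1(\Sigma)$, and $\lambda',\phi'$ transform by $h$; hence $\widetilde q_{E_1,E_2}$ intertwines the $G$-actions and descends to $[P_{G,\Sigma}/G]\to[P_{G,\Sigma'}/G]$. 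In the disconnected case (1) the source carries the $G\times G$-action. Here the key point is that every $\Sigma_2$-datum enters the formulas through the combination $g_1\vw g_2^{-1}.(\cdots)$, in which $g_2^{-1}$ and the $\Sigma_2$-data transform inversely: replacing $(g_2,\rho_2,\lambda_2,\phi_2)$ by $(h_2 g_2,\,h_2\rho_2 h_2^{-1},\,h_2.\lambda_2,\,h_2.\phi_2)$ leaves $(\rho',\lambda',\phi')$ unchanged, while the first factor acts exactly as in case (2). Thus $\widetilde q_{E_1,E_2}$ is equivariant for the projection $G\times G\to G$, $(h_1,h_2)\mapsto h_1$, and descends to $\P_{G,\Sigma}=[P_{G,\Sigma}/G^2]\to[P_{G,\Sigma'}/G]=\P_{G,\Sigma'}$. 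Along the way one verifies that $(\rho',\lambda',\phi')$ does lie in $P_{G,\Sigma'}^{(\{m'_k\})}$, in particular that the matching $\pi_{\pm}$-conditions hold across the new edge $\widehat E$; this is the same bookkeeping already used to prove \cref{lem:boundary_shift}.

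For the agreement with \eqref{eq:gluing_morphism} I would unwind the bijection of \cref{lem:rigidified-P}. The topological gluing is characterized by the unique isomorphism of $(\L,\beta)$ near $E_1$ and near $E_2$ identifying the pinning $p_{E_1}$ with the opposite pinning $p_{E_2}^{*}$. Writing $(\phi_1)_{E_1}=g_1.p_\std$ and $(\phi_2)_{E_2}=g_2.p_\std$, we have $p_{E_2}^{*}=g_2\vw.p_\std$; since $G$ is of adjoint type, $\vw^{2}=s_G=1$, so $\vw^{-1}=\vw$ and the transition element $t$ determined by $t.(g_2\vw.p_\std)=g_1.p_\std$ is $t=g_1\vw g_2^{-1}$. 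This is precisely the parallel transport produced by the gluing: in the connected case it is the holonomy along the new based loop $\alpha=\epsilon_{E_1}^{-1}\ast\epsilon_{E_2}$, giving $\rho'(\alpha)=g_1\vw g_2^{-1}$; in the disconnected case, where the rigidification of $\Sigma'$ is taken from $\Sigma_1$, it is the element transporting all $\Sigma_2$-data into the $\Sigma_1$-frame, producing $\Ad_{g_1\vw g_2^{-1}}$ on $\rho_2$ and the $g_1\vw g_2^{-1}$-action on $(\lambda_2,\phi_2)$. Matching these against the defining formulas shows that the descended morphism sends $[\L,\beta;p]$ to $q_{E_1,E_2}[\L,\beta;p]$.

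The main obstacle is the second part: carefully tracking the parallel transport, the induced orientations of the boundary intervals, and the half-twist $\vw$ through the van Kampen presentation, so as to pin down the transition element as $g_1\vw g_2^{-1}$ rather than, say, $g_1\vw^{-1}g_2^{-1}$ or its inverse. The identity $\vw^{-1}=\vw$ in the adjoint group is exactly what reconciles the opposite-pinning convention $p^{*}=g\vw.p_\std$ with the sign-free expression appearing in $\widetilde q_{E_1,E_2}$, and getting this consistent across both the connected and disconnected cases is the delicate point; by contrast, the descent is a routine equivariance verification.
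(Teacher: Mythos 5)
Your proof is correct and follows essentially the same route as the paper's: verify equivariance of $\widetilde{q}_{E_1,E_2}$ so that it descends, and then identify the transition element $g_1\vw g_2^{-1}$ (using $\vw^{-1}=\vw$ in the adjoint group) as the unique element matching $p_{E_1}$ with $p_{E_2}^{*}$, in both the connected and disconnected cases. If anything, you are more explicit than the paper, which dismisses the descent as ``clearly $G$-equivariant'' without spelling out the $G\times G$-action in the disconnected case and its invariance under the second factor, a point you verify correctly.
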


\begin{proof}
The morphism $\widetilde{q}_{E_L,E_R}$ is clearly $G$-equivariant, and hence descends to a morphism $\P_{G,\Sigma} \to \P_{G,\Sigma'}$. In order to see that it agrees with the topological definition, observe the following. 

In the disconnected case, consider the action of the element $g_L\vw g_R^{-1}$ on the triple $(\rho_R,\lambda_R,\phi_R)$ by rescaling the rigidification. After such rescaling, the pinning assigned to the boundary interval $E_R$ gives $g_L\vw g_R^{-1}.(\phi_R)_{E_R}=g_L\vw.p_\std$, which is the opposite of the pinning $(\phi_L)_{E_L}$. Thus the gluing condition matches with the one explained in the beginning of this subsection.

In the connected case, note that the monodromy $\rho'(\alpha)$ is the unique element such that $\rho'(\alpha).(g_R.p_\std)^\ast=g_L.p_\std$, which is given by $\rho'(\alpha)=g_L\vw g_R^{-1}$. The remaining data is unchanged, since the system of curves $\{\delta_k,\epsilon_E\}$ on $\Sigma$ is naturally inherited by $\Sigma'$. 
\end{proof}

\subsection{Wilson lines and Wilson loops}\label{subsec:monodromy}
We are going to introduce the \emph{Wilson line} morphisms (\emph{Wilson lines} for short) on $\P_{G,\Sigma}$ for a marked surface with non-empty boundary. 

Let $E_\inn$, $E_\out \in \mathbb{B}$ be two boundary intervals, and $c$ a path from $E_\inn$ to $E_\out$ in $\Sigma^\ast$. 
%After applying an isotopy if necessary, the path $c$ can be also viewed as a path in the surface $\Sigma^\ast$: henceforth we tacitly use this identification. 
First we give a topological definition. 
For a point $[\L,\beta;p] \in \P_{G,\Sigma}$, choose a local trivialization of $\L$ on a vicinity of $E_\inn$ so that the flat section $p_{E_\inn}$ of $\L_\P$ associated to $E_\inn$ corresponds to $p_\std$. This local trivialization can be extended along the path $c$ until it reaches $E_\out$. Then the flat section $p_{E_\out}$ determines a pinning under this trivialization, which is written as $g.p_\std^*$ for a unique element $g=g_c([\L,\beta;p]) \in G$. It depends only on the homotopy class $[c]$ of $c$ relative to $E_\inn$ and $E_\out$: we call such a homotopy class $[c]$ an \emph{arc class} on the marked surface $\Sigma$, and write $[c]: E_\inn \to E_\out$ in the sequel. 
Then we have a map
\begin{align*}
    g_{[c]}: \P_{G,\Sigma} \to G,
\end{align*}
which we call the \emph{Wilson line} along the arc class $[c]: E_\inn \to E_\out$. 

The Wilson lines can be defined as morphisms $\P_{G,\Sigma} \to G$, as follows. Fix a basepoint $x \in \Sigma^\ast$ and the collection $\{m_k\}$ of distinguished marked points, and consider the corresponding presentation $\P_{G,\Sigma}=[P_{G,\Sigma}^{(\{m_k\})}/G]$. 
%Then we have $E_\inn=E_i^{(k)}$ and $E_\out=E_j^{(\ell)}$ for some $k,\ell \in\{1,\dots,b\}$, $i \in \{1,\dots,N_k\}$, $j \in \{1,\dots,N_\ell\}$. 
Notice that any arc class $[c]:E_\inn \to E_\out$ can be decomposed as $[c]=[\epsilon_{E_\inn}]^{-1}\ast [\gamma_x] \ast [\epsilon_{E_\out}]$, where $\gamma_x$ is a based loop at $x$.

\begin{prop}\label{prop:presentation_Wilson_line}
Define a $G$-equivariant morphism $\widetilde{g}_{[c]}:P_{G,\Sigma}^{(\{m_k\})} \to G \times G$ of varieties by 
\begin{align*}
    \widetilde{g}_{[c]}(\rho,\lambda,\phi):=(g_{E_\inn}^{-1}\rho(\gamma_x) g_{E_\out}\vw,g_{E_\inn}),
\end{align*}
where we write $\phi_{E}=g_E.p_\std$ for a unique element $g_E\in G$ for a boundary interval $E$. Here $G$ acts on the first factor of $G \times G$ trivially, and by the left multiplication on the second. Then the induced morphism $g_{[c]}:\P_{G,\Sigma} \to G$ of varieties agrees with the topological definition given above.
\end{prop}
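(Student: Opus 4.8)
The plan is to establish the two assertions separately: first, that $\widetilde{g}_{[c]}$ is $G$-equivariant for the stated actions, so that it descends to a morphism $g_{[c]}:\P_{G,\Sigma}\to G$; second, that this descended morphism computes the same element of $G$ as the topological recipe.

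\emph{Equivariance.} This is a direct calculation. Under the $G$-action $(\rho,\lambda,\phi)\mapsto(g\rho g^{-1},g.\lambda,g.\phi)$ on $P_{G,\Sigma}^{(\{m_k\})}$, the element recording a pinning transforms by $g_E\mapsto gg_E$, since $\phi_E=g_E.p_\std$ becomes $g.\phi_E=(gg_E).p_\std$ and $\P_G$ is a principal $G$-space. Substituting into $\widetilde{g}_{[c]}$, the first component $g_{E_\inn}^{-1}\rho(\gamma_x)g_{E_\out}\vw$ is unchanged because the occurrences of $g$ coming from $g_{E_\inn}^{-1}$, $\rho(\gamma_x)$ and $g_{E_\out}$ cancel in pairs, while the second component becomes $gg_{E_\inn}$, i.e. left multiplication by $g$. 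Thus $\widetilde{g}_{[c]}$ is equivariant for the action that is trivial on the first factor of $G\times G$ and left multiplication on the second. Since the first component is $G$-invariant it descends to $[P_{G,\Sigma}^{(\{m_k\})}/G]=\P_{G,\Sigma}\to G$; equivalently, because the $G$-action on $G\times G$ is free via the second factor, $[G\times G/G]\cong G$ through the first projection, and $\widetilde{g}_{[c]}$ descends to a genuine morphism of varieties $g_{[c]}:\P_{G,\Sigma}\to G$.

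\emph{Comparison with the topological definition.} Fix a point $[\L,\beta;p]$ and a rigidification $s\in\L_x$ representing it. By \cref{lem:rigidified-P}, transporting the pinning $p_E$ back to $x$ along $\epsilon_E^{-1}$ and reading it in the frame $s$ yields $\phi_E=g_E.p_\std$; equivalently $p_E=[\,T_{\epsilon_E}(s).g_E,\,p_\std\,]$, where $T_{(-)}$ denotes parallel transport in $\L$ and $[\,\cdot,\cdot\,]$ the class in $\L_\P=\L\times_G\P_G$. The topological recipe first selects a frame near $E_\inn$ in which $p_{E_\inn}$ reads as $p_\std$; the displayed formula identifies this frame as $s_\inn=T_{\epsilon_{E_\inn}}(s).g_{E_\inn}$. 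Extending along $c$ gives $s_\out=T_c(s_\inn)=T_{\epsilon_{E_\inn}\ast c}(s).g_{E_\inn}$, and the homotopy $c\simeq\epsilon_{E_\inn}^{-1}\ast\gamma_x\ast\epsilon_{E_\out}$ rewrites this as transport along $\gamma_x\ast\epsilon_{E_\out}$, so that $s_\out=T_{\epsilon_{E_\out}}(s).\rho(\gamma_x)^{-1}g_{E_\inn}$, the factor $\rho(\gamma_x)^{-1}$ coming from the monodromy at $x$. The topological output $g_c$ is defined by $p_{E_\out}=[\,s_\out,\,g_c.p_\std^{\ast}\,]$; expanding with $p_\std^{\ast}=\vw.p_\std$ and with $p_{E_\out}=[\,T_{\epsilon_{E_\out}}(s).g_{E_\out},\,p_\std\,]$ gives the identity $s_\out.(g_c\vw)=T_{\epsilon_{E_\out}}(s).g_{E_\out}$ in the fiber of $\L$ at the endpoint of $c$, a $G$-torsor. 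Cancelling $T_{\epsilon_{E_\out}}(s)$, solving, and using $\vw^2=1$ in the adjoint group $G$ yields $g_c=g_{E_\inn}^{-1}\rho(\gamma_x)g_{E_\out}\vw$, which is exactly the first component of $\widetilde{g}_{[c]}$.

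\emph{Main obstacle.} The non-formal content is entirely bookkeeping of conventions. One must fix once and for all the direction of parallel transport, the order in the concatenation $\ast$, and the resulting monodromy convention $T_{\gamma_x}(s)=s.\rho(\gamma_x)^{-1}$ (the one making $\rho$ a homomorphism, which is what produces $\rho(\gamma_x)$ rather than its inverse in the final formula), and verify that these coincide with the conventions under which \cref{lem:rigidified-P} is stated—in particular the normalization $\lambda_{m_{N_k+1}^{(k)}}:=\rho(\delta_k).\lambda_{m_1^{(k)}}$ pins the convention down. Once these are fixed, the only geometric inputs are that $\P_G$ is a principal $G$-space, so that the stabilizer of $p_\std$ is trivial and the torsor identity may be cancelled, together with the two algebraic facts $p_\std^{\ast}=\vw.p_\std$ and $\vw^2=1$ in $G$, which are responsible for the precise placement of $\vw$.
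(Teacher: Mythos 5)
Your proposal is correct and follows essentially the same route as the paper: equivariance plus the identification $[G\times G/G]\cong G$ (the paper's \cref{lem:quotient_example,lem:equivariant_morphism}) for the descent, and then the same frame-tracing computation along $\epsilon_{E_\inn}^{-1}\ast\gamma_x\ast\epsilon_{E_\out}$ using that $\P_G$ is a principal $G$-space, $p_\std^\ast=\vw.p_\std$, and $\vw^2=1$ in $G$. If anything, you are more explicit than the paper about the torsor and monodromy conventions (your convention $T_{\gamma_x}(s)=s.\rho(\gamma_x)^{-1}$ reproduces the paper's intermediate reading $\rho(\gamma_x)g_{E_\out}.p_\std$, so the bookkeeping checks out).
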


\begin{proof}
Observe that $[G \times G/G] =G$ by \cref{lem:quotient_example}. Therefore $\widetilde{g}_{[c]}$ induces a morphism $g_{[c]}:\P_{G,\Sigma} \to G$ of Artin stacks by \cref{lem:equivariant_morphism}. The action on the target amounts to forgetting the second factor.

Consider the rigidified framed $G$-local system with pinnings $(\L,\beta,p;s)$ corresponding to a given point of $P_{G,\Sigma}$. The rigidification $s$ determines a local trivialization of $\L$ near $x$, and the section $p_{E_\inn}$  (resp. $p_{E_\out}$) gives the element $g_{E_\inn}.p_\std$ (resp. $g_{E_\out}.p_\std$) via the parallel-transport along the path $\epsilon_{E_\inn}$ (resp. $\epsilon_{E_\out}$) under this local trivialization. Moreover, notice that the section $p_{E_\out}$ gives $\rho(\gamma_x)g_{E_\out}.p_\std$ via the parallel-transport along the path $\gamma_x \ast \epsilon_{E_\out}$. 
The local trivialization of $\L$ near $E_\inn$ so that $p_{E_\inn}$ corresponds to $p_\std$ is given by the rigidification $s.g_{E_\inn}^{-1}$. The latter trivialization can be continued along the path $\epsilon_{E_\inn}^{-1} \ast \gamma_x \ast \epsilon_{E_\out}$, for which the section $p_{E_\out}$ gives $g_{E_\inn}^{-1}\rho(\gamma_x)g_{E_\out}.p_\std=g_{E_\inn}^{-1}\rho(\gamma_x)g_{E_\out}\vw.p^\ast_\std$ as desired. 
\end{proof}

\begin{rem}\label{rem:partial_pinnings}
\begin{enumerate}
    \item As the proof indicates, the second component of the presentation morphism $\widetilde{g}_{[c]}$ is introduced in order to make it $G$-equivariant, rather than $G$-invariant (note that when $G$ acts on some variety $X$ trivially, then $[X/G] \neq X$ as a stack). As we shall see in \cref{rem:geometric_quotient}, $\P_{G,\Sigma}$ is a variety if $\Sigma$ has no punctures, hence $g_{[c]}: \P_{G,\Sigma} \to G$ can be directly defined. 
    \item The Wilson line $g_{[c]}$ along an arc class $[c]:E_\inn \to E_\out$ can be defined as a morphism
\begin{align*}
    g_{[c]}:\P_{G,\Sigma;\{E_\inn,E_\out\}} \to G,
\end{align*}
since it does not refer to the pinnings other than $p_{E_\inn}$ and $p_{E_\out}$. 
Here recall \eqref{eq:Betti_partial_pinnings}.
\end{enumerate}
\end{rem}

The Wilson lines $g_{[c]}$ have the following multiplicative property with respect to the gluing of marked surfaces. Let $\Sigma$ be a (possibly disconnected) marked surface, and consider two arc classes $[c_1]:E_1 \to E_2$ and $[c_2]:E'_2 \to E_3$. Let $\Sigma'$ be the marked surface obtained from $\Sigma$ by gluing the boundary intervals $E_2$ and $E'_2$. Then the concatenation of the arc classes $[c_1]$ and $[c_2]$ give an arc class $[c]: E_1 \to E_3$ on $\Sigma'$. See \cref{fig:multiplicativity,fig:multiplicativity_connected_case}. Recall the gluing morphism $q_{E_2,E'_2}: \P_{G,\Sigma} \to \P_{G,\Sigma'}$. 

\begin{prop}\label{prop:multiplicativity}
We have $q_{E_2,E'_2}^* g_{[c]} = g_{[c_1]}\cdot g_{[c_2]}$.
\end{prop}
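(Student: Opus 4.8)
The plan is to prove the identity of $G$-valued morphisms $\P_{G,\Sigma}\to G$ by checking it on a presenting atlas, combining the explicit presentation $\widetilde{g}_{[c]}$ from \cref{prop:presentation_Wilson_line} with the explicit presentation $\widetilde{q}_{E_2,E_2'}$ of the gluing morphism. Since both the Wilson lines and the gluing morphism are already described as $G$-equivariant maps of the atlas varieties, it suffices to verify the equality after pulling everything back to $P_{G,\Sigma}^{(\{m_k\})}$, where both sides become honest regular maps into $G$ and the claimed factorization becomes a group-element computation.

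Before grinding that out, I would first run the purely topological argument, which exposes the mechanism. Given $[\L,\beta;p]$, trivialize $\L$ near $E_1$ so that $p_{E_1}$ corresponds to $p_\std$ and parallel-transport this trivialization along $c_1$ to $E_2$; by the definition of $g_{[c_1]}$ the pinning $p_{E_2}$ is then read as $g_{[c_1]}.p_\std^\ast$. The gluing morphism $q_{E_2,E_2'}$ identifies the germ of $(\L,\beta)$ near $E_2$ with that near $E_2'$ by the unique isomorphism carrying $p_{E_2}$ to $p_{E_2'}^\ast$. Trivializing near $E_2'$ so that $p_{E_2'}$ corresponds to $p_\std$ (the trivialization used for $g_{[c_2]}$), one checks that across the new edge $\widehat E$ the transported trivialization from the $c_1$-side and the one from the $c_2$-side differ by the constant gauge transformation $g_{[c_1]}$: indeed the gluing isomorphism sends $g_{[c_1]}.p_\std^\ast$ to $p_\std^\ast$, forcing the transition to be left multiplication by $g_{[c_1]}^{-1}$. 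Transporting this constant relation along $c_2$ and evaluating at $E_3$, where $p_{E_3}=g_{[c_2]}.p_\std^\ast$ in the $c_2$-trivialization, yields $p_{E_3}=g_{[c_1]}g_{[c_2]}.p_\std^\ast$ in the trivialization continued from $E_1$ along $c=c_1\ast c_2$, which is exactly $g_{[c]}=g_{[c_1]}\cdot g_{[c_2]}$.

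To make this rigorous I would record the same computation at the level of atlases, treating the disconnected and connected cases of the gluing presentation separately. Writing $\phi_{E_2}=g_{E_2}.p_\std$ and $\phi_{E_2'}=g_{E_2'}.p_\std$, the gluing presentation introduces the transport element $T:=g_{E_2}\vw g_{E_2'}^{-1}$ across $\widehat E$: in the connected case it is $\rho'(\alpha)$, while in the disconnected case it conjugates the second monodromy via $\Ad_T$ and rescales the second collection of pinnings. Feeding $(\rho',\lambda',\phi')$ into $\widetilde{g}_{[c]}=g_{E_1}^{-1}\rho'(\gamma_x)g'_{E_3}\vw$, expressing the based loop of $c=c_1\ast c_2$ as the based loop of $c_1$ followed by the $\alpha$-conjugate of the based loop of $c_2$ (so that $\rho'(\gamma_x)=\rho(\gamma_{c_1})\,T\,\rho(\gamma_{c_2})\,T^{-1}$), and substituting $g'_{E_3}=T\,g_{E_3}$, the adjacent factors $T^{-1}\cdot T$ cancel. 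Splitting the remaining $T=g_{E_2}\vw\cdot g_{E_2'}^{-1}$ then distributes a $\vw$ into the left block and a $g_{E_2'}^{-1}$ into the right block, giving exactly $\bigl(g_{E_1}^{-1}\rho(\gamma_{c_1})g_{E_2}\vw\bigr)\bigl(g_{E_2'}^{-1}\rho(\gamma_{c_2})g_{E_3}\vw\bigr)=g_{[c_1]}\cdot g_{[c_2]}$.

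The main obstacle I anticipate is organizational rather than conceptual: keeping careful track of the basepoints, the chosen connecting paths $\epsilon_E$, and especially the $\vw$-twist coming from the opposite pinning $p^\ast=g\vw.p_\std$, so that the transport element $T$ meets its inverse and cancels cleanly with no spurious Cartan or Weyl factor surviving. The crucial point is that the $\vw$ hidden inside $T$ is precisely the twist demanded by the gluing condition $p_{E_2}\leftrightarrow p_{E_2'}^\ast$, and it is what fuses the trailing $\vw$ of $g_{[c_1]}$ with the leading $g_{E_2'}^{-1}$ of $g_{[c_2]}$. Once the disconnected case is settled, the connected case follows by the same manipulation after inserting the extra generator $\alpha$ of $\pi_1(\Sigma')$, and the remaining situations (several components, or distinguished marked points not matched under the gluing) reduce to these via \cref{lem:boundary_shift}.
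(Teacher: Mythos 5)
Your proposal is correct and follows essentially the same route as the paper: both verify the identity on the atlas $P_{G,\Sigma}^{(\{m_k\})}$ using the presentation $\widetilde{g}_{[c]}$ of \cref{prop:presentation_Wilson_line} together with the explicit presentation $\widetilde{q}_{E_2,E'_2}$ of the gluing morphism from \cref{subsubsec:gluing}, splitting into the disconnected and connected cases and cancelling/factoring the transport element $g_{E_2}\vw g_{E'_2}^{-1}$ between the two Wilson-line blocks. Your version merely makes explicit the monodromy factors and the $\Ad_T$-conjugation that the paper's displayed computation suppresses, which is a harmless elaboration of the same argument.
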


\begin{proof}
Recall the presentation of the gluing morphism given in \cref{subsubsec:gluing}. We may assume that $\Sigma'$ is connected without loss of generality, and divide the argument into the two cases.

\begin{description}
\item[(1) Disconnected case]
In this case, we have
\begin{align*}
    \widetilde{q}_{E_2,E'_2}^* g_{[c]} &= g_{E_1}^{-1} (g_{E_2}\vw g_{E'_2}^{-1}\cdot g_{E_3})\vw \\
    &= g_{E_1}^{-1}g_{E_2}\vw \cdot g_{E'_2}^{-1}g_{E_3}\vw \\
    &=g_{[c_1]}\cdot g_{[c_2]}.
\end{align*}
\item[(2) Connected case]
In this case, consider the based loop $\alpha:=\mathrm{pr}(\epsilon_{E_2}^{-1}\ast \epsilon_{E'_2}) \in \pi_1(\Sigma',x')$. See \cref{fig:multiplicativity_connected_case}.  
Then we have $g_{[c]}=g_{E_1}^{-1}\rho(\alpha)g_{E_3}\vw$ and $\widetilde{q}_{E_2,E'_2}^*\rho(\alpha)=g_{E_2}\vw g_{E'_2}^{-1}$. Hence
\begin{align*}
    \widetilde{q}_{E_2,E'_2}^* g_{[c]} &= g_{E_1}^{-1} (g_{E_2}\vw g_{E'_2}^{-1}) g_{E_3}\vw \\
    &= g_{E_1}^{-1}g_{E_2}\vw \cdot g_{E'_2}^{-1}g_{E_3}\vw \\
    &=g_{[c_1]}\cdot g_{[c_2]}.
\end{align*}
\end{description}
\end{proof}

% \begin{proof}
% We work with the topological definition. 
% Given $[\L,\beta;p] \in \P_{G,\Sigma}$, let $g_i:=g_{[c_i]}([\L,\beta;p])$ for $i=1,2$. 
% Choose a local trivialization $s$ of $\L$ on a vicinity of $E_1$ so that the flat section $p_{E_1}$ corresponds to $p_\std$. Extend $s$ along $[c_1]$, so that the section $p_{E_2}$ gives $g_1.p_\std^\ast$ under this trivialization. Considering the local system $q_{E_1,E_2}(\L,\beta;p)$ on $\Sigma'$ obtained by the gluing construction, the local trivialization $s$ can be further extended beyond the interval $E_2=E'_2$. Then by the gluing condition, the section $p_{E'_2}$ gives $g_1.p_\std$. 

% If we replace the local trivialization $s$ with another one $s':=s.g_1^{-1}$
% \begin{NB2}
% Here recall the following elementary fact. Given a principal $G$-bundle $P \to M$, a local flat section $s: U \to P$ gives a local trivialization $P|_U \cong U \times G$ and under this trivialization, any other flat section $\psi$ is written as $\psi(x)=s(x).\psi_s$, $x \in U$ for some $\psi_s \in G$. Under another trivialization $s':=s.h$, we get $\psi(x)=s(x).\psi_s=s'(x).h^{-1}\psi_s$ and the new expression is $\psi_{s'}:=h^{-1}\psi_s$. 
% \end{NB2}
% such that $p_{E'_2}$ gives $p_\std$ under this trivialization, then $p_{E_3}$ gives $g_2.p_\std^\ast$ under the extension of $s'$ along $[c_2]$. It is equivalent to saying that under the extension of the original trivialization $s$ along $[c]$, the section $p_{E_3}$ gives $g_1g_2.p_\std^\ast$. Thus we get $g_{[c]}(q_{E_1,E_2}([\L,\beta;p]))=g_1g_2$, as desired. 
% \end{proof}
%
\begin{figure}[h]
\begin{tikzpicture}[scale=0.8]

%Torus
\draw(0,0) ellipse (0.3 and 1);
\fill(0,1) circle(2pt);
\fill(0,-1) circle(2pt);
\node at (0,1.4) {$E_1$};
\draw(5,1) arc (90:-90:0.3 and 1);
\draw[dashed](5,1) arc (90:270:0.3 and 1);
\fill(5+0.3,0) circle(2pt);
\node at (5,1.4) {$E_2$};
\draw(0,1) ..controls (0.5,1) and (1.2,1.5).. (2.5,1.5) ..controls (3.8,1.5) and (4.5,1).. (5,1);
\draw(0,-1) ..controls (0.5,-1) and (1.2,-1.5).. (2.5,-1.5) ..controls (3.8,-1.5)  and (4.5,-1).. (5,-1);
\pic at (2.5,0) {handle};
\draw (2,0.7) circle(2pt);
\draw (3,0.7) circle(2pt);
\draw[->,red,thick](0.3*0.8661,-0.5) ..controls (0.3*0.8661+4,-1.5)  and (5+0.3*0.8661-1,0.5).. node[midway,below=0.2em]{$[c_1]$} (5+0.3*0.8661,0.5);
%Pants
\begin{scope}[xshift=6cm]
\draw(0,0) ellipse (0.3 and 1);
\fill(0.3,0) circle(2pt);
\node at (0,1.4) {$E'_2$};
\draw(3,2.5) arc (90:-90:0.3 and 1);
\draw[dashed] (3,2.5) arc (90:270:0.3 and 1);
\fill(3,2.5) circle(2pt);
\fill(3,0.5) circle(2pt);
\node at (3,2.9) {$E_3$};
\draw(3,-0.5) arc (90:-90:0.3 and 1);
\draw[dashed] (3,-0.5) arc (90:270:0.3 and 1);
\fill(3+0.3,-1.5) circle(2pt);
\draw(0,1) ..controls (0.5,1) and (2.5,2.5).. (3,2.5);
\draw(0,-1) ..controls (0.5,-1) and (2.5,-2.5).. (3,-2.5);
\draw(3,0.5) ..controls (2,0.3) and (2,-0.3).. (3,-0.5);
\draw[->,red,thick](0.3*0.8661,0.5) ..controls (0.3*0.8661+1,0.5)  and (2+0.3,1.5).. node[midway,below=0.2em]{$[c_2]$} (3+0.3,1.5);
\end{scope}

\draw [ultra thick,-{Classical TikZ Rightarrow[length=4pt]},decorate,decoration={snake,amplitude=1.5pt,pre length=2pt,post length=3pt}] (10.5,0) -- node[midway,above]{Glue $E_2$ and $E'_2$} (12.5,0);
\begin{scope}[xshift=14cm]
\draw(0,0) ellipse (0.3 and 1);
\fill(0,1) circle(2pt);
\fill(0,-1) circle(2pt);
\node at (0,1.4) {$E_1$};
\draw[dashed](5,1) arc (90:-90:0.3 and 1);
\draw[dashed](5,1) arc (90:270:0.3 and 1);
\draw(5+0.3,0) circle(2pt);
\draw(0,1) ..controls (0.5,1) and (1.2,1.5).. (2.5,1.5) ..controls (3.8,1.5) and (4.5,1).. (5,1);
\draw(0,-1) ..controls (0.5,-1) and (1.2,-1.5).. (2.5,-1.5)  ..controls (3.8,-1.5)  and (4.5,-1).. (5,-1);
\pic at (2.5,0) {handle};
\draw (2,0.7) circle(2pt);
\draw (3,0.7) circle(2pt);
\draw[red,thick](0.3*0.8661,-0.5) ..controls (0.3*0.8661+4,-1.5)  and (5+0.3*0.8661-1,0.5).. node[midway,below=0.5em]{$[c]$} (5+0.3*0.8661,0.5);
%Pants
{\begin{scope}[xshift=5cm]
\draw(3,2.5) arc (90:-90:0.3 and 1);
\draw[dashed] (3,2.5) arc (90:270:0.3 and 1);
\fill(3,2.5) circle(2pt);
\fill(3,0.5) circle(2pt);
\node at (3,2.9) {$E_3$};
\draw(3,-0.5) arc (90:-90:0.3 and 1);
\draw[dashed] (3,-0.5) arc (90:270:0.3 and 1);
\fill(3+0.3,-1.5) circle(2pt);
\draw(0,1) ..controls (0.5,1) and (2.5,2.5).. (3,2.5);
\draw(0,-1) ..controls (0.5,-1) and (2.5,-2.5).. (3,-2.5);
\draw(3,0.5) ..controls (2,0.3) and (2,-0.3).. (3,-0.5);
\draw[->,red,thick](0.3*0.8661,0.5) ..controls (0.3*0.8661+1,0.5)  and (2+0.3,1.5).. (3+0.3,1.5);
\end{scope}}
\end{scope}
\end{tikzpicture}
    \caption{Multiplicativity of Wilson lines, disconnected case.}
    \label{fig:multiplicativity}
\end{figure}

\begin{figure}
\begin{tikzpicture}[scale=0.8]
\draw(0,0) node[left]{$E_1$} arc(180:360:1 and 0.3);
\draw[dashed](0,0) arc(180:0:1 and 0.3);
\draw(0,-1) node[left]{$E_2$} arc(180:-180:1 and 0.3);
\draw(0,0) ..controls (0,1) and (-2,1).. (-2,-0.5) ..controls (-2,-2) and (0,-2).. (0,-1);
\draw(2,0) ..controls (2,4) and (-4,4).. (-4,-0.5) ..controls (-4,-5) and (2,-5).. (2,-1);
\fill(1,-0.3) circle(2pt);
\fill(1,-1.3) circle(2pt);

\draw (1,0)++(-0.7,-0.3*0.7) coordinate(x);
\draw (1,-1)++(-0.7,-0.3*0.7) coordinate(y);
\draw[red,thick,->] (x) ..controls (0.3,2) and (-3,2).. (-3,-0.5) node[left]{$[c]$} ..controls (-3,-3) and (0.3,-3).. (y);

\draw [ultra thick,-{Classical TikZ Rightarrow[length=4pt]},decorate,decoration={snake,amplitude=1.5pt,pre length=2pt,post length=3pt}] (3,-0.5) -- node[midway,above]{Glue $E_1$ and $E_2$} (5,-0.5);

\begin{scope}[xshift=10cm]
\draw[dashed](0,-0.5) arc(180:-180:1 and 0.3);
\draw(0,-0.5) ..controls (0,1) and (-2,1).. (-2,-0.5) ..controls (-2,-2) and (0,-2).. (0,-0.5);
\draw(2,-0.5) ..controls (2,4) and (-4,4).. (-4,-0.5) ..controls (-4,-5) and (2,-5).. (2,-0.5);
\draw(1,-0.8) circle(2pt);

\draw (1,-0.5)++(-0.7,0) coordinate(x);
\draw[red,thick,->] (x) ..controls (0.3,2) and (-3,2).. (-3,-0.5) node[left]{$|\gamma|$} ..controls (-3,-3) and (0.3,-3).. (x);
\end{scope}
\end{tikzpicture}
    \caption{Multiplicativity of Wilson lines, connected case.}
    \label{fig:multiplicativity_connected_case}
\end{figure}
As a variant of the above argument, we can describe the monodromy homomorphism in terms of the Wilson lines. Given a based loop $\gamma \in \pi_1(\Sigma^\ast,x)$, we have the evaluation morphism
\begin{align*}
    \mathrm{ev}_\gamma: P_{G,\Sigma} \to G, 
\end{align*}
which takes the monodromy $\rho(\gamma)$ along $\gamma$ for a given rigidified $G$-local system with pinnings.
%where $\rho$ denotes the monodromy homomorphism associated with a rigidified $G$-local system $(\L;s)$. 
%Given a $G$-local system $\L$ on $\Sigma$ , a base point $x \in \Sigma$ and a local trivialization $s$ at $x$, we get the monodromy homomorphism $\rho_\bullet^s(\L):\pi_1(\Sigma,x) \to G$, $[\gamma] \mapsto \rho_{[\gamma]}^s(\L)$. 
Note that the set of conjugacy classes in $\pi_1(\Sigma^\ast,x)$ is identified with the set $\hat{\pi}(\Sigma^\ast):=[S^1,\Sigma^\ast]$ of free loops on $\Sigma^\ast$. 
Let $\pi_1(\Sigma^\ast,x) \to \hat{\pi}(\Sigma^\ast)$, $[\gamma] \mapsto |\gamma|$ denote the canonical projection. Then the morphism $\mathrm{ev}_\gamma$ descends to a morphism $\rho_{|\gamma|}:\P_{G,\Sigma} \to [G/\Ad G]$ that fits into the commutative diagram
\begin{equation*}
    \begin{tikzcd}
    P_{G,\Sigma} \ar[d] \ar[r,"\mathrm{ev}_\gamma"] & G \ar[d]\\
    \P_{G,\Sigma} \ar[r,"\rho_{|\gamma|}"'] & {[G/\Ad G]},
    \end{tikzcd}
\end{equation*}
%The free homotopy class of a loop $\gamma$ is denoted by $|\gamma|$, so that we have a canonical projection $\pi_1(\Sigma^\ast,x) \to \hat{\pi}(\Sigma^\ast)$, $[\gamma] \mapsto |\gamma|$. 
%It is a classical fact that the conjugacy class 
% \begin{align*}
%     \rho_\bullet(\L):=[\rho_\bullet^s(\L)] \in \Hom(\pi_1(\Sigma,x),G)/\Ad G \subset \mathrm{Map}(\hat{\pi}(\Sigma),G/\Ad G)
% \end{align*}
% of the monodromy homomorphism depends only on the gauge-equivalence class of $\L$, and conversely determines the latter. 
which only depends on the free loop $|\gamma| \in \hat{\pi}(\Sigma^\ast)$. 
We call $\rho_{|\gamma|}$ the \emph{Wilson loop} along $|\gamma|$. 

% The Wilson loop can be defined as a morphism $\rho_{|\gamma|}:\P_{G,\Sigma} \to \mathrm{Loc}_{G,\Sigma} \to [G/\Ad G]$, where $\mathrm{Loc}_{G,\Sigma}:=[\Hom(\pi_1(\Sigma,x),G)/G]$ denotes the moduli stack of $G$-local systems and the first morphism is induced by the projection $P_{G,\Sigma} \to \Hom(\pi_1(\Sigma,x),G)$, $(\rho,\lambda,\phi) \mapsto \rho$. The second morphism is induced by the $G$-equivariant morphism $\Hom(\pi_1(\Sigma,x),G) \to G$
% % \begin{equation*}
% %     \begin{tikzcd}
% %         P_{G,\Sigma} \ar[r] \ar[d] \ar[bend left=20]{rr}{\widetilde{\rho}_{|\gamma|}} & \Hom(\pi_1(\Sigma,x),G) \ar[r,"ev_{[\gamma_x]}"'] \ar[d] & G \ar[d]\\
% %         \P_{G,\Sigma} \ar[r] \ar[bend right=20]{rr}{\rho_{|\gamma|}} & \mathrm{Loc}_{G,\Sigma} \ar[r] & {[G/\Ad G]}.
% %     \end{tikzcd}
% % \end{equation*}
% given by the evaluation at the based loop $[\gamma_x] \in \pi_1(\Sigma,x)$ presenting the free loop $|\gamma|$. Let $\widetilde{\rho}_{|\gamma|}:P_{G,\Sigma}\to G$ denote the composite of these morphisms on atlases.

\begin{prop}\label{prop:Wilson line-loop}
Let $\Sigma$ be a marked surface, $[c]:E_1 \to E_2$ an arc class. Let $\Sigma'$ be the marked surface obtained from $\Sigma$ by gluing the boundary intervals $E_1$ and $E_2$, and $\gamma \in \pi_1({\Sigma'}^\ast,x')$ be the based loop arising from $[c]$. Here we choose the basepoint $x' \in {\Sigma'}^\ast$ on the edge arising from $E_1$ and $E_2$.  
Then we have the following commutative diagram of morphisms of stacks:
\begin{equation}\label{eq:Wilson line-loop}
    \begin{tikzcd}
    \P_{G,\Sigma} \ar[r,"g_{[c]}"] \ar[d,"q_{E_1,E_2}"'] & G \ar[d] \\
    \P_{G,\Sigma'} \ar[r,"\rho_{|\gamma|}"'] & {[G/\Ad G]},
    \end{tikzcd}
\end{equation}
where the right vertical morphism is the canonical projection.
\end{prop}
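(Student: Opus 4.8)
The plan is to verify commutativity at the level of the atlases of \cref{subsec:alg_str_stack,subsubsec:gluing} and then descend to the quotient stacks. Since the arc $c$ connects $E_1$ and $E_2$, these boundary intervals lie on the same connected component of $\Sigma$, so $q_{E_1,E_2}$ is an instance of the \emph{connected case} of \cref{subsubsec:gluing}. Accordingly I would place the basepoint $x$ on the new edge $\widehat{E}$, use the presentation $\widetilde{q}_{E_1,E_2}$ recorded there, the presentation $\widetilde{g}_{[c]}$ of \cref{prop:presentation_Wilson_line}, and the presentation $\widetilde{\rho}_{|\gamma|}\colon P_{G,\Sigma'}\to G$ of the Wilson loop, namely evaluation $\rho'\mapsto \rho'(\gamma_x)$ at a based loop $\gamma_x$ representing $|\gamma|$.

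The first step is topological: identify the based loop $\gamma_x\in\pi_1(\Sigma',x)$ obtained by closing up $c$. Writing $[c]=[\epsilon_{E_1}]^{-1}\ast[\beta_x]\ast[\epsilon_{E_2}]$ with $\beta_x\in\pi_1(\Sigma,x)$, I would show that $|\gamma|$ is freely represented by $\gamma_x=\beta_x\ast\alpha^{-1}$, where $\alpha=\epsilon_{E_1}^{-1}\ast\epsilon_{E_2}$ is the generator crossing $\widehat{E}$ from \cref{subsubsec:gluing}; the factor $\alpha^{-1}$ records the return passage across the glued edge that closes $c$ into a loop. Only the conjugacy class of $\gamma_x$ matters for $\rho_{|\gamma|}$, so the precise based representative (left versus right placement of $\alpha^{\pm1}$) is immaterial.

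The computational step evaluates the monodromy. Using the connected-case gluing formulas $\rho'(\beta_x)=\rho(\beta_x)$ and $\rho'(\alpha)=g_1\vw g_2^{-1}$ (writing $\phi_{E_i}=g_i.p_\std$, $g_i:=g_{E_i}$), together with $\vw^{-1}=\vw$ in the adjoint group $G$, I would compute
\[
\widetilde{q}_{E_1,E_2}^{\ast}\widetilde{\rho}_{|\gamma|}=\rho'(\gamma_x)=\rho(\beta_x)\,g_2\vw\,g_1^{-1}=g_1\,\bigl(g_1^{-1}\rho(\beta_x)g_2\vw\bigr)\,g_1^{-1},
\]
where $g_1^{-1}\rho(\beta_x)g_2\vw$ is precisely the first component of $\widetilde{g}_{[c]}$ in \cref{prop:presentation_Wilson_line}. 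Thus on the atlas $\widetilde{q}_{E_1,E_2}^{\ast}\widetilde{\rho}_{|\gamma|}$ and the presentation of $g_{[c]}$ are conjugate by the regular function $g_1=g_{E_1}$. To promote this to the stacks, I note that the right vertical morphism $G\to[G/\Ad G]$ is the quotient by conjugation, so two $G$-valued morphisms differing by $\Ad$-conjugation by a regular function induce $2$-isomorphic morphisms to $[G/\Ad G]$; the conjugator $g_1$ furnishes the required $2$-isomorphism between $q_{E_1,E_2}^{\ast}\rho_{|\gamma|}$ and the composite $\P_{G,\Sigma}\xrightarrow{g_{[c]}}G\to[G/\Ad G]$. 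I would also check compatibility with the defining $G$-equivariance: under the atlas action one has $g_i\mapsto gg_i$ and $\rho(\beta_x)\mapsto g\rho(\beta_x)g^{-1}$, so $g_1^{-1}\rho(\beta_x)g_2\vw$ is $G$-invariant (descending to $g_{[c]}$), while $\rho'(\gamma_x)\mapsto \Ad_g(\rho'(\gamma_x))$ (descending to the $[G/\Ad G]$-valued Wilson loop), confirming that both composites are well defined and that the square $2$-commutes.

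I expect the main obstacle to be the first step: pinning down the based loop $\gamma_x$ and the exact placement of the factors $\alpha^{\pm1}$ and $\vw$, i.e.\ correctly tracking the opposite-pinning identification $p_{E_1}\leftrightarrow p_{E_2}^{\ast}$ through the monodromy. Once the topological bookkeeping is fixed, the algebra above and the stack-theoretic conclusion are routine, essentially mirroring the connected case of \cref{prop:multiplicativity}.
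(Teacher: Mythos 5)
Your proposal is correct and follows essentially the same route as the paper's proof: both work on the atlases using the connected-case presentation of $\widetilde{q}_{E_1,E_2}$ from \cref{subsubsec:gluing} together with \cref{prop:presentation_Wilson_line}, observe that $\widetilde{q}_{E_1,E_2}^{\ast}\widetilde{\rho}_{|\gamma|}$ is the $\Ad$-conjugate of the first component $\widetilde{g}^{(1)}_{[c]}$ by a regular $G$-valued function on the atlas (the paper conjugates by $g_{E_2}$, you by $g_{E_1}$ --- an immaterial difference traceable to the choice of based representative of $|\gamma|$), and then descend through the projection $G \to [G/\Ad G]$, which absorbs the conjugation. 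Your additional bookkeeping (the explicit representative $\gamma_x=\beta_x\ast\alpha^{-1}$ and the $G$-equivariance check) only spells out details that the paper's terser computation leaves implicit.
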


\begin{proof}
From the presentation of the gluing morphism given in \cref{subsubsec:gluing}, we have
\begin{align*}
    \widetilde{q}_{E_1,E_2}^* ev_{\gamma}= g_{E_1}\vw g_{E_2}^{-1} = \Ad_{g_{E_2}}(\widetilde{g}^{(1)}_{[c]}),
\end{align*}
where $\widetilde{g}^{(1)}_{[c]}$ denotes the first component of the morphism $\widetilde{g}_{[c]}$ defined in \cref{prop:presentation_Wilson_line}. 
In other words, we have the commutative diagram
\begin{equation*}
    \begin{tikzcd}
    P_{G,\Sigma} \ar[r,"\widetilde{g}_{[c]}"] \ar[d,"\widetilde{q}_{E_1,E_2}"'] & G \times G \ar[d,"\Ad_{g_{E_2}}\circ \mathrm{pr}_1"] \ar[r,"\mathrm{pr}_1"] & G \ar[d]\\
    P_{G,\Sigma'} \ar[r,"\mathrm{ev}_{\gamma}"'] & G \ar[r] & {[G/\Ad G]}
    \end{tikzcd}
\end{equation*}
and thus we get the desired assertion.
\end{proof}

% \begin{proof}
% Given $[\L,\beta;p] \in \P_{G,\Sigma}$ choose a local trivialization $s$ of $\L$ on a vicinity of $E_1$ so that the flat section $p_{E_1}$ corresponds to $p_\std$. Extend $s$ along $[c]$, so that the section $p_{E_2}$ gives $g_{[c]}([\L,\beta;p]).p_\std^\ast$ under this trivialization. On the surface $\Sigma'$, we can further extend $s$ beyond the interval $E_1=E_2$ and the section $p_{E_1}$ now gives $g_{[c]}([\L,\beta;p]).p_\std$ under the final trivialization. This exactly means that $\rho_{[\gamma]}^s(\L')=g_{[c]}([\L,\beta;p])$, where $\L'$ denotes the underlying $G$-local system of $q_{E_1,E_2}([\L,\beta;p])$. Thus the assertion is proved.
% \end{proof}

\begin{rem}[Twisted Wilson lines]\label{rem:twisted Wilson line}
Let $\Pi_1(\Sigma,\mathbb{B})$ be the groupoid whose objects are boundary intervals of $\Sigma$ and morphisms are arc classes with the composition rule given by concatenations. Then each point $[\L,\beta;p] \in \P_{G,\Sigma}$ defines a functor \begin{align*}
    g^{\mathrm{tw}}_\bullet ([\L,\beta;p]): \Pi_1(\Sigma,\mathbb{B}) \to G, \quad [c] \mapsto g_{[c]}^{\mathrm{tw}}([\L,\beta;p]),
\end{align*}
where $g_{[c]}^{\mathrm{tw}}([\L,\beta;p]):=g_{[c]}([\L,\beta;p])\vw$ denotes the \emph{twisted} Wilson line and the group $G$ is naturally regarded as a groupoid with one object. Note that an automorphism $[c]$ of a boundary interval $E$ in $\Pi_1(\Sigma,\mathbb{B})$ can be represented by a loop $\gamma$ based at $x \in E$, and the conjugacy class of the twisted Wilson line $g_{[c]}^{\mathrm{tw}}$ coincides with the Wilson loop $\rho_{|\gamma|}$. 
Although the Wilson lines themselves do not induce such a functor, we will see that they possess a nice positivity property as well as the multiplicativity for gluings explained above. 
%On the other hand, the twisted Wilson lines do not have such positivity.
\end{rem}

\subsection{Generation of $\cO(\P_{G,\Sigma})$ by matrix coefficients of (twisted) Wilson lines}\label{subsec:generate}
We are going to obtain an explicit presentation of the function algebra $\cO(\P_{G,\Sigma})$ by using the (twisted) Wilson lines when $\Sigma$ has no punctures. 
%In particular our decomposition formula (\cref{t:Wilson_line_regular}) can be viewed as an explicit presentation of the isomorphism \eqref{eq:isom_Ptolemy_Betti}. 
In the contrary case $\partial\Sigma=\emptyset$, the description of the function algebra $\cO(\P_{G,\Sigma})=\cO(\X_{G,\Sigma})$ as an $\cO(\mathrm{Loc}_{G,\Sigma})$-module has been already obtained in \cite[Section 12.5]{FG03}. 

% Let $\widehat{G}:= \{ (g,B) \in G \times \B_G \mid g.B = B\}$ denote the Grothendieck--Springer resolution, which has an affinization via the morphism
% \begin{align}\label{eq:affinization_GS resolution}
%     \eta=(\mathrm{pr}_1,\pi_B): \widehat{G} \to G \times H, \quad (g,B) \mapsto (g, \pi_B(g)),
% \end{align}
% which gives a closed embedding of $\widehat{G}\setminus Q$ for a subset $Q$ of codim $\geq 2$. 
% Here $\pi_B: B \to B/[B,B] \xrightarrow{\sim} H$ is the canonical map extracting the semisimple part. For a detail, see \cite{Shen20} and the references therein. In particular, we have a surjective homomorphism $\eta^*: \cO(G \times H)=\cO(G)\otimes \cO(H) \to \cO(\widehat{G})$. 

Choose a generating set $\mathsf{S}=\{(\alpha_i,\beta_i)_{i=1}^g,(\gamma_m)_{m \in \mathbb{P}},(\delta_k)_{k=1}^b\}$ of $\pi_1(\Sigma^\ast,x)$, a collection $\{m_k\}$ of distinguished marked points, and paths $\epsilon_j^{(k)}=\epsilon_{E_j^{(k)}}$ as in \cref{subsec:alg_str_stack}. 
Then we get the atlas $P_{G,\Sigma}^{(\{m_k\})}$, which consists of triples $(\rho,\lambda,\phi)$ satisfying certain conditions described in \cref{lem:rigidified-P}. 

Assume that $\Sigma$ has no punctures, and choose one boundary interval, say, $E_0:=E_1^{(1)}$. Write $\phi_E=g_E.p_\std$, $g_E \in G$ for $E \in \mathbb{B}$ and set $g_0:=g_{E_0}$. 
Then we have a $G$-equivariant morphism
\begin{align*}
    \widetilde{\Phi}'_{E_0}: P^{(\{m_k\})}_{G,\Sigma} \to G^{2g+b} \times  G^{\mathbb{B}\setminus \{E_0\}} \times G
\end{align*}
which sends $(\rho,\lambda,\phi)$ to the tuple 
\begin{align*}
    \left((\rho_{E_0}(\alpha_i),
    \rho_{E_0}(\beta_i),
    \rho_{E_0}(\delta_k))_{\substack{i=1,\dots,g \\ k=1,\dots,b}},\ 
    %(g_0^{-1}\rho(\gamma_a)g_0,g_0^{-1}.\lambda_{a})_{a \in \mathbb{P}},
    (g_{E_0,E})_{E \neq E_0}, g_0\right).
\end{align*}
Here $\rho_{E_0}(\gamma):=g_0^{-1}\rho(\gamma)g_0$ is the monodromy along $\gamma$ for the local trivialization given by the pinning $\phi_{E_0}$ for $\gamma \in \mathsf{S}$, and $g_{E_0,E}:=g_0^{-1}g_E\vw$ is the Wilson line along the arc class $[\epsilon_{E_0,E}]=[\epsilon_{E_0}^{-1} \ast \epsilon_E]: E_0 \to E$. The group $G$ acts on the last factor of $G^{2g+b} \times  G^{\mathbb{B}\setminus \{E_0\}} \times G$ by left multiplication, and trivially on the other factors.
Then it descends to an embedding 
\begin{align}\label{eq:Betti_embedding}
    \Phi'_{E_0}:\P_{G,\Sigma}=[P_{G,\Sigma}^{(\{m_k\})}/G] \to G^{2g+b}\times G^{\mathbb{B}\setminus \{E_0\}}
\end{align}
of Artin stacks. Note from \cref{rem:twisted Wilson line} that $\rho_{E_0}(\gamma)$ for $\gamma \in \mathsf{S}$ can be regarded as the twisted Wilson line along the based loop $\gamma_x$ at $x \in E_0$. We can take their matrix coefficients, not only their traces.

For each $k=1,\dots,b$, consider the paths $\epsilon_{j,j-1}^{(k)}$ which are based-homotopic to boundary arcs which contain exactly one marked point $m_j^{(k)}$, for $j=1,\dots,N_k$. Here the indices are read modulo $N_k$.  

\begin{lem}\label{lem:Wilson_line_Borel}
The Wilson line $g_{j,j-1}^{(k)}$ along the arc class $[\epsilon_{j,j-1}^{(k)}]:E_j^{(k)} \to E_{j-1}^{(k)}$ takes values in $B^+$. 
\end{lem}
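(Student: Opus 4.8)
The plan is to reduce the statement to a single equality of flags at the marked point $m_j^{(k)}$ that is shared by the two boundary intervals $E_{j}^{(k)}$ and $E_{j-1}^{(k)}$, and then to read off membership in $B^+$ directly from the definition of the Wilson line. The geometric point is that $m_j^{(k)}$ is simultaneously the initial vertex of $E_j^{(k)}$ and the terminal vertex of $E_{j-1}^{(k)}$, and the arc $\epsilon_{j,j-1}^{(k)}$ traverses only a small neighborhood of $m_j^{(k)}$ on which the framing $\beta$ is a single flat section of $\L_\B$. Thus the pinning $p_{E_j^{(k)}}$ and the pinning $p_{E_{j-1}^{(k)}}$ both ``see'' the same flag at $m_j^{(k)}$, namely $\pi_+(p_{E_j^{(k)}})$ on the one side and $\pi_-(p_{E_{j-1}^{(k)}})$ on the other.

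First I would fix a point $[\L,\beta;p]$ and follow the topological definition of the Wilson line: trivialize $\L$ on a vicinity of $E_\inn:=E_j^{(k)}$ so that $p_{E_j^{(k)}}$ corresponds to $p_\std=([U^+],B^-)$; in particular the underlying flag $\pi_+(p_{E_j^{(k)}})$ at $m_j^{(k)}$ is $B^+$. Next I extend this trivialization along $\epsilon_{j,j-1}^{(k)}$ to $E_\out:=E_{j-1}^{(k)}$. Since this arc is based-homotopic to a boundary arc meeting $\mathbb{S}\cup\widehat{\mathbb{P}}$ only in $m_j^{(k)}$, it may be represented inside the neighborhood of $m_j^{(k)}$ on which $\beta$ is flat; hence the flat section is carried to itself, and the underlying flag $\pi_-(p_{E_{j-1}^{(k)}})$ at the terminal vertex $m_j^{(k)}$ of $E_{j-1}^{(k)}$ is again $B^+$ in the transported trivialization. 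Finally, writing $p_{E_{j-1}^{(k)}}=g.p_\std^{\ast}$ with $g=g_{j,j-1}^{(k)}$ and computing $\pi_-(p_\std^{\ast})=\pi_-(\vw.p_\std)=\vw.B^-=B^+$, I obtain $g.B^+=\pi_-(p_{E_{j-1}^{(k)}})=B^+$; as $B^+$ is self-normalizing in $G$, this forces $g\in B^+$.

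The step requiring the most care is the \emph{locality} of the parallel transport above: one must check that $[\epsilon_{j,j-1}^{(k)}]$ genuinely admits a representative contained in the neighborhood of $m_j^{(k)}$ where $\beta$ is defined and flat, so that comparing $\pi_+(p_{E_j^{(k)}})$ with $\pi_-(p_{E_{j-1}^{(k)}})$ really amounts to comparing one flat section with itself. I expect the cleanest way to make this rigorous at the level of stacks—and to cover uniformly the wrap-around case $j=1$, where $m_1^{(k)}$ is the distinguished marked point—is to argue on the atlas via \cref{prop:presentation_Wilson_line,lem:rigidified-P}. There $g_{j,j-1}^{(k)}$ is represented by $g_{E_j^{(k)}}^{-1}\rho(\gamma_x)g_{E_{j-1}^{(k)}}\vw$, where $\rho(\gamma_x)$ is trivial for $2\le j\le N_k$ (and a power of $\rho(\delta_k)$ for $j=1$), while the matching constraints $\pi_+(\phi_{E_j^{(k)}})=\lambda_{m_j^{(k)}}=\pi_-(\phi_{E_{j-1}^{(k)}})$ translate into $g_{E_j^{(k)}}.B^+=g_{E_{j-1}^{(k)}}.B^-$. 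Substituting $B^-=\vw.B^+$ and using that $B^+$ is self-normalizing (together with $\vw^2=1$ in the adjoint group $G$ to absorb the $\rho(\delta_k)$ factor in the wrap-around case) then yields $g_{j,j-1}^{(k)}\in B^+$ by a direct manipulation, confirming the geometric computation above.
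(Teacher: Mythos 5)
Your proposal is correct and takes essentially the same route as the paper: the paper's proof is precisely your atlas computation, deducing $g_{E_j^{(k)}}.B^+=g_{E_{j-1}^{(k)}}.B^-=g_{E_{j-1}^{(k)}}\vw.B^+$ from the matching constraints of \cref{lem:rigidified-P} and concluding $g_{j,j-1}^{(k)}=g_{E_j^{(k)}}^{-1}g_{E_{j-1}^{(k)}}\vw\in B^+$ because $B^+$ is self-normalizing, your topological preamble being a faithful gloss of that same computation. One tiny correction: in the wrap-around case the $\rho(\delta_k)^{-1}$ factor is cancelled by the constraint $\lambda_{m_{N_k+1}^{(k)}}=\rho(\delta_k).\lambda_{m_1^{(k)}}$ (the paper phrases this as $g_2=\rho(\delta_k)^{-1}g_1$), not by $\vw^2=1$, which plays no role here.
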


\begin{proof}
Let $E_1$ (resp. $E_2$) denote the boundary interval having $m_j^{(k)}$ as its initial (resp. terminal) point. Let $\phi_{i}=g_i.p_\std$ be the pinning assigned to $E_i$ for $i=1,2$. It can happen that $E_1=E_2$: in that case, we have $g_2=\rho(\delta_k)^{-1}g_1$. 
From the condition $\pi_+(\phi_1)=\lambda_{m_j^{(k)}}=\pi_-(\phi_2)$, we get $g_1.B^+=g_2.B^-=g_2\vw.B^+$. Hence $g_{j,j-1}^{(k)}=g_1^{-1}g_2\vw \in B^+$.
\end{proof}
Since $\epsilon_{E_0,E_j^{(k)}}=\epsilon_{E_0,E_{N_k}^{(k)}}\ast \epsilon_{N_k,N_k-1}^{(k)}\ast \cdots \ast \epsilon_{j+1,j}^{(k)}$, we have
\begin{align}\label{eq:pinning_reconstruction}
    g_{E_0,E_j^{(k)}} = (g^{(k)}\vw) (g_{N_k,N_k-1}^{(k)}\vw) \cdots (g_{j+2,j+1}^{(k)}\vw) g_{j+1,j}^{(k)}
\end{align}
for $k=1,\dots,b$ and $j=2,\dots,N_k-1$. Here $g^{(k)}:=g_{E_0,E_{N_k}^{(k)}}$ denotes the Wilson line along the arc class $[\epsilon_{E_0,E_{N_k}^{(k)}}]:E_0 \to E_{N_k}^{(k)}$ for $k=2,\dots,b$, and $g^{(1)}:=1$. See \cref{fig:Wilson_reparametrization}. 
\begin{figure}
\begin{tikzpicture}
\filldraw[fill=gray!30,draw=black,thick](0,0) circle(1cm); 
\foreach \i in {0,120,240}
{
\fill(\i:1) circle(2pt);
\draw[red,thick,->] (\i-20:1) ..controls (\i-20:2) and (\i+20:2).. (\i+20:1);
}
\node at (60:1.5) {\scalebox{0.85}{$E_1^{(1)}$}};
\draw(60:1.5)++(0,0.5) node{\rotatebox{90}{$=$}};
\draw(60:1.5)++(0,0.9) node{\scalebox{0.85}{$E_0$}};
\node at (-60:1.5) {\scalebox{0.85}{$E_2^{(1)}$}};
\node at (180:1.5) {\scalebox{0.85}{$E_3^{(1)}$}};
\node[right,red] at (0:1.6) {\scalebox{0.85}{$g_{2,1}^{(1)}$}};
\node[below,red] at (-120:1.6) {\scalebox{0.85}{$g_{3,2}^{(1)}$}};
\node[above,red] at (120:1.6) {\scalebox{0.85}{$g_{4,3}^{(1)}$}};

\draw[red,thick,->] (45:1) ..controls (45:2) and  (6,1.2).. (7,0);
\node[red] at (4,1.4) {$g^{(k)}$};
\begin{scope}[xshift=8cm]
\filldraw[fill=gray!30,draw=black,thick](0,0) circle(1cm); 
\foreach \i in {45,135,215,315}
{
\fill(\i:1) circle(2pt);
\draw[red,thick,->] (\i-20:1) ..controls (\i-20:2) and (\i+20:2).. (\i+20:1);
}
\node at (90:1.5) {\scalebox{0.85}{$E_1^{(k)}$}};
\node at (0:1.5) {\scalebox{0.85}{$E_2^{(k)}$}};
\node at (-90:1.5) {\scalebox{0.85}{$E_3^{(k)}$}};
\node at (-180:1.5) {\scalebox{0.85}{$E_4^{(k)}$}};
\node[right,red] at (45:1.65) {\scalebox{0.85}{$g_{2,1}^{(1)}$}};
\node[right,red] at (-45:1.65) {\scalebox{0.85}{$g_{3,2}^{(1)}$}};
\node[left,red] at (-135:1.65) {\scalebox{0.85}{$g_{4,3}^{(1)}$}};
\node[left,red] at (135:1.65) {\scalebox{0.85}{$g_{1,4}^{(1)}$}};
\end{scope}
\pic at (4,-0.5) {handle};
\end{tikzpicture}
    \caption{Some Wilson lines.}
    \label{fig:Wilson_reparametrization}
\end{figure}
Therefore the embedding \eqref{eq:Betti_embedding} gives rise to another embedding
\begin{align*}
    \Phi_{E_0}: \P_{G,\Sigma} \to G^{2g+b} \times G^{b-1} \times (B^+)^{\sum_{k=1}^b N_k},
\end{align*}
which is represented by a morphism $\widetilde{\Phi}_{E_0}$ that sends a $G$-orbit of $(\rho,\lambda,\phi)$ to the tuple 
\begin{align}\label{eq:Betti_modified_embedding}
    \left((\rho_{E_0}(\alpha_i),
    \rho_{E_0}(\beta_i),
    \rho_{E_0}(\delta_k))_{\substack{i=1,\dots,g \\ k=1,\dots,b}},\ 
    (g^{(k)})_{k=2,\dots,b},\ 
    (g_{j,j-1}^{(k)})_{\substack{k=1,\dots,b \\j=1,\dots,N_k}}, g_0
    \right).
\end{align}

\begin{thm}\label{thm:generation_by_Wilson_lines}
The image of the embedding $\Phi_{E_0}$ is the closed subvariety which consists of the tuples \eqref{eq:Betti_modified_embedding} satisfying the following conditions:
\begin{itemize}
    \item Monodromy relation: $\prod_{i=1}^g [\rho_{E_0}(\alpha_i),\rho_{E_0}(\beta_i)]\cdot \prod_{k=1}^b \rho_{E_0}(\delta_k) = 1$; \vspace{0.5em}
    \item Boundary relation: $(g_{N_k,N_k-1}^{(k)}\vw) \cdots (g_{2,1}^{(k)}\vw) (g_{1,N_k}^{(k)}\vw) = \rho_{E_0}(\delta_k)^{-1}$ for $k=1,\dots,b$.
\end{itemize}
\end{thm}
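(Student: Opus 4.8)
The plan is to exhibit $\Phi_{E_0}$ as an isomorphism onto the closed subvariety $Z \subset G^{2g+b}\times G^{b-1}\times (B^+)^{\sum_{k=1}^b N_k}$ cut out by the monodromy and boundary relations. Since $\Phi_{E_0}$ is already known to be an embedding and the two relations are manifestly closed conditions (so $Z$ is closed), it suffices to check that the image of $\Phi_{E_0}$ lands in $Z$ and to produce an explicit inverse morphism $Z \to \P_{G,\Sigma}$.

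First I would verify that the image lies in $Z$. For the monodromy relation I use that $\Sigma$ has no punctures, so $\pi_1(\Sigma,x)$ admits the standard one-relator presentation whose sole relation is $\prod_{i=1}^g[\alpha_i,\beta_i]\cdot\prod_{k=1}^b\delta_k = 1$; applying any $\rho$ and conjugating by $g_0^{-1}$ turns this into the asserted relation for the twisted monodromies $\rho_{E_0}(\cdot)=\Ad_{g_0^{-1}}\rho(\cdot)$. For the boundary relation I combine \cref{lem:Wilson_line_Borel}, which gives $g_{j,j-1}^{(k)} = h_j^{-1}h_{j-1}\vw$ when $\phi_{E_j^{(k)}} = h_j.p_\std$, with the identity $\vw^2=\overline{w_0}^2 = 1$ valid in the adjoint group $G$. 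The twisted factors $g_{j,j-1}^{(k)}\vw = h_j^{-1}h_{j-1}$ then telescope around $\partial_k$, while the defining closing-up condition $\lambda_{m_{N_k+1}^{(k)}} = \rho(\delta_k).\lambda_{m_1^{(k)}}$ from \cref{lem:rigidified-P} inserts exactly one monodromy factor at the distinguished marked point $m_k$, producing the boundary relation.

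Next I would construct the inverse. Given a tuple in $Z$, the monodromy relation guarantees that the assignment on the generators $\alpha_i,\beta_i,\delta_k$ extends to a genuine homomorphism $\rho_{E_0}\colon\pi_1(\Sigma,x)\to G$; choosing the representative with $g_0=e$ I set $\rho:=\rho_{E_0}$ and $\phi_{E_0}:=p_\std$. I then reconstruct the remaining pinnings through \eqref{eq:pinning_reconstruction}, which expresses each $g_{E_0,E_j^{(k)}}$ as an explicit word in the data $g^{(k)}$ and the $g_{j',j'-1}^{(k)}$; inverting $g_{E_0,E}=g_0^{-1}g_E\vw$ recovers the pinning element $g_E$, hence $\phi_E=g_E.p_\std$, and I put $\lambda_{m}:=\pi_+(\phi_\bullet)$. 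The content is to check that $(\rho,\lambda,\phi)$ satisfies every condition of \cref{lem:rigidified-P}: defining $\lambda_{m_j^{(k)}}:=\pi_+(\phi_{E_j^{(k)}})$ makes the conditions on $\pi_+$ hold tautologically, while the conditions $\pi_-(\phi_{E_{j-1}^{(k)}})=\lambda_{m_j^{(k)}}$ amount, via $\pi_-(g.p_\std)=g\vw.B^+$, to $g_{j,j-1}^{(k)}\in B^+$, which holds by hypothesis; the closing-up condition around each $\partial_k$ is precisely where the boundary relation is consumed. Genericity of the relevant pairs of flags is automatic, since each reconstructed $\phi_E$ is by construction a pinning, i.e. lies over a generic pair.

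The main obstacle I anticipate is the reconstruction step, and within it the careful bookkeeping of the $\vw$-twists together with the single monodromy insertion at each distinguished marked point: one must confirm that traversing each boundary component $\partial_k$ once closes up consistently, i.e. that the boundary relation is not only necessary but also \emph{exactly} sufficient to force $\lambda_{m_{N_k+1}^{(k)}}=\rho(\delta_k).\lambda_{m_1^{(k)}}$ under the reconstruction (for $k\geq 2$ this is cleanest when the relation is read relative to the framing at $E_{N_k}^{(k)}$, transported back to $E_0$ via $g^{(k)}$). Once this consistency is established, the reconstruction is manifestly algebraic and two-sided inverse to $\Phi_{E_0}$, yielding the claimed identification of the image with the closed subvariety $Z$.
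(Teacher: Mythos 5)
Your proposal is correct and takes essentially the same route as the paper's own proof: the forward inclusion via the multiplicativity of twisted Wilson lines, \cref{lem:Wilson_line_Borel}, and the one-relator presentation of $\pi_1(\Sigma,x)$, and the converse via the explicit reconstruction through \eqref{eq:pinning_reconstruction}, with $\lambda$ defined by $\pi_+$, the $\pi_-$-conditions reduced to $g_{j,j-1}^{(k)}\in B^+$, and the boundary relation consumed at the closing-up around each $\partial_k$. Your attention to the basepoint bookkeeping at the distinguished marked point (reading the telescoped product relative to the framing at $E_{N_k}^{(k)}$ and transporting back via $g^{(k)}$) supplies exactly the detail the paper compresses into ``clear from the previous discussion,'' so nothing essential is missing.
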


\begin{proof}
It is clear from the previous discussion and the multiplicative property of the twisted Wilson lines $g_{[c]}^\mathrm{tw}=g_{[c]}\vw$ that the image of $\Phi_{E_0}$ satisfies the conditions. Conversely, given a tuple \eqref{eq:Betti_modified_embedding} which satisfies the conditions, we can reconstruct the $G$-orbit of a triple $(\rho,\lambda,\phi) \in P_{G,\Sigma}^{(\{m_k\})}$, as follows. We first get the monodromy homomorphism $\rho_{E_0}$ normalized at the boundary interval $E_0$, and the pinning $\phi_{E_0}=p_\std$. The other pinnings are given by $\phi_E:=(g_{E_0,E}.p_\std)^\ast$ for $E \in \mathbb{B} \setminus \{E_0\}$, where $g_{E_0,E} \in G$ is determined by the formula \eqref{eq:pinning_reconstruction}. The collection $\lambda$ of the underlying flags is given by 
\begin{align*}
    \lambda_{m_j^{(k)}}
    &:= \pi_+(\phi_{E_j^{(k)}})  \\
    &= (g^{(k)}\vw) (g_{N_k,N_k-1}^{(k)}\vw) \cdots (g_{j+2,j+1}^{(k)}\vw) g_{j+1,j}^{(k)}.B^- \\
    &= (g^{(k)}\vw) (g_{N_k,N_k-1}^{(k)}\vw) \cdots (g_{j+2,j+1}^{(k)}\vw) (g_{j+1,j}^{(k)}\vw)g_{j,j-1}^{(k)}.B^+ \\
    &= \pi_-(\phi_{E_{j-1}^{(k)}}).
\end{align*}
Each consecutive pair of flags is generic, since
\begin{align*}
    [\lambda_{m_j^{(k)}}, \lambda_{m_{j-1}^{(k)}}] 
    =[B^-,\vw g_{j,j-1}^{(k)}.B^-] = [B^+,B^-]
\end{align*}
by $g_{j,j-1}^{(k)} \in B^+$. Thus we get $(\rho,\lambda,\phi) \in P_{G,\Sigma}^{(\{m_k\})}$ normalized as $\phi_{E_0}=p_\std$. 
\end{proof}

\begin{cor}
When $\Sigma$ has no punctures, we have
\begin{align*}
    \cO(\P_{G,\Sigma}) \cong (\cO(G)^{\otimes (2g+b)} \otimes \cO(G)^{\otimes (b-1)} \otimes \cO(B^+)^{\otimes \sum_{k=1}^b N_k})/\sqrt{\mathscr{I}_{G,\Sigma}},
\end{align*}
where $\sqrt{\mathscr{I}_{G,\Sigma}}$ is the radical of the ideal $\mathscr{I}_{G,\Sigma}$ generated by the two relations described in \cref{thm:generation_by_Wilson_lines}. In particular, the function algebra $\cO(\P_{G,\Sigma})$ is generated by the matrix coefficients of (twisted) Wilson lines. 
\end{cor}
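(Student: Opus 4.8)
The plan is to read off the corollary from \cref{thm:generation_by_Wilson_lines} by dualizing the embedding $\Phi_{E_0}$ at the level of coordinate rings. First I would record that, since $\Sigma$ has no punctures, the stack $\P_{G,\Sigma}$ is in fact a variety (\cref{rem:geometric_quotient}); as it will be identified with a closed subvariety of a product of affine varieties, it is moreover affine, so that speaking of $\cO(\P_{G,\Sigma})$ as a genuine coordinate ring is unproblematic. Set $Y:=G^{2g+b}\times G^{b-1}\times (B^+)^{\sum_{k=1}^b N_k}$, an affine variety whose coordinate ring is exactly the tensor product $\cO(G)^{\otimes(2g+b)}\otimes \cO(G)^{\otimes(b-1)}\otimes \cO(B^+)^{\otimes \sum_{k} N_k}$ appearing in the statement.

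Next I would upgrade the set-theoretic description in \cref{thm:generation_by_Wilson_lines} to an isomorphism of varieties. That theorem shows $\Phi_{E_0}$ to be an embedding whose image is the closed locus $Z:=V(\mathscr{I}_{G,\Sigma})\subset Y$ cut out by the monodromy and boundary relations; moreover its proof produces an explicit reconstruction of a representative $(\rho,\lambda,\phi)$ from any tuple satisfying these relations, which furnishes a morphism inverse to $\Phi_{E_0}$ onto $Z$. Hence $\Phi_{E_0}$ is an isomorphism of $\P_{G,\Sigma}$ onto the reduced closed subvariety $Z$. Dualizing, the pullback $\Phi_{E_0}^{\ast}\colon \cO(Y)\to \cO(\P_{G,\Sigma})$ is surjective with kernel the ideal $I(Z)$ of functions vanishing on $Z$. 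Since we work over $\C$, Hilbert's Nullstellensatz gives $I(Z)=\sqrt{\mathscr{I}_{G,\Sigma}}$, which yields the displayed isomorphism $\cO(\P_{G,\Sigma})\cong \cO(Y)/\sqrt{\mathscr{I}_{G,\Sigma}}$.

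Finally, for the generation statement I would note that $\cO(G)$ is spanned by matrix coefficients $c^V_{f,v}$ (the group $G$ being reductive over $\C$, via the algebraic Peter--Weyl decomposition $\cO(G)\cong\bigoplus_{\lambda} V(\lambda)^{\ast}\otimes V(\lambda)$), and that $\cO(B^+)$ is generated by the restrictions of such matrix coefficients because $B^+\hookrightarrow G$ is a closed subgroup, so $\cO(G)\twoheadrightarrow \cO(B^+)$ is surjective. Under $\Phi_{E_0}^{\ast}$ each tensor factor $\cO(G)$ or $\cO(B^+)$ is pulled back along the corresponding coordinate of $\widetilde{\Phi}_{E_0}$ in \eqref{eq:Betti_modified_embedding}, namely along a twisted Wilson line $\rho_{E_0}(\gamma)$ or a Wilson line $g^{(k)},\,g^{(k)}_{j,j-1}$; consequently every algebra generator becomes a matrix coefficient of a (twisted) Wilson line, proving the final assertion.

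This corollary is essentially a formal transcription of \cref{thm:generation_by_Wilson_lines}, so I do not anticipate serious difficulty; the one genuinely delicate point is the appearance of the radical. The two geometric relations describe $Z$ only set-theoretically, and it is reducedness of $\P_{G,\Sigma}$ (equivalently, the reduced scheme structure on the image) that forces the passage from $\mathscr{I}_{G,\Sigma}$ to $\sqrt{\mathscr{I}_{G,\Sigma}}$. Making the inverse morphism in the second step precise, so that the identification $\P_{G,\Sigma}\cong Z$ holds scheme-theoretically on the reduced locus rather than merely bijectively, is therefore the step deserving the most care.
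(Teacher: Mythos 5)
Your proposal is correct and matches the paper's intent: the paper states this corollary without a separate proof, treating it as an immediate dualization of \cref{thm:generation_by_Wilson_lines}, and your argument fills in exactly the implicit steps (the identification $\P_{G,\Sigma}\cong Z$ via the explicit inverse constructed in the theorem's proof, the Nullstellensatz to account for the radical, and Peter--Weyl plus the surjection $\cO(G)\twoheadrightarrow\cO(B^+)$ for the generation claim). Your closing observation that the radical arises from the merely set-theoretic description of the image is precisely the right point of care and is consistent with the paper's formulation.
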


\begin{rem}\label{rem:geometric_quotient}
When $\Sigma$ has no punctures, one can see that the variety $P_{G,\Sigma}^{(\{m_k\})}$ is affine via the embedding $\widetilde{\Phi}_{E_0}$, on which $G$ acts freely. Hence the moduli space $\P_{G,\Sigma}^{(\{m_k\})}$ is representable by the geometric quotient $P_{G,\Sigma}^{(\{m_k\})}/G$ by \cref{lem:geometric_quotient}. 
\end{rem}

\begin{rem}
When $\Sigma$ has punctures and non-empty boundary, we have
\begin{align*}
    \cO(\P_{G,\Sigma}) \cong (\cO(G)^{\otimes (2g+b)} \otimes  \cO(\widehat{G})^{\otimes p} \otimes \cO(G)^{\otimes (b-1)} \otimes \cO(B^+)^{\otimes \sum_{k=1}^b N_k})/\sqrt{\mathscr{I}_{G,\Sigma}},
\end{align*}
where $p$ is the number of punctures, $\widehat{G}:=\{(g,B) \in G \times \B_G \mid g \in B\}$ denotes the \emph{Grothendieck--Springer resolution}, to which the pair $(\rho_{E_0}(\gamma_{m}),\lambda_{m})$ for $m \in \mathbb{P}$ belongs. The ideal $\mathscr{I}_{G,\Sigma}$ is generated by the monodromy relation 
\begin{align*}
    \prod_{i=1}^g [\rho_{E_0}(\alpha_i),\rho_{E_0}(\beta_i)]\cdot \prod_{j=1}^p \rho_{E_0}(\gamma_{m_j})\cdot \prod_{k=1}^b \rho_{E_0}(\delta_k) = 1
\end{align*}
and the same boundary relation, where we fixed an appropriate enumeration $\mathbb{P}=\{m_1,\dots,m_p\}$. In particular, $\cO(\widehat{G})$ contains some functions not coming from the matrix coefficients of the twisted Wilson line $\rho_{E_0}(\gamma_m)$: see \cite[Section 12.5]{FG03} and \cite[Section 4.2]{Shen20} for a detail. 
\end{rem}

\begin{ex}
When $\Sigma=T$ is a triangle, we have
\begin{align*}
    \cO(\P_{G,T}) \cong \cO(B^+)^{\otimes 3}/\sqrt{\mathscr{I}_{G,T}},
\end{align*}
where $\mathscr{I}_{G,T}=\langle\ g_{3,2} \vw g_{2,1} \vw g_{1,3} \vw =1 \ \rangle$. 
As we have seen in \cref{c:LR}, the images of the Wilson lines $g_{j,j-1}$ are in fact contained in the double Bruhat cell $B^+_\ast$. This can be seen as $g_{3,2}=(\vw g_{1,3}^{-1} \vw) \vw (\vw g_{2,1}^{-1}\vw) \in B^- \vw B^-$ and the cyclic symmetry. 
%Here we have $\sqrt{\mathscr{I}_{G,T}}=\mathscr{I}_{G,T}$, since we can solve the relation for the matrix coefficients of $g_{3,2}$
\end{ex}

\begin{figure}
    \begin{tikzpicture}[scale=0.9]
    \draw(0,0) -- (3,0) -- (3,3) -- (0,3) --cycle;
    \foreach \i in {1,2,3,4}
    \draw(90*\i:1.8)++(1.5,1.5) node{$E_\i$};
    \draw[red,thick,->] (3,1) to[out=180,in=90] (2,0);
    \draw[red,thick,->] (1,0) to[out=90,in=0] (0,1);
    \draw[red,thick,->] (0,2) to[out=0,in=-90] (1,3);
    \draw[red,thick,->] (2,3) to[out=-90,in=180] (3,2);
    \draw[red,thick,->] (1.5,3) --node[midway,left]{$g_{1,3}$} (1.5,0);
    \node at (1.5,-1) {$g_{1,3} \in G^{w_0,w_0}$};
    
    \begin{scope}[xshift=5cm]
    \draw(3,3) -- (0,3) -- (0,0) -- (3,0);
    \draw[dashed] (3,0) -- (3,3);
    \foreach \i in {1,2,3,4}
    \draw(90*\i:1.8)++(1.5,1.5) node{$E_\i$};
    %\draw[red,thick,->] (3,1) to[out=180,in=90] (2,0);
    \draw[red,thick,->] (1,0) to[out=90,in=0] (0,1);
    \draw[red,thick,->] (0,2) to[out=0,in=-90] (1,3);
    %\draw[red,thick,->] (2,3) to[out=-90,in=180] (3,2);
    \draw[red,thick,->] (1.5,3) --node[midway,left]{$g_{1,3}$} (1.5,0);
    \node at (1.5,-1) {$g_{1,3} \in B^+\vw B^+$};
    \end{scope}
    
    \begin{scope}[xshift=10cm]
    \draw(0,0) -- (3,0);
    \draw(0,3) -- (3,3);
    \draw[dashed] (0,0) -- (0,3);
    \draw[dashed] (3,0) -- (3,3);
    \foreach \i in {1,2,3,4}
    \draw(90*\i:1.8)++(1.5,1.5) node{$E_\i$};
    %\draw[red,thick,->] (3,1) to[out=180,in=90] (2,0);
    %\draw[red,thick,->] (1,0) to[out=90,in=0] (0,1);
    %\draw[red,thick,->] (0,2) to[out=0,in=-90] (1,3);
    %\draw[red,thick,->] (2,3) to[out=-90,in=180] (3,2);
    \draw[red,thick,->] (1.5,3) --node[midway,left]{$g_{1,3}$} (1.5,0);
    \node at (1.5,-1) {$g_{1,3} \in G$};
    \end{scope}
    \end{tikzpicture}
    \caption{Some Wilson lines on the moduli space $\P_{G,Q;\Xi}$. The boundary intervals not belonging to $\Xi$ are shown by dashed lines.}
    \label{fig:square_Wilson_lines}
\end{figure}

\begin{ex}
When $\Sigma=Q$ is a quadrilateral, we have 
\begin{align*}
    \cO(\P_{G,Q}) \cong \cO(B^+)^{\otimes 4}/\sqrt{\mathscr{I}_{G,Q}},
\end{align*}
where $\mathscr{I}_{G,Q}=\langle\ g_{4,3}\vw g_{3,2} \vw g_{2,1} \vw g_{1,4} \vw =1 \ \rangle$. 
Let us consider the Wilson line $g_{1,3}$ on $\P_{G,Q}$ shown in the left of \cref{fig:square_Wilson_lines}. 
%In the notation of \cref{c:LSR}, it corresponds to $b_S$. 
Letting $g_{3,1}:=\vw g_{1,3}^{-1} \vw = (g_{1,3}^\ast)^\mathsf{T}$, we get the relations
\begin{align*}
    &g_{1,3}\vw g_{3,2}\vw g_{2,1}\vw =1, \\
    &g_{3,1}\vw g_{1,4}\vw g_{4,3}\vw =1.
\end{align*}
Then similarly to the previous example, we get $g_{1,3} \in B^-\vw B^-$ and $g_{3,1} \in B^-\vw B^-$, the latter being equivalent to $g_{1,3} \in B^+\vw B^+$. Thus we get $g_{1,3} \in G^{w_0,w_0}=B^+\vw B^+ \cap B^-\vw B^-$. 
\end{ex}

\begin{ex}[partially generic cases]
The restriction $g_{1,3} \in G^{w_0,w_0}$ in the previous example can be viewed as a consequence of the genericity condition for the consecutive flags. Let us consider the moduli space $\P_{G,Q:\Xi}$ with $\Xi=\{E_1,E_2,E_3\}$ and $\Xi=\{E_1,E_3\}$, which are schematically shown in the middle and in the right in \cref{fig:square_Wilson_lines}, respectively. In these cases we have less Wilson lines and less restrictions for the values of $g_{1,3}$: it can take an arbitrary value in $B^+\vw B^+$ and in $G$, respectively. 

In particular, we have $\P_{G,Q;\{E_1,E_3\}} \cong G$. The configuration of flags is parametrized as $[B^+,B^-,g_{1,3}.B^+,g_{1,3}.B^-]$. Our discussion shows that the image of the dominant morphism $\P_{G,Q} \to \P_{G,Q;\{E_1,E_3\}}\cong G$ is exactly the double Bruhat cell $G^{w_0,w_0}$.
\end{ex}

\subsection{Decomposition formula for Wilson lines}\label{subsec:standard_config}
We are going to give a certain explicit representative of an element of $\Conf_3 \P_G$. 
%This is the way we recognize the variety structures on these spaces and perform explicit computations in the subsequent sections. See \cite{SW19} for a description of the variety structure of $\Conf_k \P_G$ for $k \geq 3$ in terms of the double Bott--Samelson varieties. 
We also introduce certain functions on these spaces called the \emph{basic Wilson lines}, which will be the local building blocks for the general Wilson line morphisms. The standard configuration makes it apparent that the values of the basic Wilson lines are upper or lower triangular.

\subsubsection{The moduli space for a triangle} 
Let $T$ be a triangle, \emph{i.e.}, a disk with three special points. 
The choice $m$ of a distinguished marked point determines an atlas $P_{G,T}^{(m)}$ of the moduli space $\P_{G,T}$. Note that the representable stack $[P_{G,T}^{(m)}/G]$ is nothing but the configuration space $\Conf_3 \P_G$. 
In other words, the moduli space $\P_{G,T}$ can be identified with the configuration space $\Conf_3 \P_G$ in three ways, depending on the choice of a distinguished marked point. Let us denote the isomorphism by
\begin{align}\label{eq:moduli_triangle}
    f_{m}:\P_{G,T} \xrightarrow{\sim} \Conf_3 \P_G.
\end{align}
For later use, it is useful to indicate the distinguished marked point by the symbol $\ast$ on the corresponding corner in figures, which we call the \emph{dot}. See \cref{fig:polygon} for an example. 

In topological terms, the isomorphisms $f_{m}$ are described as follows. Let us denote the three marked points of $T$ by $m,m',m''$ in this counter-clockwise order. Let $E:=[m,m'], E':=[m',m''], E'':=[m'',m]$ be the three boundary intervals. 
Given $[\L,\beta;p] \in \P_{G,T}$, the local system $\L$ is trivial. We have three sections $\beta_m,\beta_{m'},\beta_{m''}$ of $\L_\B$ defined near each marked point, and three sections $p_E,p_{E'},p_{E''}$ of $\L_\P$ defined on each boundary interval. Then 
\begin{align}\label{eq:moduli_triangle_explicit}
    f_{m}:\ &\P_{G,T} \xrightarrow{\sim} \Conf_3 \P_G, \\
    \nonumber
    &[\L,\beta;p] \mapsto [\beta_m(x),\beta_{m'}(x),\beta_{m''}(x);p_{E}(x),p_{E'}(x),p_{E''}(x)].
\end{align}
Here we extend the domain of each section until a common point $x \in T$ via the parallel transport defined by $\L$. The following is a special case of \cref{lem:boundary_shift}.

\begin{lem}\label{l:cyclic shift}
The coordinate transformation $f_{m'}\circ f_{m}^{-1}$ is given by the cyclic shift
\begin{align*}
    \mathcal{S}_3: \Conf_3 \P_G \xrightarrow{\sim} \Conf_3 \P_G, \quad [p_{E},p_{E'},p_{E''}] \mapsto [p_{E'},p_{E''},p_{E}],
\end{align*}
which is an isomorphism.
\end{lem}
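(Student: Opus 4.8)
The statement is declared to be a special case of \cref{lem:boundary_shift}, so the plan is to specialize that lemma to the triangle and then descend to the quotient stack.

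First I would set up $T$ as a marked surface: it is a disk with a single boundary component carrying $N=3$ boundary intervals $E,E',E''$ and special points $m,m',m''$ in counter-clockwise order. Choosing $m$ as the distinguished marked point produces the atlas $P_{G,T}^{(m)}$, and passing to the next special point $m'$ is exactly the replacement $m\mapsto m_2^{(1)}$ considered in \cref{lem:boundary_shift}. The crucial simplification is that $\overline{T}=T$ is simply connected: the boundary loop $\delta$ is null-homotopic, so $\rho(\delta)=e$ for every rigidified object $(\rho,\lambda,\phi)\in P_{G,T}^{(m)}$.

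Next I would invoke \cref{lem:boundary_shift} with this data. It furnishes a $G$-equivariant isomorphism $P_{G,T}^{(m)}\xrightarrow{\sim}P_{G,T}^{(m')}$ which, on the pinning components, sends $(\phi_E,\phi_{E'},\phi_{E''})\mapsto(\phi_{E'},\phi_{E''},\rho(\delta).\phi_E)$, and likewise on the flag components. Since $\rho(\delta)=e$, the twist disappears and the map becomes the honest cyclic permutation $(\phi_E,\phi_{E'},\phi_{E''})\mapsto(\phi_{E'},\phi_{E''},\phi_E)$. Finally I would pass to the quotient by $G$ and translate through the identifications $f_m$ and $f_{m'}$ of \eqref{eq:moduli_triangle_explicit}. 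Because $f_{m'}$ reads off the configuration data starting from $m'$, that is, in the order $p_{E'},p_{E''},p_E$, whereas $f_m$ uses the order $p_E,p_{E'},p_{E''}$, the descended map is precisely $f_{m'}\circ f_m^{-1}\colon[p_E,p_{E'},p_{E''}]\mapsto[p_{E'},p_{E''},p_E]$, which is $\mathcal{S}_3$. It is an isomorphism because it descends from the isomorphism of \cref{lem:boundary_shift} (alternatively, $\mathcal{S}_3^3=\mathrm{id}$ exhibits its inverse).

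I do not expect a serious obstacle, as the argument is a direct specialization; the only points requiring care are bookkeeping ones. I would fix the orientation convention so that the cyclic shift runs in the asserted direction, and I would confirm that the vanishing $\rho(\delta)=e$ is exactly what removes the boundary twist $\rho(\delta_k)$ in the general formula of \cref{lem:boundary_shift}. Compatibility of the atlas-level map with the two identifications $f_m,f_{m'}$ upon descent is routine, since both atlases present the single moduli stack $\P_{G,T}$.
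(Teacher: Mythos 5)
Your proof is correct and follows exactly the route the paper intends: the paper states the lemma as an immediate special case of \cref{lem:boundary_shift}, and your argument is precisely that specialization, with the key simplification $\rho(\delta)=e$ (from simple connectivity of the disk) correctly identified as the reason the boundary twist in the general formula disappears.
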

An explicit computation of the cyclic shift $\mathcal{S}_3$ in terms of the standard configuration is given in \cref{sec:positivity}. 
%We can endow $\P_{G,T}$ with a structure of algebraic variety by the pull-back of that on $\Conf_3 \P_G$ via one of three isomorphisms $f_{m_i}$, $i=1,2,3$.

\subsubsection{The standard configuration and basic Wilson lines}
Now let us more look into the configuration space $\Conf_3 \P_G$, which models the moduli space $\P_{G,T}$ as we have seen just above. 
Let $U^\pm_*:= U^\pm \cap B^\mp \vw B^\mp$ denote the \emph{open unipotent cell}, which is an open subvariety of $U^\pm$. 
%For the structure of its coordinate ring, see \cref{subsec:grp-Liealg}. 
Let $\phi':U^+_* \xrightarrow{\sim} U^-_*$ be the unique isomorphism such that $\phi'(u_+).B^+=u_+.B^-$. See \cref{sec:twist map} for details. 
%We also use some maps related to the \emph{twist automorphism} $\eta_{w_0}:U^+_* \to U^+_*$, which are collected in \cref{sec:twist map}. 

\begin{lem}[cf. {\cite[(8.7)]{FG03}}]\label{l:standardconfig}
We have an isomorphism
\begin{align*}
\wC_3 &: H \times H \times U_*^+ \xrightarrow{\sim} \Conf_3 \P_G, \\ &(h_1,h_2,u_+) \mapsto [B^+, B^-, u_+.B^-; p_\std, \phi'(u_+)h_1\vw.p_\std, u_+h_2\vw.p_\std]
\end{align*}
of varieties. 
\end{lem}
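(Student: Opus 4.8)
The plan is to exhibit $H \times H \times U^+_*$ as an explicit slice for the free $G$-action on the space of configurations, and to read off $(h_1,h_2,u_+)$ from a normalized representative. First I would record that $\wC_3$ lands in $\Conf_3\P_G$: using the right action formula $(g.p_\std).h = gh.p_\std$, the relations $\vw.B^+=B^-$ and $\vw.B^-=B^+$, the inclusions $\phi'(u_+)\in U^-\subset B^-$ and $u_+\in U^+\subset B^+$, and the defining property $\phi'(u_+).B^+=u_+.B^-$, one checks directly that the three prescribed pinnings satisfy the cyclic constraints $\pi_-(p_{i-1,i})=\pi_+(p_{i,i+1})$. For instance $\pi_+(\phi'(u_+)h_1\vw.p_\std)=\phi'(u_+).B^-=B^-$ and $\pi_-(\phi'(u_+)h_1\vw.p_\std)=\phi'(u_+).B^+=u_+.B^-$, and similarly $\pi_+(u_+h_2\vw.p_\std)=u_+.B^-$, $\pi_-(u_+h_2\vw.p_\std)=B^+$.

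For bijectivity, recall that $\P_G$ is a principal (\emph{i.e.} simply transitive) $G$-space, so every $G$-orbit in $\Conf_3\P_G$ contains a unique representative with $p_{12}=p_\std$; this forces $B_1=\pi_+(p_\std)=B^+$ and $B_2=\pi_-(p_\std)=B^-$. It then suffices to parametrize the remaining data $(B_3;p_{23},p_{31})$, subject to the genericity constraints, by $(h_1,h_2,u_+)$. The flag $B_3$ must be generic with respect to both $B^-$ and $B^+$. Genericity being a symmetric condition, $(B_3,B^+)$ generic is equivalent to $(B^+,B_3)$ generic, which places $B_3$ in the open $B^+$-orbit $U^+.B^-$; hence $B_3=u_+.B^-$ for a unique $u_+\in U^+$ (as $U^+\cap B^-=\{e\}$). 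Genericity of $(B^-,u_+.B^-)$ is then equivalent to $u_+\in U^+_*$: this is exactly the locus where the twist isomorphism $\phi'$ is defined (see \cref{sec:twist map}), since there $(B^-,u_+.B^-)=\phi'(u_+).(B^-,B^+)$ with $\phi'(u_+)\in U^-$, whence generic.

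With $B_1,B_2,B_3$ and $p_{12}=p_\std$ fixed, the pinnings $p_{23}$ and $p_{31}$ range over the fibers of $(\pi_+,\pi_-)$ over $(B^-,u_+.B^-)$ and $(u_+.B^-,B^+)$, each a principal $H$-space. The elements $\phi'(u_+)\vw.p_\std$ and $u_+\vw.p_\std$ are base points of these fibers (the computation above with $h_1=h_2=e$). Since $\vw\in N_G(H)$, the identity $h\vw=\vw\,(\vw^{-1}h\vw)$ gives $\phi'(u_+)h_1\vw.p_\std=(\phi'(u_+)\vw.p_\std).(\vw^{-1}h_1\vw)$, and as $h_1\mapsto \vw^{-1}h_1\vw$ is an automorphism of $H$, the assignment $h_1\mapsto p_{23}$ is a bijection onto the fiber; the same argument gives a bijection $h_2\mapsto p_{31}$. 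Thus the normalized representative determines, and is determined by, a unique triple $(h_1,h_2,u_+)$.

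Assembling these steps yields a bijection that is a morphism in both directions: $\wC_3$ is manifestly algebraic, and its inverse — normalize $p_{12}$ via the principal $G$-space structure $\P_G\cong G$, read $u_+$ off $B_3$, and $h_1,h_2$ off $p_{23},p_{31}$ — is algebraic as well, so $\wC_3$ is an isomorphism of varieties (equivalently, the slice identifies the geometric quotient of \cref{lem:geometric_quotient}). I expect the one genuinely non-formal point to be the equivalence ``$(B^-,u_+.B^-)$ generic $\iff u_+\in U^+_*$'', i.e. identifying the double-genericity locus with the open unipotent cell and matching it with the domain of $\phi'$; everything else is bookkeeping, the only delicate piece being to carry the conjugation $h\mapsto\vw^{-1}h\vw$ correctly so that $h_1,h_2$ appear on the intended side.
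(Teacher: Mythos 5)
Your proposal is correct and takes essentially the same route as the paper's own proof: normalize a representative of the configuration (you fix $p_{12}=p_\std$ first, while the paper fixes the flags as $[B^+,B^-,u_+.B^-]$ first and then uses the residual $H=B^+\cap B^-$), identify $B_3=u_+.B^-$ with $u_+\in U_*^+$ via the genericity conditions and the defining property of $\phi'$, and parametrize $p_{23},p_{31}$ by $h_1,h_2$ using the principal $H$-fiber structure. Your extra care about the well-definedness check, the equivalence with $u_+\in U_*^+$ (which is the appendix bijection $U_*^+\xrightarrow{\sim}\B_*$ that the paper uses implicitly), and the algebraicity of the inverse are harmless refinements of the paper's shorter ``it suffices to prove bijectivity'' argument.
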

We call the representative in the right-hand side or the parametrization $\wC_3$ itself the \emph{standard configuration}.

\begin{proof}
Since $\wC_3$ is clearly a morphism of varieties, it suffices to prove that it is bijective. 
Let $(B_1,B_2,B_3; p_{12},p_{23},p_{31})$ be an arbitrary configuration. Using the genericity condition for the pairs $(B_1,B_2)$, $(B_2,B_3)$ and $(B_3,B_1)$, we can write $[B_1,B_2,B_3] = [B^+,B^-,u_+.B^-]$ for some $u_+ \in U_*^+$. Using an element of $B^+ \cap B^- =H$, we can further translate the configuration so that $p_{12}=p_\std$. Note that a representative of $(B_1,B_2,B_3; p_{12},p_{23},p_{31})$ satisfying these conditions is unique. 

Since $p_{31}$ is now a pinning over the pair $(u_+.B^-,B^+)$, there exists $h_2 \in H$ such that $p_{31} = u_+h_2\vw.p_\std$. Let us write $p_{23} = g.p_\std$ for some $g \in G$. Since $p_{23}$ is a pinning over the pair $(B^-,u_+.B^-)$, we have $g.B^+ = B^-$ and hence $g = b_-\vw$ for some $b_- \in B^-$. We also have $g.B^- = u_+.B^- = \phi'(u_+)\vw. B^-$, where the latter is the very definition of the map $\phi'$. 
It implies that $b_- =\phi'(u_+)h_1$ for some $h_1 \in H$. 
Thus we get the desired parametrization.
\end{proof}
Thus we get an induced isomorphism 
\begin{align*}
    \cO(\Conf_3 \P_G) \xrightarrow{\sim} \cO(H)^{\otimes 2} \otimes \cO(U^+_*)
\end{align*}
of the rings of regular functions. 
Note that we can represent a configuration $\wC \in \Conf_3 \P_G$ in the following two ways:
\begin{itemize}
    \item $\wC = [p_\std,\overline{p}_{23},\overline{p}_{31}]$, where the first component is normalized,
    \item $\wC = [\overline{p}'_{12},\overline{p}'_{23},p_\std]$, where the last component is normalized.
\end{itemize}
Such representatives are unique since the set of pinnings is a principal $G$-space.  

\begin{dfn}\label{d:basic-Conf3}
Define the elements $b_L=b_L(\wC)$, $b_R=b_R(\wC) \in G$ (\lq\lq left", \lq\lq right") by the condition 
\begin{align*}
    \overline{p}_{31} = (b_L.p_\std)^*, \quad \overline{p}'_{12} = (b_R.p_\std)^*.
\end{align*}
The resulting maps $b_L,b_R:\Conf_3 \P_G \to G$ are called the \emph{basic Wilson lines}. 
\end{dfn}
Note that we have $(b_R\vw)^{-1} = b_L\vw$, since 
\begin{align*}
    \wC 
    = [\overline{p}'_{12},\overline{p}'_{23},p_\std] 
    = [b_R\vw.p_\std,\overline{p}'_{23},p_\std] 
    = [p_\std, (b_R\vw)^{-1}.\overline{p}'_{23}, (b_R\vw)^{-1}.p_\std].
\end{align*}
%These $G$-valued functions are the simplest examples of Wilson lines: see \cref{f:intersection}. 
We remark here that these functions already appeared in \cite[Section 6.2]{GS14}.
The following is a direct consequence of \cref{l:standardconfig}:

\begin{cor}\label{c:LR}
We have $b_L(\wC) \in B^+_*$ and $b_R(\wC) \in B^-_*$ for any configuration $\wC \in \Conf_3 \P_G$. The resulting maps
\begin{align}
    &b_L: \Conf_3 \P_G \to B^+_*, \quad \wC \mapsto b_L(\wC),\label{eq:phimap1} \\
    &b_R: \Conf_3 \P_G \to B^-_*, \quad \wC \mapsto b_R(\wC)\label{eq:phimap2} 
\end{align}
are morphisms of varieties, which are explicitly given by
\begin{align*}
    b_L(\wC_3(h_1,h_2,u_+)) &= u_+h_2 \in B^+_*, \\
    b_R(\wC_3(h_1,h_2,u_+)) &= \vw^{-1} (u_+h_2)^{-1}\vw=((u_+h_2)^{\ast})^{\mathsf{T}}  \in B^-_*.
\end{align*}
\end{cor}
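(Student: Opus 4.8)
The plan is to read everything off directly from the standard configuration of \cref{l:standardconfig}, so that the corollary becomes a bookkeeping exercise in the two anti-/automorphisms $\mathsf{T}$ and $\ast$. Write $\wC=\wC_3(h_1,h_2,u_+)$, so that the representative $[B^+,B^-,u_+.B^-;\,p_\std,\phi'(u_+)h_1\vw.p_\std,u_+h_2\vw.p_\std]$ already has its first pinning normalized to $p_\std$. First I would compute $b_L$: in the notation of \cref{d:basic-Conf3} this representative is exactly $\wC=[p_\std,\overline{p}_{23},\overline{p}_{31}]$ with $\overline{p}_{31}=u_+h_2\vw.p_\std$. Since by definition $\overline{p}_{31}=(b_L.p_\std)^\ast=b_L\vw.p_\std$ and $\P_G$ is a principal $G$-space, comparing the two expressions forces $b_L\vw=u_+h_2\vw$, whence $b_L=u_+h_2$. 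The membership $b_L\in B^+_\ast=B^+\cap B^-\vw B^-$ is then immediate: $u_+\in U^+_\ast=U^+\cap B^-\vw B^-$ gives $u_+h_2\in U^+H=B^+$ and, since $H\subset B^-$, also $u_+h_2\in(B^-\vw B^-)H=B^-\vw B^-$.

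For $b_R$ I would invoke the relation $(b_R\vw)^{-1}=b_L\vw$ recorded immediately after \cref{d:basic-Conf3}, which rearranges to $b_R=\vw^{-1}b_L^{-1}\vw^{-1}$. Because $G$ is of adjoint type we have $s_G=\vw^2=1$, hence $\vw^{-1}=\vw$ and $b_R=\vw^{-1}(u_+h_2)^{-1}\vw$, the first of the two displayed formulas. To match this with $((u_+h_2)^\ast)^{\mathsf{T}}$, I would use the description $g^\ast=\vw(g^{-1})^{\mathsf{T}}\vw^{-1}$ from \cref{l:Dynkininv} together with the identity $\vw^{\mathsf{T}}=\vw^{-1}$; the latter I would obtain by reducing to each rank-one subgroup $\varphi_s(SL_2)$, on which $\mathsf{T}$ restricts to the ordinary matrix transpose so that $\overline{r}_s^{\mathsf{T}}=\overline{r}_s^{-1}$, and then multiplying over a reduced word. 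Since $\mathsf{T}$ is an anti-involution, a short manipulation gives $(g^\ast)^{\mathsf{T}}=\vw g^{-1}\vw^{-1}=\vw^{-1}g^{-1}\vw$ in $G$, which is exactly $b_R$ with $g=u_+h_2$.

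Finally, for $b_R\in B^-_\ast=B^-\cap B^+\vw B^+$ I would use the two involutions structurally rather than by direct cell chasing. The transpose $\mathsf{T}$ sends $B^+\mapsto B^-$ and, via $(b_1\vw b_2)^{\mathsf{T}}=b_2^{\mathsf{T}}\vw^{-1}b_1^{\mathsf{T}}$, sends $B^-\vw B^-\mapsto B^+\vw B^+$ (using $\vw^{\mathsf{T}}=\vw^{-1}=\vw$), so it carries $B^+_\ast$ onto $B^-_\ast$; and $\ast$ preserves $B^+_\ast$, since it preserves $B^+$ and, through $g^\ast=\vw(g^{-1})^{\mathsf{T}}\vw^{-1}$ and conjugation by $\vw$, also $B^-\vw B^-$. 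As $b_R=(b_L^\ast)^{\mathsf{T}}$ with $b_L\in B^+_\ast$, this yields $b_R\in B^-_\ast$. The morphism claim is then automatic: $\wC_3$ is an isomorphism of varieties and the formulas exhibit $b_L,b_R$ as polynomial, hence regular, expressions in the coordinates $(h_1,h_2,u_+)$.

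The only genuinely delicate point I anticipate is the bookkeeping around the involutions — establishing $\vw^{\mathsf{T}}=\vw^{-1}$ and then using $\vw^2=1$ in the adjoint group repeatedly to reconcile $\vw^{-1}(u_+h_2)^{-1}\vw$ with $((u_+h_2)^\ast)^{\mathsf{T}}$ — whereas the computation of $b_L$ and every membership statement are formal once the standard configuration is in hand.
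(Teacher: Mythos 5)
Your proof is correct and takes essentially the same route the paper intends: the paper states this corollary as a direct consequence of \cref{l:standardconfig}, and your steps — reading $b_L=u_+h_2$ off the unique representative with $p_{12}=p_\std$, invoking the relation $(b_R\vw)^{-1}=b_L\vw$ recorded after \cref{d:basic-Conf3} together with $\vw^2=1$ in the adjoint group, and reconciling $\vw^{-1}(u_+h_2)^{-1}\vw$ with $((u_+h_2)^\ast)^{\mathsf{T}}$ via \cref{l:Dynkininv} and $\vw^{\mathsf{T}}=\vw^{-1}$ — are exactly the details the paper leaves implicit. The membership and regularity arguments are likewise correct, so there is nothing to fix.
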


\begin{rem}\label{rem:LR_Wilson_line}
The definition of the basic Wilson lines $b_L$ and $b_R$ can be rephrased as follows. Let us write a configuration as $\wC=[g_1.p_\std,g_2.p_\std,g_3.p_\std] \in \Conf_3 \P_G$. Then we have 
\begin{align*}
    b_L(\wC)=g_1^{-1}g_3\vw \quad\mbox{and}\quad b_R(\wC)=g_3^{-1}g_1\vw,
\end{align*}
and their regularity is also clear from this presentation. 
\end{rem}

\paragraph{\textbf{The $H^3$-action.}}
Recall the right $H^3$-action on $\Conf_3 \P_G$ given by $[p_{12},p_{23},p_{31}].(k_1,k_2,k_3)=[p_{12}.k_1,p_{23}.k_2,p_{31}.k_3]$ for $(k_1,k_2,k_3) \in H^3$. 
It is expressed in the standard configuration by 
\begin{align}\label{eq:H3-action_config}
    \wC_3(h_1,h_2,u_+).(k_1,k_2,k_3)=\wC_3(k_1^{-1}h_1w_0(k_2),k_1^{-1}h_2w_0(k_3),\Ad_{k_1}^{-1}(u_+))
\end{align}
for $(h_1,h_2,u_+) \in H \times H \times U_*^+$. 
By this action, the functions $b_L$ and $b_R$ are rescaled as
\begin{align}
    b_L(\wC.(k_1,k_2,k_3)) &= k_1^{-1}b_L(\wC)w_0(k_3), \label{eq:H-invarianceL}\\
    b_R(\wC.(k_1,k_2,k_3)) &= k_3^{-1}b_L(\wC)w_0(k_1).\label{eq:H-invarianceR}
\end{align}

The following is the geometric rephrasing:

\begin{lem}\label{l:H3-action}
%For $j\in \{1,2,3\}$, let $E_{j}$ denote the boundary interval of $T$ connecting the marked points $m_j$ and $m_{j+1}$. 
Let us use the notation in \eqref{eq:moduli_triangle_explicit}. 
Then via the isomorphism 
\begin{align*}
    \wC_{3,m}:=f_{m}^{-1}\circ \wC_3: H \times H \times U_*^+ \xrightarrow{\sim} \P_{G,T},
\end{align*}
the action $\P_{G,T} \times H^3 \to \P_{G,T}$ defined by $(\alpha_{E},\alpha_{E'},\alpha_{E''})$ is given by
\begin{align*}
    \wC_{3,m}(h_1,h_2,u_+).(k_1,k_2,k_3) = \wC_{3,m}(k_1^{-1}h_1w_0(k_2), k_1^{-1}h_2w_0(k_3), \mathrm{Ad}_{k_1}^{-1}(u_+))
\end{align*}
for $(h_1,h_2,u_+) \in H \times H \times U_*^+$ and $(k_1,k_2,k_3) \in H^3$.
\end{lem}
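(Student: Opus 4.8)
The plan is to reduce the statement to the action formula \eqref{eq:H3-action_config} already available on $\Conf_3\P_G$ by checking that the identification $f_m$ is $H^3$-equivariant. Since $\wC_{3,m}=f_m^{-1}\circ\wC_3$ by definition, it suffices to show that $f_m$ intertwines the action $(\alpha_E,\alpha_{E'},\alpha_{E''})$ on $\P_{G,T}$ with the standard right $H^3$-action $[p_{12},p_{23},p_{31}].(k_1,k_2,k_3)=[p_{12}.k_1,p_{23}.k_2,p_{31}.k_3]$ on $\Conf_3\P_G$. Granting this, $f_m^{-1}$ is equally $H^3$-equivariant, and applying it to \eqref{eq:H3-action_config} yields
\begin{align*}
\wC_{3,m}(h_1,h_2,u_+).(k_1,k_2,k_3)=\wC_{3,m}(k_1^{-1}h_1 w_0(k_2),\,k_1^{-1}h_2 w_0(k_3),\,\Ad_{k_1}^{-1}(u_+)),
\end{align*}
which is exactly the asserted formula.

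Thus the core step is the equivariance of $f_m$. First I would fix the index dictionary dictated by \eqref{eq:moduli_triangle_explicit}: with the marked points $m,m',m''$ in counter-clockwise order and $E=[m,m']$, $E'=[m',m'']$, $E''=[m'',m]$, the configuration $f_m([\L,\beta;p])$ has pinning slots $(p_{12},p_{23},p_{31})=(p_E(x),p_{E'}(x),p_{E''}(x))$. Hence the slot $p_{12}$ carries the pinning along $E$, the slot $p_{23}$ that along $E'$, and $p_{31}$ that along $E''$, which pairs $\alpha_E\leftrightarrow k_1$, $\alpha_{E'}\leftrightarrow k_2$, $\alpha_{E''}\leftrightarrow k_3$. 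Next I would check that rescaling commutes with evaluation at the common point $x$: the action $\alpha_E(h)$ replaces the flat section $p_E$ of $\L_\P$ by $p_E.h$ via the fiberwise principal $H$-action induced from the right $H$-action on $\P_G$; since this $H$-action commutes with the left $G$-action defining $\L_\P=\L\times_G\P_G$, it is preserved by the parallel transport used to extend the sections to $x$, so the rescaled section evaluates to $p_E(x).h$. Consequently $\alpha_E(k_1)$ acts on $f_m([\L,\beta;p])$ by $p_{12}\mapsto p_{12}.k_1$ while fixing the other slots, and likewise for $E',E''$; this is precisely the standard $H^3$-action, establishing the equivariance.

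The only genuinely delicate point is the compatibility of the fiberwise $H$-rescaling with parallel transport, namely that rescaling a flat section of $\L_\P$ by $h\in H$ again produces a flat section whose value at $x$ is rescaled by $h$. Because the right $H$-action on $\P_G$ commutes with the left $G$-action used in the associated bundle $\L_\P$, this compatibility is automatic, so no serious obstacle arises; everything else is bookkeeping with the index dictionary above. (The input formula \eqref{eq:H3-action_config} is itself obtained by a direct computation in the standard configuration of \cref{l:standardconfig}, by rescaling the three pinnings and renormalizing the first slot back to $p_\std$, using $\vw h\vw^{-1}=w_0(h)$ and the $H$-equivariance $\phi'(\Ad_{k}^{-1}u_+)=\Ad_{k}^{-1}(\phi'(u_+))$.)
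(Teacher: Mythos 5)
Your proposal is correct and matches the paper's (implicit) intent: the paper offers no separate proof of \cref{l:H3-action}, presenting it as the immediate ``geometric rephrasing'' of \eqref{eq:H3-action_config}, and your reduction via the $H^3$-equivariance of $f_m$ is exactly the step being left to the reader. Your bookkeeping of the slots $(p_{12},p_{23},p_{31})=(p_E(x),p_{E'}(x),p_{E''}(x))$, the compatibility of the fiberwise $H$-rescaling with parallel transport, and the sketched derivation of \eqref{eq:H3-action_config} (including $\Ad_{k}^{-1}\phi'(u_+)=\phi'(\Ad_k^{-1}u_+)$ and $\vw k=w_0(k)\vw$) are all sound.
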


Let us briefly summarize what we have seen. Given an ideal triangulation $\Delta$ of a marked surface $\Sigma$, we get the gluing morphism $q_\Delta: \widetilde{\P_{G,\Sigma}^\Delta}=\prod_{T \in t(\Delta)} \P_{G,T} \to \P_{G,\Sigma}$ (recall \cref{t:GS decomposition}). 
Through this morphism, the triangle pieces $\P_{G,T}$ can be regarded as local components of $\P_{G,\Sigma}$. 
%It gives a decomposition of the moduli space $\P_{G,\Sigma}$ into triangle pieces $\P_{G,T}$. 
Moreover, if we choose a marked point $m_T$ of each triangle $T$, then we get isomorphisms $f_{m_T}:\P_{G,T} \xrightarrow{\sim} \Conf_3 \P_G$ by \eqref{eq:moduli_triangle}, which enables us to study the moduli spaces in terms of the unipotent cells via the standard configuration (\cref{l:standardconfig}). In this paper, the choice of $m_T$ is indicated by the symbol $\ast$ in the figures (e.g.~\cref{fig:polygon} below). We call the data $\Delta_\ast:=(\Delta,(m_T)_{T \in t(\Delta)})$ a \emph{dotted triangulation}.

\subsubsection{Decomposition formula in the polygon case}\label{subsec:pairing}
Let $\Pi$ be an oriented $k$-gon, which can be regarded as a marked surface (\emph{i.e.}, a disk with $k$ special points on the boundary). 
Note that for two boundary intervals $E_\inn,E_\out$ of $\Pi$, there is a unique arc class of the form $[c]: E_\inn \to E_\out$ in this case.

Take an ideal triangulation $\Delta$ of $\Pi$. Choose a representative $c$ so that the intersection with $\Delta$ is minimal. Let $T_1,\dots,T_M$ be the triangles of $\Delta$ which $c$ traverses in this order. Note that for each $\nu=1,\dots,M$, the intersection $c \cap T_\nu$ is either one of the two patterns shown in \cref{f:intersection}. The \emph{turning pattern} of $c$ with respect to $\Delta$ is encoded in the sequence $\tau_\Delta([c]) = (\tau_\nu)_{\nu=1}^M \in \{L,R\}^{N}$, where $\tau_\nu = L$ (resp. $\tau_\nu=R$) if $c \cap T_\nu$ is the left (resp. right) pattern in \cref{f:intersection}. For our purpose, it is enough to consider the case $T_1 \cup \dots \cup T_M = \Pi$. An example for $k=6$ is shown in \cref{fig:polygon}. 

\begin{figure}
\[
\begin{tikzpicture}
%left
\draw (0,0) coordinate (B1) node[above]{$B_1^{(\nu)}$} node[below]{$\ast$};
\draw (240: 2) coordinate (B2) node[left]{$B_2^{(\nu)}$};
\draw (300: 2) coordinate (B3) node[right]{$B_3^{(\nu)}$};
\draw (B1) -- (B2) -- (B3) --cycle;
\draw[->,thick,color=red] (240:1) arc[start angle=240, end angle=300, radius=1cm] node[midway,below]{$c$};
\draw(0,-2.5) node{$\tau_\nu=L$};
%right
\begin{scope}[xshift=5cm]
\draw (0,0) coordinate (B1) node[above]{$B_1^{(\nu)}$} node[below]{$\ast$};
\draw (240: 2) coordinate (B2) node[left]{$B_2^{(\nu)}$};
\draw (300: 2) coordinate (B3) node[right]{$B_3^{(\nu)}$};
\draw (B1) -- (B2) -- (B3) --cycle;
\draw[->,thick,color=red] (300:1) arc[start angle=300, end angle=240, radius=1cm] node[midway,below]{$c$};
\draw(0,-2.5) node{$\tau_\nu=R$};
\end{scope}
\end{tikzpicture}
\]
\caption{Two intersection patterns of $c \cap T_\nu$}
\label{f:intersection}
\end{figure}

\begin{figure}
\begin{tikzpicture}
\foreach \i in {0,1,2,3,4,5}
\draw(60*\i:2) coordinate(A\i) -- (60*\i+60:2);
\draw(A0) -- (A2);
\draw(A2) -- (A5);
\draw(A5) -- (A3);
\draw(210:2) node{$E_\inn$};
\draw(30:2) node[right=-0.5em]{$E_\out$};
\draw(2,0) node[above left]{$\ast$};
\draw(120:2) node[below right]{$\ast$};
\draw(300:2) node[above left]{$\ast$};
\draw(180:2) node[below right]{$\ast$};
\path(210:1.732)++(0:0.5) coordinate(c1);
\path(30:1.732)++(180:0.5) coordinate(c2);
\draw[->,thick,red] (210:1.732) ..controls (c1) and (c2) .. node[midway,above]{$c$} (30:1.732);
%\draw[->,thick,red] (210:1.732) to[out=30, in=225] node[midway,above]{$c$} (0,0) to[out=45, in=210] (30:1.732);
\end{tikzpicture}
    \caption{An example of arc class for $k=6$. The turning pattern is $\tau_\Delta([c])=(L,R,L,R)$.}
    \label{fig:polygon}
\end{figure}

\begin{dfn}\label{d:dot associated with side pair}
Let $\Delta$ be an ideal triangulation of $\Pi$. 
Let $[c]:E_\inn \to E_\out$ be an arc class such that its minimal representative $c$ traverses every triangle of $\Delta$. We put a dot $m_\nu$ on the triangle $T_\nu$ so that the arc $c \cup T_\nu$ separates the corner with the dot $m_\nu$ from the other two corners (see \cref{f:intersection}). The resulting dotted triangulation is written as $\Delta_*=\Delta_*([c])$, and called the \emph{dotted triangulation associated with the arc class $[c]$}.
\end{dfn}
Let us consider the restriction of the Wilson line $g_{[c]}$ to the substack $\P_{G,\Pi}^\Delta$. Let $q_\Delta: \prod_{\nu=1}^M \P_{G,T_\nu} \to \P_{G,\Pi}^\Delta$ be the gluing morphism, and
define
\begin{align*}
    g_\nu:= 
    \begin{cases} 
        b_L\circ f_{m_\nu}: \P_{G,T_\nu} \to B^+_* & \text{if $\tau_\nu = L$}, \\
        b_R\circ f_{m_\nu}: \P_{G,T_\nu} \to B^-_* & \text{if $\tau_\nu = R$}
    \end{cases}
\end{align*}
for $\nu=1,\dots,M$. Here recall the isomorphisms \eqref{eq:moduli_triangle}. 
Then we have the following:

\begin{prop}[Decomposition formula: polygon case]\label{p:mon-decomp}
For an arc class $[c]: E_\inn \to E_\out$ on $\Pi$ as above, we have
\begin{align*}
    q_{\Delta}^*g_{[c]} = \mu_M \circ \prod_{\nu=1}^M g_\nu,
\end{align*}
where $\mu_M$ denotes the multiplication of $M$ elements in $G$.
\end{prop}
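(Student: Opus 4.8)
The plan is to combine the multiplicativity of Wilson lines (\cref{prop:multiplicativity}) with a purely local computation on each triangle. First I would cut the minimal representative $c$ along the internal edges of $\Delta$ into the pieces $c_\nu := c \cap T_\nu$, so that $[c]=[c_1]\ast\cdots\ast[c_M]$ is the concatenation of arc classes $[c_\nu]:D_{\nu-1}\to D_\nu$ on the individual triangles, where $D_0=E_\inn$, $D_M=E_\out$, and $D_\nu$ (for $1\le\nu\le M-1$) is the internal edge separating $T_\nu$ and $T_{\nu+1}$. Since we assume $T_1\cup\cdots\cup T_M=\Pi$ and $c$ is minimal, the dual graph of $\Delta$ is a path and each internal edge is crossed exactly once; hence the $D_\nu$ exhaust all internal edges and $q_\Delta$ is precisely the composite of the gluings along $D_1,\dots,D_{M-1}$.

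The local step is to show $g_{[c_\nu]}=g_\nu$ as morphisms $\P_{G,T_\nu}\to G$, i.e. that the triangle Wilson line for the arc cutting off the dotted corner coincides with $b_L\circ f_{m_\nu}$ or $b_R\circ f_{m_\nu}$ according to $\tau_\nu$. Writing a configuration as $\wC=[g_1.p_\std,g_2.p_\std,g_3.p_\std]$, \cref{rem:LR_Wilson_line} gives $b_L(\wC)=g_1^{-1}g_3\vw$ and $b_R(\wC)=g_3^{-1}g_1\vw$. Under $f_{m_\nu}$ as in \eqref{eq:moduli_triangle_explicit}, the pinnings of the two edges adjacent to the dot occupy the first and third slots, and by \cref{d:dot associated with side pair} these are exactly $E_\inn^{(\nu)}$ and $E_\out^{(\nu)}$ in the order dictated by \cref{f:intersection}. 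Evaluating the topological definition of $g_{[c_\nu]}$ — trivialize so that the incoming pinning becomes $p_\std$, transport along $c_\nu$, and read off the outgoing pinning as $g_{[c_\nu]}\vw.p_\std$ using $p_\std^\ast=\vw.p_\std$ — then yields $g_1^{-1}g_3\vw^{-1}$ when $\tau_\nu=L$ and $g_3^{-1}g_1\vw^{-1}$ when $\tau_\nu=R$. These agree with $b_L$ and $b_R$ because $\vw^2=s_G=1$ in the adjoint group $G$, so that $\vw^{-1}=\vw$.

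Finally I would assemble the global formula by induction on $M$. The base case $M=1$ is the local step. For the inductive step, set $\Pi_\nu:=T_1\cup\cdots\cup T_\nu$ equipped with the arc $c^{(\nu)}:=c_1\ast\cdots\ast c_\nu:E_\inn\to D_\nu$; by associativity of the gluing operation the morphism $q_\Delta$ factors through the gluing of $\Pi_{M-1}$ with $T_M$ along $D_{M-1}$, which is the disconnected case. Applying \cref{prop:multiplicativity} to this last gluing gives $q_\Delta^\ast g_{[c]}=g_{[c^{(M-1)}]}\cdot g_{[c_M]}$, and the induction hypothesis together with the local step turns the right-hand side into $\bigl(\prod_{\nu=1}^{M-1}g_\nu\bigr)\cdot g_M=\mu_M\circ\prod_{\nu=1}^M g_\nu$, as desired.

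The main obstacle is the local identification in the second paragraph: it requires pinning down the precise correspondence between the dot convention of \cref{d:dot associated with side pair}, the slot assignment of $f_{m_\nu}$ in \eqref{eq:moduli_triangle_explicit}, and the left/right distinction of the basic Wilson lines, and then verifying that the stray factor $\vw^{\pm1}$ coming from the opposite-pinning convention is absorbed by the adjoint-group relation $\vw^2=1$. Once this orientation bookkeeping is fixed correctly on a single triangle, the global assembly is a routine induction built on multiplicativity and the associativity of gluing.
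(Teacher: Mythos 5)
Your proof is correct and follows essentially the same route as the paper: induction on $M$ with the inductive step given by gluing $\Pi_{M-1}$ with $T_M$ and invoking \cref{prop:multiplicativity}, and the base case $M=1$ settled by identifying the triangle Wilson line with $b_L$ or $b_R$ via \cref{rem:LR_Wilson_line} (the paper does this through \cref{prop:presentation_Wilson_line} rather than the topological definition, but these agree by that very proposition). Your more explicit bookkeeping in the local step — the dot-slot correspondence from \cref{d:dot associated with side pair} and the absorption of $\vw^{\pm 1}$ via $\vw^2=s_G=1$ in the adjoint group — is a correct unwinding of what the paper leaves to the cited statements.
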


\begin{proof}
We proceed by induction on $M \geq 1$. The case $M=1$ can be verified by comparing the presentation of the Wilson line (\cref{prop:presentation_Wilson_line}) with \cref{rem:LR_Wilson_line}. For $M >1$, consider the polygon $\Pi':=T_1\cup \dots \cup T_{M-1}$ and the gluing morphism 
\begin{align*}
    q_M:\P_{G,\Pi' \sqcup T_M} \to \P_{G,\Pi}
\end{align*}
where a boundary interval $E$ of $\Pi'$ and that $E'$ of $T_M$ are glued to give the common edge of $T_{M-1}$ and $T_M$ in $\Pi$. 
Then \cref{prop:multiplicativity} implies $q_M^*g_{[c]}= g_{[c']}\cdot g_{[c_M]}$, where $[c']:E_\inn \to E$ and $[c_M]:E' \to E_\out$. The assertion follows from the induction hypothesis. 

% Recall from \cref{prop:presentation_Wilson_line} that the relevant Wilson lines are given by the $G$-invariant functions 
% \begin{align*}
%     \widetilde{g}_{[c]}=g_{E_\inn}^{-1}g_{E_\out}\vw, \quad \widetilde{g}_{[c']}=g_{E_\inn}^{-1}g_{E}\vw, \quad \widetilde{g}_{[c_M]}=g_{E'}^{-1}g_{E_\out}\vw.
% \end{align*}
% Then from the presentation of the gluing morphism given in \cref{subsubsec:gluing}, we get
% \begin{align*}
%     \widetilde{q}_M^*\widetilde{g}_{[c]} &= g_{E_\inn}^{-1}(g_{E}\vw g_{E'}^{-1}\cdot g_{E_\out})\vw \\
%     &= g_{E_\inn}^{-1}g_{E}\vw\cdot g_{E'}^{-1} g_{E_\out}\vw \\
%     &=\widetilde{g}_{[c']} \cdot \widetilde{g}_{[c_M]}
% \end{align*}
% as desired. 
\end{proof}

\subsubsection{Decomposition formula in general: covering argument}\label{subsec:regularity_of_Wilson_lines_and_loops}

Assume $\partial \Sigma \neq \emptyset$ and choose an arc class $[c]: E_\inn \to E_\out$. 
Let $\varpi: \widetilde{\Sigma} \to \Sigma^\ast$ be the universal cover, and take a representative $c$ and its lift $\widetilde{c}$ to $\widetilde{\Sigma}$. 
Fix an ideal triangulation $\Delta$ of $\Sigma$, which is also lifted to a tesselation $\widetilde{\Delta}$ of the universal cover. Applying an isotopy if necessary, we may assume that the intersections of $c$ with $\Delta$ and $\widetilde{c}$ with $\widetilde{\Delta}$ are minimal. 
Let $\Pi_{c;\Delta} \subset \widetilde{\Sigma}$ be the smallest polygon (a union of triangles in $\widetilde{\Delta}$) containing $\widetilde{c}$. The two endpoints of $\widetilde{c}$ lies on lifts of the edges $E_\inn$, $E_\out$, which are denoted by $\widetilde{E}_\inn$ and $\widetilde{E}_\out$, respectively. 
By definition the polygon $\Pi_{c;\Delta}$ is equipped with an ideal triangulation induced from $\widetilde{\Delta}$, which we denote by $\Delta_c$. 
Let us write the associated turning pattern as $\tau_\Delta([c]):=\tau_{\Delta_c}(\widetilde{c})=(\tau_1,\dots,\tau_M) \in \{L,R\}^M$, which we call the \emph{turning pattern} of the arc class $[c]$ with respect to $\Delta$.  
Let $\widetilde{T}_1,\dots,\widetilde{T}_M$ be the sequence of triangles of $\Delta_c$ which are traversed by $\widetilde{c}$ in this order.  

Let $\pi_c:=\varpi|_{\Pi_{c;\Delta}}: \Pi_{c;\Delta} \to \Sigma^\ast$, which is a covering map over its image. It induces a map $\pi_c^*: \P_{G,\Sigma} \to \P_{G,\Pi_{c;\Delta};\{\widetilde{E}_\inn,\widetilde{E}_\out\}}$ via pull-back. Here recall \cref{def:moduli_partial_pinnings}. 
From the definitions and \cref{rem:partial_pinnings}, we have:
%Let $\Sigma_{c;\Delta}:=\varpi(\Pi_{c;\Delta}) \subset \Sigma$. It is again a marked surface, and the covering $\pi_c:=\varpi|_{\Pi_{c;\Delta}}: \Pi_{c;\Delta} \to \Sigma_{c;\Delta}$ induces a map $\pi_c^*: \P_{G,\Sigma_{c;\Delta}} \to \P_{G,\Pi_{c;\Delta}}$ via pull-back. By its local definition, the Wilson line $g_c$ is regarded as a map from $\P_{G,\Sigma_{c;\Delta}}$. Namely, it lies in the image of the $\C$-algebra homomorphism $\C[\P_{G,\Sigma_{c;\Delta}}] \to \C[\P_{G,\Sigma}]$ induced by the restriction map. The restriction of $\Delta$ to $\Sigma_{c;\Delta}$ is again denoted by $\Delta$.

\begin{lem}\label{l:line-pairing}
The following diagram commutes:
\begin{equation*}
    \begin{tikzcd}
        \P_{G,\Sigma} \ar[r,"\pi_c^*"] \ar[rd,"g_{[c]}"'] & \P_{G,\Pi_{c;\Delta};\{\widetilde{E}_\inn,\widetilde{E}_\out\}} \ar[d, "g_{[\widetilde{c}]}"]\\
         & G.
    \end{tikzcd}
\end{equation*}
\end{lem}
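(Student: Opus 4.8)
The plan is to reduce the claim to the topological definition of the Wilson line and then exploit the locality of that definition. Recall from \cref{subsec:monodromy} that $g_{[c]}([\L,\beta;p])$ is computed by trivializing $\L$ on a vicinity of $E_\inn$ so that $p_{E_\inn}$ becomes $p_\std$, extending this trivialization along $c$ until $E_\out$, and reading off the unique element $g\in G$ for which $p_{E_\out}=g.p_\std^\ast$. As emphasized in \cref{rem:partial_pinnings}, this recipe refers only to the parallel transport of $\L$ along $c$ together with the two pinnings $p_{E_\inn}$ and $p_{E_\out}$ at the endpoints; in particular $g_{[c]}$ already factors through $\P_{G,\Sigma;\{E_\inn,E_\out\}}$, and likewise $g_{[\widetilde c]}$ is well-defined on $\P_{G,\Pi_{c;\Delta};\{\widetilde E_\inn,\widetilde E_\out\}}$. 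This is exactly why the partial-pinning moduli space is the natural target of $\pi_c^\ast$.

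The key geometric input is that, since $\widetilde c$ is a lift of $c$ along the covering $\pi_c=\varpi|_{\Pi_{c;\Delta}}$, we have $\pi_c\circ\widetilde c = c$, and $\pi_c$ restricts to a homeomorphism from a tubular neighborhood of $\widetilde c$ onto a neighborhood of $c$ carrying $\widetilde E_\inn,\widetilde E_\out$ to $E_\inn,E_\out$. First I would record the standard fact that pull-back of a local system is compatible with parallel transport: transporting a flat section of $\pi_c^\ast\L_\P$ along $\widetilde c$ corresponds, under $\pi_c$, to transporting the associated flat section of $\L_\P$ along $c$. Consequently the pulled-back datum $\pi_c^\ast[\L,\beta;p]$ carries, along $\widetilde c$, precisely the parallel transport of $\L$ along $c$, and its pinnings over $\widetilde E_\inn,\widetilde E_\out$ are the images of $p_{E_\inn},p_{E_\out}$.

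With this identification in hand, running the defining recipe for $g_{[\widetilde c]}$ on $\pi_c^\ast[\L,\beta;p]$ performs the very same trivialization--transport--comparison as the recipe for $g_{[c]}$ on $[\L,\beta;p]$: one normalizes the pinning over $\widetilde E_\inn$ to $p_\std$, transports along $\widetilde c$, and reads off the comparison element over $\widetilde E_\out$. Hence $g_{[\widetilde c]}(\pi_c^\ast[\L,\beta;p])=g_{[c]}([\L,\beta;p])$, which is the asserted commutativity. To upgrade this pointwise equality to an identity of morphisms of stacks, I would verify it on atlases via the explicit presentation of \cref{prop:presentation_Wilson_line}, where both sides are given by the same formula $g_{E_\inn}^{-1}\rho(\gamma_x)g_{E_\out}\vw$ once a compatible rigidification and system of paths $\epsilon_{E_\inn},\epsilon_{E_\out}$ is transported through $\pi_c$. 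I expect the only delicate point to be the bookkeeping: confirming that $\P_{G,\Pi_{c;\Delta};\{\widetilde E_\inn,\widetilde E_\out\}}$ retains exactly the data that $g_{[\widetilde c]}$ consumes, so that $\pi_c^\ast$ genuinely lands in this moduli space and the factorization is meaningful. The geometric content is otherwise immediate from the locality of the Wilson line.
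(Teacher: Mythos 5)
Your proof is correct and is essentially the paper's argument made explicit: the paper gives no written proof, stating only that the lemma follows ``from the definitions and \cref{rem:partial_pinnings}'', i.e.\ precisely the locality you invoke — the Wilson line depends only on the parallel transport along the curve and the pinnings at the two endpoint intervals, and pull-back along the covering $\pi_c$ is compatible with parallel transport along the lift $\widetilde{c}$. Your additional check on atlases via \cref{prop:presentation_Wilson_line} is sound but beyond what the paper records.
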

Combined with \cref{p:mon-decomp}, we would obtain a decomposition formula for Wilson lines. We are going to write it down explicitly.

Fix a dotted triangulation $\Delta_*$ of $\Sigma$. Let $m_T$ denote the dot assigned to a triangle $T \in t(\Delta)$. 
Note that $\Delta_*$ determines a dotted triangulation $\Delta_*^{\mathrm{lift}}$ of the polygon $\Pi_{c;\Delta}$ over $\Delta_c$ by lifting the dots, which may not agree with the \lq\lq canonical'' dotted triangulation $\Delta_*^{\mathrm{can}}:=\Delta_*([\widetilde{c}])$ associated with the arc class $[\widetilde{c}]$ in the polygon. The only difference is the position of dots: denote the dot on the triangle $\widetilde{T}_\nu$ for the triangulation $\Delta_*^{\mathrm{can}}$ (resp. $\Delta_*^{\mathrm{lift}}$) by $m_\nu$ (resp. $n_\nu$) for $\nu=1,\dots,M$. Then the disagreement of the two dots $m_\nu$ and $n_\nu$ results in a relation $f_{m_\nu} = \mathcal{S}_3^{t_\nu}\circ f_{n_\nu}$ for some $t_\nu \in \{0,\pm 1\}$. 

For $\nu=1,\dots,M$, let  $T_\nu:=\pi_c(\widetilde{T}_\nu) \in t(\Delta)$ denote the projected image of the $\nu$-th triangle, which do not need to be distinct. Finally, set $\overline{f}_{n_\nu} := f_{m_{T_\nu}} \circ \mathrm{pr}_{T_\nu}:\prod_{T \in t(\Delta)} \P_{G,T} \to \Conf_3 \P_G$. 

\begin{thm}[Decomposition formula for Wilson lines]\label{t:Wilson_line_regular}
For an arc class $[c]:E_\inn \to E_\out$ and a dotted triangulation $\Delta_*$,
let the notations as above. Then we have 
\begin{align*}
    q_\Delta^*g_{[c]} = \mu_M\circ  \prod_{\nu=1}^M (b_{\tau_\nu}\circ \mathcal{S}_3^{t_\nu} \circ \overline{f}_{n_\nu}).
\end{align*}
\end{thm}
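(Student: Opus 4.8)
The plan is to reduce the assertion to the polygon case \cref{p:mon-decomp} through the covering map $\pi_c$, and then to absorb the discrepancy between the lifted dots and the canonical dots into powers of the cyclic shift $\mathcal{S}_3$. The starting point is \cref{l:line-pairing}, which gives $g_{[c]}=g_{[\widetilde{c}]}\circ \pi_c^\ast$, so that
\[
q_\Delta^\ast g_{[c]} = g_{[\widetilde{c}]}\circ \pi_c^\ast \circ q_\Delta.
\]
Thus everything hinges on identifying $\pi_c^\ast \circ q_\Delta$ with the polygon gluing morphism $q_{\Delta_c}$ precomposed with suitable projections.

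First I would establish the compatibility of the covering pullback with the gluing morphisms. Since $\pi_c$ restricts to an (orientation-preserving) homeomorphism $\widetilde{T}_\nu \xrightarrow{\sim} T_\nu:=\pi_c(\widetilde{T}_\nu)$ on each triangle, it induces an isomorphism $\theta_\nu:\P_{G,T_\nu}\xrightarrow{\sim}\P_{G,\widetilde{T}_\nu}$ of the triangle pieces. Each internal edge of $\Delta_c$ maps to an edge of $\Delta$, and the flat datum of $\pi_c^\ast(\L,\beta;p)$ on a neighborhood of such an edge coincides with that of $(\L,\beta;p)$ on the corresponding edge; hence the gluing constraints of \cref{t:GS decomposition} match on both sides. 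This produces a commutative square $\pi_c^\ast \circ q_\Delta = q_{\Delta_c}\circ \Theta$, where $\Theta:=\prod_{\nu}(\theta_\nu\circ \pr_{T_\nu})$ assembles the triangle-wise restriction isomorphisms and the projections. Combining with the previous display yields
\[
q_\Delta^\ast g_{[c]} = (q_{\Delta_c}^\ast g_{[\widetilde{c}]})\circ \Theta.
\]

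Next I would apply \cref{p:mon-decomp} to the polygon $\Pi_{c;\Delta}$ with its canonical dotted triangulation $\Delta_\ast^{\mathrm{can}}=\Delta_\ast([\widetilde{c}])$, whose dots are the $m_\nu$, obtaining $q_{\Delta_c}^\ast g_{[\widetilde{c}]}=\mu_M\circ \prod_{\nu=1}^M(b_{\tau_\nu}\circ f_{m_\nu}\circ \pr_{\widetilde{T}_\nu})$. Using $\pr_{\widetilde{T}_\nu}\circ \Theta = \theta_\nu\circ \pr_{T_\nu}$, this gives
\[
q_\Delta^\ast g_{[c]} = \mu_M\circ \prod_{\nu=1}^M \bigl(b_{\tau_\nu}\circ f_{m_\nu}\circ \theta_\nu\circ \pr_{T_\nu}\bigr).
\]
It then remains to rewrite $f_{m_\nu}\circ \theta_\nu$. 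By construction $n_\nu$ is the lift of $m_{T_\nu}$, so $\theta_\nu$ matches these dots and $f_{n_\nu}\circ \theta_\nu = f_{m_{T_\nu}}$; on the other hand the disagreement of the two dots on $\widetilde{T}_\nu$ gives $f_{m_\nu}=\mathcal{S}_3^{t_\nu}\circ f_{n_\nu}$ by \cref{l:cyclic shift}. Hence $f_{m_\nu}\circ \theta_\nu = \mathcal{S}_3^{t_\nu}\circ f_{m_{T_\nu}}$ and $f_{m_\nu}\circ \theta_\nu\circ \pr_{T_\nu}=\mathcal{S}_3^{t_\nu}\circ \overline{f}_{n_\nu}$ by the very definition of $\overline{f}_{n_\nu}$, and substituting produces the claimed formula.

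The main obstacle is the covering--gluing compatibility of the second paragraph. Because $\pi_c$ is only a covering \emph{onto its image}, the projected triangles $T_\nu$ need not be distinct, so one cannot merely declare $\pi_c^\ast$ to be an isomorphism of triangle decompositions. The delicate point is to verify that the restriction of the pulled-back framed local system with pinnings to each lift $\widetilde{T}_\nu$ is canonically identified, through $\theta_\nu$, with the restriction of the original datum to $T_\nu$, and that these identifications are coherent with the gluing along the internal edges of $\Delta_c$. Once this square is in place, the rest is the routine cyclic-shift bookkeeping recorded above.
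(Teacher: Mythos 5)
Your proposal is correct and matches the paper's proof essentially step for step: the reduction via \cref{l:line-pairing}, the covering--gluing compatibility square (the paper's \eqref{eq:pullback-gluing}, with your $\Theta$ being exactly the paper's $\widetilde{\pi}_c^*=\prod_{\nu}(\pi_c|_{\widetilde{T}_\nu})^*$), the polygon formula \cref{p:mon-decomp} with the canonical dots $m_\nu$, and the dot adjustment $f_{m_\nu}=\mathcal{S}_3^{t_\nu}\circ f_{n_\nu}$, $f_{n_\nu}\circ\theta_\nu\circ\pr_{T_\nu}=\overline{f}_{n_\nu}$, which is the paper's diagram \eqref{eq:dots_adjustment}. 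The one cosmetic point you gloss over is that the pulled-back datum carries pinnings only at $\widetilde{E}_\inn$ and $\widetilde{E}_\out$, so the right-hand side of your compatibility square must be composed with the projection to the partially pinned space $\P_{G,\Pi_{c;\Delta};\{\widetilde{E}_\inn,\widetilde{E}_\out\}}^{\Delta_c}$ (as the paper does); since $g_{[\widetilde{c}]}$ is defined there by \cref{rem:partial_pinnings}, this does not affect the argument.
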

See \cref{e:Wilson_line_annulus} and \cref{fig:Wilson_line_example} for an example. 

% \begin{thm}\label{t:Wilson_line_regular}
% The Wilson line along an arc class $[c]:E_\inn \to E_\out$ defines a representable morphism $g_{[c]}: \P_{G,\Sigma}^\Ptolemy \to G$ of Artin stacks.  
% \end{thm}

\begin{proof}
% Fix an ideal triangulation $\Delta$ of $\Sigma$. 
% In view of \cref{lem:equivariant_morphism}, we are first going to show that the pull-back $q_{\Delta;E}^*g_{[c]}$ defines an $H^{\Delta;E}$-invariant morphism $P_{G,\Sigma}^{\Delta;E} \to G$ of affine algebraic varieties for each $E \in e_{\mathrm{int}}(\Delta)$. 
% To illustrate the proof, let us first consider the pull-back $q_\Delta^*g_{[c]}$. 

Since the pull-back via the covering map $\pi_c$ commutes with the gluing morphisms, we have the commutative diagram

\begin{equation}\label{eq:pullback-gluing}
    \begin{tikzcd}
    \prod_{T \in t(\Delta)} \P_{G,T} \ar[r,"\widetilde{\pi}_c^*"] \ar[d,"q_\Delta"'] & \prod_{\nu=1}^M \P_{G,\widetilde{T}_\nu} \ar[d,"q_{\Delta_c}"] \\
    \P_{G,\Sigma}^\Delta \ar[r,"\pi_c^*"'] & \P_{G,\Pi_{c;\Delta};\{\widetilde{E}_\inn,\widetilde{E}_\out\}}^{\Delta_c},
    \end{tikzcd}
\end{equation}
where $\widetilde{\pi}_c^*:=\prod_{\nu=1}^M (\pi_c|_{\widetilde{T}_\nu})^*$, and the right vertical map is the composite of the gluing morphism and the projection forgetting the pinnings except for those assigned to $\widetilde{E}_\inn$ and $\widetilde{E}_\out$. 
% Fix a dotted triangulation $\Delta_*$ over $\Delta$. Let $m_T$ denote the dot assigned to a triangle $T \in t(\Delta)$. 
% Note that $\Delta_*$ determines a dotted triangulation $\Delta_*^{\mathrm{lift}}$ of the polygon $\Pi_{c;\Delta}$ over $\Delta_c$ by lifting the dots, which may not agree with the dotted triangulation $\Delta_*^{\mathrm{can}}:=\Delta_*(\widetilde{E}_\inn,\widetilde{E}_\out)$ associated with the turning pattern $(\widetilde{E}_\inn,\widetilde{E}_\out)$. The only difference is the position of dots: denote the dot on the triangle $\widetilde{T}_\nu$ for the triangulation $\Delta_*^{\mathrm{can}}$ (resp. $\Delta_*^{\mathrm{lift}}$) by $m_\nu$ (resp. $n_\nu$) for $\nu=1,\dots,M$. Then the disagreement of the two dots $m_\nu$ and $n_\nu$ results in a relation $f_{m_\nu} = \mathcal{S}_3^{t_\nu}\circ f_{n_\nu}$ for some $t_\nu \in \{0,\pm 1\}$. Thus the following diagram commutes:  
From the definition of $t_\nu$ and $\overline{f}_{n_\nu}$, the following diagram commutes:

\begin{equation}\label{eq:dots_adjustment}
    \begin{tikzcd}
    \prod_{\nu=1}^M \Conf_3 \P_G \quad \ar[r,"\prod_\nu \mathcal{S}_3^{t_\nu}"] & \quad \prod_{\nu=1}^M \Conf_3 \P_G \\
    \prod_{T \in t(\Delta)} \P_{G,T} \ar[u,"( \overline{f}_{n_\nu})_{\nu}"] \ar[r,"\widetilde{\pi}_c^*"'] \quad & \quad \prod_{\nu=1}^M \P_{G,\widetilde{T}_\nu}. \ar[u,"\prod_\nu f_{m_\nu}"'] \ar[lu,"\prod_\nu f_{n_\nu}"]
    \end{tikzcd}
\end{equation}
%Here $\overline{f}_{n_\nu} := f_{m_{T_\nu}} \circ \mathrm{pr}_{T_\nu}$, $T_\nu:=\pi_c(\widetilde{T}_\nu)$  for $\nu=1,\dots,M$, and and $\mathrm{pr}_{T'}: \prod_{T \in t(\Delta)} \P_{G,T} \to \P_{G,T'}$ denotes the projection for $T' \in t(\Delta)$. 

Combining together, we get 
\begin{alignat}{2}\label{eq:Wilson_line_product_formula}
    q_\Delta^*g_{[c]} 
    &= g_{[\widetilde{c}]} \circ \pi_c^* \circ q_\Delta &\qquad &\mbox{(by \cref{l:line-pairing})} \\
    &= g_{[\widetilde{c}]} \circ q_{\Delta_c} \circ \widetilde{\pi}_c^* &\qquad &\mbox{(by \eqref{eq:pullback-gluing})} \\
    &= \mu_M\circ \left(\prod_{\nu=1}^M g_\nu\right) \circ \widetilde{\pi}_c^* &\qquad &\mbox{(by \cref{p:mon-decomp})} \\
    &=\mu_M\circ \left(\prod_{\nu=1}^M b_{\tau_\nu}\circ f_{m_\nu}\right) \circ \widetilde{\pi}_c^* &\qquad &\label{eq:Wilson_line_product_formula_2} \\
    &=\mu_M\circ  \prod_{\nu=1}^M (b_{\tau_\nu}\circ \mathcal{S}_3^{t_\nu} \circ \overline{f}_{n_\nu}) &\qquad & \mbox{(by \eqref{eq:dots_adjustment})},
\end{alignat}
as desired. 
% Since $b_{\tau_\nu}\circ \mathcal{S}_3^{t_\nu} \circ \overline{f}_{n_\nu}$ is regular for $\nu=1,\dots,M$ by \cref{c:LR}, we see that $q_\Delta^*g_{[c]}$ is a morphism of affine algebraic varieties. 

% For an interior edge $E \in e_{\mathrm{int}}(\Delta)$, the pull-back $q_{\Delta;E}^*g_{[c]}$ is proved to be a morphism by a similar modification as in the proof of \cref{t:univ Laurent}. For simplicity, let us consider the case where $\pi_c^{-1}(E)$ is the diagonal of the quadrilateral $\widetilde{Q}_{12}:=\widetilde{T}_1\cup\widetilde{T}_2$. Then we get have two dotted cell decompositions with a common ideal cell decomposition of $\Pi_{c;\Delta}$, equipped with the dots $(m_{12},m_3,\dots,m_M)$ and $(n_{12},n_3,\dots,n_M)$. Similarly as above we get
% \begin{align*}
%     q_{\Delta;E}^*g_{[c]} = \mu_{M-1}\circ \left((b_{\tau_{12}}\circ f_{m_{12}}) \times  \prod_{\nu=3}^M (b_{\tau_\nu}\circ f_{m_\nu}) \right) \circ \widetilde{\pi}_c^*.
% \end{align*}
% As before one can find $t_{12} \in \{0,1,2,3\}$ so that
% \begin{align*}
%      f_{m_{12}} = \mathcal{S}_4^{t_{12}} \circ f_{n_{12}},
% \end{align*}
% and hence $q_{\Delta;E}^*g_{[c]}$ is a product of $b_{\tau_{12}} \circ \mathcal{S}_4^{t_{12}} \circ \overline{f}_{n_{12}}$ and $b_{\tau_\nu}\circ \mathcal{S}_3^{t_\nu} \circ \overline{f}_{n_\nu}$ for $\nu=3,\dots,M$, which are regular by \cref{c:LSR}. 
% Thus we get a representable morphism $\P_{G,\Sigma}^{\Delta;E}=[P_{G,\Sigma}^{\Delta;E}/H^{\Delta;E}] \to G$ of Artin stacks, which combine to give a morphism $g_{[c]}:\P_{G,\Sigma}^\Ptolemy \to G$.
\end{proof}

\begin{ex}[A Wilson line on a marked annulus]\label{e:Wilson_line_annulus}
Let $\Sigma$ be a marked annulus with one marked point on each boundary component, equipped with the dotted triangulation $\Delta$ shown in \cref{fig:Wilson_line_example}. Consider the arc class $[c]: E_0 \to E_4$ shown there. The turning pattern is $\tau_\Delta([c])=(L,L,R,R)$. 
Under the projection $\pi_c: \Pi_{c;\Delta} \to \Sigma^\ast$, we have $T=\pi_c(\widetilde{T}_1)=\pi_c(\widetilde{T}_3)$, $T'=\pi_c(\widetilde{T}_2)=\pi_c(\widetilde{T}_4)$. By comparing the two pictures in the right, we have $t_1=0$, $t_2=0$, $t_3=-1$, $t_4=1$. Thus we have
\begin{align}\label{eq:annulus_example}
    q_\Delta^* g_{[c]} = \mu_4\circ ((b_L \circ f_{m_T})\times(b_L \circ f_{m_{T'}})\times(b_R \circ \mathcal{S}_3^{-1} \circ f_{m_T})\times(b_R \circ \mathcal{S}_3 \circ f_{m_{T'}})).
\end{align}
Of course, we could have chosen another triangulation $\Delta'$ obtained from $\Delta$ by the flip along the edge $E_2$. In this case we have $\tau_{\Delta'}([c])=(L,R)$ and we need no cyclic shifts to express $g_{[c]}$. 
%This is an example of a $[c]$-simple triangulation: see \cref{sec:network}.
\end{ex}

In the situation of \cref{prop:Wilson line-loop}, an ideal triangulation $\Delta$ of $\Sigma$ descends to an ideal triangulation $\Delta'$ of $\Sigma'$. Then $q_\Delta^\ast g_{[c]}:\prod_{T \in t(\Delta)}\P_{G,T} \to G$ induces $q_{\Delta'}^\ast \rho_{|\gamma|}:\prod_{T \in t(\Delta')}\P_{G,T} \to [G/\Ad G]$, where $t(\Delta')=t(\Delta)$. 
Hence the decomposition formula given in \cref{t:Wilson_line_regular} also gives a formula for $q_{\Delta'}^\ast \rho_{|\gamma|}$. 

\begin{ex}[A Wilson loop on a once-punctured torus]
A once-punctured torus $\Sigma'$ is obtained by gluing the boundary intervals $E_0$ and $E_4$ of the marked annulus $\Sigma$ considered above. The arc class $[c]:E_0 \to E_4$ descends to a free loop $|\gamma| \in \hat{\pi}({\Sigma'}^\ast)$. 
Then $q_{\Delta'}^\ast \rho_{|\gamma|}$ can be computed from \eqref{eq:annulus_example}.
%Then by the proof of \cref{prop:Wilson line-loop}, a suitable conjugate of the Wilson line $g_{[c]}:\P_{G,\Sigma} \to G$ gives a presentation of the Wilson loop $\rho_{|\gamma|}: \P_{G,\Sigma_{E_0,E_4}} \to [G/\Ad G]$.
\end{ex}

\begin{figure}
\[
\begin{tikzpicture}
%Annulus
\fill[gray!30] (0,0) circle [radius=0.5];
\draw (0,0) circle [radius=0.5];
\draw (0,0) circle [radius=2];
\fill(0.5,0) circle(2pt);
\fill(2,0) circle(2pt);
\draw[->>,shorten >=2pt] (0.5,0) -- (2,0);
    %Bezier curve: (A) ..controls (x) and (y) .. (B)
    % (x)-(A): initial velocity, (B)-(y): final velocity
\draw(0.5,0) ..controls (0.5,-1) and (-1,-1) .. (-1,0) ..controls (-1,1) and (1,2) .. (2,0);
\draw[->,thick,red] (-0.5,0) ..controls (-1.5,-1.5) and (1,-2) .. (1,0) node[midway,below right]{$c$} .. controls (1,1) and (0.5,1.5) .. (0,2);
\draw(0,-0.6) node{$\ast$};
\draw(0.6,-0.2) node{$\ast$};
\draw(0,0.7) node{$T$};
\draw(-1.5,0) node{$T'$};
\draw(0,-2.5) node{$(\Sigma,\Delta_*)$};
{\color{blue}
\draw(-0.5,0) node[right]{\scalebox{0.8}{$E_0$}};
\draw(-1,1) node{\scalebox{0.8}{$E_1$}};
\draw(1.5,0) node[below]{\scalebox{0.8}{$E_2$}};
\draw(0,2) node[above]{\scalebox{0.8}{$E_4$}};
}
%Lift
\begin{scope}[xshift=4cm, yshift=2cm]
\draw(0,1) -- (8,1) -- (8,-1) -- (0,-1) --cycle;
\draw(4,1) -- (0,-1);
\draw(8,1) -- (4,-1);
\foreach \x in {0,4,8}
{
\fill(\x,1) circle(2pt);
\fill(\x,-1) circle(2pt);
\draw[->>,shorten >=2pt] (\x,1) -- (\x,-1);
}
\draw[->,thick,red] (2,1) node[above]{\scalebox{0.9}{$\widetilde{E}_\inn$}} ..controls (2,0) and (6,0) .. (6,-1) node[below]{\scalebox{0.9}{$\widetilde{E}_\out$}} node[midway,above right]{$\widetilde{c}$};
\draw(1,0.5) node{$\widetilde{T}_1$};
\draw(3,-0.5) node{$\widetilde{T}_2$};
\draw(5,0.5) node{$\widetilde{T}_3$};
\draw(7,-0.5) node{$\widetilde{T}_4$};
\draw(3,0.8) node{$\ast$};
\draw(3.6,0.5) node{$\ast$};
\draw(7,0.8) node{$\ast$};
\draw(7.6,0.5) node{$\ast$};
\draw(4,-1.5) node{$(\Pi_{c;\Delta},\Delta_*^{\mathrm{lift}})$};
\end{scope}

%Canonical
\begin{scope}[xshift=4cm, yshift=-2cm]
\draw(0,1) -- (8,1) -- (8,-1) -- (0,-1) --cycle;
\draw(4,1) -- (0,-1);
\draw(8,1) -- (4,-1);
\foreach \x in {0,4,8}
{
\fill(\x,1) circle(2pt);
\fill(\x,-1) circle(2pt);
\draw[->>,shorten >=2pt] (\x,1) -- (\x,-1);
}
\draw[->,thick,red] (2,1) node[above]{\scalebox{0.9}{$\widetilde{E}_\inn$}} ..controls (2,0) and (6,0) .. (6,-1) node[below]{\scalebox{0.9}{$\widetilde{E}_\out$}} node[midway,above right]{$\widetilde{c}$};
\draw(1,0.5) node{$\widetilde{T}_1$};
\draw(3,-0.5) node{$\widetilde{T}_2$};
\draw(5,0.5) node{$\widetilde{T}_3$};
\draw(7,-0.5) node{$\widetilde{T}_4$};
\draw(3,0.8) node{$\ast$};
\draw(3.6,0.5) node{$\ast$};
\draw(4.4,-0.5) node{$\ast$};
\draw(5,-0.8) node{$\ast$};
\draw(4,-1.5) node{$(\Pi_{c;\Delta},\Delta_*^{\mathrm{can}})$};
\end{scope}
\end{tikzpicture}
\]
\caption{A marked annulus $\Sigma$ with a dotted triangulation $\Delta_*$ (left), a polygon $\Pi_{c;\Delta}$ with the dotted triangulations $\Delta_*^{\mathrm{lift}}$ (right top) and $\Delta_*^{\mathrm{can}}$ (right bottom). Glued edges are marked by double-head arrows.}
\label{fig:Wilson_line_example}
\end{figure}

\section{Factorization coordinates and their relations}\label{sec:coordinate expressions}
As a preparation for the subsequent sections, we recall several parametrizations and coordinates of factorizing nature: Lusztig parametrizations on unipotent cells, coweight parametrizations of double Bruhat cells, and Goncharov--Shen coordinates on the configuration space $\Conf_3 \P_G$. 
A necessary background on the cluster algebra is reviewed in \cref{sec:quivers}. 

\begin{conv}\label{calculus_notation}
For a torus $T=\bG_m^N$ equipped with a coordinate system $\mathbf{X}=(X_k)_{k=1}^N$ and a map $f: T \to V$ to a variety $V$, we occasionally write $f= f(\mathbf{X})$ as in the usual calculus. 
\end{conv}

\subsection{Lusztig parametrizations on the unipotent cells and the Goncharov--Shen potentials}\label{subsec:Lusztig}
For $w \in W(G)$, let $U^\pm_w := U^\pm \cap B^\mp w B^\mp$ denote the \emph{unipotent cell}. 

\begin{prop}\label{p:Lusztig-param}
Given a reduced word $\bs = (s_1,\dots, s_l)$ of $w$, the maps
\begin{align*}
    &x^\bs: \bG_m^l \to U^+_w, \quad (t_1,\dots,t_l) \mapsto x_{s_1}(t_1) \dots x_{s_l}(t_l), \\
    &y^\bs: \bG_m^l \to U^-_w, \quad (t_1,\dots,t_l) \mapsto y_{s_1}(t_1) \dots y_{s_l}(t_l)
\end{align*}
are open embeddings.
\end{prop}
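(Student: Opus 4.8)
The plan is to prove the statement for $x^\bs$ in detail and deduce the case of $y^\bs$ by symmetry. The transpose anti-involution $\mathsf{T}$ satisfies $x_s(t)^{\mathsf{T}}=y_s(t)$, interchanges $U^+\leftrightarrow U^-$ and $B^+\leftrightarrow B^-$, and reverses products, so that $(x_{s_l}(t_l)\cdots x_{s_1}(t_1))^{\mathsf{T}}=y_{s_1}(t_1)\cdots y_{s_l}(t_l)$ and $\mathsf{T}$ carries $U^+_{w^{-1}}=U^+\cap B^-\overline{w^{-1}}B^-$ isomorphically onto $U^-_w=U^-\cap B^+\overline{w}B^+$. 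Since $(s_l,\dots,s_1)$ is a reduced word for $w^{-1}$, the assertion for $y^\bs$ follows from that for $x^{(s_l,\dots,s_1)}$. Hence it suffices to show that $x^\bs\colon \bG_m^l\to U^+_w$ is an isomorphism of varieties.

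\textbf{Step 1: the affine parametrization.} First I would recall the classical fact that the product map extends to a biregular isomorphism $x^\bs\colon \bA^l\xrightarrow{\sim}U^+(w)$, where $U^+(w):=U^+\cap\overline{w}U^-\overline{w}^{-1}$ is the closed subgroup of dimension $\ell(w)=l$ given by the product of the root subgroups $U_{\beta_k}$ with $\beta_k:=r_{s_1}\cdots r_{s_{k-1}}(\alpha_{s_k})$ running over the inversion set of $w$. This is standard (see \cite{FG06,BFZ96}) and is proved by induction on $l$ using $U^+(w)=U_{\alpha_{s_1}}\cdot\overline{r}_{s_1}U^+(r_{s_1}w)\overline{r}_{s_1}^{-1}$. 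In particular $x^\bs$ is injective on $\bA^l$, so its restriction to $\bG_m^l$ is injective with image an open subvariety of $U^+(w)$.

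\textbf{Step 2: locating the open cell.} Next I would determine exactly which points of $\bA^l$ land in $U^+_w=U^+\cap B^-\overline{w}B^-$, using the relative-position calculus behind \cref{c:flagchain}. For $t=(t_1,\dots,t_l)$ set $F_k:=x_{s_1}(t_1)\cdots x_{s_k}(t_k).B^-$, so $F_0=B^-$ and $F_l=x^\bs(t).B^-$. Left-translating by $(x_{s_1}(t_1)\cdots x_{s_{k-1}}(t_{k-1}))^{-1}$ and applying the rank-one $SL_2$ computation through $\varphi_{s_k}$ gives $w(F_{k-1},F_k)=w(B^-,x_{s_k}(t_k).B^-)$, which is $r_{s_k}$ if $t_k\neq 0$ and $e$ if $t_k=0$. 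When all $t_k\neq 0$, reducedness of $\bs$ makes the lengths add at each stage, so iterating \cite[Lemma 2.3]{GS19} yields $w(B^-,F_l)=r_{s_1}\cdots r_{s_l}=w$, i.e. $x^\bs(t)\in B^-\overline{w}B^-$. When some $t_k=0$ the corresponding step is trivial and subadditivity of the $w$-distance forces $\ell(w(B^-,F_l))<l$, so $x^\bs(t)$ lies in a cell $B^-\overline{v}B^-$ with $v\neq w$ and, by disjointness of Bruhat cells, outside $U^+_w$. Thus $x^\bs(\bG_m^l)\subseteq U^+_w$ while $x^\bs$ maps $\bA^l\setminus\bG_m^l$ off $U^+_w$, and $x^\bs\colon \bG_m^l\to U^+_w$ is an injective immersion.

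\textbf{Step 3: surjectivity, and the main obstacle.} It remains to show that every element of $U^+_w$ arises in this way, equivalently that $U^+_w\subseteq U^+(w)$; this is the one genuinely delicate point. The map $u\mapsto u.B^-$ embeds $U^+_w$ as the locally closed subset $X_w\cap(U^+.B^-)$ of the flag variety, namely the Schubert cell of $w$ intersected with the big cell, so $U^+_w$ is irreducible of dimension $\ell(w)=l$ and contains the dense subset $x^\bs(\bG_m^l)$. To upgrade density to equality I would invoke the direct product factorization $U^+=U^+(w)\cdot(U^+\cap\overline{w}U^+\overline{w}^{-1})$: writing $u=u_1u_2$ accordingly, a relative-position computation shows that a nontrivial second factor $u_2$ moves $u.B^-$ out of relative position $w$ from $B^-$, forcing $u_2=e$ and hence $u\in U^+(w)$. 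Combined with Step 2 this gives $x^\bs(\bG_m^l)=U^+_w$, so $x^\bs$ is a bijective morphism onto $U^+_w$ whose inverse is the restriction of the regular inverse from Step 1; it is therefore a biregular isomorphism, and in particular an open embedding. I expect this containment $U^+_w\subseteq U^+(w)$ (i.e. the surjectivity) to be the principal obstacle, and it can alternatively be quoted directly from the parametrization theory of (double) Bruhat cells in \cite{FG06,BFZ96}.
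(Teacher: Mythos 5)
Your Step 1 is false, and with it the injectivity you later rely on. The classical direct-product theorem for $U^+(w):=U^+\cap wU^-w^{-1}$ concerns the map $(c_1,\dots,c_l)\mapsto x_{\beta_1}(c_1)\cdots x_{\beta_l}(c_l)$ built from the $l$ \emph{distinct} root subgroups $U_{\beta_k}$ attached to the inversion roots $\beta_k=r_{s_1}\cdots r_{s_{k-1}}(\alpha_{s_k})$, taken in this convex order; it does not apply to the word map $x^\bs$, which repeats the same simple root subgroups. Concretely, in type $A_2$ with $w=r_1r_2$ one has $U^+(w)=U_{\alpha_1}U_{\alpha_1+\alpha_2}=\{u\in U^+ : u_{23}=0\}$, while $x_1(t_1)x_2(t_2)$ has $u_{23}=t_2$, so $x^\bs(\bA^2)\not\subseteq U^+(w)$; and for $\bs=(1,2,1)$ one has $x_1(a)x_2(0)x_1(c)=x_1(a+c)$, so $x^\bs$ is not injective on $\bA^3$. (Your induction $U^+(w)=U_{\alpha_{s_1}}\cdot\overline{r}_{s_1}U^+(r_{s_1}w)\overline{r}_{s_1}^{-1}$ parametrizes $U^+(w)$ by $(t_1,t')\mapsto x_{s_1}(t_1)\,\overline{r}_{s_1}x^{\bs'}(t')\overline{r}_{s_1}^{-1}$, which is not $x^\bs$.) Step 3 then fails twice over: the containment $U^+_w\subseteq U^+(w)$ is false (the same $x_1(t_1)x_2(t_2)$ with $t_1,t_2\neq 0$ lies in $U^+_{r_1r_2}$ but not in $U^+(r_1r_2)$), and the surjectivity you aim for is simply not true, even for $w=w_0$. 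In $SL_3$ with $\bs=(1,2,1)$, writing $u_{12}=x$, $u_{23}=y$, $u_{13}=z$, the image of $\bG_m^3$ is $\{y\neq 0,\ z\neq 0,\ xy-z\neq 0\}$, whereas a direct check of the minor criteria gives $U^+_{w_0}=\{z\neq 0,\ xy-z\neq 0\}$: any element with $y=0$, $x\neq 0$, $z\neq 0$ lies in $U^+_{w_0}$ but outside your chart (it lies instead in the chart of the other reduced word $(2,1,2)$, whose image is $\{x\neq 0,\ z\neq 0,\ xy-z\neq 0\}$). This is exactly why \cref{p:Lusztig-param} asserts only that $x^\bs$ is an \emph{open embedding}: different reduced words give genuinely different toric charts --- the cluster charts underlying the whole paper --- and no single one covers $U^+_w$.

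What survives is Step 2's containment $x^\bs(\bG_m^l)\subseteq U^+_w$: from $x_s(t)=y_s(t^{-1})\alpha_s^\vee(t)\overline{r}_s^{-1}y_s(t^{-1})$ one gets $x_s(t)\in B^-\overline{r}_sB^-$ for $t\neq 0$, and $B^-uB^-\cdot B^-vB^-=B^-uvB^-$ whenever $l(uv)=l(u)+l(v)$ (only beware that in the paper's normalization the $w$-distance of the pair $(B^-, x_s(t).B^-)$ is $r_{s^\ast}$, not $r_s$ --- compare the proof of \cref{l:lus-GS} --- though this does not affect membership in the Bruhat cell). But the genuinely hard content --- injectivity on $\bG_m^l$, openness of the image, and regularity of the inverse --- is not addressed by anything that remains once Step 1 is removed. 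Note that the paper itself states the proposition without proof, as Lusztig's classical parametrization; the standard proofs construct an explicit inverse by expressing each $t_k$ as a ratio of generalized minors (the Chamber Ansatz and twist map of \cite{BFZ96,BZ97}; see also \cite{FG06}), which shows at once that $x^\bs$ is injective, that its image is the open locus where the relevant minors are nonvanishing, and that the inverse is regular there. If you want a self-contained argument, that is the route to take.
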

We call these parametrizations \emph{Lusztig parametrizations}. %A description of the inverse map leads us to the Chamber Ansatz formula, see \cite{BFZ96,BZ97,Williams}. 
These maps induce injective $\mathbb{C}$-algebra homomorphisms 
\begin{align}
    &(x^\bs)^{\ast}:  \cO(U^+_w)\to \cO(\bG_m^l )=\C[t_1^{\pm 1},\dots,t_l^{\pm 1}],\label{eq:Lusztig-param}\\
    &(y^\bs)^{\ast}:  \cO(U^-_w)\to \cO(\bG_m^l )=\C[t_1^{\pm 1},\dots,t_l^{\pm 1}].
\end{align}
When $w=w_0$ is the longest element, we have $U^\pm_{w_0}=U^\pm_*$.

\paragraph{\textbf{Goncharov--Shen potentials}}
Let us consider the configuration space 
\begin{align*}
    \Conf^{\ast} (\A_G,\B_G,\B_G):= G \backslash \{(\widehat{B}_1,B_2,B_3)  \mid \text{$(B_1,B_2)$ and $(B_1,B_3)$ are generic} \}.
\end{align*}
It has a parametrization 
\begin{align*}
    \beta_3: U^+ \xrightarrow{\sim} \Conf (\A_G,\B_G,\B_G), \quad u_+ \mapsto \left[[U^+], B^-, u_+. B^-\right].
\end{align*}
The map $\beta_3^{-1}$ pulls-back the additive characters $\chi_s: U^+ \to \bA^1$,  $\chi_s(u_+):=\Delta_{\varpi_s, r_s\varpi_s}(u_+)$ for $s \in S$ to give a function
\[
W_s:=\chi_s\circ \beta_3^{-1}: \Conf^* (\A_G,\B_G,\B_G)\to \bA^1,
\]
 which we call the \emph{Goncharov--Shen potential}. 
Note that $u_+ \in U^+_w$ if and only if $w(B_2,B_3)=w^\ast$, when we write $\beta_3(u_+)=(A_1,B_2,B_3)$. Here $W(G)\to W(G), w\mapsto w^{\ast}$ is an involution given by
\[
w^{\ast}=w_0ww_0.
\]
The following relation will be used later:

\begin{lem}\label{l:char-lus}
For a reduced word $\bs=(s_1,\dots,s_l)$ of $w$, let $u_+=x_{s_1}(t_1) \dots x_{s_l}(t_l) \in U^+_w$ be the corresponding Lusztig parametrization. Then we have
\begin{align*}
    \chi_s(u_+) = \sum_{k: s_k=s} t_k.
\end{align*}
\end{lem}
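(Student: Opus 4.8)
The plan is to unfold the definition of $\chi_s$ as a matrix coefficient in the fundamental module $V(\varpi_s)$ and then isolate a single weight component. By definition $\chi_s(u_+) = \Delta_{\varpi_s, r_s\varpi_s}(u_+) = \langle f_{\varpi_s}, u_+.v_{r_s\varpi_s}\rangle$, where $v_{r_s\varpi_s} = \overline{r}_s.v_{\varpi_s}$ has weight $r_s\varpi_s = \varpi_s - \alpha_s$ (using $\langle \alpha_s^\vee, \varpi_s\rangle = 1$), and $f_{\varpi_s} = v_{\varpi_s}^{\vee}$ is dual to the highest weight vector for the form $(\ ,\ )_{\varpi_s}$. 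Since that form pairs weight spaces of equal weight, $f_{\varpi_s}$ annihilates every weight space other than the one-dimensional highest one; hence $\chi_s(u_+)$ is exactly the coefficient of $v_{\varpi_s}$ in $u_+.v_{r_s\varpi_s}$.

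First I would expand $u_+ = \prod_{k=1}^l x_{s_k}(t_k) = \prod_{k=1}^l \exp(t_k e_{s_k})$, writing $e_s$ for the Chevalley generator with $x_s(t) = \exp(t e_s)$. Acting on $v_{r_s\varpi_s}$, a monomial $e_{s_1}^{n_1}\cdots e_{s_l}^{n_l}$ raises the weight by $\sum_k n_k\alpha_{s_k}$, so it reaches the highest weight space precisely when $\sum_k n_k\alpha_{s_k} = \alpha_s$. By linear independence of the simple roots this forces $n_k = 0$ for all $k$ with $s_k \neq s$ and $\sum_{k:\,s_k=s} n_k = 1$; thus exactly one factor $e_s$ is applied once and every other factor acts as the identity. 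Collecting these surviving terms yields
\[
\langle f_{\varpi_s}, u_+.v_{r_s\varpi_s}\rangle \;=\; \Big(\sum_{k:\, s_k=s} t_k\Big)\,\langle f_{\varpi_s},\, e_s.v_{r_s\varpi_s}\rangle.
\]

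It then remains to verify $\langle f_{\varpi_s}, e_s.v_{r_s\varpi_s}\rangle = 1$, which I would check inside the rank-one subgroup $\varphi_s(SL_2)$. Since $\langle \alpha_s^\vee, \varpi_s\rangle = 1$, the vectors $v_{\varpi_s}$ and $v_{r_s\varpi_s}$ span a copy of the standard $SL_2$-module, on which $\overline{r}_s = \varphi_s\!\left(\begin{smallmatrix}0&-1\\1&0\end{smallmatrix}\right)$ sends $v_{\varpi_s}\mapsto v_{r_s\varpi_s}$ and $e_s$ acts as the raising operator $\left(\begin{smallmatrix}0&1\\0&0\end{smallmatrix}\right)$, giving $e_s.v_{r_s\varpi_s} = v_{\varpi_s}$. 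Finally $\langle f_{\varpi_s}, v_{\varpi_s}\rangle = (v_{\varpi_s}, v_{\varpi_s})_{\varpi_s} = 1$ by the normalization of the form, which closes the argument.

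The only genuinely delicate point is this last identification $e_s.v_{r_s\varpi_s} = v_{\varpi_s}$: one must pin down the normalizations of $\overline{r}_s$, $e_s$, and $v_{r_s\varpi_s}$ consistently through $\varphi_s$ so that no stray scalar (in particular no sign) is introduced. Everything else is weight bookkeeping. I therefore expect the $SL_2$ reduction, together with the compatibility of the conventions for $\varphi_s$ and for $(\ ,\ )_{\varpi_s}$, to be where the real content of the lemma lies.
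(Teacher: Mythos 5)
Your proof is correct. The paper in fact states \cref{l:char-lus} without proof, treating it as a standard fact from the Lusztig/Berenstein--Zelevinsky theory of parametrizations, and your argument is exactly the expected one: reducing $\chi_s(u_+)=\Delta_{\varpi_s,r_s\varpi_s}(u_+)$ to the coefficient of $v_{\varpi_s}$ in $u_+.v_{r_s\varpi_s}$ (since the form $(\ ,\ )_{\varpi_s}$ pairs weight spaces of equal weight), using linear independence of the simple roots to see that only the first-order terms $t_k e_s$ with $s_k=s$ survive in the expanded product of exponentials, and settling the normalization by the $SL_2$ computation $e_s.v_{r_s\varpi_s}=e_sf_s.v_{\varpi_s}=\langle\alpha_s^\vee,\varpi_s\rangle v_{\varpi_s}=v_{\varpi_s}$, which correctly rules out any stray sign (the only point worth flagging, as you did; note also that since $\varpi_s$ need not lie in $X^*(H)$ for adjoint $G$, the module $V(\varpi_s)$ is formally a $\widetilde{G}$-module, but this is harmless because $U^+$ lifts canonically to $\widetilde{G}$).
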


\subsection{Coweight parametrizations on double Bruhat  cells}\label{subsec:DBcells}
The coweight parametrizations on double Bruhat cells are introduced in \cite{FG06} and further investigated in \cite{Williams}. 

Let $G$ be an adjoint group. 
For each $u,v \in W(G)$, the \emph{double Bruhat cell} is defined to be $G^{u,v} := B^+ u B^+ \cap B^- v B^-$. It is a subvariety of $G$. In this paper, we only treat with the special cases $u=e$ or $v=e$. See \cite{FG06,Williams} for the general construction\footnote{Indeed, the general case is obtained by a suitable amalgamation from the cases $u=e$ and $v=e$.}. 

Let us write $B^+_v:=G^{e,v}$ and $B^-_u:=G^{u,e}$. First consider $B^+_v$. 
Let $\bs=(s_1, \dots, s_l)$ be a reduced word for $v$. Then the \emph{evaluation map} $ev^+_\bs: \bG_m^{n+l} \to B^+_v$ is defined by
\begin{align*}
    ev^+_\bs(\mathbf{x}):= \left(\prod_{s=1}^n H^s(x_s) \right) \cdot \dprod_{k=1,\dots,l} (\mathbb{E}^{s_k}H^{s_k}(x_{n+k})),
\end{align*}
where $\mathbf{x}=(x_k)_{k=1}^{n+l}$ and the symbol $\overrightarrow{\prod}_{k=1,\dots,l}$ means that we multiply the elements successively from the left to the right, namely $\overrightarrow{\prod}_{k=1,\dots,l}g_k:=g_1\dots g_l$. 
%, where an element $x \in \bG_m^{n+ \ell}$ is written as $x=(x_i^s)_{s \in S,~ i=0,\dots, n^s(\bs)}$. 
Similarly in the case $v=e$, we take a reduced word $\bs$ for $u$ and define $ev^-_\bs: \bG_m^{n+l} \to B^-_u$ by replacing each $\mathbb{E}$ with $\mathbb{F}$. We call the variables $\mathbf{x}=(x_k)_k$ the \emph{coweight parameters}.

The following indexing for the coweight parameters $\mathbf{x}$ will turn out to be useful: for a reduced word $\bs=(s_1,\dots,s_l)$ of an element of $W(G)$, let $k(s,i)$ denote the $i$-th number $k$ such that $s_k=s$. Let $n^s(\bs)$ be the number of $s$ which appear in the word $\bs$. If we relabel the variables as 
\begin{align}
 x_i^s:=x_{k(s, i)}   
\end{align}\label{eq:relabeling}
for $s \in S,~ i=0,\dots, n^s(\bs)$, then they always appear in the form $H^s(x_i^s)$ in the expression of $ev^\pm_{\bs}(\mathbf{x})$. 
Let 
\begin{align}\label{eq:relabeling_set}
    I(\bs):=\{(s,i) \mid s \in S,~ i=0,\dots, n^s(\bs)\}.
\end{align}
Then for a reduced word $\bs$ of $u \in W(G)$, the evaluation maps give open embeddings $ev_\bs^\pm: \bG_m^{I(\bs)} \to B^\pm_u$ where the variable assigned to the component $(s,i) \in I(\bs)$ is substituted to the $k(s,i)$-th position.
%In practice, the coweight parametrizations associated with reduced words will be suffice. A reduction of a word corresponds to a cluster projection. See \cite{FG06} for a detail. 

\begin{ex}[Type $A_3$]\label{e:coweight}
Let $G=PGL_4$. 
The evaluation map associated with the reduced word $\bs=(1,2,3,1,2,1)$ is given by
\begin{align*}
&ev^+_\bs(\mathbf{x}) \\
	 &= H^1(x_0^1)H^2(x_0^2)H^3(x_0^3) \mathbb{E}^1 H^1(x_1^1) \mathbb{E}^2 H^2(x_1^2) \mathbb{E}^3 H^3(x_1^3) \mathbb{E}^1 H^1(x_2^1) \mathbb{E}^2 H^2(x_2^2) \mathbb{E}^1 H^1(x_3^1).
\end{align*}
The evaluation maps are compatible with group multiplication. For example, let us consider $\bs:=(1,2,3)$ and $\bs':=(1)$. Then we have 
\begin{align*}
    &ev^+_\bs(x_0^1,x_0^2,x_0^3,x_1^1,x_1^2,x_1^3)\cdot ev^+_{\bs'}(y_0^1,y_0^2,y_0^3,y_1^1) \\
    &=H^1(x_0^1)H^2(x_0^2)H^3(x_0^3) \mathbb{E}^1 H^1({\color{red} x_1^1}) \mathbb{E}^2 H^2({\color{blue} x_1^2}) \mathbb{E}^3 H^3({\color{mygreen} x_1^3})\cdot H^1({\color{red} y_0^1})H^2({\color{blue} y_0^2})H^3({\color{mygreen} y_0^3}) \mathbb{E}^1 H^1(y_1^1) \\
    &=H^1(x_0^1)H^2(x_0^2)H^3(x_0^3) \mathbb{E}^1 H^1({\color{red} x_1^1y_0^1}) \mathbb{E}^2 H^2({\color{blue} x_1^2y_0^2}) \mathbb{E}^3 H^3({\color{mygreen} x_1^3y_0^3})\mathbb{E}^1 H^1(y_1^1) \\
    &=ev^+_{\bs''}(x_0^1,x_0^2,x_0^3,x_1^1y_0^1,x_1^2y_0^2,x_1^3y_0^3,y_1^1).
\end{align*}
with $\bs'':=(1,2,3,1)$. 
Here in the third line, we used the fact that $H^s(x)$ and $H^t(y)$ always commute with each other, and that $\mathbb{E}^s$ and $H^t(x)$ commutes with each other when $s \neq t$. If we denote the variable assigned to the component $(i,s) \in I(\bs'')$ by $z_i^s$, then 
\begin{align*}
    z_0^1&=x_0^1, & z_0^2&=x_0^2, & z_0^3&=x_0^3, & \\
    z_1^1&=x_1^1y_0^1, & z_1^2&=x_1^2y_0^2, & z_1^3&=x_1^3y_0^3, & z_2^1=y_1^1.
\end{align*}
\end{ex}
%For a reduced word $\bs=(s_1,\dots,s_l)$ of an element of $W(G)$, let us consider the weighted quiver $\bJ^\pm(\bs)$

For a reduced word $\bs$ of $w \in W(G)$ and $\epsilon \in \{+,-\}$, each variable $x_i^s$ of the coweight parametrization $ev_{\bs}^\epsilon$ is assigned to the vertex $v_i^s$ of the weighted quiver $\bJ^\epsilon(\bs)$. See \cref{sec:quivers}. The group multiplication corresponds to an appropriate amalgamation of quivers. For example, the multiplication considered in \cref{e:coweight} corresponds to the quiver amalgamation shown in \cref{f:mult-amal}. The pair $\sfS^\epsilon(\bs):=(\bJ^\epsilon(\bs), (x_i^s)_{(s,i) \in I(\bs)})$ forms an \emph{$X$-seed} in the ambient field $\cF=\Rat(B^\epsilon_w)$. 

\begin{thm}[Fock-Goncharov \cite{FG06}, Williams \cite{Williams}]
For an element $w \in W(G)$ and $\epsilon \in \{+,-\}$, the seeds $\sfS^\epsilon(\bs)$ associated with reduced words $\bs$ of $w$ are mutation-equivalent to each other. Hence the collection$(\sfS^\epsilon(\bs))_{\bs}$ is a cluster Poisson atlas (\cref{d:cluster atlas}) on the double Bruhat cell $B^\epsilon_w$. 
\end{thm}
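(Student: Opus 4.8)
The plan is to reduce the equivalence of the seeds to the combinatorics of reduced words via Tits' theorem, and then to identify the resulting coordinate changes with cluster $X$-mutations through a local, rank-two computation.

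First I would invoke Tits' theorem (a.k.a.\ Matsumoto's lemma): any two reduced words $\bs, \bs'$ of a fixed $w \in W(G)$ are connected by a finite sequence of \emph{braid moves}, each replacing an alternating subword $(s,t,s,\dots)$ of length $m_{st}$ by $(t,s,t,\dots)$. Since mutation-equivalence is transitive, it suffices to treat a single braid move. By the multiplicativity of the evaluation maps under group multiplication (illustrated in \cref{e:coweight}), the comparison of $ev^\epsilon_\bs$ and $ev^\epsilon_{\bs'}$ localizes to the two letters involved: only the product of the elementary factors $\mathbb{E}^{s}H^{s}(\cdot)$ (resp.\ $\mathbb{F}^{s}H^{s}(\cdot)$) indexed by the affected subword changes, while all other factors---including the leading Cartan part $\prod_{s} H^s(x_s)$---are spectators. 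Thus the analysis reduces to the rank-two subgroup generated by $s$ and $t$, i.e.\ to the cases $m_{st} \in \{2,3,4,6\}$.

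Next I would carry out the rank-two computation in each case. Writing the two factorizations of a generic element of the relevant rank-two double Bruhat cell, one solves for the coweight parameters of $ev^\epsilon_{\bs'}$ in terms of those of $ev^\epsilon_\bs$, using only the relation $h\,x_s(t)\,h^{-1} = x_s(h^{\alpha_s} t)$, the mutual commutation of the $H^s$, and the commutation of $\mathbb{E}^s$ with $H^t$ for $s \neq t$. One then checks that (a) the weighted quiver $\bJ^\epsilon(\bs)$ is carried to $\bJ^\epsilon(\bs')$, up to a relabeling of vertices, by an explicit finite sequence of quiver mutations supported on the vertices attached to the subword; and (b) the induced change of coweight parameters is exactly the composite $X$-mutation determined by that sequence. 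The commutation move $m_{st}=2$ is trivial (no mutation; the parameters merely permute), the move $m_{st}=3$ needs a single mutation, and the moves $m_{st}=4$ and $m_{st}=6$ require longer, but still explicit, mutation sequences dictated by the entries $C_{st}, C_{ts}$.

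Finally, the atlas statement follows formally. Each evaluation map $ev^\epsilon_\bs : \bG_m^{I(\bs)} \to B^\epsilon_w$ is an open embedding, and the transition map $ev^\epsilon_{\bs'} \circ (ev^\epsilon_\bs)^{-1}$ between any two charts equals the cluster $X$-transformation associated with the braid-move sequence joining $\bs$ and $\bs'$, by the previous step; in particular the Poisson compatibility is automatic since all transitions are $X$-mutations. Hence the tori $\bG_m^{I(\bs)}$ glue along their overlaps purely by cluster Poisson transformations, so $(\sfS^\epsilon(\bs))_\bs$ is a cluster Poisson atlas on $B^\epsilon_w$ in the sense of \cref{d:cluster atlas}. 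I expect the main obstacle to be the non-simply-laced moves $m_{st}=4$ and $m_{st}=6$: pinning down the precise mutation sequence and relabeling permutation, and verifying that the weighted $X$-mutation rule reproduces the monomial transformation of the coweight parameters exactly, is where the genuine bookkeeping lies, whereas the cases $m_{st}\in\{2,3\}$ are comparatively routine.
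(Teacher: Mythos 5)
The paper does not prove this theorem itself but quotes it from \cite{FG06} and \cite{Williams}, and your plan is precisely the argument of those references: Tits' theorem reduces everything to a single braid move, whose effect on the evaluation maps localizes (by the multiplicativity of $ev^\epsilon_{\bs}$) to the rank-two subword and is then matched with an explicit mutation sequence at interior, hence unfrozen, vertices --- no mutation for $m_{st}=2$, one for $m_{st}=3$, and the longer sequences for $m_{st}=4,6$ that constitute Williams' extension beyond the simply-laced setting, together with the vertex relabeling forced by the change in the multiset of Dynkin labels. So your proposal is correct and coincides in approach with the cited proof.
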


%Moreover one can think of each $\mathbb{E}^{s_k}$ (or $\mathbb{F}^{s_k}$) as assigned to each horizontal arrow of $\bJ^\epsilon(\bs)$. In this viewpoint, the matrix $H^{s}(x_i^s)$ is assigned to the vertex $v_i^s$. 
%Then the coweight parametrization $ev_{\bs}^\epsilon$ is then obtained by \lq\lq scanning" the matrices from the left to the right, and multipying them in this order. Two matrices that appear at the same time necessarily commute with each other.

\begin{figure}
    \begin{tikzpicture}
\begin{scope}[>=latex]
%Quiver #1
%vertices
\foreach \i in {0,1}
\draw (2*\i+1,0) circle(2pt) node[below]{$x_{\i}^1$};
\foreach \i in {0,1}
\draw (2*\i+2, 1) circle(2pt) node[above=0.2em]{$x_{\i}^2$};
\foreach \i in {0,1}
\draw (2*\i+3, 2) circle(2pt) node[above]{$x_{\i}^3$};
%horizontal arrows
\qarrow{3,0}{1,0}
\qarrow{4,1}{2,1}
\qarrow{5,2}{3,2}
%left-down arrows
\qdarrow{1,0}{2,1}
\qdarrow{2,1}{3,2}
\qdarrow{3,0}{4,1}
\qdarrow{4,1}{5,2}
%left-up arrows
\qarrow{2,1}{3,0}
\qarrow{3,2}{4,1}
%Quiver #2
%vertices
\foreach \i in {0,1}
\draw (2*\i+5,0) circle(2pt) node[below]{$y_{\i}^1$};
\foreach \i in {0}
\draw (2*\i+6, 1) circle(2pt) node[above=0.2em]{$y_{\i}^2$};
\foreach \i in {0}
\draw (2*\i+7, 2) circle(2pt) node[above]{$y_{\i}^3$};
%arrows
\qarrow{7,0}{5,0}
\qdarrow{6,1}{7,0}
\qdarrow{5,0}{6,1}

\draw[ultra thick,-{Classical TikZ Rightarrow[length=4pt]}] (8,1) to (9,1);

{\begin{scope}[xshift=9cm]
%Quiver #3
\foreach \i in {0,1,2}
\draw (2*\i+1,0) circle(2pt);
\foreach \i in {0,1}
\draw (2*\i+2, 1) circle(2pt);
\foreach \i in {0,1}
\draw (2*\i+3, 2) circle(2pt);
\draw (1,0) node[below]{$x_0^1$};
\draw (3,0) node[below]{$x_1^1y_0^1$};
\draw (5,0) node[below]{$x_2^1$};
\draw (2,1) node[left=0.2em]{$x_0^2$};
\draw (4,1) node[right=0.2em]{$x_1^2y_0^2$};
\draw (3,2) node[above]{$x_0^3$};
\draw (5,2) node[above]{$x_1^3y_0^3$};
%arrows
\foreach \i in {1,2}
\qarrow{2*\i+1,0}{2*\i-1,0};
\foreach \i in {1}
\qarrow{2*\i+2,1}{2*\i,1};
\qarrow{5,2}{3,2}
\qarrow{2,1}{3,0}
\qdarrow{4,1}{5,0}
\qarrow{3,2}{4,1}
\qdarrow{1,0}{2,1}
\qdarrow{2,1}{3,2}
\qarrow{3,0}{4,1}
\qdarrow{4,1}{5,2}
\end{scope}}
\end{scope}
    \end{tikzpicture}
    \caption{Amalgamation of the quivers $\bJ^+(1,2,3)$ and $\bJ^+(1)$ for type $A_3$ produces the quiver $\bJ^+(1,2,3,1)$.}
    \label{f:mult-amal}
\end{figure}

The following lemma directly follows from the definition of the Dynkin involution and \cref{l:Dynkininv}, which will be useful in the sequel.

\begin{lem}\label{l:generator_involution}
We have the following relations:
\begin{align*}
    \vw^{-1} H^s(x)^{-1} \vw &= H^{s^\ast}(x), \\
    \vw^{-1} (\mathbb{E}^s)^{-1} \vw &= \mathbb{F}^{s^\ast},\\
    \vw^{-1} (\mathbb{F}^s)^{-1} \vw &= \mathbb{E}^{s^\ast}.
\end{align*}
\end{lem}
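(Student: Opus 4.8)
The plan is to deduce all three identities from the explicit form of the Dynkin involution $\ast$, namely $g^\ast = \vw (g^{-1})^{\mathsf{T}}\vw^{-1}$, together with \cref{l:Dynkininv}. The single structural input that makes the one-sided conjugations behave well is the identity $\vw^2 = s_G = 1$ in the adjoint group $G$, so that $\vw^{-1} = \vw$; I would record this reduction first, as it is what will reconcile conjugation by $\vw$ with conjugation by $\vw^{-1}$ throughout.

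For the unipotent generators I would argue as follows. By \cref{l:Dynkininv} applied at $t=1$ we have $(\mathbb{E}^s)^\ast = \mathbb{E}^{s^\ast}$ and $(\mathbb{F}^s)^\ast = \mathbb{F}^{s^\ast}$. On the other hand, the transpose sends $(\mathbb{E}^s)^{\mathsf{T}} = x_s(1)^{\mathsf{T}} = y_s(1) = \mathbb{F}^s$ and likewise $(\mathbb{F}^s)^{\mathsf{T}} = \mathbb{E}^s$. Substituting into the definition of $\ast$ gives $\mathbb{E}^{s^\ast} = (\mathbb{E}^s)^\ast = \vw (\mathbb{F}^s)^{-1}\vw^{-1}$ and $\mathbb{F}^{s^\ast} = (\mathbb{F}^s)^\ast = \vw (\mathbb{E}^s)^{-1}\vw^{-1}$; using $\vw=\vw^{-1}$ these are exactly the second and third displayed relations. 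These two steps are immediate formal consequences of \cref{l:Dynkininv} and require no further input.

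The first relation requires understanding how $\ast$ acts on the torus. Since $h^{\mathsf{T}}=h$ for $h\in H$, the definition gives $H^s(x)^\ast = \vw H^s(x)^{-1}\vw^{-1} = \vw^{-1}H^s(x)^{-1}\vw$. Conjugation by $\vw$ realizes the $w_0$-action on the cocharacter lattice $X_*(H)$, so I must compute $w_0.\varpi_s^\vee$. The key point is the identity $-w_0\varpi_s^\vee = \varpi_{s^\ast}^\vee$: the automorphism $-w_0$ preserves the simple (co)roots and induces the involution $s\mapsto s^\ast$ (by the defining relation $\alpha_{s^\ast} = -w_0\alpha_s$), and a short pairing computation with $\langle \alpha_t^\vee,\varpi_s\rangle = \delta_{st}$ upgrades this from (co)roots to the fundamental coweights. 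Granting this, $\vw\,\varpi_s^\vee(x^{-1})\,\vw^{-1} = (w_0.\varpi_s^\vee)(x^{-1}) = (-\varpi_{s^\ast}^\vee)(x^{-1}) = \varpi_{s^\ast}^\vee(x)$, that is $H^s(x)^\ast = H^{s^\ast}(x)$, which rearranges to the first relation.

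The main obstacle is purely bookkeeping in this first relation: pinning down the convention under which conjugation by $\vw$ gives the $w_0$-action (left versus right, and the placement of inverses), and verifying $-w_0\varpi_s^\vee = \varpi_{s^\ast}^\vee$ on the coweight lattice rather than merely on roots. Both facts are standard, and the potential ambiguity between $\vw$- and $\vw^{-1}$-conjugation is harmless precisely because $\vw^2 = 1$ in $G$, so that once the coweight computation is in place the three identities follow directly.
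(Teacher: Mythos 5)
Your proposal is correct and follows exactly the route the paper intends: the paper offers no written proof beyond stating that the lemma ``directly follows from the definition of the Dynkin involution and \cref{l:Dynkininv},'' and your argument is precisely that unwinding — reading off $(\mathbb{E}^s)^\ast=\mathbb{E}^{s^\ast}$, $(\mathbb{F}^s)^\ast=\mathbb{F}^{s^\ast}$ at $t=1$, using $h^{\mathsf{T}}=h$ together with $-w_0\varpi_s^\vee=\varpi_{s^\ast}^\vee$ for the torus relation, and invoking $\vw^2=s_G=1$ in the adjoint group to identify conjugation by $\vw$ with conjugation by $\vw^{-1}$. The only cosmetic slip is citing the pairing $\langle \alpha_t^\vee,\varpi_s\rangle=\delta_{st}$ where the coweight-side identity $\langle \varpi_s^\vee,\alpha_t\rangle=\delta_{st}$ is the one actually used; the computation itself is correct.
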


Since the map $g \mapsto \vw^{-1} g^{-1} \vw$ is an anti-homomorphism, we get the following:

\begin{cor}\label{l:ev_involution}
For a reduced word $\bs=(s_1,\dots,s_N)$ of $w_0 \in W(G)$, let  $\bs^\ast_{\op}:=(s_N^\ast,\dots,s_1^\ast)$. 
Then we have
\begin{align*}
    \vw^{-1} ev^+_{\bs}(\mathbf{x})^{-1} \vw = ev^-_{\bs^\ast_{\op}}\circ \iota^*(\mathbf{x}),
\end{align*}
where $\iota^*:\bG_m^{I(\bs)} \to \bG_m^{I((\bs)_\op^\ast)}$ is an isomorphism induced by the bijection
\begin{align}\label{eq:iota-involution}
    \iota: I(\bs_\op^\ast) \to I(\bs),\quad (s^\ast,i) \mapsto (s,n^s(\bs)-i). 
\end{align}
%$\mathbf{x}=(x_i^{s})_{s\in S, i=0,\dots, n^s(\bs)}$ and  $\widetilde{\mathbf{x}}=(\bar{x}_i^s)_{s\in S, i=0,\dots, n^s(\bs^\ast_{\op})}$ are related as $x_{n^{s}(\bs)-i}^{s}=\bar{x}_i^{s^\ast}$. 
\end{cor}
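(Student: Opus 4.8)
The plan is to apply the anti-automorphism $\theta\colon g\mapsto \vw^{-1}g^{-1}\vw$ of $G$ (noted just before the statement) directly to the factored form of $ev^+_{\bs}(\mathbf{x})$ and to recognize the outcome as $ev^-_{\bs^\ast_{\op}}$ after reorganizing the torus factors. Writing $\bs=(s_1,\dots,s_N)$ and recording the relabelled variables $x_i^s=x_{k(s,i)}$, we have
\[
ev^+_{\bs}(\mathbf{x})=\Big(\prod_{s\in S}H^s(x_0^s)\Big)\,\mathbb{E}^{s_1}H^{s_1}(x^{s_1}_{i(1)})\cdots \mathbb{E}^{s_N}H^{s_N}(x^{s_N}_{i(N)}),
\]
where $i(k)$ is the occurrence index of $s_k$ in $\bs$. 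Since $\theta$ reverses products and, by \cref{l:generator_involution}, sends $H^s(a)\mapsto H^{s^\ast}(a)$ and $\mathbb{E}^s\mapsto \mathbb{F}^{s^\ast}$, applying it factor by factor gives
\[
\theta\big(ev^+_{\bs}(\mathbf{x})\big)=H^{s_N^\ast}(x^{s_N}_{i(N)})\mathbb{F}^{s_N^\ast}\cdots H^{s_1^\ast}(x^{s_1}_{i(1)})\mathbb{F}^{s_1^\ast}\,\prod_{s\in S}H^{s^\ast}(x_0^s).
\]
This already exhibits the correct sequence of root vectors $\mathbb{F}^{s_N^\ast},\dots,\mathbb{F}^{s_1^\ast}$, i.e. the word $\bs^\ast_{\op}$; only the placement of the torus factors differs from the standard form of $ev^-_{\bs^\ast_{\op}}$.

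First I would settle the single-letter case $\bs=(s)$ as a base computation. Here $\theta(ev^+_{(s)}(\mathbf{x}))=H^{s^\ast}(x_1^s)\mathbb{F}^{s^\ast}\prod_t H^{t^\ast}(x_0^t)$, and using the relation $H^t(a)\mathbb{F}^{s}=y_{s}(a^{-\langle\varpi_t^\vee,\alpha_{s}\rangle})H^t(a)$ (a direct consequence of $hy_s(t)h^{-1}=y_s(h^{-\alpha_s}t)$) one moves the torus factors into standard position and checks that the result is $ev^-_{(s^\ast)}$ with $y_0^{s^\ast}=x_1^s$, $y_1^{s^\ast}=x_0^s$ and $y_0^u=x_0^{u^\ast}$ for $u\neq s^\ast$, which is exactly the substitution prescribed by $\iota(s^\ast,0)=(s,1)$, $\iota(s^\ast,1)=(s,0)$ and $\iota(u,0)=(u^\ast,0)$. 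Then I would pass to general $\bs$ by induction on $N$: peeling off the last letter gives a factorization $ev^+_{\bs}=ev^+_{\bs'}\cdot ev^+_{(s_N)}$ with merged coweight parameters as in \cref{e:coweight}, so that by anti-multiplicativity of $\theta$,
\[
\theta\big(ev^+_{\bs}\big)=\theta\big(ev^+_{(s_N)}\big)\cdot \theta\big(ev^+_{\bs'}\big)=ev^-_{(s_N^\ast)}\cdot ev^-_{(\bs')^\ast_{\op}},
\]
and the transpose-analogue of the multiplicativity in \cref{e:coweight} recombines the right-hand side into $ev^-_{(s_N^\ast)\ast(\bs')^\ast_{\op}}=ev^-_{\bs^\ast_{\op}}$. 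Here $(\bs')^\ast_{\op}$ is a reduced word for $w_0(w')^{-1}w_0$, so the induction hypothesis (stated for an arbitrary $w\in W(G)$) applies to it; specializing $w=w_0$ at the end yields the corollary as stated.

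The main obstacle is the bookkeeping of the torus factors, and with it the verification that the coweight parameters recombine under concatenation precisely according to the index bijection $\iota(s^\ast,i)=(s,n^s(\bs)-i)$. Each elementary block of $\theta(ev^+_{\bs})$ carries its Cartan factor on the ``wrong'' side, and commuting the $H^{t}(a)$ past the $\mathbb{F}^s$'s produces factors $y_s(a^{-\delta_{ts}})$ that must be reassembled into standard $\mathbb{F}H$-blocks; the real content is that, after merging, the occurrence index $i$ of each letter in $\bs$ is sent to the complementary index $n^s(\bs)-i$ in $\bs^\ast_{\op}$. Confining this calculation to the single-letter base case and propagating it through the multiplicativity of the evaluation maps keeps the argument transparent and avoids a simultaneous global rearrangement of all $N$ torus factors.
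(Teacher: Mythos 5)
Your proposal is correct and takes essentially the same route as the paper: the paper deduces the corollary in one line from the fact that $g \mapsto \vw^{-1}g^{-1}\vw$ is an anti-homomorphism together with the generator relations of \cref{l:generator_involution}, which is exactly the mechanism you apply factor by factor. Your single-letter base case and induction via the multiplicativity of the evaluation maps is simply a careful organization of the torus-factor commutations and the index bookkeeping for $\iota$ that the paper leaves implicit, and it checks out.
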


\subsection{Goncharov--Shen coordinates on $\Conf_3 \P_G$} \label{subsec:GScoord}
We recall the Goncharov--Shen's cluster Poisson coordinate system on $\Conf_3 \P_G$ associated with a reduced word $\bs = (s_1,\dots,s_N)$ of $w_0 \in W(G)$. See \cite{GS19} for a detail. %Let $\bs^\ast:= (s_1^\ast,\dots, s_N^\ast)$. 
Let $[B_1,B_2,B_3; p_{12},p_{23},p_{31}] \in \Conf_3 \P_G$. 
Using \cref{c:flagchain}, we take the decomposition of the generic pair $(B_2,B_3)$ with respect to $\bs$:
\[
B_2 = B_2^0 \xrightarrow{s_1^{\ast}} B_2^1 \xrightarrow{s_2^{\ast}} \dots \xrightarrow{s_N^{\ast}} B_2^N = B_3,
\]
where $w(B_2^{k-1}, B_2^k) = s_k^{\ast}$ for $k = 1,\dots, N$. Suppose that the triple $(B_1,B_2,B_3)$ is ``sufficiently generic'' so that each pair $(B_1,B_2^k)$ is generic for $k=0,\dots,N$. Let $\widehat{B}_1$, $\widehat{B}'_1$ be two lifts of $B_1$ determined by the pinnings $p_{12}$, $p_{31}^\ast$, respectively. Now we define:
\begin{align*}
X_{s \choose i}:=
\begin{cases}
W_{s} \left(\widehat{B}_1,B_2, B_2^{k(s,1)} \right) & (i=0), \\
W_{s} \left(\widehat{B}_1, B_2^{k(s,i)}, B_2^{k(s,i+1)} \right)/W_{s} \left(\widehat{B}_1, B_2^{k(s,i-1)}, B_2^{k(s,i)} \right) & (i=1,\dots, n^{s}(\bs)-1), \\
W_{s} \left( \widehat{B}'_1,B_2^{k(s,n^{s}(\bs)-1)}, B_2^{k(s,n^{s}(\bs))} \right)^{-1} & (i=n^{s}(\bs)).
\end{cases}
\end{align*}
Here as before, $k(s,i)$ denotes the $i$-th number $k$ such that $s_k=s$ in $\bs$. Using the embedding $\bG_m \hookrightarrow \bA^1$, we regard $X_{s \choose i}$ as a $\bG_m$-valued rational function on $\Conf_3 \P_G$. 

%\begin{rem}\label{r:X-coord_difference}
%In \cite{GS19}, the authors use the decomposition of the generic pair $(B_2,B_3)$ with respect to $\bs^\ast$ rather than $\bs$.
%\end{rem}

Let $\widetilde{G}$ be the simply-connected group which covers $G$ and take a lift
\begin{align}\label{eq:liftdecomp}
\widetilde{B}_2 = \widetilde{B}_2^0 \xrightarrow{s_1^{\ast}} \widetilde{B}_2^1 \xrightarrow{s_2^{\ast}} \dots \xrightarrow{s_N^{\ast}} \widetilde{B}_2^N = \widetilde{B}_3
\end{align}
of the above chain to $\A_{\widetilde{G}}:=\widetilde{G}/U^+$ so that the pair $(\widetilde{B}_2,\widetilde{B}_3)$ is determined by the pinning $p_{23}$ and the conditions  $h(\widetilde{B}_2^{j},\widetilde{B}_2^{j-1}) = 1$ for $j=1,\dots,N$ hold. Here the $h$-invariant and the $w$-distance of the elements of $\A_{\widetilde{G}}^{\times 2}$ are defined in the same way as those for $\A_{G}^{\times 2}$. Such a lift exists thanks to \cite[Lemma-Definition 5.3]{GS19}. Then we have the \emph{primary coordinates}
\[
P_{\bs,k} := \frac{\Lambda_{s_k} (\widetilde{B}_1, \widetilde{B}_2^k)} {\Lambda_{s_k} (\widetilde{B}_1, \widetilde{B}_2^{k-1})}, 
\]
where $\widetilde{B}_1$ is an arbitrary lift of $\widehat{B}_1\in \A_G$ to $\A_{\widetilde{G}}$ and 
$\Lambda_{s}:  \A_{\widetilde{G}}\times \A_{\widetilde{G}}\to \bA^1$ is the unique $\widetilde{G}$-invariant rational function such that 
\[
\Lambda_{s}(h.[U^+], \vw.[U^+])=h^{\varpi_s} \in \bG_m.
\]
Note that $P_{\bs,k}$ does not depend on the choice of the lifts $\widetilde{B}_1, \widetilde{B}_2$ and it gives a well-defined $\bG_m$-valued regular function on $\Conf_3 \P_G$. See also \cref{l:flaglift}. 

Let $\beta_k^\bs := r_{s_N} \dots r_{s_{k+1}}(\alpha_{s_k}^\vee)$ be a sequence of coroots associated with $\bs$. For each $s \in S$, there exists a unique $k = k(s)$ such that $\beta_k^\bs = \alpha_{s}^\vee$. Then we set 
\[
X_{s \choose -\infty} :=P_{\bs,k(s)}. 
\]
\begin{dfn}\label{d:GS coord}
The rational
%\footnote{See \cref{t:coweight-GS} below for the rationality.} 
functions $X_{s \choose i}$ ($s \in S, ~i=-\infty, 0,1,\dots,n^{s}(\bs)$) are called the \emph{Goncharov--Shen coordinates} (\emph{GS coordinates} for short) on $\Conf_3 \P_G$, associated with the reduced word $\bs$. When we want to emphasize the dependence on the reduced word $\bs$, we write $X_{s \choose i}=:X_{s \choose i}^{\bs}$.
\end{dfn}
Conversely, we can construct an embedding 
\begin{align}
\psi_{\bs}: \bG_m^{I_\infty(\bs)} \to \Conf_3 \P_G\label{eq:GS coord}
\end{align}
from given set of GS coordinates, where $I_\infty(\bs):=\{(s,i) \mid s \in S, ~i=-\infty, 0,1,\dots,n^{s}(\bs)\}$. 
If $G=PGL_{n+1}$ and the reduced word $\bs=\bs_\std(n)$ is the one defined inductively by 
\begin{align}\label{eq:std_word}
    \bs_\std(n) = (1,2,\dots,n)\bs_\std(n-1), \quad
    \bs_\std(1) = (1),
\end{align}
then the GS coordinates are nothing but the Fock--Goncharov coordinates introduced in \cite[Section 9]{FG03}.
% The ordered coordinates $(X_1,\dots,X_{n+N})$ defined by

% \begin{align}\label{eq:GS relabeling}
%     X_{s \choose i}=: X_{k(s,i)}    
% \end{align}
% will be used later, where $k(s,i)$ denotes the $i$-th number $k$ such that $s_k=s$. Here the GS coordinates $X_{s \choose \infty}$ are excluded. 
% Compare with \eqref{eq:relabeling}.

\begin{lem}[{\cite[Lemma 9.2]{GS19}}]\label{l:H3-action on GS coord}
Let $(k_1, k_2, k_3)\in H^3$ and denote the action of $(k_1, k_2, k_3)$ on $\Conf_3\P_G$ described in \cref{l:H3-action} by $\alpha_{k_1, k_2, k_3}:  \Conf_3\P_G\to \Conf_3\P_G$. Then for $s\in S$, we have
\begin{align*}
\alpha_{k_1, k_2, k_3}^{\ast}X_{s\choose i}=\begin{cases}
k_1^{-\alpha_s}X_{s\choose 0}&\text{if } i=0,\\
k_3^{-\alpha_{s^{\ast}}}X_{s\choose n^s(\bs)}&\text{if } i=n^s(\bs),\\
k_2^{-\alpha_s}X_{s\choose -\infty}&\text{if } i=-\infty,\\
X_{s\choose i}&\text{otherwise}.
\end{cases} 
\end{align*}
\end{lem}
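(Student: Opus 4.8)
The plan is to reduce the whole statement to one transformation law for the Goncharov--Shen potential, to dispatch every coordinate except $X_{s\choose -\infty}$ by that law, and to treat the primary coordinate by a separate telescoping computation. First I would record how the action $\alpha_{k_1,k_2,k_3}$ moves the data entering the definition of the $X_{s\choose i}$. Since the $H^3$-action rescales the three pinnings within their $H$-fibres, the underlying flags $B_1,B_2,B_3$ and hence the entire chain $B_2=B_2^0\to\cdots\to B_2^N=B_3$ are unchanged, and only the decorations move: the decorated flag $\widehat{B}_1$ read off from $p_{12}$ becomes $\widehat{B}_1.k_1$, while the decorated flag $\widehat{B}'_1$ read off from $p_{31}^\ast$ becomes $\widehat{B}'_1.w_0(k_3)$ by the opposite-pinning rule $(g.p_\std.h)^\ast=g.p_\std^\ast.w_0(h)$. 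The lift chain $\widetilde{B}_2^\bullet$ used for the primary coordinates moves only through $p_{23}\mapsto p_{23}.k_2$.

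Second, I would establish the key identity
\[
W_s(\widehat{B}_1.h,B_2,B_3)=h^{-\alpha_s}\,W_s(\widehat{B}_1,B_2,B_3).
\]
Writing $\beta_3(u_+)=[\widehat{B}_1,B_2,B_3]$, rescaling the first decoration by $h$ replaces $u_+$ by $\Ad_{h^{-1}}(u_+)$; combined with the bi-$H$-equivariance of the minor $\chi_s=\Delta_{\varpi_s,r_s\varpi_s}$ and the identity $r_s\varpi_s-\varpi_s=-\alpha_s$, this produces the factor $h^{-\alpha_s}$. Granting this, the cases $i=0$, $1\le i\le n^s(\bs)-1$, and $i=n^s(\bs)$ are immediate. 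For $i=0$ one reads off $k_1^{-\alpha_s}$; the interior coordinates are ratios of two potentials carrying the same decoration $\widehat{B}_1$, so the two factors $k_1^{-\alpha_s}$ cancel and the coordinate is invariant; for $i=n^s(\bs)$ the relevant decoration is $\widehat{B}'_1$, which is rescaled by $w_0(k_3)$, so after taking the inverse one obtains $w_0(k_3)^{\alpha_s}=k_3^{-\alpha_{s^\ast}}$ by Weyl-invariance of the pairing together with $\alpha_{s^\ast}=-w_0\alpha_s$.

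The hard part is the primary coordinate $X_{s\choose -\infty}=P_{\bs,k(s)}$, and this is where I expect the main obstacle. I would first determine how the lift chain rescales: since $p_{23}\mapsto p_{23}.k_2$ rescales the two endpoints $\widetilde{B}_2^0,\widetilde{B}_2^N$ by $\widetilde{k}_2$ and $w_0.\widetilde{k}_2$, the normalization $h(\widetilde{B}_2^j,\widetilde{B}_2^{j-1})=1$ forces $\widetilde{B}_2^j\mapsto\widetilde{B}_2^j.a_j$ with $a_j=r_{s_j^\ast}.a_{j-1}$ and $a_0=\widetilde{k}_2$, the consistency $a_N=w_0.\widetilde{k}_2$ holding because $(s_1^\ast,\dots,s_N^\ast)$ is again a reduced word for $w_0$. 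Next, the bi-$H$-equivariance of $\Lambda_s$—rescaling the second slot of a generic pair by $b$ contributes $(w_0.b)^{-\varpi_s}=b^{\varpi_{s^\ast}}$—makes the $\widetilde{B}_1$-dependence cancel in the ratio defining $P_{\bs,k}$ and leaves $\alpha_{k_1,k_2,k_3}^\ast P_{\bs,k}=a_{k-1}^{-\alpha_{s_k^\ast}}P_{\bs,k}$. The final and most delicate input is the Weyl-combinatorial identity: using $r_{s_1}\cdots r_{s_N}=w_0$, the conjugation rule $r_{s_j^\ast}=w_0 r_{s_j}w_0$, and $\alpha_{s^\ast}=-w_0\alpha_s$, one checks
\[
r_{s_1^\ast}\cdots r_{s_{k-1}^\ast}\alpha_{s_k^\ast}=r_{s_N}\cdots r_{s_{k+1}}\alpha_{s_k},
\]
whose coroot is exactly $\beta_k^{\bs}$; for $k=k(s)$ this equals $\alpha_s^\vee$, so the right-hand side is $\alpha_s$ and hence $a_{k-1}^{-\alpha_{s_k^\ast}}=k_2^{-\alpha_s}$, as required. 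The genuine difficulty lies in this last case—organizing the telescoping rescaling of the $\widetilde{G}$-lift chain and matching the resulting weight against $\beta_k^{\bs}$ through the $\ast$-involution—whereas the remaining coordinates follow formally from the single law of the second step.
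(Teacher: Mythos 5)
Your proof is correct, but it takes a genuinely different route from the paper: the paper's own proof is essentially a citation, deferring the cases $i=0$, $i=n^s(\bs)$, $i=-\infty$ to \cite[Lemma 9.2]{GS19} and only remarking that the invariance of the interior coordinates follows from the definitions of the $H^3$-action and $W_s$. You instead give a self-contained derivation, and all the key steps check out: the equivariance $W_s(\widehat{B}_1.h,B_2,B_3)=h^{-\alpha_s}W_s(\widehat{B}_1,B_2,B_3)$ follows from $u_+\mapsto \Ad_{h^{-1}}(u_+)$ and $r_s\varpi_s-\varpi_s=-\alpha_s$; the opposite-pinning rule correctly turns $p_{31}\mapsto p_{31}.k_3$ into $\widehat{B}'_1\mapsto\widehat{B}'_1.w_0(k_3)$, giving $w_0(k_3)^{\alpha_s}=k_3^{-\alpha_{s^\ast}}$; the recursion $a_j=r_{s_j^\ast}(a_{j-1})$ is exactly what the normalization $h(\widetilde{B}_2^{j},\widetilde{B}_2^{j-1})=1$ forces (the $h$-invariant of $(A_1.a,A_2.b)$ is $h(A_1,A_2)\,a\,w(b)^{-1}$), with the endpoint consistency $a_N=w_0(\widetilde{k}_2)$ as you say; the second-slot equivariance of $\Lambda_s$ contributes $b^{\varpi_{s^\ast}}$, so the ratio $P_{\bs,k}$ picks up $a_k^{\varpi_{s_k^\ast}}a_{k-1}^{-\varpi_{s_k^\ast}}=a_{k-1}^{r_{s_k^\ast}\varpi_{s_k^\ast}-\varpi_{s_k^\ast}}=a_{k-1}^{-\alpha_{s_k^\ast}}$; and your Weyl identity is verified by $r_{s_N}\cdots r_{s_{k+1}}=w_0\,r_{s_1}\cdots r_{s_k}$, whence $r_{s_N}\cdots r_{s_{k+1}}\alpha_{s_k}=-w_0\,r_{s_1}\cdots r_{s_{k-1}}\alpha_{s_k}=r_{s_1^\ast}\cdots r_{s_{k-1}^\ast}\alpha_{s_k^\ast}$, which at $k=k(s)$ equals $\alpha_s$ since $\beta^\bs_{k(s)}=\alpha_s^\vee$ forces $\alpha^\bs_{k(s)}=\alpha_s$. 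What your approach buys is independence from the external reference and a uniform mechanism (equivariance of $W_s$ and $\Lambda_s$) covering all four cases; note also that the paper's internal machinery would permit yet a third, purely computational verification, via the standard-configuration formulas (\cref{l:GScoord_standard}, \cref{c:x-infty}) combined with \eqref{eq:H3-action_config}, against which your answers agree — e.g.\ $X_{s\choose n^s(\bs)}=h_2^{\alpha_s}t_{k(s,n^s(\bs))}^{-1}$ with $h_2\mapsto k_1^{-1}h_2w_0(k_3)$ and $t\mapsto k_1^{-\alpha_s}t$ reproduces exactly $k_3^{-\alpha_{s^\ast}}$.
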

\begin{proof}
The first three equalities are given in \cite[Lemma 9.2]{GS19}. The last one straightforwardly follows from the definition of the $H^3$-action and $W_s$. 
\end{proof}
For $s \in S$, the GS coordinate $X_{s \choose i}$ for $i=0,1,\dots,n^{s}(\bs)$ (resp. $i=-\infty$) is assigned to the vertex $v_i^s$ (resp. $y_s$) of the weighted quiver $\widetilde{\bJ}^+(\bs)$. See \cref{sec:quivers}. The indices in the subset $I_\uf(\bs):=\{(s,i) \in I_\infty(\bs) \mid s \in S,~0 < i < n^s(\bs)\}$ are declared to be unfrozen. Then the pair $\widetilde{\sfS}(\bs):=(\widetilde{\bJ}^+(\bs),(X_{s \choose i})_{(s,i) \in I_\infty(\bs)})$ forms a seed in the ambient field $\cF=\Rat(\Conf_3 \P_G)$. 

\begin{thm}[Goncharov--Shen {\cite[Theorem 7.2]{GS19}}]
The seeds $\widetilde{\sfS}(\bs)$ associated with any two reduced words $\bs$ of the longest element $w_0$ are mutation-equivalent to each other. Hence the collection $(\widetilde{\sfS}(\bs))_{\bs}$ is a cluster Poisson atlas (\cref{d:cluster atlas}) on $\Conf_3 \P_G$.
\end{thm}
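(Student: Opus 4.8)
The plan is to reduce the assertion to a purely local computation indexed by rank-two root subsystems. By Matsumoto's theorem, any two reduced words $\bs, \bs'$ of $w_0$ are connected by a finite sequence of \emph{elementary braid moves}, each of which replaces an alternating subword $(s,t,s,\dots)$ of length $m_{st}$ by $(t,s,t,\dots)$ inside a fixed window, leaving the remaining letters untouched. Since mutation-equivalence of seeds is an equivalence relation, it suffices to treat a single such move and to exhibit a sequence of mutations at unfrozen vertices, followed by a seed isomorphism respecting the frozen/unfrozen partition, that carries $\widetilde{\sfS}(\bs)$ to $\widetilde{\sfS}(\bs')$.

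First I would exploit the locality of the GS coordinates. By construction each $X_{s\choose i}$ depends only on the consecutive flags $B_2^{k-1}, B_2^k$ of the chain associated to $\bs$ via \cref{c:flagchain}, through the potential $W_s$; likewise the frozen primary coordinate $X_{s\choose -\infty}$ is read off from the decorated lift and the functions $\Lambda_s$. Consequently, an elementary braid move in the $(s,t)$-window alters only the coordinates attached to positions inside that window, together with the adjacent frozen data, and it modifies only the full subquiver of $\widetilde{\bJ}^+(\bs)$ spanned by the $s$- and $t$-vertices. This confines the entire verification to the rank-two subsystem $\langle \alpha_s,\alpha_t\rangle$, i.e.\ to the four cases $m_{st}\in\{2,3,4,6\}$, corresponding to types $A_1\times A_1$, $A_2$, $B_2$ and $G_2$.

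In each rank-two case I would compute explicitly, on one side, the change of the weighted quiver under the braid move, and on the other side, the transformation of the GS coordinates. For the latter I would pass to the standard configuration (\cref{l:standardconfig}), express the coordinates $X_{s\choose i}$ with $i\ge 0$ in terms of the Lusztig parameters of the unipotent cell using \cref{l:char-lus}, and evaluate the primary coordinates via the $\A_{\widetilde{G}}$-lift. The $A_1\times A_1$ move is then a trivial relabeling with no mutation; the $A_2$ move is a single mutation at the unique unfrozen vertex lying in the window; and the $B_2$ and $G_2$ moves are the known composite mutation sequences for those Cartan types. One then checks directly that the cluster $X$-mutation rule reproduces the computed coordinate change. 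A useful cross-check is that, via the map $b_L$ of \cref{c:LR}, the subseed on the vertices with $i\ge 0$ is precisely the coweight seed $\sfS^+(\bs)$ on the double Bruhat cell $B^+_{w_0}$, for which mutation-equivalence across reduced words is already the Fock--Goncharov--Williams theorem \cite{FG06,Williams} recalled above.

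I expect the genuinely delicate points to be two-fold. The non-simply-laced windows $B_2$ and, above all, $G_2$ require tracking longer composite mutation sequences together with the quiver multipliers $d_s$, where the bookkeeping is heavy; and, orthogonally, one must verify that the frozen primary coordinates $X_{s\choose -\infty}$ --- the data genuinely new to $\Conf_3\P_G$ beyond the double Bruhat cell --- are transported correctly through these mutations, since they are defined using the decorated lift and the invariants $\Lambda_s$ rather than by the potentials alone. Establishing this last compatibility is the main obstacle, after which mutation-equivalence of all the $\widetilde{\sfS}(\bs)$, and hence the cluster Poisson atlas structure, follows by composition over a chain of braid moves.
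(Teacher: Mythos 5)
The paper offers no internal proof of this theorem---it is imported verbatim from Goncharov--Shen \cite[Theorem 7.2]{GS19}---and your outline coincides with the strategy of that cited proof: reduction via Matsumoto's theorem to elementary braid moves, explicit rank-two verifications ($A_1\times A_1$ a trivial relabeling, $A_2$ a single mutation at the in-window unfrozen vertex, $B_2$ and $G_2$ longer composite sequences), the check that the induced change of the GS coordinates matches the cluster Poisson mutation rule, and the separate tracking of the frozen data including the primary coordinates $X_{s \choose -\infty}$, which you correctly single out as the genuinely new ingredient beyond the double Bruhat cell picture captured by $b_L$. One caution should you execute the sketch: your locality claim is slightly overstated, since a braid move changes the multiset of letters (so the $s$- and $t$-labelled vertex sets of $\widetilde{\bJ}^+(\bs)$ and $\widetilde{\bJ}^+(\bs')$ differ and a relabeling is part of the equivalence) and the in-window mutations also modify arrows to spectator vertices of adjacent Dynkin labels $u$ with $C_{su}\neq 0$ or $C_{tu}\neq 0$, so the rank-two computation must be performed together with these adjacent levels rather than on the $s,t$-subquiver alone.
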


% Later we use the following labellings:
% \begin{align}\label{eq:label_frozen}
% s_{12}:=(s,0),\quad s_{23}:=(s,-\infty),\quad s_{31}:=(s^\ast \,n^{\bs}(s^\ast)) \in I_\infty(\bs).  
% \end{align}

\begin{rem}\label{rem:GS coordinates_partial_pinnings}
\begin{enumerate}
    \item Note that the frozen coordinates $X_{s \choose 0}$, $X_{s \choose n^s(\bs)}$ and $X_{s \choose -\infty}$ depend only on one of the three pinnings. Moreover, observe from \cref{l:H3-action on GS coord} that these frozen coordinates are uniquely distinguished by their degrees with respect to the $H^3$-action among the GS coordinates associated with $\bs$. 
    \item On the other hand, the unfrozen coordinates $X_{s\choose i}$ for $(s,i) \in I_\uf(\bs)$ only depend on the underlying flags $(B_1,B_2,B_3)$. Hence we have the following birational charts for the configuration spaces with some of the pinnings dropped:
\begin{align*}
    &\bG_m^{I_\infty(\bs)\setminus \{(s,-\infty)\mid s \in S\}} \to [G\backslash \{(B_1,B_2,B_3;p_{12},p_{31})\}], \\ 
    &\bG_m^{I_\infty(\bs)\setminus \{(s,0),(s,-\infty)\mid s \in S\}} \to [G\backslash \{(B_1,B_2,B_3;p_{31})\}], 
\end{align*}
and so on. Here a pair of flags over which no pinning is assigned is not required to be generic. 
%These configuration spaces are building blocks for the moduli space $\P_{G,\Sigma;\Xi}$ with partial genericity (recall \cref{def:moduli_partial_pinnings}). 
\end{enumerate}
\end{rem}

\begin{rem}\label{r:GScoord}
The above definition of coordinates is the same as the original one given in \cite{GS19}. 
It can be verified as follows. 
For each $s \in S$, take the unique flag $B_{2,s}^k$ such that $w(B_2^k,B_{2,s}^k) = w_0r_{s}$ and $w(B_{2,s}^k, B_1) = r_{s}$. 
Then actually we have $B_{2,s}^{k-1} = B_{2,s}^k$ whenever $s_k \neq s$ \cite[Lemma 7.9]{GS19}. See also \cref{r:additional flags}. We collect all the distinct flags among $B_{2,s}^k$ and relabel them as $B_{s \choose i}$, $(s,i) \in I(\bs) \subset I_\infty(\bs)$. 
Then the triple $\left(\widehat{B}_1,B_{s \choose i}, B_{s \choose i+1}\right)$ determines a configuration of $SL_2$-flags, namely an element of $\Conf^*(\A_{SL_2},\B_{SL_2},\B_{SL_2})$. See \cite[(291)]{GS19}. 
Then by \cite[Proposition 7.10]{GS19}, we have
\[
W_s \left(\widehat{B}_1, B_2^{k(s,i)}, B_2^{k(s,i+1)} \right)=W\left(\widehat{B}_1,B_{s \choose i}, B_{s \choose i+1}\right),
\]
where the right-hand side is the potential of a configuration of $SL_2$-flags.
\end{rem}

\subsection{Coordinate expressions of basic Wilson lines}\label{subsec:coordinate comparison}
\begin{NB}
\begin{prop}\label{p:coweight-lus}
\begin{enumerate}
    \item For $b^+ \in G^{e, v}$, let $b^+=hu^+$ $(h \in H, u^+ \in U^+_*)$ be its triangular decomposition. For any reduced word $\bs=(s_1,\dots,s_l)$ of $v$, the Lusztig parameters $(t_k)_{k=1,\dots,l}$ of $u^+$ and the coweight parameters $(x_i^s)_{s \in S,i=0,\dots,n^s(\bs)}$ of $b^+$ are related by
    \[
    t_{k(s,i)}^{-1} = x_{i}^s \dots x_{n^s(\bs)}^s
    \]
    for $s \in S$ and $i=1,\dots,n^s(\bs)$.
    \item For $b^- \in B^-_u$, let $b^-=u^-h$ $(h \in H, u^- \in U^-_*)$ be its triangular decomposition. For any reduced word $\bs=(s_1,\dots,s_l)$ of $u$, the Lusztig parameters $(t_k)_{k=1,\dots,l}$ of $u^-$ and the coweight parameters $(x_i^s)_{s \in S,i=0,\dots,n^s(\bs)}$ of $b^+$ are related by
    \[
    t_{k(s,i)}^{-1} = x_0^s \dots x_{i-1}^s
    \]
    for $s \in S$ and $i=1,\dots,n^s(\bs)$.
\end{enumerate}
\end{prop}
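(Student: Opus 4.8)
The plan is to prove both statements by a single direct computation: expand the evaluation map $ev^\pm_\bs(\mathbf{x})$, push all of its Cartan factors to one side using the commutation relations between the one-parameter subgroups $H^s$ and the root generators $\mathbb{E}^s,\mathbb{F}^s$, and then read off the resulting Lusztig parameters. The only tools needed are the relations $h\,x_s(t)\,h^{-1}=x_s(h^{\alpha_s}t)$ and $h\,y_s(t)\,h^{-1}=y_s(h^{-\alpha_s}t)$ for $h\in H$, together with $(H^s(a))^{\alpha_t}=a^{\langle\varpi_s^\vee,\alpha_t\rangle}=a^{\delta_{st}}$, which isolates the Cartan factor of the matching letter.

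For part (1), write $b^+=ev^+_\bs(\mathbf{x})$, set $h_0:=\prod_{s\in S}H^s(x_0^s)$, and let $a_k$ denote the Cartan parameter occurring in the $k$-th factor $\mathbb{E}^{s_k}H^{s_k}(a_k)$, so that $a_k=x_i^{s_k}$ when position $k$ is the $i$-th occurrence of the letter $s_k$. First I would introduce the partial products $g_k:=h_0\,H^{s_1}(a_1)\cdots H^{s_k}(a_k)\in H$ (with $g_0=h_0$) and verify the one-step identity
\begin{align*}
    g_{k-1}\,\mathbb{E}^{s_k}H^{s_k}(a_k)=x_{s_k}\!\left(g_{k-1}^{\alpha_{s_k}}\right) g_k,
\end{align*}
which follows from $\mathbb{E}^{s_k}=x_{s_k}(1)$ and the commutation relation. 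Iterating gives $b^+=\bigl(\dprod_{k=1,\dots,l} x_{s_k}(g_{k-1}^{\alpha_{s_k}})\bigr)g_l$. Since the required decomposition is $b^+=h\,u^+$ with the torus factor on the \emph{left}, I would then conjugate $g_l$ across, which rescales the $k$-th Lusztig parameter to $t_k=(g_l^{-1}g_{k-1})^{\alpha_{s_k}}$. A short character computation, using that only the factors $H^{s_j}(a_j)$ with $s_j=s_k$ survive the pairing with $\alpha_{s_k}$, evaluates $(g_l^{-1}g_{k-1})^{\alpha_{s_k}}=\prod_{j\ge k,\ s_j=s_k}a_j^{-1}$; for $k=k(s,i)$ these indices run over the occurrences $i,i+1,\dots,n^s(\bs)$ of $s$, yielding $t_{k(s,i)}^{-1}=x_i^s\cdots x_{n^s(\bs)}^s$.

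Part (2) is entirely parallel, now using $\mathbb{F}^s=y_s(1)$ and the sign-flipped relation $h\,y_s(t)h^{-1}=y_s(h^{-\alpha_s}t)$, which produces $g_{k-1}\mathbb{F}^{s_k}H^{s_k}(a_k)=y_{s_k}(g_{k-1}^{-\alpha_{s_k}})g_k$ and hence $b^-=\bigl(\dprod_{k} y_{s_k}(g_{k-1}^{-\alpha_{s_k}})\bigr)g_l$. The key difference is that here the target decomposition $b^-=u^-h$ already carries the torus factor on the \emph{right}, so no conjugation is needed and $t_k=g_{k-1}^{-\alpha_{s_k}}$ directly. Evaluating the character as before gives $g_{k-1}^{\alpha_{s_k}}=x_0^s\!\!\prod_{j<k,\ s_j=s}\!\!a_j=x_0^s x_1^s\cdots x_{i-1}^s$ for $k=k(s,i)$, whence $t_{k(s,i)}^{-1}=x_0^s\cdots x_{i-1}^s$.

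The computation itself is routine; the main obstacle is purely bookkeeping. One must correctly match the relabeled coweight parameters $a_j=x_i^{s_j}$ to the occurrences of each letter in $\bs$, and one must recognize that the asymmetry between the two formulas---a \emph{tail} product $x_i^s\cdots x_{n^s(\bs)}^s$ in (1) versus a \emph{head} product $x_0^s\cdots x_{i-1}^s$ in (2)---is not an accident but reflects the side on which the torus factor $g_l$ lands. This forces a conjugation in (1) (which completes each partial product to the full range) but none in (2), and the opposite sign in the conjugation relation for $y_s$ accounts for the remaining discrepancy.
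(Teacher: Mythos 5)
Your proof is correct and is essentially the paper's own argument: the paper likewise proves both parts by moving all the Cartan factors $H^{s}(\cdot)$ to the left (resp.\ right) via the commutation relations $H^s(a)^{-1}x_u(b)H^s(a)=x_u(a^{-\delta_{su}}b)$ and $H^s(a)y_u(b)H^s(a)^{-1}=y_u(a^{-\delta_{su}}b)$ and reading off the resulting Lusztig parameters. Your partial-product bookkeeping $g_k$ and the final conjugation by $g_l$ in part (1) is just a more explicit organization of that same computation (and your signs, including the tail versus head products, come out right).
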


\begin{proof}
For the first part, let us write 
\begin{align*}
    b^+ = ev^+_\bs(\mathbf{x}) = \left(\prod_{s=1}^n H^s(x_s) \right) \cdot \overrightarrow{\prod_{k=1,\dots,l}} (\mathbb{E}^{s_k}H^{s_k}(x_{n+k})).
\end{align*}
Then using the relation $H^s(a)^{-1}x_u(b)H^s(a) = x_u(a^{-\delta_{su}}b)$ (and recalling $x_u(1) = \mathbb{E}^u$), we can move all the $H$'s to the left. The result takes the form
\begin{align*}
    b^+ = \left(\prod_{s=1}^n H^s((x_0^s \dots x_{n^s(\bs)}^s)^{-1}) \right) \cdot x_{s_1}(t_1) \dots x_{s_l}(t_l),
\end{align*}
where we have $t_{k(s,i)} = (x_1^s \dots x_{i}^s)^{-1}$. The second part is similarly proved by using the relation $H^s(a)y_u(b)H^s(a)^{-1} = y_u(a^{-\delta_{su}}b)$ and moving all the $H$'s to the right.
\end{proof}
\end{NB}
In this section, we give an expression of the basic Wilson lines $b_L,b_R$ (\cref{d:basic-Conf3}) in terms of the GS coordinates, relating the coweight parametrizations on the double Bruhat cells $B^+_*:=G^{e,w_0}$, $B^-_*:=G^{w_0,e}$ with the GS coordinates. 
%See also \cite[Lemma 7.29]{GS19}. 
%We describe the relation in the form which will be suited for our computation in \cref{sec:GS coord}. Recall the basic Wilson lines  from \cref{d:basic-Conf3}. 
The index set $I(\bs)$ introduced in \eqref{eq:relabeling_set} is naturally regarded as a subset of $I_\infty(\bs)$.

\begin{thm}[cf. {\cite[Lemma 7.29]{GS19}}]\label{t:coweight-GS}
For each reduced word $\bs$ of $w_0 \in W(G)$ we have 
%the following relations among the coordinates:
% \begin{align*}
%     b_L^*(x_i^s) &= X_{s \choose i} \quad\text{for $s \in S$ and $i=0,\dots,n^s(\bs)$}; \\ 
%     b_R^* (\bar{x}_i^{s^\ast}) &= X_{s \choose n^{s}(\bs)-i} 
%     \quad\text{for $s \in S$ and $i=0,\dots,n^{s^\ast}(\bs^\ast_\op)$}.
% \end{align*}
% Here $x_i^s$ (resp. $\bar{x}_i^s$) is a coweight parameter on $B^+_*$ (resp. $B^-_*$) with respect to $\bs$ (resp. $\bs^\ast_{\op}$) and $X_{s \choose i}$ is a GS coordinate on $\Conf_3 \P_G$ with respect to $\bs$. 
% Namely,
\begin{align*}
    \psi_{\bs}^*b_L = ev^+_{\bs}, \quad \psi_{\bs}^*b_R = ev^-_{\bs_\op^\ast}\circ \iota^*,
\end{align*}
where $\iota^*:\bG_m^{I(\bs)} \to \bG_m^{I(\bs_\op^\ast)}$ is the isomorphism induced by \eqref{eq:iota-involution}.  
\end{thm}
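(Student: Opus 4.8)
The plan is to prove the identity for $b_L$ first and then deduce the one for $b_R$ formally. For the latter reduction, recall from \cref{c:LR} (equivalently \cref{rem:LR_Wilson_line}) that $b_R(\wC) = \vw^{-1} b_L(\wC)^{-1} \vw$ holds for every $\wC \in \Conf_3 \P_G$, using $\vw^2=1$ in the adjoint group $G$. Hence, once $\psi_\bs^* b_L = ev^+_\bs$ is known, \cref{l:ev_involution} gives
\[
\psi_\bs^* b_R = \vw^{-1}(\psi_\bs^* b_L)^{-1}\vw = \vw^{-1}(ev^+_\bs)^{-1}\vw = ev^-_{\bs_\op^\ast}\circ \iota^*,
\]
where $(ev^+_\bs)^{-1}$ denotes the pointwise inverse in $G$; this is exactly the second identity. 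Note also that, since $b_L(\wC) = g_1^{-1}g_3\vw$ depends only on the pinnings $p_{12}$ and $p_{31}$ but not on $p_{23}$ (\cref{rem:LR_Wilson_line}), while $X_{s \choose -\infty}$ is the only GS coordinate reading off $p_{23}$ (\cref{rem:GS coordinates_partial_pinnings}), the morphism $\psi_\bs^* b_L$ automatically factors through the projection $\bG_m^{I_\infty(\bs)} \to \bG_m^{I(\bs)}$ forgetting the $(s,-\infty)$-coordinates, which is what makes the comparison with $ev^+_\bs$ on $\bG_m^{I(\bs)}$ meaningful.

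It remains to prove $\psi_\bs^* b_L = ev^+_\bs$. I would work with a configuration in standard form $\wC = \wC_3(h_1,h_2,u_+)$ (\cref{l:standardconfig}), for which $b_L(\wC) = u_+ h_2 \in B^+_*$ by \cref{c:LR}. Given coweight parameters $\mathbf{x} \in \bG_m^{I(\bs)}$, fix the configuration with $u_+ h_2 = ev^+_\bs(\mathbf{x})$ (the factor $h_1$ being free and irrelevant for $b_L$). The assertion is then equivalent to showing that the GS coordinates of this configuration satisfy $X_{s \choose i}^{\bs} = x_i^s$ for all $(s,i) \in I(\bs)$. To compute these I would write $u_+ = x_{s_1}(t_1)\cdots x_{s_N}(t_N)$ in the Lusztig parametrization (\cref{p:Lusztig-param}), set $B_1 = B^+$, and build the flag chain $B^- = B_2^0 \xrightarrow{s_1^\ast} \cdots \xrightarrow{s_N^\ast} B_2^N = u_+.B^-$ from the successive partial products. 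The intermediate flags $B_2^k$, and the auxiliary rank-one flags $B_{s \choose i}$ of \cref{r:GScoord}, are then explicit functions of the $t_k$'s.

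The core is the evaluation of the potentials. By \cref{d:GS coord} each $X_{s \choose i}$ is a ratio of values $W_s(\widehat{B}_1, B_2^{k(s,i)}, B_2^{k(s,i+1)})$, which by \cref{r:GScoord} reduce to rank-one potentials $W(\widehat{B}_1, B_{s \choose i}, B_{s \choose i+1})$, and by the defining formula $W_s = \chi_s \circ \beta_3^{-1}$ together with \cref{l:char-lus} these become explicit (sums of) Lusztig parameters. Converting between the $t_k$ and the $x_i^s$ via the triangular change of variables obtained by commuting the torus factors $H^s(x_i^s)$ to one side in $ev^+_\bs(\mathbf{x})$ (using $H^s(a)x_t(b)H^s(a)^{-1} = x_t(a^{\delta_{st}}b)$), the defining ratios telescope to give precisely $X_{s \choose i} = x_i^s$, with the two boundary cases $i=0$ and $i=n^s(\bs)$ accounting for the normalizations by $\widehat{B}_1$ and $\widehat{B}'_1$ respectively.

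The main obstacle is exactly this bookkeeping: tracking the intermediate flags along the chain, matching the genericity and $h$-invariant normalization conventions of the lifts to $\A_{\widetilde{G}}$ used in \cref{d:GS coord}, and arranging the telescoping so that the interior ratios and the two frozen endpoints all land on the correct variable. As a guard against indexing and normalization errors, I would cross-check against $H^3$-equivariance: the transformation of $\psi_\bs^* b_L$ dictated by \eqref{eq:H-invarianceL} must match the transformation of the GS coordinates in \cref{l:H3-action on GS coord} and of $ev^+_\bs$ under left/right multiplication by $H$ — which rescales the frozen coordinates $X_{s \choose 0}$ and $X_{s \choose n^s(\bs)}$ in the expected way and leaves the unfrozen ones invariant — thereby reducing the independent verification to the unfrozen coordinates together with a single frozen base point.
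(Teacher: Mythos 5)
Your proposal follows essentially the same route as the paper: the $b_R$ identity is deduced formally from the $b_L$ identity via $b_R=\vw^{-1}b_L^{-1}\vw$ and \cref{l:ev_involution}, exactly as in the paper, and your plan for $\psi_\bs^*b_L=ev^+_\bs$ — standard configuration with $b_L=u_+h_2$, Lusztig parametrization of $u_+$, flag chain $B_2^k=x_{[1\ k]}^{\bs}(\mathbf{t}).B^-$ from partial products, potential evaluation via \cref{l:char-lus} giving the telescoping relation $t_{k(s,i)}=X_{s\choose 0}\cdots X_{s\choose i-1}$, the endpoint normalizations via $\widehat{B}_1$ and $\widehat{B}'_1$, and finally commuting the $H$-factors to recover the coweight form — is precisely the content of \cref{l:lus-GS}, \cref{l:Cartan-GS} and the paper's concluding computation. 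The only cosmetic difference is that the paper evaluates $W_s$ directly rather than passing through the rank-one reduction of \cref{r:GScoord}, and your added $H^3$-equivariance cross-check is sound but not needed.
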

Below we give a proof of this theorem based on the standard configuration (\cref{l:standardconfig}). Let us write
\begin{align}
    \wC_3^{-1}\circ \psi_\bs=(\sfh_1(\bX),\sfh_2(\bX),\sfu_+(\bX)): \bG_m^{I_\infty(\bs)} \to H \times H \times U^+_*.\label{eq:def_of_hu}
\end{align}
Then from \cref{c:LR}, we have 
\begin{align*}
    \psi_\bs^*b_L=\sfu_+(\bX)h_2(\bX), \quad \psi_\bs^*b_R = \vw^{-1}(\sfu_+(\bX)h_2(\bX))^{-1}\vw.
\end{align*}
We are going to compute the functions $\sfu_+(\bX)$ and $\sfh_2(\bX)$. 
 
In the following, we use the short-hand notations $x_{[i\ j]}^\bs(\mathbf{t}):=x_{s_i}(t_i) \dots x_{s_j}(t_j)$ and $y_{[i\ j]}^{\bs}(\mathbf{t}):= y_{s_i}(t_i) \dots y_{s_j}(t_j)$ for a reduced word $\bs=(s_1,\dots,s_N)$ of $w_0 \in W(G)$ and $1 \leq i < j \leq N$. 

\begin{lem}\label{l:lus-GS}
For a configuration $\wC=\wC_3(h_1,h_2,u_+) \in \Conf_3 \P_G$ and its representative as in \cref{l:standardconfig}, write $u_+ = x_{s_1}(t_1)\dots x_{s_N}(t_N) = x_{[1\ N]}^{\bs}(\mathbf{t})$ using the Lusztig coordinates associated with $\bs$. Let $(X_{s \choose i}) \in \bG_m^{I_\infty(\bs)}$ be the GS coordinates of $\wC$ associated with $\bs$. 
Then we have the following:
\begin{enumerate}
\item
$B_2^k = x_{[1\ k]}^{\bs}(\mathbf{t}) B^-$.
\item
For each $s \in S$ and $i = 1,\dots,n^s(\bs)$, we have
\begin{align}\label{eq:Lus-GS}
    t_{k(s,i)} = X_{s \choose 0} \dots X_{s \choose i-1}.
\end{align}
Here $k(s,i)$ is the $i$-th number $k$ with $s_k = s$ in $\bs$ from the left. 
\end{enumerate} 
\end{lem}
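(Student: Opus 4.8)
The plan is to establish (1) first and deduce (2) from it by telescoping the defining formulae of the GS coordinates. For (1), note that in the standard configuration $B_2 = B^-$ and $B_3 = u_+.B^- = x_{[1\ N]}^{\bs}(\mathbf{t}).B^-$, and set $\widetilde{B}^k := x_{[1\ k]}^{\bs}(\mathbf{t}).B^-$; I would show that this chain realises the GS chain $(B_2^k)$. By $G$-invariance of the $w$-distance, translating by $(x_{[1\ k-1]}^{\bs}(\mathbf{t}))^{-1}$ reduces the computation of $w(\widetilde{B}^{k-1}, \widetilde{B}^k)$ to $w(B^-, x_{s_k}(t_k).B^-)$. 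Writing $B^- = \vw.B^+$ as flags and unwinding the Bruhat definition of the $w$-distance, this is the $w$ with $\vw^{-1} x_{s_k}(t_k)\vw \in B^+\overline{w}B^+$; by \cref{l:generator_involution} (applied to the one-parameter subgroup) this element equals $y_{s_k^\ast}(-t_k)$, which for $t_k \neq 0$ lies in $B^+ \overline{r_{s_k^\ast}} B^+$ by the rank-one Bruhat decomposition. Hence $w(\widetilde{B}^{k-1}, \widetilde{B}^k) = s_k^\ast$, so $(\widetilde{B}^k)$ is precisely the chain attached by \cref{c:flagchain} to the reduced word $\bs^\ast=(s_1^\ast,\dots,s_N^\ast)$ of $w_0$ and the generic pair $(B_2,B_3)$; the uniqueness clause of \cref{c:flagchain} then forces $B_2^k = \widetilde{B}^k$, which is (1). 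Each pair $(B_1,B_2^k)=(B^+, x_{[1\ k]}^{\bs}(\mathbf{t}).B^-)$ is automatically generic because $x_{[1\ k]}^{\bs}(\mathbf{t})\in U^+$, so the configuration is sufficiently generic and all GS coordinates are defined.

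For (2), I would first telescope the GS formulae. With $\widehat{B}_1 = [U^+]$ (the decorated flag underlying $p_{12}=p_\std$) and the convention $B_2^{k(s,0)}:=B_2$, a direct cancellation gives
\[
\prod_{l=0}^{i-1} X_{s \choose l} = W_s\!\left(\widehat{B}_1, B_2^{k(s,i-1)}, B_2^{k(s,i)}\right), \qquad 1 \le i \le n^s(\bs);
\]
here the right endpoint index $i-1$ never reaches $n^s(\bs)$, so only the interior defining formula (involving $\widehat{B}_1$, not $\widehat{B}'_1$) intervenes and the cross terms cancel. To evaluate the remaining potential, write $a=k(s,i-1)$, $b=k(s,i)$ and use (1) to present the relevant configuration as $([U^+], x_{[1\ a]}^{\bs}(\mathbf{t}).B^-, x_{[1\ b]}^{\bs}(\mathbf{t}).B^-)$. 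Translating by $(x_{[1\ a]}^{\bs}(\mathbf{t}))^{-1}\in U^+$, which fixes the base point $[U^+]\in\A_G$, turns this into $([U^+], B^-, x_{[a+1\ b]}^{\bs}(\mathbf{t}).B^-) = \beta_3(x_{[a+1\ b]}^{\bs}(\mathbf{t}))$, so by $G$-invariance $W_s = \chi_s(x_{[a+1\ b]}^{\bs}(\mathbf{t}))$. Finally \cref{l:char-lus} evaluates this as $\sum_{a<k\le b,\ s_k=s} t_k$, and since $a$ and $b$ are consecutive occurrences of $s$ in $\bs$, the unique surviving term is $t_{k(s,i)}$, giving (2).

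The genuinely delicate step is part (1): matching the GS chain, which by construction advances along the $\ast$-twisted word $\bs^\ast$, with the Lusztig factorisation written in the untwisted word $\bs$, and pinning down the rank-one Bruhat position with the correct $\ast$ and the correct representative $\vw$ (keeping track of $\vw^2 = 1$ in the adjoint group $G$ and of the coset-versus-subgroup conventions for flags). Once (1) and the identification $\widehat{B}_1 = [U^+]$ are secured, part (2) is essentially formal: it combines the telescoping, the observation that $U^+$ stabilises the base point $[U^+]$, and \cref{l:char-lus}. A minor point to dispatch along the way is that every configuration occurring is automatically generic, since the relevant products of $x_{s}(t)$ lie in $U^+$, so that all potentials and coordinates are well defined and the stated identities hold as equalities of rational functions on $\Conf_3\P_G$.
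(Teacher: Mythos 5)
Your proof is correct and takes essentially the same route as the paper's: part (1) via the rank-one computation $w(B^-, x_{s_k}(t_k).B^-)=r_{s_k^\ast}$ combined with the uniqueness clause of \cref{c:flagchain}, and part (2) by translating the configuration by $(x^{\bs}_{[1\ k(s,i-1)]}(\mathbf{t}))^{-1}\in U^+$ (which fixes $[U^+]$) to reduce the potential to $\chi_s$ of a partial product and invoking \cref{l:char-lus}. The only cosmetic differences are that the paper reads off the rank-one Bruhat position directly from the identity $x_s(t)=y_s(t^{-1})\alpha_s^\vee(t)\overline{r}_s^{-1}y_s(t^{-1})$ instead of conjugating by $\vw$, and concludes (2) by induction on $i$ where you telescope the defining formulae.
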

Substituting \eqref{eq:Lus-GS} into $x_{[1,N]}^\bs(\mathbf{t})$, we get an expression of the function $\sfu_+(\bX)$.

\begin{proof}
To check that the right-hand side of the first statement indeed gives $B_2^k$, let us compute
\begin{align*}
    w(x_{[1\ k-1]}^{\bs}(\mathbf{t}) B^-,x_{[1\ k]}^{\bs}(\mathbf{t}) B^-) 
    = w(B^-, x_{s_k}(t_k) B^-) = r_{s_k^\ast},
\end{align*}
where we used the relation $x_s(t) = y_s(t^{-1})\alpha_s^\vee(t) \overline{r}_s^
{-1}y_s(t^{-1})$. 
Then the uniqueness statement of \cref{c:flagchain} and an induction on $k$ implies $B_2^k = x_{[1\ k]}^{\bs}(\mathbf{t}) B^-$.

To prove the second statement, we compute
\begin{align*}
[\widehat{B}_1,B_2^{k(s,i)}, B_2^{k(s,i+1)}]
&= \left[[U^+], x_{[1\ k(s,i)]}^{\bs}(\mathbf{t}) B^-, x_{[1\ k(s,i+1)]}^{\bs}(\mathbf{t}) B^-\right] \\
&= \left[[U^+], B^-, x_{[k(s,i)+1\ k(s,i+1)]}^{\bs}(\mathbf{t}) B^- \right]. %\\
%&= [U^+, B^-, \vw^{-1} y_{[k(s,i)+1\ k(s,i+1)]}^{\bs}(\mathbf{t}) \vw B^-] \\
%&= [U^+, B^-, x_{[k(s,i)+1\ k(s,i+1)]}^{\bs^\ast}(\mathbf{t}) B^-].
\end{align*}
Thus we have $W_{s}(\widehat{B}_1,B_2^{k(s,i)}, B_2^{k(s,i+1)}) = \chi_{s}(x_{[k(s,i)+1\ k(s,i+1)]}^\bs(\mathbf{t})) = t_{k(s,i+1)}$ by \cref{l:char-lus}. A similar computation shows that $X_{s \choose 0} = W_s(\widehat{B}_1,B_2, B_2^{k(s,1)}) = t_{k(s,1)}$ and we get $t_{k(s,i)} = X_{s \choose 0} \dots X_{s \choose i-1}$ by induction on $i$.
\end{proof}

\begin{lem}\label{l:Cartan-GS}
We have $\sfh_2(\bX) = \prod_{s \in S} H^{s} (\mathbb{X}_s)$, where $\mathbb{X}_s := \prod_{i=0}^{n^s(\bs)} X_{s \choose i}$.
\end{lem}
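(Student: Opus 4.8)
The plan is to compute the product $\mathbb{X}_s=\prod_{i=0}^{n^s(\bs)}X_{s\choose i}$ explicitly by telescoping the defining formula of the Goncharov--Shen coordinates, recognize the outcome as $h_2^{\alpha_s}$, and then recover $\sfh_2(\bX)$ by the duality between $\{\alpha_s\}$ and $\{\varpi_s^\vee\}$.

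First I would telescope. Writing $n:=n^s(\bs)$ and setting $A_i:=W_s(\widehat{B}_1,B_2^{k(s,i)},B_2^{k(s,i+1)})$ (with $B_2^{k(s,0)}:=B_2$), the definition of $X_{s\choose i}$ gives $X_{s\choose 0}=A_0$ and $X_{s\choose i}=A_i/A_{i-1}$ for $1\le i\le n-1$, so the numerators cancel the denominators of the successive factors and $\prod_{i=0}^{n-1}X_{s\choose i}=A_{n-1}$. Multiplying by the last factor $X_{s\choose n}=W_s(\widehat{B}'_1,B_2^{k(s,n-1)},B_2^{k(s,n)})^{-1}$ yields
\[
\mathbb{X}_s=\frac{W_s(\widehat{B}_1,B_2^{k(s,n-1)},B_2^{k(s,n)})}{W_s(\widehat{B}'_1,B_2^{k(s,n-1)},B_2^{k(s,n)})},
\]
so that $\mathbb{X}_s$ measures precisely the discrepancy between the two lifts $\widehat{B}_1$ (from $p_{12}$) and $\widehat{B}'_1$ (from $p_{31}^{\ast}$) of $B_1$ on one and the same pair of flags.

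Next I would pin down this discrepancy in the standard configuration of \cref{l:standardconfig} and \eqref{eq:def_of_hu}. Since $p_{12}=p_\std$, we have $\widehat{B}_1=[U^+]$, while $p_{31}=u_+h_2\vw.p_\std$ gives $p_{31}^{\ast}=u_+h_2\vw^2.p_\std=u_+h_2.p_\std$, using $\vw^2=s_G=1$ in the adjoint group $G$. As $u_+\in U^+$ stabilizes $[U^+]$, the decorated-flag component of $p_{31}^{\ast}$ is $\widehat{B}'_1=u_+h_2.[U^+]=h_2.[U^+]=\widehat{B}_1.h_2$; that is, $\widehat{B}'_1$ is obtained from $\widehat{B}_1$ by the right $H$-action by $h_2=\sfh_2(\bX)$.

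The core step is the homogeneity of the potential under rescaling of its decorated-flag argument. Using $G$-invariance of $W_s$ and the fact that the right $H$-action commutes with the left $G$-action, I would bring a configuration to the form $\beta_3(u_+)=[[U^+],B^-,u_+.B^-]$; the rescaling $[U^+].h=h.[U^+]$ is then absorbed by acting by $h^{-1}$, which replaces $u_+$ by $\Ad_{h^{-1}}(u_+)=h^{-1}u_+h$. Combined with $\chi_s(h^{-1}u_+h)=h^{-\alpha_s}\chi_s(u_+)$, immediate from $h^{-1}x_s(t)h=x_s(h^{-\alpha_s}t)$ and \cref{l:char-lus}, this gives
\[
W_s(\widehat{B}_1.h,B,B')=h^{-\alpha_s}\,W_s(\widehat{B}_1,B,B').
\]
Applying this with $h=h_2$ to the telescoped expression yields $\mathbb{X}_s=h_2^{\alpha_s}$. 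Finally, since $\{\alpha_s\}_{s\in S}$ is the basis of $X^*(H)$ dual to the basis $\{\varpi_s^\vee\}_{s\in S}$ of $X_*(H)$, evaluating any $\alpha_t$ on $\prod_s\varpi_s^\vee(h_2^{\alpha_s})$ returns $\prod_s(h_2^{\alpha_s})^{\langle\varpi_s^\vee,\alpha_t\rangle}=h_2^{\alpha_t}$, whence $\prod_s H^s(\mathbb{X}_s)=\prod_s\varpi_s^\vee(h_2^{\alpha_s})=h_2=\sfh_2(\bX)$, as claimed.

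I expect the main obstacle to be the careful bookkeeping in the two middle steps: correctly identifying the lift $\widehat{B}'_1=\widehat{B}_1.h_2$ coming from the opposite pinning $p_{31}^{\ast}$ (which hinges on $\vw^2=1$ in the adjoint group and on $U^+$ stabilizing $[U^+]$), and verifying that the homogeneity degree of $W_s$ in its decorated-flag argument is exactly $-\alpha_s$. The telescoping and the concluding duality argument are routine.
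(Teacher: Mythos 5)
Your proof is correct and follows essentially the same route as the paper's: both identify $\widehat{B}'_1 = h_2.[U^+]$ from $p_{31}^{\ast} = u_+h_2.p_\std$ (using $\vw^2=1$ in the adjoint group) and extract $h_2^{\alpha_s}$ from the $\Ad_{h_2}$-twist, which rescales the potential by $h_2^{-\alpha_s}$. The only cosmetic difference is that you telescope the GS coordinates directly to reach $\mathbb{X}_s = h_2^{\alpha_s}$, whereas the paper instead invokes $t_{k(s,n^s(\bs))} = X_{s \choose 0}\cdots X_{s \choose n^s(\bs)-1}$ from \cref{l:lus-GS}, whose proof is the same telescoping expressed in Lusztig parameters.
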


\begin{proof}
Again fix a configuration $\wC=\wC_3(h_1,h_2,u_+)$. 
Recall that the pinning $p_{12} = p_\std$ corresponds to the lift $\widehat{B}_1 = [U^+]$ of $B_1$. On the other hand, the pinning $p_{31}^\ast = u_+ h_2.p_\std$ corresponds to the lift $\widehat{B}'_1 = h_2. [U^+]$. Then we can compute:
\begin{align*}
    \left[\widehat{B}'_1,B_2^{k(s,n^s(\bs)-1)}, B_2^{k(s,n^s(\bs))} \right] 
    &=\left[h_2. [U^+],B^-, x_{[k(s,n^s(\bs)-1)+1\ k(s,n^s(\bs))]}^{\bs}(\mathbf{t}). B^-\right] \\
    &=\left[[U^+],B^-, \mathrm{Ad}_{h_2}^{-1}(x_{[k(s,n^s(\bs)-1)+1\ k(s,n^s(\bs))]}^{\bs}(\mathbf{t})). B^-\right]. %\\
    %&=[U^+,B^-, \vw \mathrm{Ad}_{h_2}^{-1}(y_{[k(s,n^s(\bs)-1)+1\ k(s,n^s(\bs))]}^{\bs}(\mathbf{t}))\vw B^-].
    %&= W_s ( U^-, B^+, y_{s^\ast}(h_2^{\alpha_{s^\ast}}t_{k(s,n^s(\bs))})B^+ )^{-1} \\
    %&= h_2^{-\alpha_{s^\ast}}t_{k(s,n^s(\bs))}^{-1}.
\end{align*}
With a notice that $\mathrm{Ad}_{h_2}^{-1}(x_{s}(t_{k(s,n^s(\bs))})) =  x_s(h_2^{-\alpha_{s}}t_{k(s,n^s(\bs))})$, % = x_s(h_2^{\alpha_{s^\ast}}t_{k(s,n^s(\bs^\ast))}),
we get 
\begin{align*}
    X_{s \choose n^s(\bs)} = W_s \left( \widehat{B}'_1,B_2^{k(s,n^s(\bs)-1)}, B_2^{k(s,n^s(\bs))} \right)^{-1} =   h_2^{\alpha_{s}}t_{k(s,n^s(\bs))}^{-1}.
\end{align*}
Hence from \cref{l:lus-GS} we get $h_2^{\alpha_{s}} = t_{k(s,n^s(\bs))} X_{s \choose n^s(\bs)} = \mathbb{X}_s$, which implies $h_2 = \prod_{s \in S} H^{s} (\mathbb{X}_s)$.
\end{proof}

\begin{proof}[Proof of \cref{t:coweight-GS}]
From the definition of the one-parameter subgroup $x_u$, we have the relation $H^s(a)x_u(b)H^s(a)^{-1} = x_u(a^{\delta_{su}}b)$. In particular $x_s(t) = H^s(t)\mathbb{E}^s H^s(t)^{-1}$. Combining with \cref{l:lus-GS,l:Cartan-GS}, we get
\begin{align*}
    \psi_\bs^*b_L
    =\sfu_+(\bX) \sfh_2(\bX) 
    = \dprod_{k=1,\dots,N}(H^{s_k}(t_k) \mathbb{E}^{s_k} H^{s_k}(t_k)^{-1}) \cdot \prod_{s \in S} H^{s} (\mathbb{X}_s), 
\end{align*}
where each $t_k=t_k(\bX)$ is a monomial given by \eqref{eq:Lus-GS}. 
Since $H^s$ commutes with $\mathbb{E}^u$ for $s \neq u$, we can obtain the following expression by the relabeling as in \eqref{eq:relabeling}: 
\begin{align*}
    \psi_\bs^*b_L = \prod_{s \in S} H^{s} (t_{k(s,1)})\dprod_{\substack{s\in S\\ i=1,\dots,n^s(\bs)}}( \mathbb{E}^{s} H^{s}(t_{k(s,i)})^{-1}H^{s}(t_{k(s,i+1)}))\prod_{s \in S} H^{s} (\mathbb{X}_s)
\end{align*}
where $t_{k(s, n^s(\bs)+1)}:=1$ for $s\in S$. Note that it already has the form of the coweight parametrization $ev^+_{\bs}(\mathbf{x})$, where the parameter $\mathbf{x}=(x_i^s)_{s \in S,~i=0,\dots,n^s(\bs)}$ is computed as follows:
\begin{align*}
    x_i^s = 
    \begin{cases}
    t_{k(s,1)}=X_{s \choose 0} & (i=0), \\
    t_{k(s,i)}^{-1}t_{k(s,i+1)} = X_{s \choose i} & (i=1,\dots,n^s(\bs)-1), \\
    t_{k(s,n^s(\bs))}^{-1}\mathbb{X}_s = X_{s \choose n^s(\bs)} & (i=n^s(\bs)),
    \end{cases}
\end{align*}
where we used \cref{l:lus-GS} for the second steps. 
Thus we have $\psi_\bs^*b_L(\bX) = ev_\bs^+(\bX)$, as desired. The second statement follows from \cref{l:ev_involution}.

% Now we have $b_L=ev_{\bs}^+(\mathbf{X})$ with $\mathbf{X}:=(X_{s \choose i})_{s \in S,~i=0,\dots,n^s(\bs)}$. 
% Then \cref{l:ev_involution} tells us that $b_R=\vw^{-1} b_L^{-1} \vw = ev_{\bs^{\ast}_\op}^-(\bar{\mathbf{X}})$, where $\bar{\mathbf{X}}:=(\bar{x}_{s \choose i})_{s \in S,~i=0,\dots,n^s(\bs^{\ast}_\op)}$ is related to $\mathbf{X}$ as $X_{s \choose n^{s}(\bs)-i} = X_{s^\ast \choose i}$. 
% Thus we get the desired statements.
\end{proof}

\begin{rem}\label{r:additional flags}
Similarly to the proof of \cref{l:lus-GS} we can compute the flags defined in \cref{r:GScoord}, as follows:
\begin{align*}
B_{2,s}^k &= x_{[1\ k]}^{\bs}(\mathbf{t}) \vw \overline{r}_{s^\ast}. B^-, \\
B_{s \choose k} &= x_{[1\ k]}^{\bs}(\mathbf{t}) \vw \overline{r}_{s_k^\ast}. B^-.
\end{align*}
% Indeed, the first statement can be checked by a similar computation as in the proof of \cref{l:lus-GS}. % by using the relation $y_s(t) = x_s(t^{-1})\overline{r}_s \alpha_s^\vee(t)x_s(t^{-1})$. 
% In particular we have $B_{s \choose 0} = B_{2,s}^0 = \vw \overline{r}_{s^\ast}. B^+$ for each $s\in S$.
% In order to determine other $B_{s \choose k}$'s, we compute 
% \begin{align*}
% x_{s_k}(t_k) \vw \overline{r}_{s^\ast}. B^- 
% &= \vw (\vw x_{s_k}(t_k)\vw) \overline{r}_{s^\ast}. B^- \\
% &= \vw y_{s_k^\ast}(-t_k) \overline{r}_{s^\ast}.B^- \\
% &= \vw \overline{r}_{s^\ast} (\overline{r}_{s^\ast}^{-1}y_{s_k^\ast}(-t_k)\overline{r}_{s^\ast}).B^-.
% \end{align*}
% Then one sees that $\overline{r}_{s^\ast}^{-1}y_{s_k^\ast}(-t_k)\overline{r}_{s^\ast} \in B^-$ whenever $s_k \neq s$. Hence we get $B_{s \choose k} = x_{[1\ k]}^{\bs}(\mathbf{t}) \vw \overline{r}_{s_k^\ast}. B^-$ by induction on $k$.
\end{rem}

%The \emph{cyclic shift} is the automorphism $\mathcal{S}_3$ of $\Conf_3 \P_G$ given by 
%\[
%    \mathcal{S}_3 (\widehat{B}_1,\widehat{B}_2,\widehat{B}_3) := (\widehat{B}_2,\widehat{B}_3,\widehat{B}_1).
%\]
\subsection{Primary coordinates in the standard configuration}
Let $\bs=(s_1,\dots, s_{N})$ be a reduced word of $w_0\in W(G)$. The following computation of the primary coordinates $P_{\bs,k}$ in terms of the standard configuration will be used in \cref{sec:positivity}.

\begin{lem}\label{l:flaglift}
For a configuration $\wC=\wC_3(h_1,h_2,u_+) \in \Conf_3 \P_G$ and its representative as in \cref{l:standardconfig}, write $u_+=x_{s_1}(t_1)\dots x_{s_N}(t_N) = x_{[1\ N]}^{\bs}(\mathbf{t})$ using the Lusztig coordinates associated with $\bs$. Then we have the following: 
\begin{itemize}
    \item[(1)] $\widetilde{B}_2=\widetilde{h}_1\vw.[U^+]$ and $\widetilde{B}_2^k=x_{[1\ k]}^{\bs}(\mathbf{t})\widetilde{h}_1^{(k)}\vw.[U^+]$ 
    for $k=0,1,\dots, N$. 
    Here $\widetilde{h} \in \widetilde{G}$ is a lift of $h_1$, and
    \[
    \widetilde{h}_1^{(k)}:=r_{s_k}\cdots r_{s_1}(\widetilde{h}_1)\prod_{j=1}^k r_{s_k}\cdots r_{s_{j+1}}(\alpha_{s_j}^{\vee}(t_j)).
    \]
    \item[(2)] $P_{\bs,k}(\wC_3(h_1, h_2, u_+))=t_k((\widetilde{h}_1^{(k)})^{-\alpha_{s_k}})$. 
\end{itemize}
\end{lem}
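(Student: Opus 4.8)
The plan is to deduce part (2) from part (1) by a short computation with $\Lambda_{s_k}$, and to prove part (1) by induction on $k$ using the recursion satisfied by $\widetilde h_1^{(k)}$. The first thing I would record is that, peeling off the outermost $r_{s_k}$ from the defining product (the $j=k$ factor being $\alpha_{s_k}^\vee(t_k)$), one has
\[
\widetilde h_1^{(k)} = \overline r_{s_k}\,\widetilde h_1^{(k-1)}\,\overline r_{s_k}^{-1}\,\alpha_{s_k}^\vee(t_k).
\]
This is the identity that will drive both the inductive step in (1) and the final collapse in (2).

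For part (1), \cref{l:lus-GS}(1) already gives $B_2^k = x_{[1\ k]}^{\bs}(\mathbf t).B^-$, so only the torus part of the lift is at issue. Writing $g_k := x_{[1\ k]}^{\bs}(\mathbf t)\,\widetilde h_1^{(k)}\vw$, I would first settle the base case: the pinning $p_{23}=\phi'(u_+)h_1\vw.p_\std$ presents $\widehat B_2$ as $\phi'(u_+)h_1\vw.[U^+]$, and since $\phi'(u_+)\in U^-$ one checks $(h_1\vw)^{-1}\phi'(u_+)(h_1\vw)\in U^+$, so this decorated flag equals $h_1\vw.[U^+]$ and lifts to $\widetilde B_2=\widetilde h_1\vw.[U^+]$, the pair $(\widetilde B_2,\widetilde B_3)$ having trivial $h$-invariant. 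For the inductive step I would verify that $g_k.[U^+]$ satisfies the two conditions that pin down the chain, namely $w(\widetilde B_2^{k-1},\widetilde B_2^k)=r_{s_k^\ast}$ and $h(\widetilde B_2^k,\widetilde B_2^{k-1})=1$; the $w$-distance is automatic from the underlying flag chain. Using the recursion together with $h\,x_s(a)\,h^{-1}=x_s(h^{\alpha_s}a)$, I would reduce
\[
g_k^{-1}g_{k-1} = \vw^{-1}\,\alpha_{s_k}^\vee(c)\,x_{s_k}(-c^{-1})\,\vw,\qquad c:=t_k^{-1}\,(\widetilde h_1^{(k-1)})^{\alpha_{s_k}},
\]
and then conjugate by $\vw$, which exchanges $\varphi_{s_k}(SL_2)$ with $\varphi_{s_k^\ast}(SL_2)$ (cf.\ \cref{l:generator_involution}). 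An explicit $2\times 2$ Bruhat factorization in $\varphi_{s_k^\ast}(SL_2)$ shows the middle torus factor is trivial, i.e.\ $g_k^{-1}g_{k-1}\in U^+\,\overline r_{s_k^\ast}\,U^+$, which yields $h(\widetilde B_2^k,\widetilde B_2^{k-1})=e$. Since these conditions determine the lift uniquely, this identifies $\widetilde B_2^k=g_k.[U^+]$.

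For part (2), I would evaluate $\Lambda_{s_k}$ from its two defining properties, $\widetilde G$-invariance and $\Lambda_s(h.[U^+],\vw.[U^+])=h^{\varpi_s}$. Together these show that $g\mapsto\Lambda_s([U^+],g.[U^+])$ is left $U^+$-invariant and sends $h\vw\mapsto h^{-\varpi_s}$. Taking $\widetilde B_1=[U^+]$ and inserting part (1), the factor $x_{[1\ k]}^{\bs}(\mathbf t)\in U^+$ drops out, so $\Lambda_{s_k}(\widetilde B_1,\widetilde B_2^k)=(\widetilde h_1^{(k)})^{-\varpi_{s_k}}$ and likewise for $k-1$. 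Feeding the recursion into the ratio and using $r_{s_k}.\varpi_{s_k}=\varpi_{s_k}-\alpha_{s_k}$ and $\langle\alpha_{s_k}^\vee,\varpi_{s_k}\rangle=1$, one finds
\[
P_{\bs,k}=\frac{(\widetilde h_1^{(k)})^{-\varpi_{s_k}}}{(\widetilde h_1^{(k-1)})^{-\varpi_{s_k}}}=t_k\,(\widetilde h_1^{(k)})^{-\alpha_{s_k}},
\]
as claimed; the independence of the chosen lift $\widetilde B_1$ is guaranteed by the ratio structure and is already noted in the definition of $P_{\bs,k}$.

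The step I expect to be the main obstacle is the $h$-invariant verification in the inductive step: one must carry the general torus element $\widetilde h_1^{(k-1)}$ through the Weyl conjugation $\overline r_{s_k}(\cdot)\overline r_{s_k}^{-1}$ and through the $\vw$-conjugation governing the $\ast$-involution, and confirm that every torus contribution cancels so that $g_k^{-1}g_{k-1}$ lands in the cell $U^+\,\overline r_{s_k^\ast}\,U^+$ with trivial torus part. The base case is comparatively mild but still requires care in extracting the lift from $p_{23}$, i.e.\ absorbing the $\phi'(u_+)$ factor into the $U^+$-ambiguity of $\A_{\widetilde G}$.
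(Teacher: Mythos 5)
Your proposal is correct and takes essentially the same approach as the paper: your key recursion $\widetilde{h}_1^{(k)}=r_{s_k}(\widetilde{h}_1^{(k-1)})\,\alpha_{s_k}^{\vee}(t_k)$ is exactly the identity \eqref{eq:hk} driving the paper's induction (base case at $k=0$ from the pinning $p_{23}$, then verification of the $w$-distance and $h$-invariant conditions characterizing the lifted chain), and your part (2) computation via the two defining properties of $\Lambda_{s_k}$, ending with $-\varpi_{s_k}+r_{s_k}(\varpi_{s_k})=-\alpha_{s_k}$, reproduces the paper's almost verbatim. The only cosmetic difference is in the inductive $h$-invariant check, where the paper factors $x_{s_k}(t_k)=y_{s_k}(t_k^{-1})\alpha_{s_k}^{\vee}(t_k)\overline{r}_{s_k}^{-1}y_{s_k}(t_k^{-1})$ and absorbs the $y$-factors into the decorated flags, whereas you compute $g_k^{-1}g_{k-1}=\vw^{-1}\alpha_{s_k}^{\vee}(c)x_{s_k}(-c^{-1})\vw$ with $c=t_k^{-1}(\widetilde{h}_1^{(k-1)})^{\alpha_{s_k}}$ (which checks out) and Bruhat-factor the resulting element in $\varphi_{s_k^{\ast}}(SL_2)$ — the same $SL_2$ computation, differently packaged.
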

Observe that the decorated flags given in (1) are indeed projected to those given in \cref{l:lus-GS} (1). Also note that the right-hand side of (2) does not depend on the choice of the lift $\widetilde{h}_1$. 
\begin{proof}
Since the second component in the representative of $\wC$ is $\phi'(u_+)h_1\vw.p_\std$, the decorated flag $\widetilde{B}_2$ must be a lift of 
\[
\phi'(u_+)h_1\vw.[U^+]=h_1\vw \Ad_{(h_1\vw)^{-1}}(\phi'(u_+)).[U^+]=h_1\vw .[U^+].
\]
Such an element is written as $\widetilde{h}_1\vw.[U^+]$ for some lift of $h_1$ to $\widetilde{G}$, which proves the first statement of (1). Set $\widetilde{B}^{(k)}:=x_{[1\ k]}^{\bs}(\mathbf{t})\widetilde{h}_1^{(k)}\vw.[U^+]$. In order to show the second statement of (1), it suffices to see that 
\begin{itemize}
    \item[(i)] $\widetilde{B}^{(0)}=\widetilde{B}_2$,
    \item[(ii)] $w(\widetilde{B}^{(k)}, \widetilde{B}^{(k-1)})=r_{s_k^{\ast}}$ and  $h(\widetilde{B}^{(k)}, \widetilde{B}^{(k-1)})=1$ for $k=1,\dots, N$.
\end{itemize}
The statement (i) is immediate from the definition. In $\Conf_2 \A_{\widetilde{G}}$, we have 
\begin{align*}
    &\left[\widetilde{B}^{(k)}, \widetilde{B}^{(k-1)}\right]\\
    &=\left[x_{s_k}(t_k)\widetilde{h}_1^{(k)}\vw.[U^+], \widetilde{h}_1^{(k-1)}\vw.[U^+]\right]\\
    &=\left[y_{s_k}(t_k^{-1})\alpha_{s_k}^{\vee}(t_k)\overline{r}_{s_k}^{-1}y_{s_k}(t_k^{-1})\widetilde{h}_1^{(k)}\vw.[U^+], \widetilde{h}_1^{(k-1)}\vw.[U^+]\right]\\
    &=\left[\alpha_{s_k}^{\vee}(t_k)\overline{r}_{s_k}^{-1}\widetilde{h}_1^{(k)}\vw\Ad_{(\widetilde{h}_1^{(k)}\vw)^{-1}}(y_{s_k}(t_k^{-1})).[U^+], \widetilde{h}_1^{(k-1)}\vw\Ad_{(\widetilde{h}_1^{(k-1)}\vw)^{-1}}(y_{s_k}(-t_k^{-1})).[U^+]\right]\\
    &=\left[\alpha_{s_k}^{\vee}(t_k)r_{s_k}(\widetilde{h}_1^{(k)})\overline{r}_{s_k}^{-1}\vw.[U^+], \widetilde{h}_1^{(k-1)}\vw.[U^+]\right].
\end{align*}
Moreover we have 
\begin{align}
\alpha_{s_k}^{\vee}(t_k)r_{s_k}(\widetilde{h}_1^{(k)})&=\alpha_{s_k}^{\vee}(t_k)r_{s_{k-1}}\cdots r_{s_1}(\widetilde{h}_1)\alpha_{s_k}^{\vee}(t_k^{-1})\prod_{j=1}^{k-1} r_{s_{k-1}}\cdots r_{s_{j+1}}(\alpha_{s_j}^{\vee}(t_j))\notag \\
&=r_{s_{k-1}}\cdots r_{s_1}(\widetilde{h}_1)\prod_{j=1}^{k-1} r_{s_{k-1}}\cdots r_{s_{j+1}}(\alpha_{s_j}^{\vee}(t_j))=\widetilde{h}_1^{(k-1)}. \label{eq:hk}
\end{align}
Thus we get
\[
\left[\widetilde{B}^{(k)}, \widetilde{B}^{(k-1)}\right]=\left[\overline{r}_{s_k}^{-1}\vw.[U^+], \vw.[U^+]\right]=\left[[U^+], \overline{r}_{s_k^{\ast}}.[U^+]\right],
\]
which shows (ii). 

For the computation of $P_{\bs,k}(\wC_3(h_1, h_2, u_+))$, we may take a lift of the first flag $\widetilde{B}_1$ as $[U^+]$. Then, 
\begin{align*}
P_{\bs,k}(\wC_3(h_1, h_2, u_+))&=\frac{\Lambda_{s_k} ([U^+], 
x_{[1\ k]}^{\bs}(\mathbf{t})\widetilde{h}_1^{(k)}\vw.[U^+])}{\Lambda_{s_k} ([U^+], x_{[1\ k-1]}^{\bs}(\mathbf{t})\widetilde{h}_1^{(k-1)}\vw.[U^+])}\\
&=\frac{\Lambda_{s_k} ((\widetilde{h}_1^{(k)})^{-1}.[U^+], 
\vw.[U^+])}{\Lambda_{s_k} ((\widetilde{h}_1^{(k-1)})^{-1}.[U^+], \vw.[U^+])}\\
&=\frac{\Lambda_{s_k} ((\widetilde{h}_1^{(k)})^{-1}.[U^+], 
\vw.[U^+])}{\Lambda_{s_k} ((\alpha_{s_k}^{\vee}(t_k)r_{s_k}(\widetilde{h}_1^{(k)}))^{-1}.[U^+], \vw.[U^+])} \quad (\text{by }\eqref{eq:hk})\\
&=t_k\frac{(\widetilde{h}_1^{(k)})^{-\varpi_{s_k}}}{r_{s_k}(\widetilde{h}_1^{(k)})^{-\varpi_{s_k}}}=t_k(\widetilde{h}_1^{(k)})^{-\varpi_{s_k}+r_{s_k}(\varpi_{s_k})}=t_k(\widetilde{h}_1^{(k)})^{-\alpha_{s_k}}
\end{align*}
as desired.
\end{proof}
\begin{cor}\label{c:x-infty}
For $s\in S$, we have
\[
(\wC_3^{\ast}X_{s \choose -\infty})(h_1,h_2, u_+)=t_{k(s)}h_1^{\alpha_{s^{\ast}}}\prod_{j=1}^{k(s)} t_j^{\langle r_{s_{1}}\cdots r_{s_{j}}(\alpha_{s_j}^{\vee}), \alpha_{s^{\ast}}\rangle}
\]
for $(h_1, h_2, u_+)\in H\times H\times U^+_*$ with $u_+=x_{[1\ N]}^{\bs}(\mathbf{t})$. Here recall that $k(s)$ is determined by $r_{s_N} \dots r_{s_{k(s)+1}}(\alpha_{s_{k(s)}}^\vee) = \alpha_{s}^\vee$. 
\end{cor}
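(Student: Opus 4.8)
The plan is to deduce the formula directly from \cref{l:flaglift}(2), specialized to the index $k=k(s)$, by evaluating the relevant character of $\widetilde{h}_1^{(k(s))}$. Setting $m:=k(s)$ and $s':=s_m$, \cref{l:flaglift}(2) gives $\wC_3^\ast X_{s\choose -\infty}=P_{\bs,m}(\wC_3(h_1,h_2,u_+))=t_m\,(\widetilde{h}_1^{(m)})^{-\alpha_{s'}}$, so everything reduces to computing $(\widetilde{h}_1^{(m)})^{-\alpha_{s'}}$. Since characters are multiplicative, I would expand
\[
(\widetilde{h}_1^{(m)})^{-\alpha_{s'}}=\bigl(r_{s_m}\cdots r_{s_1}(\widetilde{h}_1)\bigr)^{-\alpha_{s'}}\cdot\prod_{j=1}^{m}\bigl(r_{s_m}\cdots r_{s_{j+1}}(\alpha_{s_j}^\vee(t_j))\bigr)^{-\alpha_{s'}},
\]
using $\bigl(w.\widetilde{h}\bigr)^{\mu}=\widetilde{h}^{\,w^{-1}.\mu}$ and $w.(\alpha_{s_j}^\vee(t_j))=(w.\alpha_{s_j}^\vee)(t_j)$, which turn the $j$-th factor into $t_j^{-\langle r_{s_m}\cdots r_{s_{j+1}}(\alpha_{s_j}^\vee),\,\alpha_{s'}\rangle}$ and the leading factor into $\widetilde{h}_1^{-\,r_{s_1}\cdots r_{s_m}(\alpha_{s'})}$.

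The computation then rests on two purely Weyl-theoretic identities. First, the defining property of $m=k(s)$, namely $r_{s_N}\cdots r_{s_{m+1}}(\alpha_{s_m}^\vee)=\alpha_s^\vee$, transfers from coroots to roots via $w.(\gamma^\vee)=(w\gamma)^\vee$ and injectivity of $\gamma\mapsto\gamma^\vee$, giving $r_{s_N}\cdots r_{s_{m+1}}(\alpha_{s'})=\alpha_s$. Combining this with $w_0=r_{s_1}\cdots r_{s_N}$ yields $r_{s_1}\cdots r_{s_m}(\alpha_{s'})=w_0\,r_{s_N}\cdots r_{s_{m+1}}(\alpha_{s'})=w_0\alpha_s=-\alpha_{s^\ast}$, so the leading factor becomes $\widetilde{h}_1^{\alpha_{s^\ast}}=h_1^{\alpha_{s^\ast}}$ (well-defined on the adjoint torus since $\alpha_{s^\ast}$ lies in the root lattice, which also confirms independence of the lift $\widetilde{h}_1$). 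Second, the telescoping relation $(r_{s_1}\cdots r_{s_m})(r_{s_m}\cdots r_{s_{j+1}})=r_{s_1}\cdots r_{s_j}$ together with $W$-invariance of $\langle\,,\rangle$ lets me rewrite
\[
-\langle r_{s_m}\cdots r_{s_{j+1}}(\alpha_{s_j}^\vee),\,\alpha_{s'}\rangle
=-\langle r_{s_1}\cdots r_{s_j}(\alpha_{s_j}^\vee),\,r_{s_1}\cdots r_{s_m}(\alpha_{s'})\rangle
=\langle r_{s_1}\cdots r_{s_j}(\alpha_{s_j}^\vee),\,\alpha_{s^\ast}\rangle,
\]
using $r_{s_1}\cdots r_{s_m}(\alpha_{s'})=-\alpha_{s^\ast}$ from the first step. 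Assembling the leading factor, the product, and the prefactor $t_m=t_{k(s)}$ reproduces exactly the claimed expression.

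The only genuinely delicate point is bookkeeping of conventions: correctly applying the contragredient rule $(w.\widetilde{h})^\mu=\widetilde{h}^{\,w^{-1}.\mu}$, keeping track of the order reversal $(r_{s_m}\cdots r_{s_1})^{-1}=r_{s_1}\cdots r_{s_m}$, and carrying the sign in $\alpha_{s^\ast}=-w_0\alpha_s$ consistently through both the Cartan term and each exponent (note the built-in $-\alpha_{s'}$ in \cref{l:flaglift}(2) is precisely what converts the two minus signs into the final positive exponents). I would also remark explicitly that the resulting answer descends from $\widetilde{H}$ to $H$, so that it is independent of the chosen lift $\widetilde{h}_1$, consistent with $X_{s\choose -\infty}$ being a genuine $\bG_m$-valued function on $\Conf_3\P_G$. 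No deeper input is needed beyond \cref{l:flaglift}.
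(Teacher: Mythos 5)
Your proof is correct and follows essentially the same route as the paper: both specialize \cref{l:flaglift}(2) at $k=k(s)$ and reduce everything to the single Weyl-group identity $r_{s_1}\cdots r_{s_{k(s)-1}}(\alpha_{s_{k(s)}})=-w_0\,r_{s_N}\cdots r_{s_{k(s)+1}}(\alpha_{s_{k(s)}})=-w_0\alpha_s=\alpha_{s^\ast}$, applied via the contragredient rule and $W$-invariance of the pairing. The only difference is presentational: the paper displays just the $h_1$-character computation and leaves the conversion of the $t_j$-exponents implicit, whereas you spell out that same telescoping/invariance step explicitly.
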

\begin{proof}
By the definition of $X_{s \choose -\infty}$ and \cref{l:flaglift} (2), the desired statement follows from the following calculation: 
\begin{align*}
r_{s_{k(s)}}\cdots r_{s_1}(h_1)^{-\alpha_{s_{k(s)}}}&=h_1^{r_{s_1}\cdots r_{s_{k(s)-1}}(\alpha_{s_{k(s)}})}=h_1^{-w_0r_{s_N} \dots r_{s_{k(s)+1}}(\alpha_{s_{k(s)}})}
=h_1^{-w_0\alpha_{s}}=h_1^{\alpha_{s^{\ast}}}.
\end{align*}
\end{proof}
The results in \cref{l:lus-GS,l:Cartan-GS} and \cref{c:x-infty} gives explicit forms of $\mathsf{h}_1,\mathsf{h}_2:  \mathbb{G}_m^{I_\infty(\bs)}\to H$ and $\mathsf{u}_+:  \mathbb{G}_m^{I_\infty(\bs)}\to U^+_{\ast}$ as follows: 
\begin{lem}\label{l:GScoord_standard}
	 Write $t_{k(s,i)} := X_{s \choose 0} \dots X_{s \choose i-1}$ for $(i, s)\in I(\bs)$ (recall the notation in \eqref{eq:Lus-GS}). Then we have the following: 
\begin{itemize}
	\item[(1)] $\mathsf{h}_1(\mathbf{X})=\prod_{s \in S} H^{s} (\widetilde{X}_s)$, where 
	\begin{align*}
		\widetilde{X}_s=X_{s^{\ast} \choose -\infty}t_{k(s^{\ast})}^{-1}\prod_{k=1}^{k(s^{\ast})} t_k^{\langle r_{s_{1}}\cdots r_{s_{k-1}}(\alpha_{s_k}^{\vee}), \alpha_{s}\rangle}
	\end{align*}
	where $k(s^{\ast})$ is determined by $r_{s_1} \dots r_{s_{k(s^{\ast})-1}}(\alpha_{s_{k(s^{\ast})}}^\vee) = \alpha_{s}^\vee$.
	\item[(2)] $\mathsf{h}_2(\mathbf{X})=\prod_{s \in S} H^{s} (\mathbb{X}_s)$, where $\mathbb{X}_s := \prod_{i=0}^{n^s(\bs)} X_{s \choose i}$. 
	\item[(3)] $\mathsf{u}_+(\mathbf{X})=x_{s_1}(t_1)\dots x_{s_N}(t_N)$.
\end{itemize}
\end{lem}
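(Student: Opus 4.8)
The plan is to obtain all three formulas by reading the coordinate computations of \cref{l:lus-GS}, \cref{l:Cartan-GS} and \cref{c:x-infty} ``in reverse''. Since $(\mathsf{h}_1,\mathsf{h}_2,\mathsf{u}_+)=\wC_3^{-1}\circ\psi_{\bs}$ by definition, and those three results already express the GS coordinates of a standard configuration $\wC_3(h_1,h_2,u_+)$ in terms of the Lusztig parameters $\mathbf{t}$ of $u_+$ and of $h_1,h_2$, the task is simply to invert these relations. Parts (3) and (2) then require no new work: for (3), \cref{l:lus-GS}(2) gives $t_{k(s,i)}=X_{s\choose 0}\cdots X_{s\choose i-1}$, so the third component of $\wC_3^{-1}\circ\psi_\bs$ is exactly $x_{s_1}(t_1)\cdots x_{s_N}(t_N)$ with these monomials, which is the assertion; and for (2), \cref{l:Cartan-GS} is literally the claimed formula for $\mathsf{h}_2$.

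The substance is part (1), where I would solve the identity of \cref{c:x-infty} for $h_1$. Since $G$ is adjoint, $\{\varpi_s^\vee\}$ is a $\Z$-basis of $X_*(H)$ with dual basis $\{\alpha_t\}$ of $X^*(H)$, so $h_1=\prod_{s}H^s(h_1^{\alpha_s})$ and it suffices to compute each $h_1^{\alpha_s}=:\widetilde X_s$. Rewriting \cref{c:x-infty} as
\begin{align*}
h_1^{\alpha_{s^\ast}}=X_{s\choose-\infty}\,t_{k(s)}^{-1}\prod_{j=1}^{k(s)}t_j^{-\langle r_{s_1}\cdots r_{s_j}(\alpha_{s_j}^\vee),\,\alpha_{s^\ast}\rangle}
\end{align*}
and applying the involution $s\mapsto s^\ast$ turns the left-hand side into $h_1^{\alpha_s}$. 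It then remains to match the result with the stated $\widetilde X_s$ through two Weyl-combinatorial identities. The exponents agree because $r_{s_j}(\alpha_{s_j}^\vee)=-\alpha_{s_j}^\vee$ yields $r_{s_1}\cdots r_{s_j}(\alpha_{s_j}^\vee)=-\,r_{s_1}\cdots r_{s_{j-1}}(\alpha_{s_j}^\vee)$, which flips the sign of the exponent. The two descriptions of the index $k(s^\ast)$ agree because, writing $w_0=s_1\cdots s_N$, one has $r_{s_N}\cdots r_{s_{k+1}}=w_0\,s_1\cdots s_k$ in $W(G)$, whence $\beta_k^\bs=r_{s_N}\cdots r_{s_{k+1}}(\alpha_{s_k}^\vee)=-w_0\bigl(r_{s_1}\cdots r_{s_{k-1}}(\alpha_{s_k}^\vee)\bigr)$; combined with $\alpha_{s^\ast}^\vee=-w_0\alpha_s^\vee$ this gives $\beta_k^\bs=\alpha_{s^\ast}^\vee$ if and only if $r_{s_1}\cdots r_{s_{k-1}}(\alpha_{s_k}^\vee)=\alpha_s^\vee$, identifying the index used in \cref{c:x-infty} with the one in the target formula.

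I expect this last index identification to be the only delicate point: it is precisely the compatibility between the ``right-to-left'' enumeration $\beta_k^\bs$ of coroots entering the definition of $X_{s\choose-\infty}$ and the ``left-to-right'' enumeration appearing in the stated $\widetilde X_s$, mediated by $w_0$ and the $\ast$-involution via the relations $\alpha_{s^\ast}=-w_0\alpha_s$ and $x_s(t)^\ast=x_{s^\ast}(t)$ recorded in \cref{l:Dynkininv}. Everything else reduces to the routine bookkeeping of substituting the monomials $t_{k(s,i)}=X_{s\choose 0}\cdots X_{s\choose i-1}$ back into the expressions, so no separate estimate or new construction is needed.
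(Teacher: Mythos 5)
Your proposal is correct and matches the paper's own route: the paper offers no separate argument for this lemma, presenting it as the direct inversion of \cref{l:lus-GS}, \cref{l:Cartan-GS} and \cref{c:x-infty}, exactly as you do. Your two bookkeeping identities are also sound — the sign flip from $r_{s_j}(\alpha_{s_j}^\vee)=-\alpha_{s_j}^\vee$ and the index translation via $r_{s_N}\cdots r_{s_{k+1}}=w_0\,r_{s_1}\cdots r_{s_k}$ together with $\alpha_{s^\ast}^\vee=-w_0\alpha_s^\vee$ are precisely the computations the paper carries out inside the proof of \cref{c:x-infty}.
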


\subsection{Goncharov--Shen coordinates on $\P_{G,\Sigma}$ via amalgamation}\label{ss:GS_coord}

\smallskip
\paragraph{\textbf{Triangle case.}} 
Let $\Sigma=T$ be a triangle. Recall that we have the isomorphism \eqref{eq:moduli_triangle} determined by choosing a distinguished marked point $m$, or a dot. Let us choose a reduced word $\bs$ of $w_0$. 
Then we define the GS coordinates on $\P_{G,T}$ to be $X_{s \choose i}^{(T,m,\bs)}:=f_{m}^*X_{s \choose i}^{\bs}$, where $X_{s \choose i}^{\bs}$ denote the GS coordinates on $\Conf_3 \P_G$ associated with the reduced word $\bs$. 
The accompanying quiver $Q_{T,m,\bs}$ is defined to be the quiver $\widetilde{\bJ}^+(\bs)$ placed on $T$ so that for $s \in S$,
\begin{itemize}
    \item the vertices $v_0^s$ lie on the edge $E$,
    \item the vertices $v_{n^s(\bs_T)}^s$ lie on the edge $E^R$, and 
    \item the vertices $y_s$ lie on the edge $E^L$.
\end{itemize}
Here we use the notation in \eqref{eq:moduli_triangle_explicit} for the edges. 
See \cref{fig:triangle_quiver} for an example. Here the isotopy class of the embedding of the quiver $\widetilde{\bJ}^+(\bs)$ into the triangle $T$ relative to the boundary intervals is included in the defining data of $Q_{T,m,\bs}$. 
Then the pair $\sfS_{(T,m,\bs)}:=\left(Q_{T,m,\bs},(X_{s \choose i}^{(T,m,\bs)})_{(s,i) \in I_\infty(\bs)}\right)$ is a seed in $\Rat(\P_{G,T})$. 
Recall the cyclic shift given in \cref{l:cyclic shift}.

\begin{thm}[Goncharov--Shen {\cite[Theorem 5.11]{GS19}}]\label{t:cluster_structure_triangle}
The cyclic shift $\mathcal{S}_3$ is realized as a sequence of cluster transformations. 
In particular, the seeds $\sfS_{(T,m,\bs)}$ for any choice of a dot $m$ and a reduced word $\bs$ of $w_0$ are mutation-equivalent to each other. Hence the collection $\{\sfS_{(T,m,\bs)}\}_{m,\bs}$ is a cluster Poisson atlas (\cref{d:cluster atlas}) on $\P_{G,T}$.
\end{thm}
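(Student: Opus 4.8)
The plan is to reduce the statement to the single assertion that the cyclic shift $\mathcal{S}_3$ is a cluster transformation, and then to verify this assertion by an explicit computation in the standard configuration. First I would note that the seeds $\sfS_{(T,m,\bs)}$ in the collection $\{\sfS_{(T,m,\bs)}\}_{m,\bs}$ are connected by two elementary moves: changing the reduced word $\bs$ of $w_0$ with the dot $m$ fixed, and moving the dot $m\mapsto m'$ with $\bs$ fixed. Any two reduced words of $w_0$ are joined by braid moves, and the three admissible dots are cyclically permuted by $\mathcal{S}_3$ (\cref{l:cyclic shift}), so these two moves suffice to connect the whole family. The first move is already a composite of mutations, by the mutation-equivalence of the seeds $\widetilde{\sfS}(\bs)$ on $\Conf_3 \P_G$ (\cite[Theorem 7.2]{GS19}) pulled back along the isomorphism $f_m$. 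Since \cref{l:cyclic shift} gives $f_{m'}=\mathcal{S}_3\circ f_m$, the comparison of $\sfS_{(T,m,\bs)}$ with $\sfS_{(T,m',\bs)}$ is exactly the comparison of the coordinates $X^{\bs}_{s \choose i}$ with $\mathcal{S}_3^{*}X^{\bs}_{s \choose i}$ on $\Conf_3 \P_G$. Thus it remains only to show that $\mathcal{S}_3$ is realized by a sequence of mutations followed by a quiver isomorphism; granting this, transitivity yields mutation-equivalence of the whole family, and \cref{d:cluster atlas} then promotes the family to a cluster Poisson atlas.

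To analyze $\mathcal{S}_3$ concretely I would work through the standard configuration $\wC_3$ of \cref{l:standardconfig}. Writing a point as $\wC_3(h_1,h_2,u_+)$ and applying the relabeling $[p_{12},p_{23},p_{31}]\mapsto[p_{23},p_{31},p_{12}]$, I would renormalize the first pinning back to $p_\std$ by the residual $G$-action, thereby reading off the transformed triple $(h_1',h_2',u_+')$ and, via the explicit formulas of \cref{l:GScoord_standard} and \cref{l:lus-GS}, the transformed coordinates $\mathcal{S}_3^{*}X^{\bs}_{s \choose i}$ as explicit subtraction-free rational functions of the $X^{\bs}_{s \choose i}$. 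The algebraic mechanism behind cyclicity is the twist/transpose duality recorded in \cref{c:LR} and \cref{l:ev_involution}: the cyclic shift interchanges the roles of $b_L$ and $b_R$, and this interchange is governed by the Berenstein–Fomin–Zelevinsky twist automorphism on $U^+_*$ \cite{BFZ96,BZ97}. The behaviour of the frozen coordinates is pinned down by their degrees under the $H^3$-action (\cref{l:H3-action on GS coord}), which forces the monomial part of the transformation and hence fixes the underlying permutation of quiver vertices in the sought quiver isomorphism.

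Third, I would identify this explicit transformation with a composite of cluster mutations. Choosing a reduced word $\bs$ adapted to the shift, I would exhibit the transformation as a maximal-green-type chain of mutations at the unfrozen vertices $(s,i)\in I_\uf(\bs)$ together with the relabeling dictated above, proceeding by induction on the rank and on the length of $w_0$ and matching each exchange relation against the computed subtraction-free formula term by term. The braid-move compatibility of \cite[Theorem 7.2]{GS19} would supply the inductive step whenever the word must be modified to align the two sides. The main obstacle is precisely this last identification: matching a closed-form rational cyclic-shift map with a prescribed mutation sequence is delicate, because the intermediate clusters carry no individual geometric meaning and the bookkeeping of the frozen-variable contributions (controlled by \cref{l:Cartan-GS} and \cref{c:x-infty}) is intricate. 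This is the technical heart of \cite[Theorem 5.11]{GS19}, which one may alternatively invoke verbatim once the coordinate dictionary of \cref{l:GScoord_standard} has been set up.
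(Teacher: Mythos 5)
The paper does not prove this statement at all: it is imported verbatim from Goncharov--Shen, and the citation \cite[Theorem 5.11]{GS19} is the entire justification. Your proposal's overall reduction is sound and matches how the surrounding machinery of the paper is organized: connecting the seeds $\sfS_{(T,m,\bs)}$ requires only word-changes (handled by \cite[Theorem 7.2]{GS19}) and dot-moves (which by \cref{l:cyclic shift} are exactly pullbacks along $\mathcal{S}_3$), and your plan to compute $\mathcal{S}_3$ in the standard configuration is precisely what the paper does elsewhere, in \cref{l:t=1_cyclic}, where the shift is shown to act by $(h_1,h_2,u_+)\mapsto(h_1^{\ast}h_2w_0([u_+\overline{w_0}]_0),\,h_1^{\ast},\,\Ad_{h_1^{\ast}}(\eta_{w_0}(u_+)^{\ast}))$ --- note, however, that the paper uses that computation only for the construction of the cyclically invariant basis $\ptF{T}$, never to re-prove the cluster nature of $\mathcal{S}_3$. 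Since you close by saying one may ``invoke \cite[Theorem 5.11]{GS19} verbatim,'' your endpoint coincides with the paper's treatment.

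The gap is in the middle portion, if it is read as an independent proof rather than a citation. First, exhibiting $\mathcal{S}_3^{\ast}X^{\bs}_{s\choose i}$ as subtraction-free rational functions does not certify that $\mathcal{S}_3$ is a composite of cluster transformations: positivity is necessary but far from sufficient (the formula in \cref{l:t=1_cyclic} involves $(\eta_{w_0}^{\ast})^4$ and the minors $\Delta^{+}_{w_0,\xi}$, and twist-type automorphisms are in general only \emph{quasi}-cluster; identifying them with genuine mutation sequences is exactly the nontrivial content of \cite[Theorem 5.11]{GS19}). Second, your proposed verification --- a ``maximal-green-type chain of mutations'' matched ``term by term'' against the exchange relations, by induction on rank and on $l(w_0)$ --- is a program, not an argument: no mutation sequence is specified, no inductive mechanism is given for why braid moves align the two sides, and the frozen-variable bookkeeping via \cref{l:Cartan-GS} and \cref{c:x-infty} is asserted rather than carried out. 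You acknowledge this yourself, and your fallback to the citation is the correct resolution; but the proposal as written should not be mistaken for a proof of the first sentence of the theorem, which is the only part that does not follow formally from the reduction.
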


\begin{figure}[ht]
\scalebox{0.9}{
\begin{tikzpicture}
\begin{scope}[>=latex]
{\begin{scope}[scale=1.25]
\draw(-4,0) node[left]{\scalebox{0.9}{$m'$}} -- (4,0) node[right]{\scalebox{0.9}{$m''$}} -- (0,4) node[above]{\scalebox{0.9}{$m$}} -- (-4,0) --cycle;
\draw(0,3.75) node{$\ast$};
%vertices
\foreach \i in {0,1,2,3}
	\dnode{0.67*\i-1,3} coordinate(V3\i) node[above =0.2em]{$v_{\i}^3$};
\foreach \i in {0,1,2,3}
{
    \draw(1.34*\i-2,2) circle(2pt) coordinate(V2\i) node[above right]{$v_{\i}^2$};      
    \draw(2*\i-3,1) circle(2pt) coordinate(V1\i) node[above right]{$v_{\i}^1$};
}
{\color{myblue}
    \foreach \j in {1,2}
        \draw(2*\j-4,0) circle(2pt) coordinate(Y\j) node[below]{$y_\j$};
    \dnode{2,0} coordinate(Y3) node[below=0.2em]{$y_3$};
}
%horizontal arrows
\foreach \i in {0,1,2}
    {
    {\color{red}
    \pgfmathsetmacro{\j}{\i+1}
    \qsarrow{V3\j}{V3\i};
     \qarrow{V2\j}{V2\i};
     \qsarrow{V1\j}{V1\i};
    }
    }
%other arrows
\qshdarrow{V20}{V30}
\qstarrow{V30}{V21}
\qsharrow{V21}{V31}
\qstarrow{V31}{V22}
\qsharrow{V22}{V32}
\qstarrow{V32}{V23}
\qshdarrow{V23}{V33}

\qdarrow{V10}{V20}
\qarrow{V20}{V11}
\qarrow{V11}{V21}
\qarrow{V21}{V12}
\qarrow{V12}{V22}
\qarrow{V22}{V13}
\qdarrow{V13}{V23}
%frozen arrows
{\color{myblue}
\qarrow{V10}{Y1}
\qarrow{Y1}{V11}
\qarrow{V11}{Y2}
\qarrow{Y2}{V12}
\qsarrow{V32}{Y3}
\qsarrow{Y3}{V33}
\qdarrow{Y2}{Y1}
\qstdarrow{Y3}{Y2}
}
\end{scope}}
\end{scope}
\end{tikzpicture}}
    \caption{Placement of the weighted quiver $\widetilde{\bJ}^+((123)^3)$ with $\lie=C_3$ on a dotted triangle $T$.}
    \label{fig:triangle_quiver}
\end{figure}
By \cref{rem:partial_pinnings} (1), the frozen coordinates are distinguished by their degrees with respect to the triple of $H$-actions $\alpha_E$ associated to each edge $E$ of $T$. For each edge $E$, let $X_{s_E}^{(T,m,\bs)}$ denote the unique one among the cooridnates $X_{s \choose i}^{(T,m,\bs)}$ which has the degree $-\alpha_s$ for the action $\alpha_E$. 

\paragraph{\textbf{General case.}} 
Let us consider a general marked surface $\Sigma$. 
A \emph{decorated triangulation} of $\Sigma$ consists of the following data $\bD = (\Delta_*, \bs_\Delta)$:
\begin{itemize}
\item
An \underline{oriented} dotted triangulation $\Delta_*$, which is a dotted triangulation $\Delta_*$ of $\Sigma$ equipped with an orientation for each edge \footnote{We need the orientation only to fix a bijection between the GS coordinates on an edge and the index set $S$.}. Let $m_T$ denote the marked point corresponding to the dot on a triangle $T$.
\item
A choice $\bs_\Delta = (\bs_T)_T$ of reduced words $\bs_T$ of the longest element $w_0 \in W(G)$, one for each triangle $T$ of $\Delta_*$.
\end{itemize}
We call $\Delta_*$ the \emph{underlying dotted triangulation} of $\bD$. 

Let $q_\Delta: \widetilde{\P_{G,\Sigma}^\Delta}=\prod_{T \in t(\Delta)} \P_{G,T} \to \P_{G,\Sigma}^\Delta$ be the gluing morphism with respect to the underlying triangulation $\Delta$. For $T \in t(\Delta)$, let $\mathrm{pr}_T:\widetilde{\P_{G,\Sigma}^\Delta} \to \P_{G,T}$ be the projection. 
Then the \emph{GS coordinate system} on $\P_{G,\Sigma}^\Delta \subset \P_{G,\Sigma}$ associated with $\bD$ is the collection
\begin{align*}
    \mathbf{X}_{\bD}:=\{ X_{s \choose i}^{(T;\bD)}\}_{\subalign{&(s,i) \in I_\uf(\bs)\\&T \in t(\Delta)}} \cup \{X^{(E;\bD)}_s\}_{{\subalign{&s \in S \\&E \in e(\Delta)}}}
\end{align*}
of rational functions, which are characterized as follows:
\begin{itemize}
    \item For $T \in t(\Delta)$ and $(s,i) \in I_\uf(\bs)$, 
    \begin{align*}
        q_\Delta^\ast X_{s \choose i}^{(T;\bD)} = X_{s \choose i}^{(T,m_T,\bs_T)} \circ \mathrm{pr}_T.
    \end{align*}
    \item For an interior edge $E \in e(\Delta)$ and $s \in S$, 
    \begin{align*}
        q_\Delta^\ast X_{s}^{(E;\bD)} = \left( X_{s_{E^L}}^{(T^L,m_{T^L},\bs_{T^L})}\circ \mathrm{pr}_{T^L}\right) \cdot \left( X_{s^\ast_{E^R}}^{(T^R,m_{T^R},\bs_{T^R})}\circ \mathrm{pr}_{T^R}\right).
    \end{align*}
    Here $T^L$ (resp, $T^R$) is the triangle that lie on the left (resp. right) of $E$, and $E^L \subset \partial T^L$ (resp. $E^R \subset \partial T^R$) is the edge that projects to $E$. 
    \item For a boundary edge $E \in e(\Delta)$, 
    \begin{align*}
        q_\Delta^\ast X_{s}^{(E;\bD)} =  X_{s_{E}}^{(T,m_{T},\bs_{T})}\circ \mathrm{pr}_{T}.
    \end{align*}
    Here $T$ is the unique triangle that contain $E$.
\end{itemize}
One can verify that the right-hand sides of these characterizing equations are indeed $H^\Delta$-invariant. 

% When $E$ is an interior edge shared by two triangles $T_1,T_2 \in t(\Delta)$, where $T_1$ is on the left side with respect to the orientation of $E$. 
% For $\ell=1,2$, label the marked points of $T_\ell$ as $m_1^{(\ell)},m_2^{(\ell)},m_3^{(\ell)}$ in the counter-clockwise order so that $m_1^{(\ell)}$ corresponds to the dot. See \cref{fig:quadrilateral_quiverA3,fig:quadrilateral_quiverC3}.

\begin{figure}[ht]
\scalebox{0.9}{
\begin{tikzpicture}
\begin{scope}[>=latex]
\path(0,0) coordinate(x1);
\path(-6,4) coordinate(x2);
\path(0,8) coordinate(x3);
\path(6,4) coordinate(x4);
\draw(x1) --(x2) -- (x3) -- (x4) -- (x1) --(x3);
\path (x3)++(-0.1,-0.3) node{$\ast$}; 
\path (x4)++(-0.2,0) node{$\ast$}; 

\quiverAthree{x1}{x2}{x3}
    \qdarrow{V10}{V20}
    \qdarrow{V20}{V30}
    {\color{myblue}
    \qdarrow{Y3}{Y2}
    \qdarrow{Y2}{Y1}
    }
\quiverAthree{x1}{x3}{x4}
    \qdarrow{V10}{V20}
    \qdarrow{V20}{V30}
    \qdarrow{V22}{V13}
    \qdarrow{V31}{V22}

\end{scope}
\end{tikzpicture}}
    \caption{The quiver on $T^L {}_{E^L}\cup_{E^R} T^R$ with $\lie=A_3$ and $\bs_{T^L}=\bs_{T^L}=(1,2,3,1,2,1)$. We glue the vertices as $(v^1_3)'=y_1''$, $(v^2_2)'=y_2''$ and $(v^3_1)'=y_3''$.
    }
    \label{fig:quadrilateral_quiverA3}
\end{figure}

\begin{figure}[ht]
\[
\scalebox{0.9}{
\begin{tikzpicture}
\begin{scope}[>=latex]
\path(0,0) coordinate(x1);
\path(-6,4) coordinate(x2);
\path(0,8) coordinate(x3);
\path(6,4) coordinate(x4);
\draw(x1) --(x2) -- (x3) -- (x4) -- (x1) --(x3);
\path (x3)++(-0.1,-0.3) node{$\ast$}; 
\path (x4)++(-0.2,0) node{$\ast$}; 

\quiverCthree{x1}{x2}{x3}
    \qdarrow{V10}{V20}
    \qshdarrow{V20}{V30}
    {\color{myblue}
    \qstdarrow{Y3}{Y2}
    \qdarrow{Y2}{Y1}
    }
\quiverCthreeY{x1}{x3}{x4}
    \qdarrow{V10}{V20}
    \qshdarrow{V20}{V30}
    \qstdarrow{V23}{V33}
    \qdarrow{V13}{V23}
\end{scope}
\end{tikzpicture}}
\]
    \caption{The quiver on $T^L {}_{E^L}\cup_{E^R} T^R$ with $\lie=C_3$ and $\bs_{T^L}=\bs_{T^R}=(1,2,3)^3$. We glue the vertices as $(v_3^s)'=y_s''$ for $s=1,2,3$.}
    \label{fig:quadrilateral_quiverC3}
\end{figure}

% Then there exists $a,b \in \{1,2,3\}$  such that the edge $E$ corresponds to the intervals $[m_a^{(1)},m_{a+1}^{(1)}]$ in $T_1$ and $[m_b^{(2)},m_{b+1}^{(2)}]$ in $T_2$ (indices should be read modulo $3$). 

Correspondingly, for an interior edge $E \in e(\Delta)$ shared by two triangles $T^L$ and $T^R$ as above, the two seeds $\sfS_{(T^L,m_{T^L},\bs_{T^L})}$ and $\sfS_{(T^R,m_{T^R},\bs_{T^R})}$ are amalgamated according to the gluing data
\begin{align}
    &F:=\{ s_{E^L} \mid s \in S\}, \quad F':=\{ s_{E^R} \mid s \in S\},\nonumber \\
    &\phi: F \to F', \quad s_{E^L} \mapsto s^\ast_{E^R} \mbox{ for $s \in S$}. \label{eq:GS_gluing rule}
\end{align}
See \cref{d:amalgamation} for a detail on the amalgamation procedure. See \cref{fig:quadrilateral_quiverA3,fig:quadrilateral_quiverC3} for examples. 
% In particular, the edge coordinates on $E$ are defined by
% \begin{align}\label{eq:edge coord}
%     X^{(E,\bs_{\bD})}_s:=X_{s_{a,a+1}}^{(1)}\cdot X_{s_{b,b+1}^\ast}^{(2)},
% \end{align}
% where $X_{s\choose i}^{(\ell)}:=X_{s\choose i}^{(T_\ell,m_1^{(\ell)},\bs_{T_\ell})}$, and the labeling \eqref{eq:label_frozen} is used. Then \cref{l:H3-action on GS coord} (\cite[Lemma 9.3]{GS19}) tells us that these edge functions are indeed $H^\Delta$-invariant. If $E^\mathrm{op}$ is the same edge with the reversed orientation, then the resulting quiver is the same (under an isotopy) while we have $X_s^{(E,\bs_{\bD})} = X_{s^\ast}^{(E^\op,\bs_{\bD})}$. 

% When $E$ is a boundary interval oriented along the boundary, it belongs to a triangle $T_1 \in t(\Delta)$ on its left. Then under the notation above, $E$ connects the marked points $m_a$ and $m_{a+1}$ for some $a\in \{1,2,3\}$. Define 
% \begin{align*}
%     X^{(E,\bs_{\bD})}_s:=X_{s_{a,a+1}}^{(T,m_T,\bs_T)}.
% \end{align*}
% When $E$ is oriented against the boundary, define $X_s^{(E,\bs_{\bD})} := X_{s^\ast}^{(E^\op,\bs_{\bD})}$. 

Applying this procedure for each interior edge, we get a weighted quiver $Q_{\bD}$ drawn on the surface $\Sigma$. 
In the light of \cref{t:GS decomposition}, the collection $\mathbf{X}_{\bD}$ of functions provide an open embedding $\psi_{\bD}: \bG_m^{I_{\bD}} \to \P_{G,\Sigma}$, whose image is contained in $\P_{G,\Sigma}^\Delta$ and the index set is given by 
\begin{align*}
    I(\bD):=\{(s,i;T) \mid T \in t(\Delta),~(s,i) \in I_\uf(\bs)\} \sqcup \{(s;E) \mid E \in e(\Delta),~s \in S\}.
\end{align*}
Thus the pair $\sfS(\bD):=(\mathbf{X}_{\bD},Q_{\bD})$ forms a seed in the ambient field $\Rat(\P_{G,\Sigma})$. 

\begin{thm}[\cite{FG03,Le16,GS19}]\label{t:cluster_structure}
The seeds $\sfS(\bD)$ associated with the decorated triangulations $\bD$ of $\Sigma$ are mutation-equivalent to each other. Hence the collection $(\sfS(\bD))_{\bD}$ is a cluster Poisson atlas on $\P_{G,\Sigma}$. 
\end{thm}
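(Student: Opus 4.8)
The plan is to reduce the global statement to a finite list of elementary moves between decorated triangulations and to show that each move is realized by a sequence of cluster Poisson mutations, possibly followed by a seed isomorphism. First I would invoke the classical combinatorics of marked surfaces: under our standing assumptions (in particular, the existence of an ideal triangulation without self-folded triangles), any two ideal triangulations are connected by a finite sequence of flips, while any two \emph{decorated} triangulations sharing the same underlying ideal triangulation differ by finitely many local moves on individual triangles, namely a change of the reduced word $\bs_T$, a cyclic shift of the dot $m_T$, and a change of the orientation of an edge. It therefore suffices to prove mutation-equivalence of the associated seeds after a single such elementary move.

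Second, I would dispose of the moves fixing the underlying ideal triangulation $\Delta$. A change of edge orientation merely relabels the frozen coordinates attached to that edge via the Dynkin involution $s \mapsto s^\ast$ (compare \cref{l:H3-action on GS coord} and the gluing rule \eqref{eq:GS_gluing rule}), hence induces a seed isomorphism requiring no mutation. For a change of reduced word or a cyclic shift of the dot on a single triangle $T$, the \emph{local} mutation-equivalence is exactly \cref{t:cluster_structure_triangle}: the reduced-word change is Goncharov--Shen's mutation-equivalence of the seeds $\widetilde{\sfS}(\bs)$ on $\Conf_3 \P_G$, and the cyclic shift $\mathcal{S}_3$ is realized as a sequence of cluster transformations. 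The remaining point is that such a local mutation sequence lifts to the global quiver $Q_{\bD}$: since $\mathbf{X}_{\bD}$ is built from the gluing morphism $q_\Delta$ of \cref{t:GS decomposition} together with the amalgamation procedure (\cref{d:amalgamation}), mutation at a vertex of $\widetilde{\bJ}^+(\bs_T)$ commutes with amalgamation, so a sequence supported in $T$ is performed in $Q_{\bD}$ with the same combinatorics. Extra care is needed for the cyclic shift, which permutes the three edges of $T$ and thereby moves the frozen data among them; one must verify that the realizing mutation sequence, together with the ensuing relabeling of the boundary vertices, is compatible with the amalgamation identifications along the edges of $T$ glued to its neighbors.

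Third, and this is the main obstacle, I would treat a single flip of the underlying ideal triangulation. A flip is supported on the quadrilateral $Q = T^L \cup_E T^R$ and is invisible away from $Q$, so by the same amalgamation-compatibility the problem localizes to showing that the two seeds attached to the two triangulations of $Q$ are mutation-equivalent in $\Rat(\P_{G,Q})$, equivalently in $\Conf_4 \P_G$. This local flip statement—that the flip is realized by an explicit sequence of mutations at the interior vertices created along $E$, matching the GS coordinates on both sides—is precisely the content of \cite{FG03} in type $A_n$, of \cite{Le16} for types $B_n,C_n,D_n$, and of \cite{GS19} in general, and I would invoke it directly. The genuinely delicate points, which would consume most of the effort if one reproves rather than cites, are the explicit description of the mutation sequence realizing the flip on the amalgamated quiver, the verification that the transformed coordinates coincide with the GS coordinates of the flipped triangulation, and the bookkeeping ensuring that the dot placements and reduced-word choices on $T^L, T^R$ may be normalized to a convenient position before the flip and restored afterward (using the moves of the second step). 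Concatenating all these moves connects any two decorated triangulations by mutations and seed isomorphisms, establishing that $(\sfS(\bD))_{\bD}$ is a cluster Poisson atlas in the sense of \cref{d:cluster atlas}.
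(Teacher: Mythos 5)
Your outline is correct, but note that the paper offers no proof of this theorem at all: it is stated as a citation to \cite{FG03,Le16,GS19}, and the route you sketch — connectivity of decorated triangulations under elementary moves, the local triangle statement (recorded in the paper as \cref{t:cluster_structure_triangle}), the fact that mutation at unfrozen vertices commutes with amalgamation (noted in \cref{subsec:amalgamation}), and the flip realized by an explicit mutation sequence on the quadrilateral — is precisely the strategy of those cited references. So your proposal is a faithful reconstruction of the argument the paper defers to, with the genuinely delicate points (compatibility of the cyclic-shift sequence with the gluing identifications, and the flip sequence matching the GS coordinates) correctly identified as the content one must import from \cite{GS19}.
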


\paragraph{\textbf{Comparison with the cluster Poisson algebra}}
Let $\sfS_{G,\Sigma}$ denote the cluster Poisson structure on $\P_{G,\Sigma}$ which includes the cluster Poisson atlas $(\sfS(\bD))_{\bD}$. Then \cref{t:cluster_structure} tells us that our moduli space $\P_{G,\Sigma}$ is birationally isomorphic to the cluster Poisson variety $\X_{\sfS_{G,\Sigma}}$, and hence their fields of rational functions are isomorphic. Slightly abusing the notation, let us denote the cluster Poisson algebra by $\cO_{\mathrm{cl}}(\P_{G,\Sigma}):=\cO(\X_{\sfS_{G,\Sigma}})$. 
Shen proved the following stronger result:

\begin{thm}[Shen \cite{Shen20}]\label{thm:Shen}
We have an isomorphism $\cO_{\mathrm{cl}}(\P_{G,\Sigma}) \cong \cO(\P_{G,\Sigma})$ of $\C$-algebras. %Here the right-hand side is the one defined in \cref{subsubsec:gluing}.
\end{thm}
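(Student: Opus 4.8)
The plan is to realize both $\cO(\P_{G,\Sigma})$ and $\cO_{\mathrm{cl}}(\P_{G,\Sigma})$ as subalgebras of the common field $\Rat(\P_{G,\Sigma})$, the identification of fields being exactly the birational equivalence of \cref{t:cluster_structure}, and then to prove that the two subalgebras coincide. Recall that by definition $\cO_{\mathrm{cl}}(\P_{G,\Sigma})=\cO(\X_{\sfS_{G,\Sigma}})$ is the upper cluster Poisson algebra, that is, the intersection inside $\Rat(\P_{G,\Sigma})$ of the Laurent polynomial rings of all seed tori in the mutation class of $\sfS_{G,\Sigma}$; in particular it is contained in the analogous intersection $\bigcap_{\bD}\cO(\bG_m^{I(\bD)})$ taken only over the seeds $\sfS(\bD)$ attached to decorated triangulations.

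A first step is to prove $\cO(\P_{G,\Sigma})=\bigcap_{\bD}\cO(\bG_m^{I(\bD)})$, the algebra of functions that are Laurent polynomials in every Goncharov--Shen chart. Each $\psi_{\bD}\colon \bG_m^{I(\bD)}\to \P_{G,\Sigma}$ is an open embedding, so a regular function restricts to a Laurent polynomial on every triangulation chart, giving the inclusion $\subseteq$. For $\supseteq$ I would use the standard normality argument: the images $\psi_{\bD}(\bG_m^{I(\bD)})=\P_{G,\Sigma}^\Delta$, as $\bD$ ranges over all decorated triangulations (equivalently as $\Delta$ ranges over triangulations connected by flips), cover $\P_{G,\Sigma}$ away from a closed subset of codimension at least $2$; since $\P_{G,\Sigma}$ is normal --- which one reads off the presentation as a geometric quotient of an affine variety by a free $G$-action (\cref{thm:generation_by_Wilson_lines}, \cref{rem:geometric_quotient}) --- a function regular on this union extends to a global regular function by the algebraic Hartogs property.

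It then remains to identify $\bigcap_{\bD}\cO(\bG_m^{I(\bD)})$ with the full upper cluster Poisson algebra $\cO_{\mathrm{cl}}(\P_{G,\Sigma})$, i.e. to show that Laurentness on all triangulation charts already forces Laurentness on every seed. Here I would localize via the amalgamation of \cref{t:GS decomposition}: the cluster Poisson variety $\X_{\sfS_{G,\Sigma}}$ is amalgamated from the triangle pieces $\Conf_3\P_G$ along the frozen data \eqref{eq:GS_gluing rule}, and the gluing morphism $q_\Delta$ identifies the $H^\Delta$-invariant part of $\bigotimes_{T}\cO(\P_{G,T})$ with $\cO(\P_{G,\Sigma}^\Delta)$. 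Thus the global identity reduces to a single triangle, where $\Conf_3\P_G\cong H\times H\times U^+_\ast$ by \cref{l:standardconfig}; on this model the Goncharov--Shen charts are the coweight charts of \cref{subsec:DBcells} on $B^\pm_\ast$ and $U^+_\ast$, and the required statement is the classical fact that the coordinate ring of an open double Bruhat cell agrees with its upper cluster algebra. The cyclic shift being realized by cluster transformations (\cref{t:cluster_structure_triangle}) guarantees independence of the chosen dot, and compatibility of amalgamation with passing to global functions then propagates the identity from the triangle to $\P_{G,\Sigma}$.

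The main obstacle is precisely this last compatibility together with the codimension-$2$ covering. Verifying that amalgamation commutes with the functor of global regular functions requires careful bookkeeping of the frozen variables and of the $H^\Delta$-quotient, and controlling the complement of $\bigcup_\Delta \P_{G,\Sigma}^\Delta$ --- the locus where no triangulation renders all interior pairs of flags generic --- to be of codimension at least $2$ is a genuinely global statement about flag configurations. When $\Sigma$ carries punctures these arguments must be carried out on the quotient stack rather than on a variety, so that normality and Hartogs extension have to be reinterpreted stack-theoretically, or reduced equivariantly to the atlas $P_{G,\Sigma}^{(\{m_k\})}$; this is where I expect the bulk of the technical work to lie.
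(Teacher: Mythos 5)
First, a point of comparison: the paper does not prove \cref{thm:Shen} at all --- it is imported wholesale from \cite{Shen20} --- so your proposal must be measured against Shen's proof. At the coarsest level your outline does echo its shape (reduction to $\Conf_3\P_G$ via the gluing of \cref{t:GS decomposition}, comparison with cells, a codimension-two extension argument), but two steps in your sketch are, respectively, wrong as stated and missing, and they are precisely where the substance of the theorem lies.

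The wrong step: you assert $\psi_{\bD}(\bG_m^{I(\bD)})=\P_{G,\Sigma}^\Delta$. This is false --- the paper is careful to say the image of $\psi_{\bD}$ is only \emph{contained in} $\P_{G,\Sigma}^\Delta$. Already on a single triangle the chart $\psi_{\bs}$ misses the configurations that are not ``sufficiently generic'' for the chain of flags in \cref{c:flagchain}, i.e.\ where some coordinate $X_{s \choose i}$ vanishes or is undefined. Consequently your Hartogs argument requires the complement of the union of the GS \emph{tori} --- not of the loci $\P_{G,\Sigma}^\Delta$ --- to have codimension at least $2$, a strictly stronger claim that is already nontrivial on one triangle (via \cref{l:standardconfig} it amounts to the union of the Lusztig charts $x^{\bs}(\bG_m^N)\subset U^+_{\ast}$ over all reduced words $\bs$ covering up to codimension two), and you offer no argument for it. The missing step is the crux: the identification $\bigcap_{\bD}\cO(\bG_m^{I(\bD)})=\cO_{\mathrm{cl}}(\P_{G,\Sigma})$, i.e.\ that Laurentness on the GS subfamily of seeds forces Laurentness on \emph{every} seed. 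Your appeal to ``the classical fact that the coordinate ring of an open double Bruhat cell agrees with its upper cluster algebra'' does not supply this: the Berenstein--Fomin--Zelevinsky result \cite{BFZ96} is the cluster-$K_2$ ($\A$-side) statement for the simply connected group with generalized minors as variables, whereas here one needs the cluster Poisson ($\X$-side) statement for the adjoint group with the GS frozen pattern, and the two are not formally interchangeable. Worse, even granting the triangle case, the identity does not propagate by your amalgamation argument: upper cluster algebras are notoriously ill-behaved under amalgamation and under passing to $H^\Delta$-invariants, and the standard starfish-type lemma requires Laurentness on \emph{all one-step mutations} of some fixed seed --- but the neighbors of a GS seed are generally not GS seeds, so membership in $\bigcap_{\bD}$ gives no starfish for free. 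Filling exactly these two gaps (the codimension-two covering by cluster tori, and the comparison of the GS-intersection with the full cluster Poisson algebra) is the technical content of \cite{Shen20}; your closing paragraph correctly locates them as ``the bulk of the technical work,'' but that work \emph{is} the theorem, so the proposal is an accurate plan rather than a proof. (A minor repair on a side point: when $\partial\Sigma\neq\emptyset$ the $G$-action on the atlas is free thanks to the pinnings even in the presence of punctures, so $\P_{G,\Sigma}$ is a smooth variety --- albeit with the non-affine $\widehat{G}$-factors --- and normality is unproblematic; genuinely stacky Hartogs issues arise only when $\partial\Sigma=\emptyset$.)
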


% Combining with \cref{t:comparison_stacks}, we get the following:

% \begin{cor}\label{cor:comparison_Poisson_algebra}
% We have an isomorphism $\cO_{\mathrm{cl}}(\P_{G,\Sigma}) \cong \cO(\P_{G,\Sigma}^\Betti)$ of $\C$-algebras.
% \end{cor}
% % \begin{thm}\label{thm:monodromy_cluster_Laurent}
% % \begin{enumerate}
% %     \item For any morphism $[c]: E_\inn \to E_\out$ in $\Pi_1^\partial (\Sigma)$ and a matrix coefficient $c_{f,v}^V \in \C[G]$ in a representation $V$ of $G$, the Wilson line function $c_{f,v}^V(g_{[c]})$ is a universally Laurent polynomial on the cluster Poisson variety $\X_{\sfS_{G,\Sigma}}$. 
% %     \item For any free loop $[\gamma]$ on $\Sigma$ and a representation $V$ of $G$, the trace function $\mathrm{tr}_V(\rho_{[\gamma]})$ is a universally Laurent polynomial on the cluster Poisson variety $\X_{\sfS_{G,\Sigma}}$. 
% % \end{enumerate}
% % \end{thm}
In particular, we have:

\begin{cor}\label{cor:Laurent}
The matrix coefficients of Wilson lines and the traces of Wilson loops are universally Laurent polynomials:
\begin{align*}
    c_{f,v}^V(g_{[c]}), \mathrm{tr}_V(\rho_{|\gamma|}) \in \cO_{\mathrm{cl}}(\P_{G,\Sigma})
\end{align*}
for any representation $V$, $f \in V^\ast$, $v \in V$, arc class $[c]$ and a free loop $|\gamma|$. 
\end{cor}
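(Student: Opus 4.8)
The plan is to derive the corollary directly from the regularity of the functions in question together with Shen's isomorphism (\cref{thm:Shen}). Recall that $\cO_{\mathrm{cl}}(\P_{G,\Sigma})=\cO(\X_{\sfS_{G,\Sigma}})$ is by definition the algebra of regular functions on the cluster Poisson variety, and a standard fact in cluster theory is that such a function is exactly one whose restriction to every cluster chart is a Laurent polynomial; thus membership in $\cO_{\mathrm{cl}}(\P_{G,\Sigma})$ is precisely the universal Laurent property. Since \cref{thm:Shen} identifies $\cO(\P_{G,\Sigma})$ and $\cO_{\mathrm{cl}}(\P_{G,\Sigma})$ as the same subalgebra of the common field $\Rat(\P_{G,\Sigma})$, it suffices to show that $c_{f,v}^V(g_{[c]})$ and $\tr_V(\rho_{|\gamma|})$ belong to $\cO(\P_{G,\Sigma})$ and then transport them across this isomorphism.

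First I would establish the regularity of the Wilson-line matrix coefficients. By \cref{prop:presentation_Wilson_line} the Wilson line is induced by the $G$-equivariant morphism $\widetilde{g}_{[c]}\colon P_{G,\Sigma}^{(\{m_k\})} \to G \times G$, with $G$ acting trivially on the first factor. For a representation $V$, $f \in V^{\ast}$ and $v \in V$, the matrix coefficient $c_{f,v}^V$ of \eqref{eq:mat_coeff} is a regular function on $G$, so its pull-back along $\mathrm{pr}_1 \circ \widetilde{g}_{[c]}$ is a $G$-invariant regular function on the atlas $P_{G,\Sigma}^{(\{m_k\})}$, which therefore descends to an element $g_{[c]}^{\ast}c_{f,v}^V = c_{f,v}^V(g_{[c]}) \in \cO(\P_{G,\Sigma})$. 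For the Wilson loop, recall from \cref{prop:Wilson line-loop} and its proof that $\rho_{|\gamma|}$ is induced by the $G$-equivariant morphism $\widetilde{\rho}_{|\gamma|}\colon P_{G,\Sigma}^{(\{m_k\})} \to G$ given by evaluation at a based loop presenting $|\gamma|$. Since $\tr_V \in \cO(G)$ is $\Ad$-invariant, its pull-back along $\widetilde{\rho}_{|\gamma|}$ is again $G$-invariant on the atlas and descends to $\tr_V(\rho_{|\gamma|}) = \rho_{|\gamma|}^{\ast}\tr_V \in \cO(\P_{G,\Sigma})$; alternatively one may realize $\tr_V(\rho_{|\gamma|})$ as a combination of matrix coefficients of a twisted Wilson line via \cref{rem:twisted Wilson line}.

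Finally I would invoke \cref{thm:Shen}: under the isomorphism $\cO(\P_{G,\Sigma}) \cong \cO_{\mathrm{cl}}(\P_{G,\Sigma})$, both $c_{f,v}^V(g_{[c]})$ and $\tr_V(\rho_{|\gamma|})$ lie in $\cO_{\mathrm{cl}}(\P_{G,\Sigma})$, which is the assertion. I do not anticipate a genuine difficulty here, since the substantive content is carried entirely by Shen's theorem; the residual work is the bookkeeping of regularity and equivariant descent. The one point demanding care is that $\P_{G,\Sigma}$ is a genuine stack once $\Sigma$ has punctures, so the argument must be run on the atlas $P_{G,\Sigma}^{(\{m_k\})}$, using the $G$-invariance of $c_{f,v}^V$ on the trivially-acted factor and of $\tr_V$ to guarantee descent to the quotient.
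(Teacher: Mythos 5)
Your proposal is correct and coincides with the paper's own (essentially one-line) argument: the paper derives \cref{cor:Laurent} directly from \cref{thm:Shen} together with the already-established regularity of $c_{f,v}^V(g_{[c]})$ and $\tr_V(\rho_{|\gamma|})$ as elements of $\cO(\P_{G,\Sigma})$, which is exactly your route. Your added bookkeeping — descent of the $G$-invariant pull-backs $c_{f,v}^V\circ\mathrm{pr}_1\circ\widetilde{g}_{[c]}$ and $\tr_V\circ\widetilde{\rho}_{|\gamma|}$ from the atlas $P_{G,\Sigma}^{(\{m_k\})}$ via $\cO([X/G])\cong\cO(X)^G$ — is precisely what the paper leaves implicit from \cref{prop:presentation_Wilson_line}, \cref{prop:Wilson line-loop} and \cref{lem:stack_functions}.
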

Our aim in the sequel is to obtain an explicit formula for these Laurent polynomials, and prove the positivity of coefficients when the coordinate system is associated with a decorated triangulation.

% \begin{rem}[comparison with the upper cluster algebra]
% Unlike \cref{thm:monodromy_cluster_Laurent}, the coordinate ring of the cluster $\A$-variety which is dual to $\X_{\sfS_{G,\Sigma}}$ does not coincides with that of the moduli space $\A_{\widetilde{G},\Sigma}$ of decorated twisted $\widetilde{G}$-local systems on $\Sigma$ in general. See \cite[Proposition 3.17]{GS16}.
% \end{rem}

% \begin{rem}[The cyclic shift for type $A_n$]\label{l:cyclic shift A_n}
% Let us briefly discuss the cyclic shift for type $A_n$, for later use. 
% Let $\bs_\std(n)$ be the reduced word of the longest element in the Weyl group $W(A_n)$ defined inductively by 
% \begin{align*}
%     \bs_\std(n) = (1,2,\dots,n)\bs_\std(n-1), \quad
%     \bs_\std(1) = (1).
% \end{align*}
% For examples $\bs_\std(2)=(1,2,1)$ and $\bs_\std(3)=(1,2,3,1,2,1)$. 
% For a triple $(a,b,c)$ of non-negative integers such that $a+b+c=n+1$, let
% \begin{align*}
%     X_{a,b,c}:=X_{a \choose c},
% \end{align*}
% where the right-hand side is the GS coordinate on $\Conf_3 \P_G$ associated with the standard word $\bs_\std(n)$. Then the coordinates $X_{a,b,c}$ admits a symmetric definition in the Fock--Goncharov fashion: see \cite[Section 3.2]{GS19}. In particular, we have 
% $\mathcal{S}_3^*X_{a,b,c} = X_{c,a,b}$ on $\Conf_3 \P_G$.

% In general type, the sequence of cluster transformations which represents $\mathcal{S}_3$ is non-trivial and much complicated. A tractable description is found in \cite[Section 8.1]{GS19}. Our computation in \cref{sec:positivity} will be based on the standard configuration \cref{l:standardconfig}.
% \end{rem}

\bigskip 

\paragraph{\textbf{Partially generic case}}
For a subset $\Xi \subset \mathbb{B}$, consider the moduli stack $\P_{G,\Sigma;\Xi}$ of $\Xi$-generic framed $G$-local systems with $\Xi$-pinnings (recall \cref{def:moduli_partial_pinnings} and \eqref{eq:Betti_partial_pinnings}). For a decorated triangulation $\bD$, set 
\begin{align}\label{eq:index set_partial_pinnings}
    I_\Xi(\bD):=I(\bD) \setminus \{(s;E) \mid E \in \mathbb{B} \setminus \Xi,\ s \in S\}.
\end{align}
Then by \cref{rem:GS coordinates_partial_pinnings}, we have an open embedding $\bG_m^{I_\Xi(\bD)} \to \P_{G,\Sigma;\Xi}$ such that the projection $\P_{G,\Sigma} \to \P_{G,\Sigma;\Xi}$ forgetting the pinnings except for those assigned to the boundary intervals in $\Xi$ is expressed as a coordinate projection. In other words, the moduli space $\P_{G,\Sigma;\Xi}$ also has a canonical cluster Poisson atlas so that the projections $\P_{G,\Sigma} \to \P_{G,\Sigma;\Xi}$ is a cluster projection.
% which fits into the commutative diagram
% \begin{equation*}
%     \begin{tikzcd}
%     \bG_m^{I(\bD)} \ar[r,"\psi_{\bD}"] \ar[d] & \P_{G,\Sigma} \ar[d] \\
%     \bG_m^{I_\Xi(\bD)} \ar[r] & \P_{G,\Sigma;\Xi}.
%     \end{tikzcd}
% \end{equation*}
% Here the left vertical map is induced from the inclusion $I_\Xi(\bD) \subset I(\bD)$, and the right vertical map is the projection forgetting the pinnings except for those assigned to the boundary intervals in $\Xi$. Since these embeddings differ only in frozen variables, the moduli space $\P_{G,\Sigma;\Xi}$ also has a canonical cluster Poisson atlas so that the projections $\P_{G,\Sigma} \to \P_{G,\Sigma;\Xi}$ are cluster projections.

%---------------------New Section--------------------------------

\section{Laurent positivity of Wilson lines and Wilson loops}\label{sec:positivity}
\subsection{The statements}
In this section, we show that Wilson lines and Wilson loops have remarkable positivity properties. Let us first clarify the positivity properties which we will deal with, and state the main theorems of this section.  

Let $\Sigma$ be a marked surface (See \cref{subsec:moduli} for our assumption on the marked surface). %Let $\mathbf{F}_{G,\Sigma}:=\Rat(\P_{G,\Sigma})$ be the field of rational functions on $\P_{G,\Sigma}$. 
We say that $f\in \cO(\P_{G, \Sigma})$ is \emph{\GSuniv\ positive Laurent} if it is expressed as a Laurent polynomial with non-negative integral coefficients in the GS coordinate system associated with any decorated triangulation $\boldsymbol{\Delta}$. A morphism $F:  \P_{G,\Sigma}\to G$ is called a \emph{\GSuniv\ positive Laurent morphism} if for any finite-dimensional representation $V$ of $G$, there exists a basis $\mathbb{B}$ of $V$ such that $c_{f, v}^V\circ F\in \cO(\P_{G, \Sigma})$ is \GSuniv\ positive Laurent for all $v\in \mathbb{B}$ and $f\in \mathbb{F}$, where $\mathbb{F}$ is the basis of $V^{\ast}$ dual to $\mathbb{B}$ (see \eqref{eq:mat_coeff}). 
%A rational $G$-valued map $\mathcal{F}:  \P_{G,\Sigma}\to G$ is said to be a \emph{\GSuniv\ positive $G$-valued Laurent polynomial} if it is a GS-positive $G$-valued Laurent polynomial with respect to an arbitrary ideal triangulations of $\Sigma$. 
 
 	\begin{rem}\label{r:GSpos}
 		In \cite{FG03}, Fock and Goncharov introduced the notion of \emph{special good positive Laurent polynomials on $\X_{PGL_{n+1},\Sigma}$}. Our \GSuniv\ positive Laurent property is a straightforward generalization of their notion. Indeed, if we set $G=PGL_{n+1}$ and $\bs_{\mathrm{std}}(n)=(1,2,\dots, n, \dots,1,2,3,1, 2, 1)$ as in \eqref{eq:std_word} for all triangles $T$ of the decorated triangulation $\bD=(\Delta_*, (\bs_{\mathrm{std}}(n))_T)$ of $\Sigma$, the GS coordinate system on $\P_{PGL_{n+1},\Sigma}$ associated with $\bD$ is the \emph{special atlas} on $\X_{PGL_{n+1},\Sigma}$ in \cite[Definiton 9.1]{FG03} (modulo the the difference between $\P_{PGL_{n+1},\Sigma}$ and $\X_{PGL_{n+1},\Sigma}$). %後でチェック
 		
 		We should remark that the definition of \GSuniv\ positive Laurent morphism does not change if we replace ``any  finite-dimensional representation $V$'' in its definition with ``any \emph{simple} finite-dimensional representation $V(\lambda)$, $\lambda\in X^{\ast}(H)_+$'' because of the complete reducibility of finite-dimensional representations. 
 	\end{rem}
%
%Note that an element of $G=PSp_{2n}$ or $PSO_{2n}$ can be regarded as a matrix, which is defined up to a sign. Therefore we can speak about the sign-definiteness of an element of $G(\mathbf{F}^+_{G,\Sigma})$.
%
%\begin{thm}
%Let $G$ be either $SO_{2n+1}, PSp_{2n}$ or $PSO_{2n}$. 
%For each loop $\gamma \in \pi_1(\Sigma)$, the universal monodromy $\rho^{\mathrm{univ}}(\gamma)$ can be conjugated to a matrix whose entries are \GSuniv\ sign-definite Laurent polynomial.
%\end{thm}
%
%\cref{t:Wilson_line_positivity} and \cref{t:monodromy_positivity} 
The following theorem is the main result in this section. 

\begin{thm}\label{t:Wilson_line_positivity}
	Let $G$ be a semisimple algebraic group of adjoint type, and assume that our marked surface $\Sigma$ has non-empty boundary. 
	Then for any arc class  $[c]:E_\inn \to E_\out$, 
	the Wilson line $g_{[c]}:  \P_{G,\Sigma}\to G$ is a \GSuniv\ positive Laurent morphism.  
\end{thm}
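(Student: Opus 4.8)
The plan is to pull the Wilson line back along the gluing morphism $q_\Delta$ attached to the underlying ideal triangulation of a fixed decorated triangulation $\bD$, and to reduce the claim to a purely local statement on a single triangle. By \cref{r:GSpos} it suffices to treat simple $V=V(\lambda)$. Fix a basis $\mathbb{B}=\{b_j\}$ of $V$ with dual basis $\mathbb{F}=\{f_j\}$ and apply the decomposition formula of \cref{t:Wilson_line_regular}, namely $q_\Delta^* g_{[c]}=\mu_M\circ\prod_{\nu=1}^M(b_{\tau_\nu}\circ\mathcal{S}_3^{t_\nu}\circ\overline{f}_{n_\nu})$. Expanding the matrix coefficient of a product through $\mathbb{B}$ gives
\[
c_{f,v}^V(g_1\cdots g_M)=\sum_{j_1,\dots,j_{M-1}} c_{f,b_{j_1}}^V(g_1)\,c_{f_{j_1},b_{j_2}}^V(g_2)\cdots c_{f_{j_{M-1}},v}^V(g_M),
\]
so that $c_{f,v}^V(g_{[c]})$ becomes a sum of products of matrix coefficients of the individual factors $b_{\tau_\nu}\circ\mathcal{S}_3^{t_\nu}$ on the triangles. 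Since sums and products of positive Laurent polynomials are again positive Laurent, and since the amalgamation construction of the GS coordinate system on $\P_{G,\Sigma}$ (\cref{ss:GS_coord}) embeds the $\bD$-coordinate monomials bijectively onto the $H^\Delta$-invariant triangle-coordinate monomials, an $H^\Delta$-invariant function whose $q_\Delta$-pullback is positive Laurent in the triangle coordinates is itself positive Laurent in the coordinates of $\bD$. Thus everything reduces to the local claim: there is a basis $\mathbb{B}$ of $V$ such that for every reduced word $\bs$, every $\tau\in\{L,R\}$ and every $t\in\{0,\pm1\}$, the functions $c_{f,v}^V(b_\tau\circ\mathcal{S}_3^{t})$ with $v\in\mathbb{B}$, $f\in\mathbb{F}$ are positive Laurent in the GS coordinates $X_{s\choose i}^{\bs}$ on $\Conf_3\P_G$.

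For the local claim the case $t=0$ is essentially explicit. By \cref{t:coweight-GS} the basic Wilson lines are given in GS coordinates by the evaluation maps $ev^+_\bs$ and $ev^-_{\bs^\ast_\op}\circ\iota^*$, whose entries are products of $H^s(\cdot)$, $\mathbb{E}^s$ and $\mathbb{F}^s$ into which the coordinates enter as positive monomials via \eqref{eq:Lus-GS} and \cref{l:Cartan-GS}. Writing $b_L=\sfu_+\sfh_2$ (cf. \cref{c:LR}) and observing that a weight vector is rescaled by the positive frozen monomial $\sfh_2^\mu=\prod_{s}\mathbb{X}_s^{\langle\varpi_s^\vee,\mu\rangle}$, the matrix coefficients of $b_L$ reduce, up to such a monomial, to matrix coefficients of $\sfu_+\in U^+_*$ pulled back along the Lusztig parametrization. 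These are positive Laurent once $\mathbb{B}$ is chosen so that they lie in a good positive basis $\pF$ of $\cO(U^+_*)$; the case of $b_R$ follows through the transpose/$\ast$ relation of \cref{c:LR}. The genuine difficulty, and the heart of the proof, is $t\neq0$: the cyclic shift $\mathcal{S}_3$ is only a composite of cluster mutations (\cref{t:cluster_structure_triangle}), so applying it to the evaluation-map expression destroys manifest positivity.

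To overcome this I would construct a basis $\ptF{T}$ of $\cO(\P_{G,T})\cong\cO(H)^{\otimes2}\otimes\cO(U^+_*)$ consisting of frozen monomials times elements of $\pF$, and establish the three properties recorded in \cref{t:positivechoice}: every element of $\ptF{T}$ is \GSuniv\ positive Laurent; the set $\ptF{T}$ is invariant under $\mathcal{S}_3$; and the matrix coefficients of $b_L$ (hence of $b_R$) decompose as non-negative combinations of elements of $\ptF{T}$ for a suitable $\mathbb{B}$. Positivity of $\ptF{T}$ follows because the GS-to-Lusztig change of coordinates \eqref{eq:Lus-GS} is a positive monomial map, so positivity of $\pF$ in every Lusztig chart transfers to positivity in every GS chart. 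The decisive point is cyclic-shift invariance: in the standard configuration $\mathcal{S}_3$ acts on the $U^+_*$-factor by the Berenstein--Fomin--Zelevinsky twist automorphism, so invariance of $\ptF{T}$ is exactly twist-invariance of $\pF$; combined with matrix-coefficient compatibility this yields positivity of $c_{f,v}^V(b_\tau\circ\mathcal{S}_3^{t})$ for all $t$, and feeding this into the sum-and-product argument of the first paragraph closes the proof.

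The main obstacle is therefore the existence of a basis $\pF$ of $\cO(U^+_*)$ that is simultaneously positive Laurent in every Lusztig parametrization, invariant under the BFZ twist, and compatible with matrix coefficients in the sense above. I would supply such a $\pF$ from the dual canonical (upper global) basis of the coordinate ring of the unipotent cell, whose positivity and twist-invariance follow from the monoidal categorification of $\cO(U^+_*)$ by quiver Hecke algebras \cite{KL:I,KK:hw,KKKO:monoidal,KKOP:strata,KKOP:loc}. Granting this input, the local positivity assembles through the matrix-coefficient expansion and the amalgamation reduction to give that $g_{[c]}$ is a \GSuniv\ positive Laurent morphism.
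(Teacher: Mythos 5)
Your proposal is correct and follows essentially the same route as the paper: reduce via $q_\Delta$ and the matrix-coefficient expansion to a local statement on triangles, then neutralize the cyclic shifts by constructing the $\mathcal{S}_3$-invariant basis $\ptF{T}$ from a twist-invariant, Lusztig-positive basis $\pF$ of $\cO(U^+_*)$ supplied by quiver Hecke categorification (this is exactly the chain \cref{t:positivechoice} $\Rightarrow$ \cref{l:triangle_basis}, \cref{t:triangle_GSpos} $\Rightarrow$ \cref{t:matrixcoeff_pos} $\Rightarrow$ the final assembly). The only cosmetic differences are that the paper works with the biorthogonal pair $\mathbb{B}(\lambda)$, $\mathbb{B}^{\rm up}(\lambda)$ rather than a single basis with its dual, and that $\mathcal{S}_3$ acts on the $U^+_*$-factor by the twist composed with the $\ast$-involution and a Cartan correction (\cref{l:t=1_cyclic}) rather than the bare twist, which is precisely what the combination of properties (G), (T), (M) absorbs.
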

Combining with \cref{prop:Wilson line-loop}, we immediately get the following: 

\begin{cor}\label{t:monodromy_positivity}
	Let $G$ be a semisimple algebraic group of adjoint type, and $|\gamma| \in \widehat{\pi}(\Sigma^\ast)$ a free loop. Then, for any finite dimensional representation $V$ of $G$, the trace of the Wilson loop $\tr_V(\rho_{|\gamma|}):=\tr_V\circ\rho_{|\gamma|}\in \cO(\P_{G, \Sigma})$ is \GSuniv\ positive Laurent. 
\end{cor}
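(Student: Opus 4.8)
The plan is to deduce the statement from \cref{t:Wilson_line_positivity} together with \cref{prop:Wilson line-loop}, exploiting the conjugation-invariance of the trace and then transporting positivity across the gluing morphism. By \cref{r:GSpos} and the complete reducibility of finite-dimensional $G$-modules (both the trace and the \GSuniv\ positive Laurent property are additive over direct sums), it suffices to treat the case $V=V(\lambda)$ with $\lambda\in X^*(H)_+$. Since $\tr_V$ is $\Ad G$-invariant, it descends to a morphism $[G/\Ad G]\to\bA^1$, which is precisely what allows one to compare it with a Wilson line.

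First I would set up the cut. Fix a decorated triangulation $\bD$ of $\Sigma$ with underlying ideal triangulation $\Delta$, and a representative of $|\gamma|$ in minimal position with respect to $\Delta$ crossing some interior edge $\hat{E}$ exactly once. Cutting $\Sigma$ along $\hat{E}$ produces a marked surface $\Sigma_0$ with two new boundary intervals $E_1,E_2$ (the two banks of $\hat{E}$) and an arc class $[c]:E_1\to E_2$ whose image under gluing is $|\gamma|$; note that $\Sigma_0$ automatically has non-empty boundary, so \cref{t:Wilson_line_positivity} applies to it. \cref{prop:Wilson line-loop} then supplies the commutative square relating $\rho_{|\gamma|}$ on $\P_{G,\Sigma}$ to $g_{[c]}$ on $\P_{G,\Sigma_0}$ through $q_{E_1,E_2}$, and composing with $\tr_V$ gives
\[
q_{E_1,E_2}^{\ast}\bigl(\tr_V(\rho_{|\gamma|})\bigr)=\tr_V\circ g_{[c]}=\sum_{v\in\mathbb{B}}c^{V}_{f_v,v}(g_{[c]}),
\]
where $\mathbb{B}$ is the basis of $V$ provided by \cref{t:Wilson_line_positivity} and $f_v\in\mathbb{F}$ denotes the element of the dual basis dual to $v$. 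By that theorem each summand is \GSuniv\ positive Laurent on $\P_{G,\Sigma_0}$, and a finite sum of Laurent polynomials with non-negative integral coefficients again has non-negative integral coefficients; hence $q_{E_1,E_2}^{\ast}(\tr_V(\rho_{|\gamma|}))$ is \GSuniv\ positive Laurent.

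It then remains to descend positivity from $\P_{G,\Sigma_0}$ to $\P_{G,\Sigma}$ in the chart of $\bD$. Since $\hat{E}$ is an interior edge of $\Delta$, the triangulation $\bD$ restricts to a decorated triangulation $\bD_0$ of $\Sigma_0$, and by the amalgamation construction of GS coordinate systems the pullback $q_{E_1,E_2}^{\ast}$ carries the interior coordinate $X^{(\hat{E};\bD)}_s$ to the monomial $X^{(E_1;\bD_0)}_s\,X^{(E_2;\bD_0)}_{s^{\ast}}$ (rule \eqref{eq:GS_gluing rule}) and every other $\bD$-coordinate to the corresponding $\bD_0$-coordinate. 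On coordinate tori $q_{E_1,E_2}$ is thus induced by a surjective homomorphism $\bG_m^{I(\bD_0)}\to\bG_m^{I(\bD)}$, so $q_{E_1,E_2}^{\ast}$ is injective on character lattices and therefore sends distinct Laurent monomials in the $\bD$-coordinates to distinct Laurent monomials in the $\bD_0$-coordinates with the same coefficients. Consequently $\tr_V(\rho_{|\gamma|})$ has non-negative integral coefficients in the $\bD$-chart if and only if its $q_{E_1,E_2}^{\ast}$-image does, which has just been established; as $\bD$ was arbitrary, this finishes the argument.

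The hard part is the final descent, in two respects. First, one must guarantee that for \emph{each} decorated triangulation $\bD$ the loop-to-arc reduction can be made compatible with $\bD$ — ideally by producing an interior edge of $\Delta$ crossed exactly once by $|\gamma|$, and otherwise by performing the reduction chart by chart through the universal-cover decomposition of \cref{t:Wilson_line_regular}, where $\tr_V$ kills the conjugation ambiguity. Second, one must verify that the monomial pullback along the gluing morphism is injective on the relevant lattices: this injectivity is exactly what makes positivity \emph{descend} along $q_{E_1,E_2}$ rather than merely ascend, and it is the crux of transferring the triangle-local positivity back to $\P_{G,\Sigma}$.
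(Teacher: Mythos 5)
Your overall strategy---reduce to $V=V(\lambda)$, cut the loop open, apply \cref{t:Wilson_line_positivity} to the cut surface, expand $\tr_V\circ g_{[c]}=\sum_{v\in\mathbb{B}}c^V_{f_v,v}(g_{[c]})$, and descend positivity along the gluing morphism---is the same as the paper's, which derives the corollary in one line from \cref{prop:Wilson line-loop} and \cref{t:Wilson_line_positivity}. Your descent step is correct and is worth making explicit: the character-lattice map underlying $q_{E_1,E_2}^{\ast}$ on GS charts is injective (each interior-edge coordinate is sent to a product of two distinct frozen coordinates, disjoint edge by edge, and all other coordinates to themselves), so distinct monomials stay distinct and coefficients are preserved; this is precisely the principle the paper itself invokes when it reduces positivity on $\P_{G,\Sigma}$ to positivity of $q_\Delta^{\ast}$-pullbacks in its proof of \cref{t:Wilson_line_positivity}.

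However, your main-line cutting step has a genuine gap, which you only half-acknowledge. It is in general impossible to choose, for a \emph{given} $\Delta$, an interior edge $\hat{E}$ crossed exactly once by a minimal representative of $|\gamma|$---and sometimes impossible for \emph{any} ideal arc. If $|\gamma|$ is trivial in $H_1(\Sigma;\Z/2)$ (e.g.\ a separating simple closed curve), its mod-$2$ intersection number with every ideal arc vanishes, so it crosses every arc an even number of times and the single-cut reduction via \cref{prop:Wilson line-loop} never applies; even for non-separating loops with $\Delta$ fixed there may be no once-crossed edge (a simple closed curve of slope $2/3$ on the once-punctured torus crosses the three edges of an ideal triangulation $2$, $3$ and $5$ times). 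Hence the ``otherwise'' branch you mention is not an optional refinement but the actual proof: cut $\gamma$ at a single crossing with an edge, pass to the polygon $\Pi_{c;\Delta}$ in the universal cover, and use \cref{t:Wilson_line_regular} together with \cref{prop:Wilson line-loop}, so that $q_\Delta^{\ast}\,\tr_V(\rho_{|\gamma|})$ equals the trace of $\mu_M\circ\prod_{\nu}(b_{\tau_\nu}\circ\mathcal{S}_3^{t_\nu}\circ\overline{f}_{n_\nu})$---the conjugation ambiguity being killed by the trace---which expands as a cyclic sum $\sum_{\sfb_1,\dots,\sfb_M}\prod_{\nu} c^{\lambda}_{G_{\lambda}(\sfb_{\nu-1})^{\vee},G^{\mathrm{up}}_{\lambda}(\sfb_{\nu})}$ with $\sfb_0=\sfb_M$, each factor positive by \cref{t:matrixcoeff_pos}; the transfer to the $\bD$-chart then proceeds exactly as in the paper's proof of \cref{t:Wilson_line_positivity}. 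With that branch promoted to the main argument, your proof is complete and agrees with the paper's intended derivation.
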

\cref{t:monodromy_positivity} is a generalization of \cite[Theorem 9.3, Corollary 9.2]{FG03}. The rest of this section is devoted to the proof of \cref{t:Wilson_line_positivity}.

\subsection{A basis of $\cO(\P_{G, T})$ with positivity}
Our computation is performed locally on each triangle $T$ of an arbitrarily fixed decorated triangulation $\bD$ of $\Sigma$. An important fact is the existence of a basis of $\cO(\mathcal{P}_{G, T})$ with an appropriate positivity. In this subsection, we explain a construction of such a nice basis. 
Fix a triangle $T$. 
%and label the marked points of $T$ as $m_1, m_2, m_3$ in the counter-clockwise order. 
Recall the standard configuration $\wC_3 : H \times H \times U_*^+ \xrightarrow{\sim} \Conf_3 \P_G$ in \cref{l:standardconfig}, and the map $f_{m}: \mathcal{P}_{G, T}\xrightarrow{\sim} \Conf_3 \P_G$ given in \eqref{eq:moduli_triangle} associated with each marked point $m$ of $T$. Then we have an isomorphism 
\[
\wC_{3, m}:=f_{m}^{-1}\circ \wC_3:  H \times H \times U_*^+ \xrightarrow{\sim} \mathcal{P}_{G, T},
\] 
which induces an isomorphism of the coordinate rings 
\[
\wC_{3, m}^{\ast} :  \cO(\mathcal{P}_{G, T}) \xrightarrow{\sim} \cO(H \times H \times U_*^+)=\cO(H)\otimes \cO(H)\otimes \cO(U_*^+).   
\]
The coordinate ring $\cO(H)$ is identified with the group algebra $\C[X^{\ast}(H)]=\sum_{\mu\in X^{\ast}(H)}\mathbb{C}e^{\mu}$. To distinguish the first component of $\cO(H)\otimes \cO(H)\otimes \cO(U_*^+)$ from its second component, we write the element $e^{\mu}$, $\mu\in X^{\ast}(H)$ in the first (resp.~second) component as $e_1^{\mu}$ (resp.~$e_2^{\mu}$).  Recall that the coordinate algebra $\cO(U_*^+)$ has a $X^*(H)$-grading $\cO(U_*^+)=\bigoplus_{\beta\in X^*(H)}\cO(U_*^+)_{\beta}$ such that 
\[
F\circ \Ad_{h}|_{U^+_*}=h^{\beta}F
\]
for $h\in H$ and $F\in \cO(U_*^+)_{\beta}$. For $\xi\in X^{\ast}(H)$, set 
\begin{align}
    \Delta_{w_0, \xi}^+:=(\Delta_{\lambda_1, w_0\lambda_1}|_{U_*^+})^{-1}\Delta_{\lambda_2, w_0\lambda_2}|_{U_*^+}\in \cO(U_*^+)\label{eq:Delta}
\end{align}
with $\lambda_1, \lambda_2\in X^{\ast}(H)_+$ such that $-\lambda_1+\lambda_2=\xi$. Note that $\Delta_{w_0, \xi}^+$ is a well-defined element. 

The description of the cyclic shift $\mathcal{S}_3$ on $\Conf_3\P_G$ (\cref{l:cyclic shift}) in terms of the standard configuration is important in the sequel. In the description, we use 
the \emph{Berenstein--Fomin--Zelevinsky twist automorphism} $\eta_{w_0}$ \cite{BFZ96,BZ97} (we call it the \emph{twist automorphism} for short) defined by 
\[
\eta_{w_0}: U_*^+ \to U_*^+, u_+ \mapsto [\overline{w_0}u_+^{\mathsf{T}}]_+.
\]
This is a regular automorphism of $U_*^+$. The properties of $\eta_{w_0}$ are collected in \cref{sec:twist map}. 
\begin{lem}\label{l:t=1_cyclic}
	We have $\mathcal{S}_3 (\wC_3(h_1,h_2,u_+)) =\wC_3(h'_1,h'_2,u'_+)$, where
	\begin{align*}
		h'_1 &= h_1^{\ast}h_2w_0([u_+\overline{w_0}]_0), \\
		h'_2 &= h_1^{\ast}, \\
		u'_+ &= \Ad_{h_1^{\ast}}(\eta_{w_0}(u_+)^{\ast}).
	\end{align*}
Hence the isomorphism $\wC_3^{-1}\circ \mathcal{S}_3\circ\wC_3:  H \times H \times U_*^+\xrightarrow{\sim}H \times H \times U_*^+$ is expressed as
% induces an isomorphism 
% \[
% (\wC_3^{-1}\circ \mathcal{S}_3\circ\wC_3)^{\ast}:  \cO(H)\otimes \cO(H)\otimes \cO(U_*^+)\xrightarrow{\sim}\cO(H)\otimes \cO(H)\otimes \cO(U_*^+)
% \]
% satisfying 
\begin{align}
(\wC_3^{-1}\circ \mathcal{S}_3\circ\wC_3)^{\ast}(e_1^{\mu}\otimes e_2^{\nu}\otimes F)=e_1^{\mu^{\ast}+\nu^{\ast}+\beta^{\ast}}\otimes e_2^{\mu}\otimes \Delta_{w_0, -\mu^{\ast}-\beta}^+\cdot (\eta_{w_0}^{\ast})^4(F)\label{eq:cyclic_formula}
\end{align}
for $\mu,\nu\in X^{\ast}(H)$ and $F\in \cO(U_*^+)_{\beta}$. 
\end{lem}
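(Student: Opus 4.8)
The plan is to establish the two assertions of \cref{l:t=1_cyclic} in sequence: first the explicit formula for $\mathcal{S}_3(\wC_3(h_1,h_2,u_+))$, and then deduce the pullback formula \eqref{eq:cyclic_formula} as a bookkeeping consequence. For the first part, I would start from the definition of the cyclic shift $\mathcal{S}_3$ (\cref{l:cyclic shift}), which sends $[p_E,p_{E'},p_{E''}] \mapsto [p_{E'},p_{E''},p_E]$, applied to the standard representative from \cref{l:standardconfig}:
\[
\wC_3(h_1,h_2,u_+) = [B^+, B^-, u_+.B^-;\ p_\std,\ \phi'(u_+)h_1\vw.p_\std,\ u_+h_2\vw.p_\std].
\]
After the shift, the configuration is $[\,\phi'(u_+)h_1\vw.p_\std,\ u_+h_2\vw.p_\std,\ p_\std\,]$ with the underlying flags cyclically permuted. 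To read off the new standard parameters, I would act by an element $g \in G$ bringing this back to standard form: translate so that the first pinning becomes $p_\std$ and the first two flags become $(B^+, B^-)$, then identify the unique $(h_1', h_2', u_+')$ via the uniqueness in \cref{l:standardconfig}.

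The key algebraic inputs are the twist automorphism $\eta_{w_0}(u_+)=[\vw u_+^{\mathsf{T}}]_+$ and the Gauss-type decompositions $[\,\cdot\,]_-[\,\cdot\,]_0[\,\cdot\,]_+$ on $G_0=U^-HU^+$; I would invoke the properties of $\eta_{w_0}$ collected in \cref{sec:twist map} together with \cref{l:Dynkininv} for the interaction of the transpose and the $\ast$-involution with $\vw$. The computation of $h_1' = h_1^\ast h_2 w_0([u_+\vw]_0)$, $h_2' = h_1^\ast$, and $u_+' = \Ad_{h_1^\ast}(\eta_{w_0}(u_+)^\ast)$ amounts to carefully tracking how the normalizing element $g$ decomposes and how the $H$-factors redistribute; here the appearance of $\ast$ and $w_0(\cdot)$ traces back to the opposite-pinning relation $(g.p_\std.h)^\ast = g.p_\std^\ast.w_0(h)$ and the definition $p^\ast = g\vw.p_\std$. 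I expect this flag-chasing to be the main obstacle: the shift mixes the roles of the three pinnings, and extracting $[u_+\vw]_0$ cleanly requires recognizing which triangular factor survives after conjugation, so the identities must be assembled in exactly the right order.

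For the second assertion, once the maps on $(h_1,h_2,u_+)$ are known, the pullback on $\cO(H)\otimes\cO(H)\otimes\cO(U_*^+)$ follows formally. The factor $e_1^{\mu}\otimes e_2^{\nu}$ pulls back according to $h_1 \mapsto h_1'$, $h_2 \mapsto h_2'$; since $h_2' = h_1^\ast$ and $h_1'$ involves $h_1^\ast$, $h_2$, and the $U_*^+$-dependent term $w_0([u_+\vw]_0)$, the character $e_1^\mu$ contributes $e_1^{\mu^\ast}$ and $e_2^{\mu}$, while the $[u_+\vw]_0$-dependence produces precisely the factor $\Delta^+_{w_0,-\mu^\ast-\beta}$ after matching the $X^*(H)$-grading of $F \in \cO(U_*^+)_\beta$. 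I would verify that $e_1^\nu \mapsto e_1^{\nu^\ast}$ combines with the grading shift to yield the total exponent $\mu^\ast + \nu^\ast + \beta^\ast$ on the first factor, using that $\ast$ is a lattice involution and that $\Ad_{h_1^\ast}$ acting on $\eta_{w_0}(u_+)^\ast$ rescales a $\beta$-graded function by $(h_1^\ast)^{\beta^\ast}$. Finally, the composite $(\eta_{w_0}^\ast)^4(F)$ in \eqref{eq:cyclic_formula} rather than $\eta_{w_0}^\ast(F)$ reflects that \emph{four} applications of the elementary cyclic operation on the pinning data realize the genuine automorphism on $U_*^+$; I would confirm this power by checking compatibility with the $H^3$-action degrees recorded in \eqref{eq:H3-action_config}, which pins down the exponent uniquely.
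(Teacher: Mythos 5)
Your first half follows the paper's own route: apply $\mathcal{S}_3$ to the standard representative, normalize by translating with $(\phi'(u_+)h_1\vw)^{-1}$, and read off $(h'_1,h'_2,u'_+)$ from the Gauss decomposition $u_+=\phi'(u_+)[u_+\vw]_0\vw^{-1}\phi(u_+)$ of \cref{p:phi-decomp} together with \cref{l:phi-tw}. That plan is sound, with one small inaccuracy: the $\ast$ and $w_0(\cdot)$ factors arise from conjugating elements past $\vw$ (i.e.\ from $\Ad_{\vw h_1^{-1}}$), not from the opposite-pinning relation, which plays no role in this computation.

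The genuine gap is in your derivation of \eqref{eq:cyclic_formula}, specifically the exponent $4$ and the minor factor. Your attributions are off: the Cartan term $w_0([u_+\vw]_0)^{\mu}=[u_+\vw]_0^{-\mu^\ast}$ contributes only $\Delta^+_{w_0,-\mu^\ast}$ (after rewriting through generalized minors), while the $-\beta$ part of the minor and the fourth power of the twist come from an input you never invoke: \cref{l:tw-power}, which states $(\eta_{w_0}^{\ast})^3(F)=\Delta^+_{w_0,-\beta}\cdot(F\circ\ast)$ for $F\in\cO(U_*^+)_{\beta}$. Concretely, since $u'_+=\Ad_{h_1^{\ast}}(\eta_{w_0}(u_+)^{\ast})$, evaluating gives $F(u'_+)=h_1^{\beta^\ast}\,(F\circ\ast)(\eta_{w_0}(u_+))$ --- note the rescaling is $(h_1^{\ast})^{\beta}=h_1^{\beta^{\ast}}$, not $(h_1^{\ast})^{\beta^{\ast}}$ as you wrote --- and one then trades $F\circ\ast$ for $\Delta^+_{w_0,\beta}\cdot(\eta^{\ast}_{w_0})^3(F)$ and pulls back once more through $\eta_{w_0}$, using $\eta^{\ast}_{w_0}(\Delta^+_{w_0,\beta})=\Delta^+_{w_0,-\beta}$, which produces $(\eta^{\ast}_{w_0})^4$ and the combined factor $\Delta^+_{w_0,-\mu^{\ast}-\beta}$. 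Your proposed justification fails on both counts: $\mathcal{S}_3$ has order three and is applied exactly once (the $4=1+3$ arises from the single twist visible in $u'_+$ plus the cube of the twist hidden inside the $\ast$-involution), and since $\eta^{\ast}_{w_0}$ merely flips the sign of the $X^{*}(H)$-grading, a degree check against the $H^3$-action of \eqref{eq:H3-action_config} sees only the parity of the exponent and cannot distinguish $(\eta^{\ast}_{w_0})^4$ from $(\eta^{\ast}_{w_0})^2$ or $(\eta^{\ast}_{w_0})^6$. Without \cref{l:tw-power} (a nontrivial result quoted from Kimura--Oya) your ``formal bookkeeping'' stalls at the expression $\eta^{\ast}_{w_0}(F\circ\ast)$ and never reaches the stated form --- which matters, since it is precisely the form $\Delta^+_{w_0,\,\cdot}\cdot(\eta^{\ast}_{w_0})^4(F)$ that lets the paper invoke properties (T) and (M) of the basis $\pF$ later on.
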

\begin{proof}
	We have
	\begin{align*}
		\mathcal{S}_3(\wC_3(h_1,h_2,u_+)) 
		&= [\phi'(u_+)h_1\vw.p_\std, u_+h_2\vw.p_\std, p_\std] \\
		&= [p_\std, (\phi'(u_+)h_1\vw)^{-1}u_+h_2\vw.p_\std, (\phi'(u_+)h_1\vw)^{-1}.p_\std].
	\end{align*}
	The third component is rewritten as 
	\begin{align*}
		\vw h_1^{-1}\phi'(u_+)^{-1}.p_\std
		&= \Ad_{\vw h_1^{-1}}(\phi'(u_+)^{-1}) w_0(h_1^{-1})\vw. p_\std\\
		&= \Ad_{h_1^{\ast}}((\phi'(u_+)^{\mathsf{T}})^{\ast})h_1^{\ast}\vw. p_\std\\
		&=\Ad_{h_1^{\ast}}(\eta_{w_0}(u_+)^{\ast})h_1^{\ast}\vw. p_\std. \quad (\text{by  \cref{l:phi-tw}})
	\end{align*}
	Hence we have $h'_2 =h_1^{\ast}$ and $u'_+ =\Ad_{h_1^{\ast}}(\eta_{w_0}(u_+)^{\ast})$. The second component is rewritten as 
	\begin{align*}
		\vw h_1^{-1}\phi'(u_+)^{-1}u_+h_2\vw. p_\std
		&= \vw h_1^{-1}[u_+\overline{w_0}]_0\vw^{-1} \phi(u_+) h_2\vw. p_\std \quad (\text{by  \cref{p:phi-decomp}})\\
		&= h_1^{\ast}w_0([u_+\overline{w_0}]_0)\phi(u_+) h_2\vw. p_\std\\
		&= \Ad_{h_1^{\ast}w_0([u_+\overline{w_0}]_0)}(\phi(u_+)) h_1^{\ast}w_0([u_+\overline{w_0}]_0)h_2\vw. p_\std.
	\end{align*}
	Hence, by reading the Cartan part off, we see that $h'_1 = h_1^{\ast}h_2w_0([u_+\overline{w_0}]_0)$. 
	
	From the computation of $h_1'. h_2'$ and $u_+'$ above, it follows that 
	\begin{align*}
\left((\wC_3^{-1}\circ \mathcal{S}_3\circ\wC_3)^{\ast}(e_1^{\mu}\otimes e_2^{\nu}\otimes F)\right)(h_1, h_2, u_+)&=(e_1^{\mu}\otimes e_2^{\nu}\otimes F)(h_1', h_2', u_+')\\
&=h_1^{\mu^{\ast}+\nu^{\ast}+\beta^{\ast}}h_2^{\mu}(w_0([u_+\overline{w_0}]_0))^{\mu}(F\circ \ast)(\eta_{w_0}(u_+)).
\end{align*}
for $\mu,\nu\in X^{\ast}(H)$ and $F\in \cO(U_*^+)_{\beta}$. Moreover, for $\mu_1, \mu_2\in X^{\ast}(H)_+$ with $-\mu_1+\mu_2=-\mu^{\ast}$, we have 
\begin{align*}
(w_0([u_+\overline{w_0}]_0))^{\mu}&=[u_+\overline{w_0}]_0^{-\mu^{\ast}}\\
&=(\Delta_{\mu_1, \mu_1}([u_+\overline{w_0}]_0))^{-1}\Delta_{\mu_2, \mu_2}([u_+\overline{w_0}]_0)\\
&=(\Delta_{\mu_1, \mu_1}(u_+\overline{w_0}))^{-1}\Delta_{\mu_2, \mu_2}(u_+\overline{w_0})\\
&=(\Delta_{\mu_1, w_0\mu_1}(u_+))^{-1}\Delta_{\mu_2, w_0\mu_2}(u_+)\\
&=\Delta_{w_0, -\mu^{\ast}}^+(u_+). 
\end{align*}
By \cref{l:tw-power}, we have 
\begin{align*}
(F\circ \ast)(\eta_{w_0}(u_+))&=(\Delta_{w_0, \beta}^+\cdot (\eta_{w_0}^{\ast})^3(F))(\eta_{w_0}(u_+))\\
&=(\eta_{w_0}^{\ast}(\Delta_{w_0, \beta}^+))(u_+)((\eta_{w_0}^{\ast})^4(F))(u_+)\\
&=(\Delta_{w_0, -\beta}^+\cdot (\eta_{w_0}^{\ast})^4(F))(u_+).\\
\end{align*}
Therefore, we obtain 
\[
(\wC_3^{-1}\circ \mathcal{S}_3\circ\wC_3)^{\ast}(e_1^{\mu}\otimes e_2^{\nu}\otimes F)=e_1^{\mu^{\ast}+\nu^{\ast}+\beta^{\ast}}\otimes e_2^{\mu}\otimes \Delta_{w_0, -\mu^{\ast}-\beta}^+\cdot (\eta_{w_0}^{\ast})^4(F)
\]
as desired. 
\end{proof}
We now construct a ``canonical'' basis of $\cO(\mathcal{P}_{G, T})$ from that of $\cO(U_*^+)$ with some nice properties (G), (T), and (M). 
\begin{lem}\label{l:triangle_basis}
	Let $\mathbb{F}$ be a basis of $\cO(U_*^+)$ such that 
	\begin{itemize}
		\item[(G)] the elements of $\mathbb{F}$ are homogeneous with respect to the $X^*(H)$-grading $\cO(U_*^+)=\bigoplus_{\beta\in X^*(H)}\cO(U_*^+)_{\beta}$, 
		\item[(T)] $\mathbb{F}$ is preserved by the twist automorphism $\eta_{w_0}^{\ast}:  \cO(U_*^+)\to \cO(U_*^+)$ as a set, and
		\item[(M)] $\Delta_{w_0, \xi}^+\cdot F\in \mathbb{F}$ for any $\xi \in X^{\ast}(H)$ and $F\in \mathbb{F}$. 
	\end{itemize}
	 Then the basis $\widetilde{\mathbb{F}}_T$ of $\cO(\mathcal{P}_{G, T})$ given by 
	\begin{align}
	\widetilde{\mathbb{F}}_T:=\{(\wC_{3, m}^{\ast})^{-1}(e_1^{\mu}\otimes e_2^{\nu}\otimes F)\mid \mu,\nu\in X^{\ast}(H), F\in \mathbb{F}\}\label{eq:triangle_basis}
	\end{align}
	does not depend on the marked point $m$ of $T$. 
\end{lem}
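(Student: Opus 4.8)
The plan is to reduce the assertion to the single statement that the subset
\[
\mathsf{B}:=\{e_1^{\mu}\otimes e_2^{\nu}\otimes F\mid \mu,\nu\in X^{\ast}(H),\ F\in\mathbb{F}\}\subset \cO(H)\otimes\cO(H)\otimes\cO(U_*^+)
\]
is preserved by the automorphism $\Phi:=\wC_3^{-1}\circ\mathcal{S}_3\circ\wC_3$ of $H\times H\times U_*^+$. Since the three marked points of $T$ are cyclically ordered, it suffices to compare the dot $m$ with the next one $m'$. By \cref{l:cyclic shift} we have $f_{m'}\circ f_{m}^{-1}=\mathcal{S}_3$, and a direct manipulation of the definitions gives $\wC_{3,m}=\wC_{3,m'}\circ\Phi$, hence $(\wC_{3,m}^{\ast})^{-1}=(\wC_{3,m'}^{\ast})^{-1}\circ(\Phi^{\ast})^{-1}$. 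Writing the basis \eqref{eq:triangle_basis} as $\widetilde{\mathbb{F}}_T=(\wC_{3,m}^{\ast})^{-1}(\mathsf{B})$, I then obtain $(\wC_{3,m}^{\ast})^{-1}(\mathsf{B})=(\wC_{3,m'}^{\ast})^{-1}\big((\Phi^{\ast})^{-1}(\mathsf{B})\big)$, so independence of the dot follows the moment I know $\Phi^{\ast}(\mathsf{B})=\mathsf{B}$.

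The core step is to verify the inclusion $\Phi^{\ast}(\mathsf{B})\subseteq\mathsf{B}$ using the explicit transformation rule \eqref{eq:cyclic_formula} of \cref{l:t=1_cyclic}. Take a basis element $e_1^{\mu}\otimes e_2^{\nu}\otimes F$ with $F\in\mathbb{F}$. By property (G) the element $F$ is homogeneous, say $F\in\cO(U_*^+)_{\beta}$, so that \eqref{eq:cyclic_formula} applies verbatim and gives
\[
\Phi^{\ast}(e_1^{\mu}\otimes e_2^{\nu}\otimes F)=e_1^{\mu^{\ast}+\nu^{\ast}+\beta^{\ast}}\otimes e_2^{\mu}\otimes \Delta_{w_0,-\mu^{\ast}-\beta}^+\cdot(\eta_{w_0}^{\ast})^4(F).
\]
The first two tensor factors are again characters of $H$, so the only thing to check is that the third factor lies in $\mathbb{F}$: property (T) applied four times yields $(\eta_{w_0}^{\ast})^4(F)\in\mathbb{F}$, and then property (M) yields $\Delta_{w_0,-\mu^{\ast}-\beta}^+\cdot(\eta_{w_0}^{\ast})^4(F)\in\mathbb{F}$. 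Thus the whole image lies in $\mathsf{B}$, and since it is a single element of $\mathsf{B}$ (not a combination), $\Phi^{\ast}$ restricts to a map $\mathsf{B}\to\mathsf{B}$.

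Finally I would promote this inclusion to an equality by a finite-order argument. Cycling the three pinnings three times is the identity, so $\mathcal{S}_3^{3}=\mathrm{id}$ on $\Conf_3\P_G$, whence $\Phi^{3}=\mathrm{id}$ and $(\Phi^{\ast})^{-1}=(\Phi^{\ast})^2$. Applying $\Phi^{\ast}(\mathsf{B})\subseteq\mathsf{B}$ twice gives $(\Phi^{\ast})^{-1}(\mathsf{B})=(\Phi^{\ast})^2(\mathsf{B})\subseteq\mathsf{B}$, i.e.\ $\mathsf{B}\subseteq\Phi^{\ast}(\mathsf{B})$, so $\Phi^{\ast}(\mathsf{B})=\mathsf{B}$, which is exactly what the first paragraph requires. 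I do not expect a genuine obstacle: the entire analytic content has been isolated in \cref{l:t=1_cyclic}, and what remains is the bookkeeping that matches the three components of the transformation rule \eqref{eq:cyclic_formula} against the three stability properties (G), (T), (M). The one point to handle with care is that \eqref{eq:cyclic_formula} is stated only for homogeneous $F$, which is precisely why property (G) must be invoked at the very start so that each basis element already sits in a single graded piece before the formula is applied.
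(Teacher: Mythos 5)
Your proof is correct and takes essentially the same route as the paper: both reduce, via \cref{l:cyclic shift}, to showing that the pullback of the cyclic shift preserves the basis $\{e_1^{\mu}\otimes e_2^{\nu}\otimes F\}$ of $\cO(H)\otimes\cO(H)\otimes\cO(U_*^+)$, and then apply the transformation formula \eqref{eq:cyclic_formula} of \cref{l:t=1_cyclic}, invoking (G) to get homogeneity before the formula is used and (T), (M) to keep the third tensor factor inside $\mathbb{F}$. Your only addition is to make explicit, via $\mathcal{S}_3^{3}=\mathrm{id}$, the upgrade from the inclusion $\Phi^{\ast}(\mathsf{B})\subseteq\mathsf{B}$ to the equality $\Phi^{\ast}(\mathsf{B})=\mathsf{B}$, a step the paper's proof leaves implicit.
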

\begin{proof}
	Using the isomorphisms 
	\[
	\wC_{3}^{\ast}:  \cO(\Conf_3 \P_G)\xrightarrow{\sim} \cO(H)\otimes \cO(H)\otimes \cO(U_*^+),\ f_{m}^{\ast}:  \cO(\Conf_3 \P_G)\xrightarrow{\sim} \cO(\mathcal{P}_{G, T}),
	\]
 we set 
	\begin{align*}
	&\widetilde{\mathbb{F}}_3:=\{(\wC_{3}^{\ast})^{-1}(e_1^{\mu}\otimes e_2^{\nu}\otimes F)\mid \mu,\nu\in X^{\ast}(H), F\in \mathbb{F}\},\\
	&\widetilde{\mathbb{F}}_{m}:= f_{m}^{\ast}(\widetilde{\mathbb{F}}_3).
	\end{align*}
	By the rotational symmetry, it suffices to show that $\widetilde{\mathbb{F}}_{m}=\widetilde{\mathbb{F}}_{m'}$, where $m'$ is the marked point placed right after $m$ along the boundary orientation. By \cref{l:cyclic shift}, we have 
	\begin{align*}
	\widetilde{\mathbb{F}}_{m'}&=f_{m'}^{\ast}(\widetilde{\mathbb{F}}_3)\\
	%&=(f_{m_1}^{\ast}\circ (f_{m_1}^{\ast})^{-1}\circ f_{m_2}^{\ast})(\widetilde{\mathbb{F}}_3)\\
	&=(f_{m}^{\ast}\circ (f_{m'}\circ f_{m}^{-1})^{\ast})(\widetilde{\mathbb{F}}_3)\\
	&=(f_{m}^{\ast}\circ \mathcal{S}_3^{\ast})(\widetilde{\mathbb{F}}_3). 
	\end{align*}
Therefore, it remains to show that $\mathcal{S}_3^{\ast}(\widetilde{\mathbb{F}}_3)=\widetilde{\mathbb{F}}_3$. For $\mu,\nu\in X^{\ast}(H)$ and $F\in \mathbb{F}$ with $F\in \cO(U_*^+)_{\beta}$, we have 
\begin{align*}
	&\mathcal{S}_3^{\ast}((\wC_{3}^{\ast})^{-1}(e_1^{\mu}\otimes e_2^{\nu}\otimes F))\\
	&=(\wC_{3}^{\ast})^{-1}((\wC_{3}^{-1}\circ \mathcal{S}_3\circ \wC_{3})^{\ast}(e_1^{\mu}\otimes e_2^{\nu}\otimes F))\\
	&=(\wC_{3}^{\ast})^{-1}(e_1^{\mu^{\ast}+\nu^{\ast}+\beta^{\ast}}\otimes e_2^{\mu}\otimes \Delta_{w_0, -\mu^{\ast}-\beta}^+\cdot (\eta_{w_0}^{\ast})^4(F))
\end{align*}
by \cref{l:t=1_cyclic}. The assumptions (T) and (M) imply $\Delta_{w_0, -\mu^{\ast}-\beta}^+\cdot (\eta_{w_0}^{\ast})^4(F)\in \mathbb{F}$. Therefore, 
$\mathcal{S}_3^{\ast}((\wC_{3}^{\ast})^{-1}(e_1^{\mu}\otimes e_2^{\nu}\otimes F))\in \widetilde{\mathbb{F}}_3$, which proves $\mathcal{S}_3^{\ast}(\widetilde{\mathbb{F}}_3)=\widetilde{\mathbb{F}}_3$. 
\end{proof}
\begin{rem}
	There are several examples of bases of $\cO(U_*^+)$ which satisfy the properties (G), (T), and (M): 
	\begin{itemize}
		\item the dual semicanonical basis, in the case when $\mathfrak{g}$ is of symmetric type \cite[Theorem 15.10]{GLS:Kac-Moody}, \cite[Theorem 6]{GLS:Chamber}. 
		\item the dual canonical basis (specialized at $q=1$) \cite[Definition 4.6, Theorem 6.1]{Kimura-Oya}.
		\item the simple object basis arising from the monoidal categorification via quiver Hecke algebras \cite[Section 5.1, Theorem 5.13]{KKOP:loc} (see also Remark \ref{r:positivechoice}). % (See also \cite{OyaNote}). 
	\end{itemize}
In this paper, we mainly use the last one because it has a convenient positivity. 
\end{rem}
We use the following strong fact in order to construct a basis of $\cO(\mathcal{P}_{G, T})$ with an appropriate positivity. %See Appendix \ref{a:positivebasis} for a proof.
%We take bases of finite-dimensional representations $V(\lambda)$, $\lambda\in X^{\ast}(H)_+$ as follows: 
\begin{thm}\label{t:positivechoice}
	There exist 
	\begin{itemize}
	    \item a basis $\pF$ of $\cO(U_*^+)$, and
	    \item two bases $\mathbb{B}(\lambda):=\{G_{\lambda}(\sfb)\mid \sfb\in \mathscr{B}(\lambda)\}$ and $\mathbb{B}^{\rm up}(\lambda):=\{G_{\lambda}^{\rm up}(\sfb)\mid \sfb\in \mathscr{B}(\lambda)\}$ of $V(\lambda)$ for each $\lambda\in X^{\ast}(H)_+$ (here $\mathscr{B}(\lambda)$ is just an index set)
	\end{itemize}
	satisfying the following properties:
	\begin{itemize}
		\item[\Grep] $\mathbb{B}(\lambda)$ and $\mathbb{B}^{\rm up}(\lambda)$ consist of weight vectors of $V(\lambda)$, and we have  
		\[
		( G_{\lambda}(\sfb), G_{\lambda}^{\rm up}(\sfb'))_{\lambda}=\delta_{\sfb, \sfb'}
		\]
		for $\sfb, \sfb'\in \mathscr{B}(\lambda)$. 
		\item[(G)] the elements of $\pF$ are homogeneous with respect to the $X^*(H)$-grading $\cO(U_*^+)=\bigoplus_{\beta\in X^*(H)}\cO(U_*^+)_{\beta}$. 
		\item[(T)] $\pF$ is preserved by the twist automorphism $\eta_{w_0}^{\ast}:  \cO(U_*^+)\to \cO(U_*^+)$ as a set. 
		\item[(M)] $\Delta_{w_0, \xi}^+\cdot F\in \pF$ for any $\xi \in X^{\ast}(H)$ and $F\in \pF$. 
		\item[(P1)] Recall the notation \eqref{eq:vee}. For $\sfb, \sfb'\in \mathscr{B}(\lambda)$, we have 
		\[
		c^{\lambda}_{G_{\lambda}(\sfb)^{\vee}, G_{\lambda}^{\rm up}(\sfb')}|_{U^+_{\ast}}\in \sum_{F\in \pF}\mathbb{Z}_{\geq 0}F,\quad 
		(c^{\lambda}_{G_{\lambda}(\sfb)^{\vee}, G_{\lambda}^{\rm up}(\sfb')}\circ \mathsf{T})|_{U^+_{\ast}}\in \sum_{F\in \pF}\mathbb{Z}_{\geq 0}F. 
		\]
		\item[(P2)] Recall the notation \eqref{eq:Lusztig-param}. For any reduced word $\bs=(s_1,\dots, s_N)$ of $w_0$ and $F\in \pF$, we have 
		\[
		(x^\bs)^{\ast}(F)\in \mathbb{Z}_{\geq 0}[t_1^{\pm 1}, \dots, t_{N}^{\pm 1}].
		\]
		%Moreover, if we consider $\mathbb{Z}_{\geq 0}[t_1^{\pm 1}, \dots, t_{N}^{\pm 1}]$ as a $X^*(H)$-graded algebra by $\deg t_k=\alpha_{s_k}$ for $k=1,\dots, N$, then $(x^\bs)^{\ast}(F)$ is homogeneous. 
	\end{itemize}
\end{thm}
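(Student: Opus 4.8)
The plan is to extract all of the required data from the theory of the dual canonical basis of the quantum unipotent coordinate ring, together with its monoidal categorification via quiver Hecke (KLR) algebras, specialized at $q=1$. Concretely, I would take $\mathbb{B}(\lambda)$ and $\mathbb{B}^{\rm up}(\lambda)$ to be the $q=1$ specializations of Kashiwara's lower and upper global bases of the irreducible $U_q(\lie)$-module $V_q(\lambda)$, indexed by the crystal $\mathscr{B}(\lambda):=B(\lambda)$. Since the lower and upper global bases are dual to one another with respect to the contravariant (Shapovalov) form, and this form specializes to $(\ ,\ )_{\lambda}$ at $q=1$, property \Grep is immediate; likewise global basis elements are weight vectors, which yields the homogeneity in \textbf{(G)}. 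For $\pF$ I would take the $q=1$ specialization of the dual canonical basis of $A_q(\mathfrak{n})=\cO_q(U^+)$, localized at the frozen minors $\Delta_{\varpi_s,w_0\varpi_s}$ ($s\in S$) so as to produce a basis of $\cO(U^+_*)$ rather than of $\cO(U^+)$; recall that $U^+_*=U^+_{w_0}$ is exactly the locus where these minors are invertible.

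Properties \textbf{(G)}, \textbf{(T)}, \textbf{(M)} concern only the cell basis $\pF$ and have already been established for this basis. Homogeneity \textbf{(G)} holds because each dual canonical basis element is a weight vector for the adjoint $H$-action. The invariance \textbf{(T)} under the Berenstein--Fomin--Zelevinsky twist $\eta_{w_0}$ and the closedness \textbf{(M)} under multiplication by the frozen functions $\Delta^+_{w_0,\xi}$ are precisely the statements proved in \cite{Kimura-Oya} for the dual canonical basis and in \cite{KKOP:loc} for the simple-module basis, so I would simply invoke these. The underlying mechanism is that $\eta_{w_0}$ is categorified by an auto-equivalence permuting the self-dual simple modules, while each $\Delta^+_{w_0,\xi}$ is a Laurent monomial in the frozen dual canonical basis elements, whose product with any basis element is again a basis element.

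The heart of the matter is the positivity in \textbf{(P1)} and \textbf{(P2)}, and here I would use the monoidal categorification $K(R\text{-}\mathrm{gmod})\cong A_q(\mathfrak{n})$ of Khovanov--Lauda and Rouquier \cite{KL:I,Rou08}, under which $\pF$ corresponds to the classes of self-dual simple $R$-modules. For \textbf{(P2)}, the pullback $(x^\bs)^{\ast}(F)$ evaluates $F$ along the Lusztig one-parameter product $x_{s_1}(t_1)\cdots x_{s_N}(t_N)$, which in the categorification reads off graded dimensions of weight (multiplicity) spaces of the corresponding simple module; these lie in $\mathbb{Z}_{\geq 0}[q^{\pm1}]$ and specialize to non-negative integers at $q=1$. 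For \textbf{(P1)}, I would rewrite $c^{\lambda}_{G_{\lambda}(\sfb)^{\vee},G_{\lambda}^{\rm up}(\sfb')}|_{U^+}=(G_\lambda(\sfb),u_+.G_\lambda^{\rm up}(\sfb'))_{\lambda}$ and recognize this matrix coefficient, after restriction to $U^+$, as an element of $A_q(\mathfrak{n})$ whose expansion in the dual canonical basis records graded composition multiplicities of simple $R$-modules inside an induction/convolution product; such multiplicities manifestly lie in $\mathbb{Z}_{\geq 0}[q^{\pm1}]$. The transpose variant follows identically after using the contravariance $(u_+.v,v')_{\lambda}=(v,u_+^{\mathsf{T}}.v')_{\lambda}$ to transfer $\mathsf{T}$ onto the module action, and the passage from $U^+$ to $U^+_*$ only multiplies these expansions by frozen monomials, preserving positivity by \textbf{(M)}.

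The main obstacle is exactly the positivity \textbf{(P1)}, \textbf{(P2)} in full generality. In symmetric (simply-laced) type one can read it off from Lusztig's geometric realization of the canonical basis by perverse sheaves; but for non-symmetric types no such geometry is available, so one is forced to use the quiver Hecke algebra categorification to render all coefficients visibly non-negative, which is precisely the reason for preferring the simple-module basis of \cite{KKOP:loc} over, say, the dual semicanonical basis of \cite{GLS:Kac-Moody}. A secondary subtlety that I would need to verify with care is that a \emph{single} basis $\pF$ of the localized ring $\cO(U^+_*)$ simultaneously satisfies all of \textbf{(G)}--\textbf{(M)} \emph{and} both positivity properties, and that the index sets $\mathscr{B}(\lambda)$ of the module bases can be matched to the cell basis compatibly enough that the expansion in \textbf{(P1)} is genuinely an expansion in this same $\pF$.
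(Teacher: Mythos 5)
Your proposal takes essentially the same route as the paper: the paper gives no detailed proof of \cref{t:positivechoice} either, but (see \cref{r:positivechoice} and the remark following \cref{l:triangle_basis}) obtains $\pF$ as the simple-object basis coming from the localization of the quiver Hecke algebra monoidal categorification of $\cO(U^+_*)$ in \cite{KKOP:loc}, with (T) from \cite[Corollary 5.4]{KKOP:loc} (cf.\ \cite{Kimura-Oya} for the dual canonical basis), (M) from invertibility of the frozen objects in the localized category, $\mathbb{B}(\lambda),\mathbb{B}^{\rm up}(\lambda)$ the global bases, and the positivity (P1)--(P2) from the categorification, exactly as you outline. Your two closing caveats --- that in non-symmetric types one must use the simple-module basis rather than the dual (semi)canonical basis, and that (P1) must genuinely be an expansion in the same localized basis $\pF$ --- are precisely the nontrivial points the paper defers to the cited literature and to \cite{OyaNote}, so your sketch is faithful to the paper's intended argument at the same level of detail.
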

In the following, we write the weight of $G_{\lambda}(\sfb)$ (and $G_{\lambda}^{\rm up}(\sfb')$) as $\weight \sfb$. 
\begin{rem}\label{r:condition_remarks}
	In (P1), either $c^{\lambda}_{G_{\lambda}(\sfb)^{\vee}, G_{\lambda}^{\rm up}(\sfb')}$ or $(c^{\lambda}_{G_{\lambda}(\sfb)^{\vee}, G_{\lambda}^{\rm up}(\sfb')}\circ \sfT)|_{U^+_{\ast}}$ is equal to $0$ if the weights of $G_{\lambda}(\sfb)$ and $G_{\lambda}^{\rm up}(\sfb')$ are distinct. Since $c^{\lambda}_{G_{\lambda}(\sfb)^{\vee}, G_{\lambda}^{\rm up}(\sfb')}\circ \mathsf{T}=c^{\lambda}_{G_{\lambda}^{\rm up}(\sfb')^{\vee}, G_{\lambda}(\sfb)}$, we can interchange the roles of $\mathbb{B}(\lambda)$ and $\mathbb{B}^{\rm up}(\lambda)$. 
	%If $F\in \mathbb{F}$ satisfies $\deg (x^\bs)^{\ast}(F)=\beta$ by (P3), then $\eta_{w_0}^{\ast}(F)\in \mathbb{F}$ (by (P4)) satisfies $\deg (x^\bs)^{\ast}(\eta_{w_0}^{\ast}(F))=-\beta$ by the formula in \cite[Theorem 6.1]{Kimura-Oya}.
\end{rem}
\begin{rem}\label{r:positivechoice}
	\cref{t:positivechoice} is highly non-trivial, but it is now known that an example of such bases can be obtained from the theory of \emph{categorification of $\cO(U_*^+)$ via quiver Hecke algebras}, developed in  \cite{KL:I,Rou08,KL:II,Rou12,KK:hw,KKKO:monoidal,KKOP:strata,KKOP:loc}. %(see also \cite{OyaNote}). %We give a proof of \cref{t:positivechoice} in \cref{a:positivebasis} based on their results. %The property (B1) for this kind of basis is explained implicitly in \cite{KKOP:localization}, and we give its explicit proof in Appendix \ref{a:positivebasis}.
 
Let us give a brief explanation. We refer to the corresponding references for all missing details. The coordinate ring $\cO(U^+)$ can be regarded as the graded dual $\cU(\fu^+)^{\ast}_{\rm gr}$ of the universal enveloping algebra of $\fu^+$, which is the Lie algebra of $U^+$. Here the algebra structure on $\cU(\fu^+)^{\ast}_{\rm gr}$ is induced from the usual coalgebra structure on $\cU(\fu^+)$ (see \cite[Section 5.1]{GLS:Kac-Moody}). In \cite{KL:I,Rou08,KL:II}, it is shown that the quantum analogue of $\cU(\fu^+)$ (resp.~$\cU(\fu^+)^{\ast}_{\rm gr}$) is realized as the complexified split Grothendieck ring of the category of finitely generated graded projective modules (resp.~the complexified Grothendieck ring of the category of finite-dimensional graded modules) over appropriate quiver Hecke algebras. Hence $\cU(\fu^+)$ (resp.~$\cU(\fu^+)^{\ast}_{\rm gr}$) possesses the basis $\bP$ (resp.~$\bL$) consisting of the classes of the indecomposable projective objects (resp.~simple objects). Here specializing the quantum parameter $q$ at $1$ corresponds to forgetting the grading of the objects. The structure constants with respect to $\bP$ and $\bL$ are manifestly non-negative by definition, and the basis $\bL$ is dual to $\bP$. In particular, for any reduced word $\bs=(s_1,\dots, s_N)$ of $w_0$ and $L\in \bL$,
\begin{align}
(x^\bs)^{\ast}(L)\in \mathbb{Z}_{\geq 0}[t_1^{\pm 1}, \dots, t_{N}^{\pm 1}]\label{eq:Feigin_pos}
\end{align}
(cf.~\cite[Section 6]{GLS:Kac-Moody}). 

The coordinate ring $\cO(U_*^+)$ of $U_*^+$ can be obtained as the localization of $\cO(U^+)$ with respect to $\{\Delta_{\lambda, w_0\lambda}|_{U^+}\mid \lambda\in X^{\ast}(H)_+\}$. Hence we can define the subset $\widetilde{\bL} \subset \cO(U_*^+)$ as 
\[
% \widetilde{\bL}=\{F\in \cO(U_*^+) \mid \text{there exist $L\in \bL$ and $\xi\in X^{\ast}(H)$ such that $F=\Delta_{w_0, \xi}^+\cdot L$}\}.
\widetilde{\bL}=\{\Delta_{w_0, \xi}^+\cdot L \mid L\in \bL,~ \xi\in X^{\ast}(H)\}.
\]
By \cite[Section 5.1]{KKOP:loc}, $\widetilde{\bL}$ is a basis of $\cO(U_*^+)$ arising from the classes of simple objects of a certain monoidal abelian category $\widetilde{\mathscr{C}}_{w_0}$, and $\widetilde{\bL}$ provides an example of $\pF$ in \cref{t:positivechoice}. Indeed, (G) and (T) can be checked easily, and  (T) holds because the twist automorphism $\eta_{w_0}^{\ast}$ is categorified in \cite{KKOP:loc} as the left dual functor (see \cite[Theorem 5.13]{KKOP:loc}). The property (P2) can be shown from \eqref{eq:Feigin_pos} and the fact that $(x^\bs)^{\ast}(\Delta_{w_0, \xi}^+)$ is a Laurent monomial with coefficient $1$ in $t_1,\dots, t_N$ for all $\xi\in X^{\ast}(H)$ (see \cite[Lemma 6.4]{BZ97}). 

For $\lambda\in X^{\ast}(H)_+$, the highest weight module $V(\lambda)$ is realized as the complexified split Grothendieck ring of the category of finitely generated graded projective modules and the complexified Grothendieck ring of the category of finite-dimensional graded modules over appropriate \emph{cyclotomic} quiver Hecke algebras \cite{KK:hw}. Here these two realizations correspond to two choices of $\Z$-forms of $V(\lambda)$. Then we can obtain a basis $\bP(\lambda)$ (resp.~$\bL(\lambda)$) of $V(\lambda)$ consisting of the classes of the indecomposable projective objects (resp.~simple objects). Then if we take 
\begin{align*}
    \mathbb{B}(\lambda)=\bP(\lambda), \quad \mathbb{B}^{\rm up}(\lambda)=\bL(\lambda), \quad \pF=\widetilde{\bL},
\end{align*}
respectively,
%$\mathbb{B}(\lambda)$ and $\mathbb{B}^{\rm up}(\lambda)$ as $\bP(\lambda)$ and $\bL(\lambda)$, respectively, 
then they satisfies \Grep\ and (P1)
%(where $\pF=\widetilde{\bL}$) 
in \cref{t:positivechoice}. Indeed, for (P1), it suffices to show that the action of the elements of $\bP$ can be expressed as the matrices with non-negative integer entries with respect to the bases $\bP(\lambda)$ and $\bL(\lambda)$. This property holds since the action of the elements of $\bP$ is also categorified as certain functors. See \cite{KK:hw} for more details\footnote{The further details of Remark \ref{r:positivechoice} can be found in Appendix B of the version 2 of our preprint arXiv:2011.14260.}. 
\end{rem}
In the following, the notations $\pF$, $\mathbb{B}(\lambda)$, and  $\mathbb{B}^{\rm up}(\lambda)$ always stand for bases satisfying the properties \Grep, (G), (T), (M), (P1), and (P2). Moreover, let $\ptF{T}$ be the basis of $\cO(\mathcal{P}_{G, T})$ defined from $\pF$ as in \eqref{eq:triangle_basis}. 
\begin{thm}\label{t:triangle_GSpos}
%Let $T$ be a triangle. Then 
The basis $\ptF{T}$ consists of \GSuniv\ positive Laurent elements. 
\end{thm}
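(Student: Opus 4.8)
The plan is to unwind the definition of \GSuniv\ positivity for the triangle directly through the standard configuration, reducing everything to the single positivity input (P2) of \cref{t:positivechoice}. Since a decorated triangulation of a triangle $T$ is nothing but a choice of a dot $m$ together with a reduced word $\bs$ of $w_0$, the GS coordinate embedding on $\P_{G,T}$ associated with such a $\bD=(m,\bs)$ is $f_m^{-1}\circ\psi_\bs:\bG_m^{I_\infty(\bs)}\to\P_{G,T}$. First I would fix $\bD=(m,\bs)$ and, invoking the dot-independence of the basis established in \cref{l:triangle_basis}, represent an arbitrary element of $\ptF{T}$ as $(\wC_{3,m}^\ast)^{-1}(e_1^\mu\otimes e_2^\nu\otimes F)$ using the \emph{same} dot $m$ occurring in $\bD$, with $\mu,\nu\in X^\ast(H)$ and $F\in\pF$.

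Next I would compute the pullback into the GS coordinates. Using $\wC_{3,m}=f_m^{-1}\circ\wC_3$ together with the cancellation $(f_m^{-1})^\ast\circ f_m^\ast=\mathrm{id}$ on $\cO(\Conf_3\P_G)$, one gets $(\wC_{3,m}^\ast)^{-1}=f_m^\ast\circ(\wC_3^\ast)^{-1}$, so that the pullback along $f_m^{-1}\circ\psi_\bs$ collapses to $\psi_\bs^\ast\circ(\wC_3^\ast)^{-1}$. Combined with $\wC_3^{-1}\circ\psi_\bs=(\sfh_1,\sfh_2,\sfu_+)$ from \eqref{eq:def_of_hu}, this yields the clean formula
\[
(f_m^{-1}\circ\psi_\bs)^\ast\bigl[(\wC_{3,m}^\ast)^{-1}(e_1^\mu\otimes e_2^\nu\otimes F)\bigr]
=\sfh_1(\bX)^\mu\cdot\sfh_2(\bX)^\nu\cdot (x^\bs)^\ast(F)\big|_{\mathbf{t}=\mathbf{t}(\bX)},
\]
where $\mathbf{t}(\bX)$ is the tuple of Lusztig coordinates expressed via \eqref{eq:Lus-GS}.

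It then remains to read off positivity factor by factor. By \cref{l:GScoord_standard}, the Cartan factors $\sfh_1(\bX)^\mu$ and $\sfh_2(\bX)^\nu$ are single Laurent \emph{monomials} in the GS coordinates with coefficient $+1$, while each Lusztig coordinate $t_{k(s,i)}(\bX)=X_{s\choose 0}\cdots X_{s\choose i-1}$ is a monomial with non-negative exponents. Property (P2) of \cref{t:positivechoice} guarantees $(x^\bs)^\ast(F)\in\Z_{\geq0}[t_1^{\pm1},\dots,t_N^{\pm1}]$; substituting the positive monomials $t_k(\bX)$ into this Laurent polynomial with non-negative coefficients produces a Laurent polynomial in the $X_{s\choose i}$ with non-negative coefficients, and multiplying by the two monomial prefactors preserves this. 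As $\bD$ was arbitrary, the element is \GSuniv\ positive Laurent.

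The genuinely hard content lies not in this argument but in the existence of a basis $\pF$ simultaneously satisfying (G), (T), (M) (already needed for \cref{l:triangle_basis} to make $\ptF{T}$ well-defined and dot-independent) together with the strong positivity (P2); this is exactly \cref{t:positivechoice}, which I take as given. The only point demanding care in the present step is to verify that the prefactors $\sfh_1(\bX)^\mu$ and $\sfh_2(\bX)^\nu$ are honest monomials of coefficient $+1$, rather than signed combinations in which cancellation could destroy positivity — and this is precisely what the explicit formulas of \cref{l:GScoord_standard} supply.
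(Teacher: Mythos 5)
Your proposal is correct and takes essentially the same route as the paper's proof: both use the dot-independence of \cref{l:triangle_basis} to reduce to showing that $\widetilde{\mathbb{F}}_{\mathrm{pos}, 3}$ is positive Laurent in the GS coordinates on $\Conf_3 \P_G$ via $\psi_\bs$, and then conclude by combining the explicit formulas for $\sfh_1, \sfh_2, \sfu_+$ in \cref{l:GScoord_standard} with property (P2) of \cref{t:positivechoice}. Your factor-by-factor check that $\sfh_1(\bX)^{\mu}$ and $\sfh_2(\bX)^{\nu}$ are coefficient-$1$ Laurent monomials (so no cancellation can occur) simply makes explicit what the paper leaves implicit in its final sentence.
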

\begin{proof}
%	We write the marked points of $T$ as $m_1, m_2, m_3$ in the counter-clockwise order. 
	Recall that a decorated triangulation $\bD$ of $T$ is determined by the choice of a dot $m$ on $T$ and a reduced word $\bs$ of $w_0$. The associated GS coordinates on $\P_{G,T}$ are defined as $X_{s \choose i}^{(T,m,\bs)}:=f_{m}^*X_{s \choose i}^{\bs}$, where the right-hand side is 
	the pull-back of the GS coordinate on $\Conf_3 \P_G$ associated with $\bs$. By \cref{l:triangle_basis}, given any decorated triangulation $\bD$ of $T$, we may regard $\ptF{T}$ as
	$f_{m}^*(\widetilde{\mathbb{F}}_{\mathrm{pos}, 3})$, where $m$ is the dot of $\bD$ and
	\[
	\widetilde{\mathbb{F}}_{\mathrm{pos}, 3}:=\{(\wC_{3}^{\ast})^{-1}(e_1^{\mu}\otimes e_2^{\nu}\otimes F)\mid \mu,\nu\in X^{\ast}(H), F\in \pF \}.
	\]
	Therefore, it suffices to show that $\widetilde{\mathbb{F}}_{\mathrm{pos}, 3}$ is expressed as a Laurent polynomial with non-negative integral coefficients in terms of the GS coordinates on $\Conf_3 \P_G$ associated with $\bs$. Recall the map $\psi_\bs$ in \eqref{eq:GS coord} and the maps $\mathsf{h}_1,\mathsf{h}_2, \mathsf{u}_+$ in \eqref{eq:def_of_hu}, whose explicit descriptions are given in \cref{l:GScoord_standard}. For $\mu,\nu\in X^{\ast}(H)$ and $F\in \pF$, we have 
	\begin{align*}
		\psi_\bs^{\ast}\left((\wC_{3}^{\ast})^{-1}(e_1^{\mu}\otimes e_2^{\nu}\otimes F)\right)=(e_1^{\mu}\otimes e_2^{\nu}\otimes F)(\mathsf{h}_1(\mathbf{X}), \mathsf{h}_2(\mathbf{X}), \mathsf{u}_+(\mathbf{X})).
	\end{align*}
By \cref{l:GScoord_standard} and the property (P2) of $\pF$, the right-hand side is a Laurent polynomial with non-negative integral coefficients in $\{X_{s \choose i}^{\bs}\}_{(s,i)\in I_{\infty}(\bs)}$. This completes the proof. 
\end{proof}
\begin{rem}
	In the proof of \cref{t:triangle_GSpos}, we do not use the property (P1) of $\pF$. %This property, together with the bases $\mathbb{B}(\lambda), \mathbb{B}^{\rm up}(\lambda)$, will be used in the next subsection.   
\end{rem}
\begin{lem}\label{l:Fpos_ast}
We have $F\circ \ast\in \pF$ for all $F\in \pF$. 
\end{lem}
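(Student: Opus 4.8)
The plan is to recognize the map $F\mapsto F\circ\ast$ as the algebra automorphism of $\cO(U_*^+)$ induced by the Dynkin diagram automorphism $\sigma\colon s\mapsto s^\ast$, and then to invoke the $\sigma$-stability of the basis $\pF$ coming from its construction via quiver Hecke algebras.

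First I would check that the $\ast$-involution of $G$ restricts to an automorphism of $U_*^+$, so that $F\mapsto F\circ\ast$ is a well-defined algebra automorphism of $\cO(U_*^+)$. By \cref{l:Dynkininv} we have $x_s(t)^\ast=x_{s^\ast}(t)$, so $\ast$ preserves $U^+$ and acts on it by the relabeling $s\mapsto s^\ast$. To see that the open cell $U_*^+=U^+\cap B^-\vw B^-$ is preserved, I would track the defining minors: using \cref{l:Dynkininv} together with $\varpi_{s^\ast}=-w_0\varpi_s$, one gets $(\Delta_{\varpi_s,w_0\varpi_s}\circ\ast)=\Delta_{\varpi_{s^\ast},w_0\varpi_{s^\ast}}$ up to the evident nonzero identification, so the nonvanishing conditions cutting out $U_*^+$ are merely permuted among the simple indices and $\ast(U_*^+)=U_*^+$. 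The same computation shows that $\ast$ carries the frozen functions $\Delta_{w_0,\xi}^+$ of \eqref{eq:Delta} to $\Delta_{w_0,\xi^\ast}^+$.

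Next I would identify $F\mapsto F\circ\ast$ with a symmetry of $\pF$. Since $\sigma\colon s\mapsto s^\ast$ is an automorphism of the Cartan datum ($\alpha_{s^\ast}=-w_0\alpha_s$ preserves the Cartan matrix), it induces an automorphism of the quiver Hecke algebra $R$ permuting its idempotents, hence an auto-equivalence of $R$-gmod permuting the simple modules; dually this permutes the (localized) dual canonical basis, which is exactly the basis $\pF$ of \cref{t:positivechoice} (equivalently, this is Lusztig's diagram-automorphism stability of the canonical basis). Under the identification used to construct $\pF$, this permutation is precisely the pullback by the relabeling $s\mapsto s^\ast$ on $U^+$, i.e. $F\mapsto F\circ\ast$ on the non-localized part. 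Combining this with the previous paragraph, where $\ast$ permutes the frozen factors $\Delta_{w_0,\xi}^+$, and with property (M), I conclude that $\pF\circ\ast=\pF$.

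The main obstacle is the precise matching in the last step: verifying that the abstract $\sigma$-action on the categorically defined $\pF$ agrees on the nose with the geometric pullback $F\mapsto F\circ\ast$. I would carry this out by testing on matrix coefficients through property (P1): the $\ast$-involution intertwines the bases $\mathbb{B}(\lambda)$ and $\mathbb{B}^{\mathrm{up}}(\lambda)$ with those of the $\sigma$-twisted module ${}^\sigma V(\lambda)\cong V(\lambda)$ (whose weights are relabeled by $\mu\mapsto\mu^\ast$), so that $c^{\lambda}_{G_{\lambda}(\sfb)^{\vee},\,G_{\lambda}^{\mathrm{up}}(\sfb')}\circ\ast$ is again of the form appearing in (P1); matching the positive expansions on both sides forces the $\sigma$-permutation of $\pF$ to coincide with $F\mapsto F\circ\ast$. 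This ultimately reduces to the $\sigma$-equivariance of the chosen bases $\mathbb{B}(\lambda),\mathbb{B}^{\mathrm{up}}(\lambda)$, which one reads off from their construction in the quiver Hecke algebra references cited in \cref{r:positivechoice}.
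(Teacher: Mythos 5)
Your approach diverges from the paper's and has a genuine gap. The lemma is stated (and used) for \emph{any} basis $\pF$ satisfying the axioms (G), (T), (M), (P1), (P2) of \cref{t:positivechoice}; your argument instead invokes an extra property---stability of the categorically constructed basis under the diagram automorphism $s\mapsto s^\ast$---which is not on that list. Even granting that such $\sigma$-stability holds for the particular basis indicated in \cref{r:positivechoice}, your proof would only cover that one example, not the abstract statement. Moreover, your crucial matching step is not actually carried out: the claim that ``matching the positive expansions on both sides forces the $\sigma$-permutation of $\pF$ to coincide with $F\mapsto F\circ\ast$'' does not follow from (P1) alone, since positivity of expansion coefficients does not pin down a bijection of basis elements without a further uniqueness or triangularity argument, and the asserted $\sigma$-equivariance of $\mathbb{B}(\lambda)$, $\mathbb{B}^{\rm up}(\lambda)$ is likewise left unverified rather than read off from any stated property.

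The paper's proof is a two-line deduction from the axioms, using the identity of \cref{l:tw-power} (from Kimura--Oya): for $F\in\pF\cap\cO(U_*^+)_\beta$ one has $F\circ\ast=\Delta_{w_0,\beta}^+\cdot(\eta_{w_0}^\ast)^3(F)$; then $(\eta_{w_0}^\ast)^3(F)\in\pF$ by property (T), and multiplying by the frozen factor $\Delta_{w_0,\beta}^+$ stays in $\pF$ by property (M). This is precisely why (T) and (M) appear among the axioms---they make $\ast$-stability automatic, with the relation between $\ast$ and the cube of the twist automorphism doing all the work. Your preliminary observations (that $\ast$ restricts to an involution of $U^+_*$ via $x_s(t)^\ast=x_{s^\ast}(t)$, and that it permutes the minors $\Delta_{w_0,\xi}^+$) are correct and consistent with the paper, but the categorical detour should be replaced by the twist-automorphism identity, which also explains why no appeal to the specific quiver Hecke algebra construction is needed.
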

\begin{proof}
	From \cref{l:tw-power}, we have $F\circ \ast=\Delta_{w_0, \beta}^+\cdot (\eta_{w_0}^{\ast})^3(F)$ for $F\in \pF\cap \cO(U_*^+)_{\beta}$. 
We have $(\eta_{w_0}^{\ast})^3(F)\in \pF$ by the property (T), and then $\Delta_{w_0, \beta}^+\cdot (\eta_{w_0}^{\ast})^3(F)\in \pF$ by the property (M). 
\end{proof}
Recall the basic Wilson lines $b_L: \Conf_3 \P_G \to B^+_*$ and $b_R: \Conf_3 \P_G \to B^-_*$ from \cref{d:basic-Conf3}. For a dot $m$ of $T$, we set
\begin{align*}
	&g_{m, L}: \P_{G,T}\xrightarrow{f_{m}}\Conf_3 \P_G \xrightarrow{b_L} B^+_* \overset{\iota}{\hookrightarrow} G,\\
	&g_{m, R}: \P_{G,T}\xrightarrow{f_{m}}\Conf_3 \P_G \xrightarrow{b_R} B^-_* \overset{\iota}{\hookrightarrow} G,
\end{align*}
where the last maps $\iota$ are the inclusion maps. 
\begin{thm}\label{t:matrixcoeff_pos}
	For $\lambda\in X^{\ast}(H)_+, \sfb, \sfb'\in \mathscr{B}(\lambda), i\in \{1,2,3\}$ and $\tau\in \{L, R\}$, the map $c^{\lambda}_{G_{\lambda}(\sfb)^{\vee}, G_{\lambda}^{\rm up}(\sfb')}\circ g_{m, \tau}\in \cO(\P_{G, T})$ is written as a $\mathbb{Z}_{\geq 0}$-linear combination of elements of $\ptF{T}$. In particular, it is \GSuniv\ positive Laurent, and $g_{m, \tau}$ is a \GSuniv\ positive  Laurent morphism. 
\end{thm}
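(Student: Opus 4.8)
The plan is to combine the positivity of the basic Wilson lines $b_L,b_R$ expressed through the open unipotent cell with the compatibility of the basis $\pF$ under the transpose and the $\ast$-involution. First I would pull the matrix coefficient back explicitly. For the left case $\tau=L$, recall from \cref{c:LR} that under the standard configuration $b_L(\wC_3(h_1,h_2,u_+))=u_+h_2\in B^+_*$. Thus
\begin{align*}
    c^{\lambda}_{G_{\lambda}(\sfb)^{\vee}, G_{\lambda}^{\rm up}(\sfb')}(b_L(\wC_3(h_1,h_2,u_+)))
    &=\langle G_{\lambda}(\sfb)^{\vee}, u_+h_2.G_{\lambda}^{\rm up}(\sfb')\rangle \\
    &= h_2^{\weight \sfb'}\, \langle G_{\lambda}(\sfb)^{\vee}, u_+.G_{\lambda}^{\rm up}(\sfb')\rangle,
\end{align*}
using that $G_{\lambda}^{\rm up}(\sfb')$ is a weight vector of weight $\weight\sfb'$ (property \Grep). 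Hence, writing this through $\wC_{3,m}$, the function $c^{\lambda}_{G_{\lambda}(\sfb)^{\vee}, G_{\lambda}^{\rm up}(\sfb')}\circ g_{m,L}$ corresponds to $e_1^0\otimes e_2^{\weight\sfb'}\otimes \left(c^{\lambda}_{G_{\lambda}(\sfb)^{\vee}, G_{\lambda}^{\rm up}(\sfb')}|_{U^+_*}\right)$ under $\wC_{3,m}^\ast$. By property (P1) the restriction $c^{\lambda}_{G_{\lambda}(\sfb)^{\vee}, G_{\lambda}^{\rm up}(\sfb')}|_{U^+_*}$ lies in $\sum_{F\in\pF}\Z_{\geq 0}F$, and since each summand $e_1^0\otimes e_2^{\weight\sfb'}\otimes F$ with $F\in\pF$ is by definition an element of $\ptF{T}$ (recall \eqref{eq:triangle_basis}), we conclude that $c^{\lambda}_{G_{\lambda}(\sfb)^{\vee}, G_{\lambda}^{\rm up}(\sfb')}\circ g_{m,L}$ is a $\Z_{\geq 0}$-linear combination of elements of $\ptF{T}$.

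For the right case $\tau=R$, the second formula in \cref{c:LR} gives $b_R(\wC_3(h_1,h_2,u_+))=((u_+h_2)^{\ast})^{\mathsf{T}}$. Therefore
\begin{align*}
    c^{\lambda}_{G_{\lambda}(\sfb)^{\vee}, G_{\lambda}^{\rm up}(\sfb')}\circ b_R
    = c^{\lambda}_{G_{\lambda}(\sfb)^{\vee}, G_{\lambda}^{\rm up}(\sfb')}\circ \mathsf{T}\circ \ast \circ b_L,
\end{align*}
so I would reduce to the left case after applying the transpose and the $\ast$-involution to the matrix coefficient. Using the relation $c^{\lambda}_{G_{\lambda}(\sfb)^{\vee}, G_{\lambda}^{\rm up}(\sfb')}\circ\mathsf{T}=c^{\lambda}_{G_{\lambda}^{\rm up}(\sfb')^{\vee}, G_{\lambda}(\sfb)}$ noted in \cref{r:condition_remarks}, the restriction to $U^+_*$ of the transposed coefficient again lies in $\sum_{F\in\pF}\Z_{\geq 0}F$ by the second inclusion of property (P1). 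The extra $\ast$ is absorbed by \cref{l:Fpos_ast}, which guarantees $F\circ\ast\in\pF$ and hence $\sum_{F\in\pF}\Z_{\geq 0}(F\circ\ast)=\sum_{F\in\pF}\Z_{\geq 0}F$; one must also track the Cartan prefactor $e_2^{\bullet}$ produced by the $h_2$-weight, which is harmless since the $e_1,e_2$ factors range over all of $X^\ast(H)$ in the definition of $\ptF{T}$.

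The last clause then follows immediately: by \cref{t:triangle_GSpos} every element of $\ptF{T}$ is \GSuniv\ positive Laurent, and a $\Z_{\geq 0}$-linear combination of such functions is again \GSuniv\ positive Laurent. Taking the basis $\mathbb{B}^{\rm up}(\lambda)$ (with dual basis $\{G_\lambda(\sfb)^\vee\}$ by property \Grep) witnesses that $g_{m,\tau}$ is a \GSuniv\ positive Laurent morphism in the sense of the definition, where by \cref{r:GSpos} it suffices to check simple representations $V(\lambda)$, $\lambda\in X^\ast(H)_+$. I expect the only delicate point to be bookkeeping: ensuring that the Cartan-weight prefactors and the $\ast$-twist in the $\tau=R$ case land exactly inside the spanning set $\ptF{T}$ rather than merely inside its Laurent span, so that the $\Z_{\geq 0}$-coefficient statement (not just \GSuniv\ positivity) is genuinely preserved. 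This is where properties (T), (M), and \cref{l:Fpos_ast} are essential, and I would verify their interplay carefully rather than treat it as routine.
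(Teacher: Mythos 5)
Your proposal is correct and follows essentially the same route as the paper's proof: for $\tau=L$ you factor through the standard configuration and apply the first inclusion of (P1) to land in $\{(\wC_{3,m}^\ast)^{-1}(e_1^0\otimes e_2^{\weight\sfb'}\otimes F)\}\subset\ptF{T}$, and for $\tau=R$ you use $b_R=\mathsf{T}\circ\ast\circ b_L$ from \cref{c:LR}, the second inclusion of (P1), and \cref{l:Fpos_ast} to absorb the $\ast$-twist, exactly as in the paper (where the Cartan prefactor you flag works out to $e_2^{(\weight\sfb)^{\ast}}$ and is indeed harmless since $\nu$ ranges over all of $X^{\ast}(H)$ in \eqref{eq:triangle_basis}). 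The concluding appeal to \cref{t:triangle_GSpos} and \cref{r:GSpos}, with $\mathbb{B}^{\rm up}(\lambda)$ and its dual basis $\{G_\lambda(\sfb)^\vee\}$ furnished by \Grep, also matches the paper.
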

\begin{proof}
We have 
	\begin{align*}
		c^{\lambda}_{G_{\lambda}(\sfb)^{\vee}, G_{\lambda}^{\rm up}(\sfb')}\circ g_{m, L}
		&=c^{\lambda}_{G_{\lambda}(\sfb)^{\vee}, G_{\lambda}^{\rm up}(\sfb')}\circ \iota \circ b_L\circ f_{m}\\
		&=c^{\lambda}_{G_{\lambda}(\sfb)^{\vee}, G_{\lambda}^{\rm up}(\sfb')}\circ \iota \circ b_L\circ \wC_3\circ \wC_3^{-1} \circ f_{m}\\
		&=(1\otimes e_2^{\weight \sfb'}\otimes c^{\lambda}_{G_{\lambda}(\sfb)^{\vee}, G_{\lambda}^{\rm up}(\sfb')}|_{U^+_{\ast}})\circ \wC_3^{-1} \circ f_{m}\ (	\text{by \cref{c:LR}})\\
		&\in \sum_{F\in \pF}\mathbb{Z}_{\geq 0} (\wC_{3, m}^{\ast})^{-1}(1\otimes e_2^{\weight \sfb'}\otimes F)\ (\text{by the property (P1)})\\
		&\subset \sum_{\widetilde{F}\in \ptF{T}}\mathbb{Z}_{\geq 0} \widetilde{F}. 
	\end{align*}
\begin{align*}
	c^{\lambda}_{G_{\lambda}(\sfb)^{\vee}, G_{\lambda}^{\rm up}(\sfb')}\circ g_{m, R}
	&=c^{\lambda}_{G_{\lambda}(\sfb)^{\vee}, G_{\lambda}^{\rm up}(\sfb')}
	\circ \iota \circ b_R\circ f_{m}\\
	&=c^{\lambda}_{G_{\lambda}(\sfb)^{\vee}, G_{\lambda}^{\rm up}(\sfb')}\circ \iota \circ b_R\circ \wC_3\circ \wC_3^{-1} \circ f_{m}\\
	&=(1\otimes e_2^{(\weight \sfb)^{\ast}}\otimes c^{\lambda}_{G_{\lambda}(\sfb)^{\vee}, G_{\lambda}^{\rm up}(\sfb')}\circ \mathsf{T}\circ \ast|_{U^+_{\ast}})\circ \wC_3^{-1} \circ f_{m}\ (\text{by \cref{c:LR}})\\
	&\in \sum_{F\in \pF}\mathbb{Z}_{\geq 0} (\wC_{3, m}^{\ast})^{-1}(1\otimes e_2^{(\weight \sfb)^{\ast}}\otimes (F\circ \ast))\ (\text{by the property (P1)})\\
	&=\sum_{F\in \pF}\mathbb{Z}_{\geq 0} (\wC_{3, m}^{\ast})^{-1}(1\otimes e_2^{(\weight \sfb)^{\ast}}\otimes F)\ (\text{by \cref{l:Fpos_ast}})\\
	&\subset \sum_{\widetilde{F}\in \ptF{T}}\mathbb{Z}_{\geq 0} \widetilde{F}. 
\end{align*}
The remaining statements immediately follow from \cref{r:GSpos} and \cref{t:triangle_GSpos}. 
\end{proof}
\subsection{A proof of \cref{t:Wilson_line_positivity}}
Let $\Sigma$ be a marked surface with non-empty boundary, and fix an arbitrary decorated triangulation $\bD = (\Delta_*, \bs_\Delta)$ of $\Sigma$ (recall our assumption on the marked surface in \cref{subsec:moduli}). Recall $\P_{G,\Sigma}^\Delta$ defined after \cref{t:GS decomposition}, where $\Delta$ is the underlying triangulation of $\Delta_*$. Fix an arc class $[c]: \widetilde{E}_\inn\to \widetilde{E}_\out$. 
For our purpose, it suffices to show that 
\[
c^{\lambda}_{G_{\lambda}(\sfb)^{\vee}, G_{\lambda}^{\rm up}(\sfb')}\circ g_{[c]}\in \cO(\P_{G,\Sigma})
\]
is \GSuniv\ positive Laurent for any $\lambda\in X^{\ast}(H)_+$ and $\sfb, \sfb'\in \mathscr{B}(\lambda)$ (see \cref{r:GSpos}).

%Set $T_{\nu}:=\varpi(\widetilde{T}_{\nu})$ and $m_{\nu}^{\mathrm{can}}:=\varpi(m_{\nu})$ for $\nu=1,\dots, M$. Note that $T_{1}, \dots, T_{M}$ may not be distinct, and $m_{\nu}^{\mathrm{can}}$ might be different from $m_{T_{\nu}}$. 

Let $q_\Delta:\widetilde{\P_{G,\Sigma}^\Delta}=\prod_{T \in t(\Delta)} \P_{G,T}\to \P_{G,\Sigma}^\Delta$ be the gluing map in \cref{t:GS decomposition}, and $\pr_T: \widetilde{\P_{G,\Sigma}^\Delta} \to \P_{G,T}$ the projection for $T \in t(\Delta)$. Recall that 
\begin{align*}
	q_\Delta^{\ast}X_{s \choose i}^{(T;\bD)}=\pr_T^{\ast}(f_{m_T}^{\ast}X_{s \choose i}^{\bs_T})%\label{eq:decomp1}\\
\end{align*}
for $T \in t(\Delta),~(s,i) \in I_\uf(\bs)$, and 
\begin{align*}
	q_\Delta^{\ast}X^{(E;\bD)}_s=
	\begin{cases}
		\pr_{T^L}^{\ast}(f_{m_{T^L}}^{\ast}X_{s_{E^L}}^{\bs_{T^L}})\cdot \pr_{T^R}^{\ast}(f_{m_{T^R}}^{\ast}X_{s_{E^R}^\ast}^{\bs_{T^R}})&\text{if }E\ \text{is an interior edge,}\\
		\pr_{T}^{\ast}(f_{m_{T}}^{\ast}X_{s_{E}})&\text{if }E\ \text{is a boundary interval,}
	\end{cases}%\label{eq:decomp2}
\end{align*}
for $E \in e(\Delta)$ and $s \in S$. Here $T^L$ (resp. $T^R$) is the triangle containing $E$ and lies on the left (resp. right) side with respect to the orientation of $E$ in the first case, and $T$ is the unique triangle containing $E$ in the second case. 
%Recall \cref{ss:GS_coord} for the definition of $s_{a,a+1}$ and $s_{b,b+1}$ ($a$ and $b$ obviously depend on $E$, but it is omitted from the notation). 
By the correspondence above, it suffices to show that $q_\Delta^{\ast}(c^{\lambda}_{G_{\lambda}(\sfb)^{\vee}, G_{\lambda}^{\rm up}(\sfb')}\circ g_{[c]})$ is expressed as a Laurent polynomial with non-negative integral coefficients in any GS coordinate system on $\prod_{T \in t(\Delta)} \P_{G,T}$.  
%Note that, by \cref{cor:comparison_Poisson_algebra}, we already know that $c^{\lambda}_{G_{\lambda}(\sfb)^{\vee}, G_{\lambda}^{\rm up}(\sfb')}\circ g_{[c]}$ is a Laurent polynomial in $\mathbf{X}_{\bD}$. 

Henceforth, we follow the notation in the beginning of \cref{subsec:regularity_of_Wilson_lines_and_loops}. For $\nu=1,\dots, M$, denote by $m_{\nu}$ the dot on $\widetilde{T}_\nu$ which is associated with the turning pattern $(\tau_1,\dots, \tau_M)$ of $\widetilde{c}$. Moreover, we have the commutative diagram

\begin{equation*}
    \begin{tikzcd}
    \prod_{T \in t(\Delta)} \P_{G,T} \ar[r,"\widetilde{\pi}_c^*"] \ar[d,"q_\Delta"'] & \prod_{\nu=1}^M \P_{G,\widetilde{T}_\nu} \ar[d,"q_{\Delta_c}"] \\
    \P_{G,\Sigma}^\Delta \ar[r,"\pi_c^*"'] & \P_{G,\Pi_{c;\Delta}}^{\Delta_c},
    \end{tikzcd}
\end{equation*}
where $\widetilde{\pi}_c^*:=\prod_{\nu=1}^M (\pi_c|_{\widetilde{T}_\nu})^*$ (see the proof of \cref{t:Wilson_line_regular}). Then, by \eqref{eq:Wilson_line_product_formula_2}, we have 
\begin{align*}
q_\Delta^{\ast}(c^{\lambda}_{G_{\lambda}(\sfb)^{\vee}, G_{\lambda}^{\rm up}(\sfb')}\circ g_{[c]})
&=c^{\lambda}_{G_{\lambda}(\sfb)^{\vee}, G_{\lambda}^{\rm up}(\sfb')}\circ \mu_M\circ 
\left(\prod_{\nu=1}^M g_{m_\nu, \tau_\nu}\right) \circ \widetilde{\pi}_c^*\\
&=(\widetilde{\pi}_c^*)^{\ast}
\left(
c^{\lambda}_{G_{\lambda}(\sfb)^{\vee}, G_{\lambda}^{\rm up}(\sfb')}\circ \mu_M\circ \prod_{\nu=1}^M g_{m_\nu, \tau_\nu}\right)\\
&=(\widetilde{\pi}_c^*)^{\ast}\left(
\sum_{\sfb_1,\dots, \sfb_{M-1}\in \mathscr{B}(\lambda)}\prod_{\nu=1}^{M}\left(c^{\lambda}_{G_{\lambda}(\sfb_{\nu-1})^{\vee}, G_{\lambda}^{\rm up}(\sfb_{\nu})}\circ g_{m_{\nu}, \tau_{\nu}}\circ \pr_{\widetilde{T_{\nu}}}\right)\right),
\end{align*}
where $\sfb_0:=\sfb$ and $\sfb_M:=\sfb'$. 
By \cref{t:matrixcoeff_pos}, each $c^{\lambda}_{G_{\lambda}(\sfb_{\nu-1})^{\vee}, G_{\lambda}^{\rm up}(\sfb_{\nu})}\circ g_{m_{\nu}, \tau_{\nu}}$ is \GSuniv\ positive Laurent. Moreover,  
\[
(\widetilde{\pi}_c^*)^{\ast} (\pr_{\widetilde{T_{\nu}}}^{\ast}(X_{s \choose i}^{(\widetilde{T}_\nu ,m,\bs)}))=
\pr_{\pi_c(\widetilde{T_{\nu}})}^{\ast} (X_{s \choose i}^{(\pi_c(\widetilde{T_{\nu}}), \pi_c(m),\bs)})
\]
for any dot $m$ on $\widetilde{T_{\nu}}$, any $(s, i)\in I_\infty(\bs)$, and any reduced word $\bs$ of $w_0$. Thus $q_\Delta^{\ast}(c^{\lambda}_{G_{\lambda}(\sfb)^{\vee}, G_{\lambda}^{\rm up}(\sfb')}\circ g_{[c]})$ is expressed as a Laurent polynomial with non-negative integral coefficients in the GS coordinate system on $\prod_{T \in t(\Delta)} \P_{G,T}$, which completes the proof of \cref{t:Wilson_line_positivity}. 
\if0
Recall from \cref{?} that we have the following commutative diagram: 
\[
    \begin{tikzcd}
	\prod_{T \in t(\Delta)} \P_{G,T} \quad \ar[r,"q_\Delta"] \ar[d,"\prod_{\nu=1}^{M}(b_{m_{\nu}^{\mathrm{can}}, \tau_{\nu}}\circ \pr_{T_{\nu}})"']  & \quad \P_{G,\Sigma} \ar[d,"g_{[c]}"]\\
	\underset{M \text{times}}{\underbrace{G\times \cdots \times G}}  \ar[r,"\mu_M"'] \quad & \quad G, 
\end{tikzcd}
\]
where $\mu_M$ denotes the multiplication of $M$ elements in $G$. Hence
\begin{align*}
	q_\Delta^{\ast}(c^{\lambda}_{G_{\lambda}(\sfb)^{\vee}, G_{\lambda}^{\rm up}(\sfb')}\circ g_{[c]})&=c^{\lambda}_{G_{\lambda}(\sfb)^{\vee}, G_{\lambda}^{\rm up}(\sfb')}\circ \left(\mu_M\circ \prod_{\nu=1}^{M}(b_{m_{\nu}^{\mathrm{can}}, \tau_{\nu}}\circ \pr_{T_{\nu}})\right)\\
	&=\sum_{\sfb_1,\dots, \sfb_{M-1}\in \mathscr{B}(\lambda)}\prod_{\nu=1}^{M}\left(c^{\lambda}_{G_{\lambda}(\sfb_{\nu-1})^{\vee}, G_{\lambda}^{\rm up}(\sfb_{\nu})}\circ b_{m_{\nu}^{\mathrm{can}}, \tau_{\nu}}\circ \pr_{T_{\nu}}\right),
\end{align*}
where $\sfb_0:=\sfb$ and $\sfb_M:=\sfb'$. 
By \cref{t:triangle_GSpos}, each $c^{\lambda}_{G_{\lambda}(\sfb_{\nu-1})^{\vee}, G_{\lambda}^{\rm up}(\sfb_{\nu})}\circ b_{m_{\nu}^{\mathrm{can}}, \tau_{\nu}}$ is a \GSuniv\ positive Laurent polynomial on $\P_{G,T}$. Therefore, $q_\Delta^{\ast}(c^{\lambda}_{G_{\lambda}(\sfb)^{\vee}, G_{\lambda}^{\rm up}(\sfb')}\circ g_{[c]})$ is expressed as a Laurent polynomial with non-negative integral coefficients in the GS coordinate system on $\prod_{T \in t(\Delta)} \P_{G,T}$, which completes the proof of \cref{t:Wilson_line_positivity}. 
\fi

\appendix

%---------------------New Section--------------------------------

\section{Some maps related to the twist automorphism}\label{sec:twist map}
In this Appendix, we collect some useful properties of the Berenstein--Fomin--Zelevinsky twist automorphism \cite{BFZ96,BZ97} 
\[
\eta_{w_0}: U_*^+ \to U_*^+, u_+ \mapsto [\overline{w_0}u_+^{\mathsf{T}}]_+,
\]
and its related maps. 

Let $\B_\pm:=\{u_\pm. B^{\mp} \mid u_\pm \in U^\pm \} \subset \B_G$ be the open Schubert cells. Consider the intersection $\B_*:=\B_+ \cap \B_-$. 
% \begin{lem}
% $\vw^{\mathsf{T}}=\overline{\overline{w_0}}=\vw^{-1}$, $\vw^\ast=\vw$, $(g^\ast)^{\mathsf{T}}=\vw g^{-1} \vw^{-1}$.
% \end{lem}
% We fix the triangular decomposition as $G_0=U^- H U^+$.

\begin{lem}[{\cite[Lemma 5.2]{FG06}}]
There are bijections
\[
\alpha_*^\pm: U_*^\pm \to \B_*, \quad u_\pm \mapsto u_\pm. B^\mp. 
\]
\end{lem}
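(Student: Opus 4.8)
The statement to prove is the following lemma (attributed to Fock--Goncharov):

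\begin{lem}[{\cite[Lemma 5.2]{FG06}}]
There are bijections
\[
\alpha_*^\pm: U_*^\pm \to \B_*, \quad u_\pm \mapsto u_\pm. B^\mp.
\]
\end{lem}

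The plan is to prove the two cases ($+$ and $-$) uniformly, so I will treat $\alpha_*^+ : U_*^+ \to \B_*$, $u_+ \mapsto u_+.B^-$, and obtain the other case by applying the transpose anti-involution $\sfT$ (or equivalently the symmetry $+\leftrightarrow -$). First I would unwind the definitions: recall that $\B_+ = \{u_+.B^- \mid u_+\in U^+\}$ and $\B_- = \{u_-.B^+ \mid u_-\in U^-\}$ are the two open Schubert cells, $\B_* = \B_+\cap\B_-$, and $U^+_* = U^+\cap B^-\vw B^-$ is the open unipotent cell. The key observation to record is that a flag $B$ lies in $\B_+$ exactly when $w(B^-,B)=w_0$ (the pair $(B^-,B)$ is generic), and in $\B_-$ exactly when $w(B^+,B)=w_0$; hence $B\in\B_*$ iff $B$ is generic with respect to \emph{both} $B^+$ and $B^-$.

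The main steps are as follows. \textbf{Well-definedness and landing in $\B_*$:} For $u_+\in U^+_*$, the flag $u_+.B^-$ automatically lies in $\B_+$ (it is a $U^+$-translate of $B^-$). I must check it also lies in $\B_-$, i.e.\ that $(B^+, u_+.B^-)$ is generic, equivalently $w(B^+,u_+.B^-)=w_0$. This is precisely where the condition $u_+\in B^-\vw B^-$ defining the \emph{open} cell $U^+_*$ is used: writing $u_+ = b_-\vw b_-'$ with $b_-,b_-'\in B^-$, one computes $u_+.B^- = b_-\vw.B^-$, and since $w(B^+,\vw.B^-)=w(B^+,B^+)$\ldots — more cleanly, $b_-\in B^-$ stabilizes $B^+$ up to the Bruhat stratum, so $w(B^+, b_-\vw.B^-)=w(B^+,\vw.B^-)=w_0$. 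This shows $\alpha_*^+(U^+_*)\subseteq\B_*$. \textbf{Injectivity:} If $u_+.B^- = u_+'.B^-$ with $u_+,u_+'\in U^+$, then $(u_+')^{-1}u_+\in U^+\cap B^-$ (the stabilizer of $B^-$ is $B^-$), and since $U^+\cap B^- = \{e\}$, we get $u_+=u_+'$; this already holds on all of $U^+$, so a fortiori on $U^+_*$. \textbf{Surjectivity:} Given $B\in\B_*$, genericity of $(B^-,B)$ gives a (unique) $u_+\in U^+$ with $B=u_+.B^-$; genericity of $(B^+,B)$ then forces $w(B^+,u_+.B^-)=w_0$, and reversing the computation of the first step shows $u_+\in B^-\vw B^-$, i.e.\ $u_+\in U^+_*$. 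This exhibits $B$ in the image.

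The technical heart — and the step I expect to be the main obstacle — is the precise translation between ``$u_+.B^-\in\B_-$'' and ``$u_+\in U^+_*= U^+\cap B^-\vw B^-$''. Both directions of the surjectivity/well-definedness argument hinge on the computation $w(B^+, u_+.B^-)=w_0 \iff u_+\in B^-\vw B^-$, which requires care with the conventions for the $w$-distance and the identification of the open Schubert cell with the big Bruhat cell. I would establish this equivalence as a short self-contained claim, using the unique factorization in the open double Bruhat stratum and the fact that $B^+\cap B^-=H$ normalizes the relevant one-parameter subgroups. Once this equivalence is in hand, the bijectivity is immediate from the fact that the set of pinnings (here, just flags generic to $B^-$) is a principal homogeneous space under $U^+$. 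Finally, the statement for $\alpha_*^-$ follows by applying $\sfT$, which interchanges $U^+\leftrightarrow U^-$, $B^+\leftrightarrow B^-$, and $\B_+\leftrightarrow\B_-$ while preserving $\B_*$, transporting the bijection $\alpha_*^+$ to $\alpha_*^-$.
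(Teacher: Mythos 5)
The paper offers no proof of this lemma---it is quoted from \cite[Lemma 5.2]{FG06}---so your attempt must stand on its own, and while its skeleton (well-definedness, injectivity from $U^+\cap B^-=\{e\}$, surjectivity from the unique $U^+$-parametrization of the big cell, transpose symmetry for the $-$ case) is the standard and correct strategy, the implementation contains a genuine error: you have swapped the characterizations of the two Schubert cells. Since $\B_+=\{u_+.B^-\mid u_+\in U^+\}$ and $u_+$ fixes $B^+$, one has $w(B^+,u_+.B^-)=w(B^+,B^-)=w_0$ for \emph{every} $u_+\in U^+$; thus $\B_+$ is the set of flags opposite to $B^+$ (not to $B^-$), and dually $\B_-$ is the set of flags opposite to $B^-$. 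Consequently the equivalence you isolate as the ``technical heart,'' namely $w(B^+,u_+.B^-)=w_0\iff u_+\in B^-\vw B^-$, is false: the left-hand side always holds and imposes no condition at all, so as written your well-definedness step never uses $u_+\in U^+_*$, and your surjectivity step cannot recover the membership $u_+\in B^-\vw B^-$. The correct condition cut out by the open cell is $w(B^-,u_+.B^-)=w_0$, i.e.\ $u_+.B^-\in\B_-$: writing $u_+=b_-\vw b'_-$ one gets $u_+.B^-=b_-\vw.B^-$, and invariance of the $w$-distance under the simultaneous translation by $b_-^{-1}$ (which fixes the \emph{first} flag $B^-$, not $B^+$) gives $w(B^-,b_-\vw.B^-)=w(B^-,\vw.B^-)=w(B^-,B^+)=w_0$; conversely $w(B^-,u_+.B^-)=w_0$ forces $u_+\in B^-\vw B^-$ by the Bruhat decomposition.

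Your inline computation also breaks down on its own terms: in the adjoint group $\vw^2=1$, so $\vw.B^-=B^+$ and hence $w(B^+,\vw.B^-)=w(B^+,B^+)=e$, not $w_0$; and the move ``$b_-\in B^-$ stabilizes $B^+$ up to the Bruhat stratum'' is not valid---the $w$-distance is invariant only under simultaneous $G$-translation of both flags, and $b_-$ stabilizes $B^-$. Once the sides are corrected as above, the rest of your argument goes through: $u_+.B^-\in\B_+$ is automatic, $u_+.B^-\in\B_-$ holds exactly for $u_+\in U^+_*$, injectivity follows since $u'_+{}^{-1}u_+\in U^+\cap B^-=\{e\}$, surjectivity follows since any $B\in\B_+$ is uniquely $u_+.B^-$ with $u_+\in U^+$ and $B\in\B_-$ then forces $u_+\in U^+_*$, and the transpose anti-involution (which interchanges $U^\pm_*$ via $u\mapsto u^{\mathsf{T}}$ and swaps $\B_\pm$) transports the result to $\alpha_*^-$.
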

Then we have a bijection $\phi':=(\alpha_*^-)^{-1} \circ \alpha_*^+: U_*^+ \to U_*^-$, which satisfies $\phi'(u_+).B^+ = u_+B^-$ for all $u_+ \in U_*^+$. 
Let us consider another map $\phi: U_*^+ \to U_*^-$ defined by $\phi(u_+):= (\phi'^{-1}(u_+^{\mathsf{T}}))^{\mathsf{T}}$ for $u_+ \in U^+$, which satisfies the following property:

\begin{lem}
We have $\phi(u_+)^{-1}.B^+ = u_+^{-1}.B^-$ for all $u_+ \in U_*^+$.
\end{lem}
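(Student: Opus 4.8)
The plan is to relate the map $\phi$ to $\phi'$ through the transpose anti-involution and then transfer the defining property of $\phi'$. Recall that $\phi$ is defined by $\phi(u_+) = (\phi'^{-1}(u_+^{\mathsf{T}}))^{\mathsf{T}}$, and the key input is the established property $\phi'(v_+).B^+ = v_+.B^-$ for all $v_+ \in U_*^+$, together with the fact that $\mathsf{T}$ is an anti-involution of $G$ sending $B^+$ to $B^-$ (since $B^- = (B^+)^{\mathsf{T}}$) and $U^+$ to $U^-$.

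First I would unwind the definition: set $v_- := \phi(u_+) \in U_*^-$, so $v_-^{\mathsf{T}} = \phi'^{-1}(u_+^{\mathsf{T}})$, i.e. $\phi'(v_-^{\mathsf{T}}) = u_+^{\mathsf{T}}$. Note that $v_-^{\mathsf{T}} \in U_*^+$ since $\mathsf{T}$ interchanges $U_*^+$ and $U_*^-$ (it preserves the cell conditions defining the open unipotent cells, because it swaps $B^+ \leftrightarrow B^-$). Applying the defining property of $\phi'$ to the element $v_-^{\mathsf{T}} \in U_*^+$ gives
\[
\phi'(v_-^{\mathsf{T}}).B^+ = v_-^{\mathsf{T}}.B^-,
\]
and since $\phi'(v_-^{\mathsf{T}}) = u_+^{\mathsf{T}}$ this reads $u_+^{\mathsf{T}}.B^+ = v_-^{\mathsf{T}}.B^-$.

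Next I would apply the transpose to this equation of flags. The cleanest way is to observe that for a flag of the form $g.B^+$ one has, at the level of the subgroup $gB^+g^{-1}$, the relation $(gB^+g^{-1})^{\mathsf{T}} = (g^{\mathsf{T}})^{-1}B^-g^{\mathsf{T}} = (g^{\mathsf{T}})^{-1}.B^-$ because $\mathsf{T}$ is an anti-automorphism with $(B^+)^{\mathsf{T}} = B^-$. Thus transposing $u_+^{\mathsf{T}}.B^+ = v_-^{\mathsf{T}}.B^-$ (viewing each side as the corresponding Borel subgroup) yields $(u_+)^{-1}.B^- = (v_-)^{-1}.B^+$, that is $\phi(u_+)^{-1}.B^+ = u_+^{-1}.B^-$, which is exactly the claim. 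I would present this by carefully tracking how $\mathsf{T}$ acts on cosets $g.B^\pm$ via the Borel-subgroup identification $g.B^\pm \leftrightarrow gB^\pm g^{-1}$, so that the anti-homomorphism property is applied unambiguously.

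The main obstacle is bookkeeping the interaction between $\mathsf{T}$ as a group anti-involution and its action on flags, since $\mathsf{T}$ does not act on the left cosets $G/B^+$ in a naive way — one must pass through the identification of flags with their associated Borel subgroups (as set up in \cref{subsec:config}) and use $(gh)^{\mathsf{T}} = h^{\mathsf{T}}g^{\mathsf{T}}$ correctly. I would handle this by recording once, as a short preliminary observation, the formula $(g.B^+)^{\mathsf{T}} = (g^{\mathsf{T}})^{-1}.B^-$ (and symmetrically with $\pm$ swapped), and then the proof reduces to a single substitution. No genericity subtleties arise beyond noting that all elements stay in the open cells $U_*^\pm$ and $\B_*$, which is guaranteed because $\mathsf{T}$ preserves these open conditions.
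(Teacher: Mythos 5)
Your proof is correct and follows essentially the same route as the paper's: both unwind the definition $\phi(u_+)=(\phi'^{-1}(u_+^{\mathsf{T}}))^{\mathsf{T}}$, invoke the defining property $\phi'(v).B^+=v.B^-$, and use that $\mathsf{T}$ is an anti-involution swapping $B^+$ and $B^-$ under the identification of flags with Borel subgroups. The paper merely organizes the same substitutions as a single chain of equalities $\phi(u_+)^{-1}B^+\phi(u_+)=(v_+^{\mathsf{T}})^{-1}B^+v_+^{\mathsf{T}}=(v_+B^-v_+^{-1})^{\mathsf{T}}=(\phi'(v_+)B^+\phi'(v_+)^{-1})^{\mathsf{T}}=u_+^{-1}B^-u_+$ with $v_+=\phi'^{-1}(u_+^{\mathsf{T}})$, which is your argument read in the opposite order.
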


\begin{proof}
Let $v_+:=\phi'^{-1}(u_+^{\mathsf{T}}) \in U_*^+$. Then 
\begin{align*}
\phi(u_+)^{-1}B^+\phi(u_+) 
	&= (v_+^{\mathsf{T}})^{-1}B^+ v_+^{\mathsf{T}} = (v_+B^-v_+^{-1})^{\mathsf{T}} \\
	&= (\phi'(v_+)B^+\phi'(v_+)^{-1})^{\mathsf{T}} = u_+^{-1}B^- u_+.
\end{align*}
\end{proof}
%\begin{dfn}
% We have another map $\gamma: U_*^+ \to H$ given by $\gamma(u_+):=[\vw^{-1} u_+^{\mathsf{T}}]_0=[u_+\vw ]_0$. 
% \end{dfn}
Using these maps, we get the following decomposition of an element of the unipotent cell $U_*^+$. Recall the triangular decomposition $G_0=U^- H U^+$, $g=[g]_-[g]_0[g]_+$. 

\begin{prop}\label{p:phi-decomp}
Let $u_+\in U_*^+$, which can be written as $u_+=u_- h\overline{w_0}^{-1} u'_-$ with $u_-, u'_-\in U^-$ and $h\in H$. Then
\begin{align*}
u_- &= \phi'(u_+), \\
u'_- &= \phi(u_+), \\
h &= [u_+\vw ]_0.
\end{align*}
In other words, we have $u_+ = \phi'(u_+) [u_+\vw ]_0\vw^{-1} \phi(u_+)$ for all $u_+ \in U_*^+$.
\end{prop}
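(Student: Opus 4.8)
The plan is to decompose a generic element $u_+ \in U_*^+$ using the Bruhat-type factorization coming from the genericity condition, and then identify each factor with the maps $\phi'$, $\phi$, and the Cartan part via the characterizing properties established in the two preceding lemmas. First I would observe that $u_+ \in U_*^+ = U^+ \cap B^- \vw B^-$ means precisely that $u_+$ lies in the big cell relative to $B^-$, so that $u_+ \vw$ lies in the open cell $G_0 = U^- H U^+$; equivalently, since $u_+ \in B^-\vw B^-$, we may write $u_+ = u_- h \vw^{-1} u_-'$ for some $u_-, u_-' \in U^-$ and $h \in H$, which is the asserted form. The existence and uniqueness of such a factorization is what makes the three target identities well-posed.

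Next I would pin down each factor by applying the characterizing flag conditions. For $u_-$: acting on the standard flag, $u_+.B^- = u_- h \vw^{-1} u_-'.B^- = u_-.B^-$ (since $h\vw^{-1} u_-' \in B^-$ stabilizes $B^-$... more precisely $\vw^{-1}u_-'.B^- = \vw^{-1}.B^-$ need care), so I would instead compute $u_+.B^-$ directly and compare with the defining relation $\phi'(u_+).B^+ = u_+.B^-$. The cleanest route is: from $u_+ = u_- h \vw^{-1} u_-'$ we get $u_+.B^+ = u_- h \vw^{-1} u_-'.B^+$; using that $u_-' \in U^-$ and $\vw^{-1}$ conjugates appropriately, the rightmost part collapses so that $u_+.B^+ = u_-.B^-$, which by the defining property $\phi'(u_+).B^+ = u_+.B^-$ forces $u_- = \phi'(u_+)$ (using that $\alpha_*^+$, $\alpha_*^-$ are bijections, so the element of $U^-$ sending $B^+$ to a given generic flag is unique). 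For $u_-'$: I would similarly exploit the relation from the second lemma, $\phi(u_+)^{-1}.B^+ = u_+^{-1}.B^-$, by computing $u_+^{-1} = (u_-')^{-1}\vw h^{-1} u_-^{-1}$ and reading off the flag $u_+^{-1}.B^-$ to match $\phi(u_+)^{-1}.B^+$, yielding $u_-' = \phi(u_+)$.

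For the Cartan factor $h$, I would take the known factorization $u_+\vw = u_- h u_-''$ with $u_-'' := \vw^{-1}\vw \cdot (\text{conjugate of } u_-')$ lying in $U^+$ after reorganizing $\vw^{-1}u_-'\vw \cdot \vw$; concretely $u_+ \vw = u_- h \vw^{-1} u_-' \vw$, and since $\vw^{-1} U^- \vw$ need not be $U^+$ in general I would instead use that the middle term is determined by the triangular (Gaussian) decomposition: writing $u_+\vw \in G_0$, its $H$-component is $[u_+\vw]_0$, and since $u_- \in U^-$ and the remaining right factor lies in $U^+$ modulo $H$, uniqueness of the $U^-HU^+$ decomposition gives $[u_+\vw]_0 = h$. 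I expect the main obstacle to be the bookkeeping of how $\vw^{\pm 1}$ conjugates unipotent subgroups and the verification that the right-hand factor of $u_+\vw$ indeed lands in $U^+$ so that the Gaussian decomposition applies cleanly; this requires care with the precise form $x_s(t) = y_s(t^{-1})\alpha_s^\vee(t)\overline{r}_s^{-1} y_s(t^{-1})$ and the commutation relations, rather than any deep new idea. The final displayed identity $u_+ = \phi'(u_+)[u_+\vw]_0 \vw^{-1}\phi(u_+)$ then follows by substituting the three identifications back into $u_+ = u_- h \vw^{-1} u_-'$.
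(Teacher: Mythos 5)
Your overall strategy (Bruhat normal form for $w_0$, flag characterizations of $\phi'$ and $\phi$, uniqueness of the Gaussian decomposition) is viable, but your verification of $u_-=\phi'(u_+)$ fails as written. The identity ``$u_+.B^+=u_-.B^-$'' that you call the cleanest route is false on its face: since $u_+\in U^+\subset B^+$ and $u_-\in U^-\subset B^-$, it would assert $B^+=B^-$. Your first attempt is also broken at the flagged spot, because $h\vw^{-1}u'_-$ does \emph{not} lie in $B^-$ (we have $\vw^{-1}\notin B^-$), so the chain does not end at $u_-.B^-$. The correct computation keeps $B^-$ throughout and lands on $B^+$: $u'_-\in B^-$ stabilizes $B^-$, conjugation by $\vw^{-1}$ sends $B^-$ to $B^+$ (as $w_0$ maps negative roots to positive roots), and $h$ stabilizes $B^+$, so $u_+.B^- = u_-h\vw^{-1}u'_-.B^- = u_-.B^+$. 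Comparing with the defining relation $\phi'(u_+).B^+=u_+.B^-$ and using that $v\mapsto v.B^+$ is injective on $U^-$ (because $U^-\cap B^+=\{e\}$, or equivalently the bijectivity of $\alpha_*^-$) gives $u_-=\phi'(u_+)$. This is precisely the paper's first step, recorded there as $u_+B^-u_+^{-1}=u_- h\vw^{-1}B^-\vw h^{-1}u_-^{-1}=u_-B^+u_-^{-1}$; the fix is one line, but the step you wrote down would not survive scrutiny.

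The other two identifications are correct and take a genuinely different, and in fact more direct, route than the paper. For $u'_-$, you apply the lemma $\phi(u_+)^{-1}.B^+=u_+^{-1}.B^-$ directly: from $u_+^{-1}=(u'_-)^{-1}\vw h^{-1}u_-^{-1}$ one gets $u_+^{-1}.B^-=(u'_-)^{-1}.B^+$, and injectivity yields $u'_-=\phi(u_+)$; the paper instead transposes the factorization to $u_+^{\mathsf{T}}=(u'_-)^{\mathsf{T}}\vw h\, u_-^{\mathsf{T}}$, applies the transposed characterization of $\phi'$, and unwinds via $\phi(u_+)=({\phi'}^{-1}(u_+^{\mathsf{T}}))^{\mathsf{T}}$. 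For $h$, your hedge that ``$\vw^{-1}U^-\vw$ need not be $U^+$ in general'' is unfounded: conjugation by $\vw$ does interchange $U^+$ and $U^-$, exactly because $w_0$ permutes the root subgroups according to $U_\alpha\mapsto U_{w_0(\alpha)}$ (the paper uses the same fact, in transposed form, when it asserts $\vw^{-1}(u'_-)^{\mathsf{T}}\vw\in U^-$). Hence $u_+\vw = u_-\,h\,(\vw^{-1}u'_-\vw)\in U^-HU^+$, and uniqueness of the triangular decomposition gives $h=[u_+\vw]_0$ in one step, avoiding the paper's detour $h=[\vw^{-1}u_+^{\mathsf{T}}]_0=[u_+\vw]_0$. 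With the first step repaired and the garbled bookkeeping around the conjugate of $u'_-$ cleaned up, your argument is complete and slightly more streamlined than the paper's transpose-based proof.
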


\begin{proof}
The first equality follows from 
\begin{align*}
    u_+B^-u_+^{-1} = u_- h\overline{w_0}^{-1} B^- \overline{w_0} h^{-1}u_-^{-1} =u_- B^+ u_-^{-1}.
\end{align*} 
By the same argument, if we write $v_- \in U_*^-$ as $v_- = v_+\vw b'_+$ with $v_+\in U^+$ and $b'_+\in B^+$, then $v_+ = {\phi'}^{-1}(v_-)$. Using this for $u_+^{\mathsf{T}} = (u'_-)^{\mathsf{T}} \vw h u_-^{\mathsf{T}}$, we get $(u'_-)^{\mathsf{T}} = {\phi'}^{-1}(u_+^{\mathsf{T}})$. Hence $u'_- = ({\phi'}^{-1}(u_+^{\mathsf{T}}))^{\mathsf{T}} = \phi(u_+)$.

For the third equality, note that $\vw^{-1} u_+^{\mathsf{T}} = \vw^{-1} (u'_-)^{\mathsf{T}}\vw h u_-^{\mathsf{T}} \in U^-HU^+$. Thus we get $h = [\vw^{-1} u_+^{\mathsf{T}}]_0 = [u_+\vw]_0$.
\end{proof}

The maps $\phi$ and $\phi'$ are related to $\eta_{w_0}$ as follows:

\begin{lem}\label{l:phi-tw}
Let $\eta_{w_0}: U_*^+ \to U_*^+$, $u_+ \mapsto [\overline{w_0}u_+^{\mathsf{T}}]_+$ be the twist automorphism. Then we have $\phi(u_+)=(\eta_{w_0}^{-1}(u_+))^{\mathsf{T}}$ and $\phi'(u_+)=(\eta_{w_0}(u_+))^{\mathsf{T}}$.
\end{lem}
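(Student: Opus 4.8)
The plan is to first reduce the identity for $\phi$ to the one for $\phi'$, and then prove the latter by a direct computation with the transpose anti-involution. By the defining formula $\phi(u_+)=(\phi'^{-1}(u_+^{\mathsf{T}}))^{\mathsf{T}}$, once I know $\phi'(w)=(\eta_{w_0}(w))^{\mathsf{T}}$ for all $w\in U_*^+$, I can invert: if $v_-=\phi'(w)=(\eta_{w_0}(w))^{\mathsf{T}}$ then $w=\eta_{w_0}^{-1}(v_-^{\mathsf{T}})$, so $\phi'^{-1}(v_-)=\eta_{w_0}^{-1}(v_-^{\mathsf{T}})$. Applying this to $v_-=u_+^{\mathsf{T}}\in U_*^-$ and using that $\mathsf{T}$ is an involution yields $\phi(u_+)=(\eta_{w_0}^{-1}(u_+))^{\mathsf{T}}$. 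Thus the whole lemma reduces to establishing $\phi'(u_+)=(\eta_{w_0}(u_+))^{\mathsf{T}}$.

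A preliminary computation I would record is $\vw^{\mathsf{T}}=\vw^{-1}$. Since the restriction of $\mathsf{T}$ to the image of each $\varphi_s\colon SL_2\to G$ is the ordinary matrix transpose (it swaps $x_s\leftrightarrow y_s$ and fixes $H$), and $\left(\begin{smallmatrix}0&-1\\1&0\end{smallmatrix}\right)^{T}=\left(\begin{smallmatrix}0&-1\\1&0\end{smallmatrix}\right)^{-1}$, we get $\overline{r}_s^{\mathsf{T}}=\overline{r}_s^{-1}$ for each $s\in S$. Because $\mathsf{T}$ is an anti-involution, writing $\vw=\overline{r}_{s_1}\cdots\overline{r}_{s_\ell}$ for a reduced word gives $\vw^{\mathsf{T}}=\overline{r}_{s_\ell}^{-1}\cdots\overline{r}_{s_1}^{-1}=\vw^{-1}$.

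For the main identity, set $v:=\eta_{w_0}(u_+)=[\vw\,u_+^{\mathsf{T}}]_+$, which is well-defined since $\eta_{w_0}$ is an automorphism of $U_*^+$ (recalled at the start of this appendix). Writing the triangular decomposition $\vw\,u_+^{\mathsf{T}}=n_-\,t\,v$ with $n_-\in U^-$, $t\in H$ and $v\in U^+$, I would solve $v=t^{-1}n_-^{-1}\vw\,u_+^{\mathsf{T}}$ and take transposes:
\begin{align*}
v^{\mathsf{T}}=u_+\,\vw^{\mathsf{T}}\,(n_-^{\mathsf{T}})^{-1}t^{-1}=u_+\,\vw^{-1}\,(n_-^{\mathsf{T}})^{-1}t^{-1},
\end{align*}
using $\vw^{\mathsf{T}}=\vw^{-1}$ and $t^{\mathsf{T}}=t$. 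Since $n_-^{\mathsf{T}}\in U^+$, the factor $(n_-^{\mathsf{T}})^{-1}t^{-1}$ lies in $B^+$, so $v^{\mathsf{T}}.B^+=u_+\vw^{-1}.B^+=u_+.B^-$, where the last step uses $\vw^{-1}.B^+=B^-$ (as $w_0$ is an involution interchanging $B^+$ and $B^-$). Finally $v\in U_*^+$ forces $v^{\mathsf{T}}\in U_*^-$, because $\mathsf{T}$ carries $U_*^+=U^+\cap B^-\vw B^-$ onto $U^-\cap B^+\vw B^+=U_*^-$ (again using $\vw^{\mathsf{T}}=\vw^{-1}$ together with $w_0^{-1}=w_0$). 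As $v^{\mathsf{T}}$ thus lies in $U_*^-$ and satisfies $v^{\mathsf{T}}.B^+=u_+.B^-$, it coincides with $\phi'(u_+)$ by the characterizing property of $\phi'$, giving $\phi'(u_+)=(\eta_{w_0}(u_+))^{\mathsf{T}}$.

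The computation is essentially mechanical; the only delicate points are the order-reversal under the anti-involution $\mathsf{T}$ and the identity $\vw^{\mathsf{T}}=\vw^{-1}$, which together with the standard fact $\vw^{-1}.B^+=B^-$ drive the whole argument. I do not expect a genuine obstacle beyond keeping careful track of these.
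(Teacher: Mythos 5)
Your proof is correct, and it takes a genuinely different (though parallel) route from the paper's. The paper proves the $\phi$-identity first, in two lines, by leaning on \cref{p:phi-decomp}: writing $u_+=b_-\vw u'_-$ with $b_-\in B^-$, $u'_-\in U^-$, that proposition identifies $u'_-=\phi(u_+)$, whence $\eta_{w_0}(\phi(u_+)^{\mathsf{T}})=[\vw u'_-]_+=[b_-^{-1}u_+]_+=u_+$; the $\phi'$-identity is then deduced formally, which is exactly your reduction step run in the opposite direction. You instead bypass \cref{p:phi-decomp} altogether: you transpose the triangular decomposition $\vw u_+^{\mathsf{T}}=n_-\,t\,v$ that defines $v=\eta_{w_0}(u_+)$, use $\vw^{\mathsf{T}}=\vw^{-1}$ (which you rightly take the trouble to verify via $\overline{r}_s^{\mathsf{T}}=\overline{r}_s^{-1}$), and check that $v^{\mathsf{T}}\in U_*^-$ satisfies the characterizing flag condition $v^{\mathsf{T}}.B^+=u_+.B^-$ of $\phi'$; every step checks out. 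What the paper's route buys is brevity, at the cost of quoting the factorization proposition; what yours buys is self-containedness, since it needs only the definitions of $\phi'$ (via the bijections $\alpha_*^\pm$) and of $\eta_{w_0}$, so in effect it reproves the relevant fragment of \cref{p:phi-decomp} on the fly. Two minor streamlinings: for the final identification you do not strictly need the membership $v^{\mathsf{T}}\in U_*^-$, because once $v^{\mathsf{T}}\in U^-$ (immediate from $v\in U^+$) and the flag equality hold, $\phi'(u_+)^{-1}v^{\mathsf{T}}\in U^-\cap N_G(B^+)=U^-\cap B^+=\{e\}$ already forces $v^{\mathsf{T}}=\phi'(u_+)$; and in the adjoint group one even has $\vw^2=s_G=1$ (as remarked in Section 2 of the paper), so $\vw^{\mathsf{T}}=\vw^{-1}=\vw$, although your general argument has the advantage of working verbatim in $\widetilde{G}$ as well.
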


\begin{proof}
Let us write $u_+=b_- \overline{w_0} u'_-$ with $b_-\in B^-$ and $u'_-\in U^-$. Then 
\[
\eta_{w_0}(\phi(u_+)^{\mathsf{T}}) = \eta_{w_0}((u'_-)^{\mathsf{T}}) = [\overline{w_0}u'_-]_+=[b_-^{-1}u_+]_+ = u_+.
\]
The second equality immediately follows from the first one. 
\end{proof}
% The following is for possible use in future:
% \begin{lem}
% We have $(\phi(u_+)^{\mathsf{T}})^\ast = [u_+\overline{w_0}]_+^{-1} =: \delta(u_+)$.
% \end{lem}

% \begin{proof}
% Note that the inverse of the twist automorphism is given by 
% \[
% \eta_{w_0}^{-1}(u_+) = \overline{w_0} [u_+\overline{w_0}]_+^{\mathsf{T}} \overline{w_0}^{-1} = ([u_+\overline{w_0}]_+^{-1})^\ast = \delta(u_+)^\ast.
% \] 
% Combining it with \cref{l:phi-tw}, we get 
% $(\phi(u_+)^{\mathsf{T}})^\ast = (\eta_{w_0}^{-1}(u_+))^\ast = \delta(u_+)$.
% \end{proof}
%The following $H$-equivariance is also useful in the text:

%\begin{lem}\label{l:tw-Cartan}
%We have $\eta_{w_0}(\Ad_h(u_+))=\Ad_{h^{-1}}(\eta_{w_0}(u_+))$ for all $u_+ \in U^+_\ast$ and $h \in H$. 
%\end{lem}
%\begin{proof}
%\begin{align*}
%   \eta_{w_0}(\Ad_h(u_+))&=[\vw(\Ad_h(u_+))^{\mathsf{T}}]_+ =[\vw\Ad_{h^{-1}}(u_+^{\mathsf{T}})]_+\\
%   &=[\Ad_{h^{-1}}(hh^{\ast}\vw u_+^{\mathsf{T}})]_+=\Ad_{h^{-1}}([\vw u_+^{\mathsf{T}}]_+)=\Ad_{h^{-1}}(\eta_{w_0}(u_+)).
%\end{align*}
%\end{proof}
The $\ast$-involution $\ast\colon G\to G$ (\cref{l:Dynkininv}) restricts to an involution $\ast\colon U^+_{\ast}\to U^+_{\ast}$. 
\begin{lem}\label{l:tw-power}
	Let $\beta\in X^*(H)$. For $F\in \cO(U^+_*)_{\beta}$, we have 
	\[
	(\eta_{w_0}^{\ast})^3(F)=\Delta_{w_0, -\beta}^+\cdot (F\circ \ast).
	\] 
\end{lem}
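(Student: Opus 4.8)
The plan is to establish the identity $(\eta_{w_0}^\ast)^3(F) = \Delta_{w_0,-\beta}^+ \cdot (F\circ \ast)$ by verifying it on the level of functions, evaluating both sides at an arbitrary $u_+ \in U_*^+$. The key observation is that the cube of the twist automorphism $\eta_{w_0}$ should coincide with the $\ast$-involution up to a correction character, so the first step is to understand $\eta_{w_0}^3$ geometrically. Using \cref{l:phi-tw}, I would rewrite $\eta_{w_0}$ and its inverse in terms of the maps $\phi,\phi'$, namely $\phi'(u_+) = (\eta_{w_0}(u_+))^{\mathsf{T}}$ and $\phi(u_+) = (\eta_{w_0}^{-1}(u_+))^{\mathsf{T}}$. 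The cleanest route is to compute the composite $\eta_{w_0}^3$ directly: I expect that $\eta_{w_0}^3(u_+)$ and $\ast(u_+) = u_+^\ast$ differ only by conjugation/rescaling by the Cartan factor $[u_+\vw]_0$ appearing in the decomposition of \cref{p:phi-decomp}.

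First I would decompose $u_+ = \phi'(u_+)\,[u_+\vw]_0\,\vw^{-1}\,\phi(u_+)$ using \cref{p:phi-decomp}, which expresses $u_+$ in terms of its ``twist data.'' The strategy is to iterate the twist: each application of $\eta_{w_0}^\ast$ on a homogeneous $F \in \cO(U_*^+)_\beta$ transforms the grading degree in a controlled way, and after three iterations the accumulated Cartan correction assembles into precisely the generalized minor expression $\Delta_{w_0,-\beta}^+$ defined in \eqref{eq:Delta}. Concretely, I would track how the element $h = [u_+\vw]_0 \in H$ transforms under successive twists, using the $\ast$-involution's compatibility with the transpose $\mathsf{T}$ from \cref{l:Dynkininv} and the relation $\vw^{-1}g^{-1}\vw = g^\ast \circ (\text{transpose})$ encoded in \cref{l:generator_involution}. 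The homogeneity degree $\beta$ enters because $F \circ \Ad_h|_{U_*^+} = h^\beta F$, so the rescaling by the Cartan part contributes exactly a factor $h^{-\beta}$, which I would identify with $\Delta_{w_0,-\beta}^+(u_+)$ via the characterization $[u_+\vw]_0^\xi = \Delta_{w_0,\xi}^+(u_+)$ established inside the proof of \cref{l:t=1_cyclic}.

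The main obstacle will be pinning down the precise Cartan bookkeeping: showing that $\eta_{w_0}^3$ equals $\ast$ composed with conjugation by the correct power of $[u_+\vw]_0$, rather than some other $H$-valued cocycle. This requires carefully composing the three transpose-and-project operations $u_+ \mapsto [\vw u_+^{\mathsf{T}}]_+$ and verifying that the ``unipotent parts'' close up to give $u_+^\ast$ while the ``Cartan parts'' accumulate coherently. I anticipate that the cleanest verification uses the fact that $\eta_{w_0}^\ast$ preserves the grading up to the shift $\beta \mapsto -\beta$ (since the twist intertwines the $H$-action with its $w_0$-twisted version via $h^\ast = w_0(h)^{-1}$-type relations), so that after three steps the net effect on the degree is $\beta \mapsto -\beta$, matching the claim $F \mapsto F \circ \ast$ together with the minor prefactor. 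Once the geometric identity $\eta_{w_0}^3(u_+) = \Ad_{[u_+\vw]_0^{?}}(u_+^\ast)$ is established with the exact exponent, pulling back $F$ and using homogeneity yields the stated formula immediately.
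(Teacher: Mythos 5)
Your overall strategy is the right one, but as written the proof has a genuine gap: everything rests on the geometric identity $\eta_{w_0}^3(u_+)=\Ad_{[u_+\vw]_0^{?}}(u_+^{\ast})$, and you never determine the exponent or prove the identity --- you explicitly defer the ``Cartan bookkeeping,'' which is exactly where the entire content of the lemma lives. The degree count you offer ($\beta\mapsto-\beta$ after three twists) is only a consistency check: since $\Delta^+_{w_0,\xi}$ is homogeneous of degree $\xi+\xi^{\ast}$, it constrains the prefactor to have the right degree but cannot distinguish the correct $H$-valued cocycle from any other candidate, so no amount of grading bookkeeping alone can finish the argument. For comparison, the paper does not carry out this computation either: its proof of \cref{l:tw-power} is a citation to the twist periodicity theorem of Kimura--Oya \cite[Theorem 8.1]{Kimura-Oya} (a quantum statement, specialized at $q=1$), so your blind route, if completed, would actually be more self-contained than the paper's.

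The good news is that your route does close, with exponent $-1$, using precisely the ingredients you name. Transposing the decomposition $u_+=\phi'(u_+)\,[u_+\vw]_0\,\vw^{-1}\phi(u_+)$ of \cref{p:phi-decomp} and substituting $\phi'(u_+)=\eta_{w_0}(u_+)^{\mathsf{T}}$, $\phi(u_+)=\eta_{w_0}^{-1}(u_+)^{\mathsf{T}}$ from \cref{l:phi-tw} gives the key relation
\[
u_+^{\mathsf{T}}=\eta_{w_0}^{-1}(u_+)\cdot\vw\,[u_+\vw]_0\cdot\eta_{w_0}(u_+).
\]
Applying this with $u_+$ replaced by $\eta_{w_0}(u_+)$ and solving yields $\eta_{w_0}^{2}(u_+)=[\eta_{w_0}(u_+)\vw]_0^{-1}\,\bigl(\vw^{-1}\phi(u_+)^{-1}\vw\bigr)\,[u_+\vw]_0^{-1}$; since $\vw^{-1}g^{-1}\vw=(g^{\mathsf{T}})^{\ast}$ in the adjoint group (using $\vw^{2}=1$) and $\phi(u_+)^{\mathsf{T}}=\eta_{w_0}^{-1}(u_+)$, the middle factor is $(\eta_{w_0}^{-1}(u_+))^{\ast}\in U^+$, and comparing Cartan parts in $B^+=H\ltimes U^+$ forces the cocycle relation $[\eta_{w_0}(u_+)\vw]_0=[u_+\vw]_0^{-1}$ together with $\eta_{w_0}^2(u_+)=\Ad_{[u_+\vw]_0}\bigl((\eta_{w_0}^{-1}(u_+))^{\ast}\bigr)$. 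Replacing $u_+$ by $\eta_{w_0}(u_+)$ once more gives $\eta_{w_0}^3(u_+)=\Ad_{[u_+\vw]_0^{-1}}(u_+^{\ast})$, and then homogeneity plus the identity $[u_+\vw]_0^{\xi}=\Delta^+_{w_0,\xi}(u_+)$ (which you correctly located inside the proof of \cref{l:t=1_cyclic}) immediately gives $(\eta_{w_0}^{\ast})^3(F)=\Delta^+_{w_0,-\beta}\cdot(F\circ\ast)$. Until a computation of this kind is actually carried out, however, your submission is a plan rather than a proof.
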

\begin{proof}
See the proof of \cite[Theorem 8.1]{Kimura-Oya}. 
\end{proof}
%\begin{lem}[{\cite[Lemma 5.38]{IIO19}}]\label{l:shift-twist}
%We have $\mathcal{S}_3 \circ \alpha_3 = \alpha_3 \circ \widetilde{\delta}: H \times H \times U_*^- \to \Conf_3 \A_G$. Here 
%\[
%\widetilde{\delta}(h_1,h_2,u_-) := (w_0(h_1)^{-1}h_2 \gamma(u_-), w_0(h_1)^{-1}, w_0(h_1)^{-1}\delta(u_-) w_0(h_1)).
%\]
%\end{lem}

%---------------------New Section--------------------------------

\section{Cluster varieties, weighted quivers and their amalgamation}\label{sec:quivers}
Here we recall weighted quivers and their mutations, and the \emph{amalgamation} procedure which produces a new weighted quiver from a given one by \lq\lq gluing" some of its vertices. This procedure naturally fits into the gluing morphism (\cref{t:GS decomposition}) via Goncharov--Shen coordinates. We also recall the construction of weighted quivers from reduced words, following \cite{FG06} and \cite{GS19}.

\subsection{Weighted quivers and the cluster Poisson varieties}
We use the conventions for weighted quivers in \cite{IIO19}. 
Recall that a \emph{weighted quiver} $Q=(I,I_0,\sigma,d)$ is defined by the following data: 
\begin{itemize}
    \item $I_0 \subset I$ are finite sets. 
    \item $\sigma=(\sigma_{ij})_{i,j \in I}$ is a  skew-symmetric $\Z/2$-valued matrix such that $\sigma_{ij} \in \Z$ unless $(i,j) \in I_0 \times I_0$.
    \item $d=(d_i)_{i \in I} \in \Z^I_{>0}$ is a tuple of positive integers.
\end{itemize}
Diagrammatically, $I$ is the set of vertices of the quiver, $d$ is the tuple of weights assigned to vertices, and the data of arrows are encoded in the matrix $\sigma$ as 
\begin{align*}
    \sigma_{ij}:= \#\{\text{arrows from $i$ to $j$}\}- \#\{\text{arrows from $j$ to $i$}\}.
\end{align*}
Here we have \lq\lq half " arrows when $\sigma_{ij} \in \Z/2$ (shown by dashed arrows in figures). The quiver has no loops nor 2-cycles by definition. The subset $I_0$ is called the frozen set, and mutations will be allowed only at the vertices in the complement $I_\uf:= I \setminus I_0$. 
The \emph{ciral dual} of $Q$ is defined by $Q^\mathrm{op}:=(I,I_0,-\sigma,d)$. 

We define the \emph{exchange matrix} $\ve=(\ve_{ij})_{i,j \in I}$ of $Q$ to be $\ve_{ij}:= d_i \sigma_{ij}\gcd(d_i,d_j)^{-1}$. Since we can reconstruct the skew-symmetric matrix $\sigma$ from the pair $(\ve,d)$, we sometimes write $Q=(I,I_0,\ve,d)$. 
The following is a reformulation of the \emph{matrix mutation} (see \emph{e.g.}, ~\cite[(12)]{FG09}) in terms of the weighted quiver: 
\begin{dfn}
For $k \in I_\uf$, let $Q'=(I,I_0,\sigma',d)$ be the weighted quiver given by
\begin{align*}
    \sigma_{ij}'= \begin{cases}
    -\sigma_{ij} & \text{$i=k$ or $j=k$}, \\
    \displaystyle{\sigma_{ij} + \frac{|\sigma_{ik}| \sigma_{kj} + \sigma_{ik} |\sigma_{kj}|}{2} 
      \,\alpha_{ij}^k } & \text{otherwise},
  \end{cases} 
\end{align*}
where $\alpha_{ij}^k = d_k \gcd(d_i, d_j)\gcd(d_k, d_i)^{-1}\gcd(d_k,d_j)^{-1}$. The operation $\mu_k: Q \mapsto Q'$ is called the mutation at the vertex $k$. 
Then the exchange matrix $\ve'$ of $Q'$ is given by the matrix mutation.
\end{dfn}
Let $\cF$ be a field isomorphic to the field of rational functions on $|I|$ independent variables with coefficients in $\C$.  
An \emph{($\X$-)seed} is a pair $(Q,\mathbf{X})$, where $\mathbf{X}=(X_i)_{i \in I}$ is a tuple of algebraically independent elements of $\cF$ and $Q$ is a weighted quiver. For $k \in I \setminus I_0$, let $(Q',\mathbf{X}')$ be another seed where $Q'=\mu_k(Q)$ is obtained from $Q$ by the mutation at $k$, and $\mathbf{X}'=(X'_i)_{i \in I}$ is given by the \emph{cluster Poisson transformation} (or the \emph{cluster $\X$-transformation}):
\begin{align}\label{eq:cluster X-transf}
    X'_i= \begin{cases}
    X_k^{-1} & i=k, \\
    X_i (1+X_k^{-\mathrm{sgn}(\ve_{ik})})^{-\ve_{ik}} & i \neq k.
    \end{cases}
\end{align}
The operation $\mu_k: (Q,\mathbf{X}) \mapsto (Q',\mathbf{X}')$ is called the \emph{seed mutation} at $k$. It is not hard to see that seed mutations are involutive: $\mu_k\mu_k=\mathrm{id}$. We say that two seeds are \emph{mutation-equivalent} if they are connected by a sequence of seed mutations and seed permutations (bijections of $I$ preserving $I_0$ setwise).

Let $\bT_{I_\uf}$ be the regular $|I_\uf|$-valent tree, each of whose edge is labeled by an index in $I_\uf$ so that two edges sharing a vertex have different labels. An assignment $\sfS=(\sfS^{(t)})_{t \in \bT_{I_\uf}}$ of a seed $\sfS^{(t)}=(Q^{(t)},\mathbf{X}^{(t)})$ to each vertex $t$ of $\bT_{I_\uf}$ is called a \emph{seed pattern} if for two vertices $t$, $t'$ sharing an edge labeled by $k \in I_\uf$, the corresponding seeds are related as $\sfS^{(t')}=\mu_k\sfS^{(t)}$. 

The \emph{cluster Poisson variety} $\X_{\sfS}=\bigcup_{t \in \bT_{I_\uf}} \X_{(t)}$ is defined by patching the coordinate tori $\X_{(t)}=\bG_m^I$ corresponding to seeds $\sfS^{(t)}$ by the rational transformations 
\begin{align*}
    \mu_k^*: \cO(\X_{(t')})= \C[X^{(t')}_i \mid i \in I] \to \cO(\X_{(t)})= \C[X^{(t)}_i \mid i \in I]
\end{align*}
given by the formula \eqref{eq:cluster X-transf} whenever $t$ and $t'$ shares an edge labeled by $k \in I_\uf$. The cluster Poisson variety has a natural Poisson structure given by $\{X^{(t)}_i,X^{(t)}_j\}:=\varepsilon_{ij}X^{(t)}_iX^{(t)}_j$. 

The ring $\cO(\X_{\sfS}) = \bigcap_{t \in \bT_{I_\uf}} \cO(\X_{(t)})$ of regular functions is called the \emph{cluster Poisson algebra}, whose elements are called \emph{universally Laurent polynomials}. An element of the sub-semifield $\mathbb{L}_+(\X_{\sfS}):= \bigcap_{t \in \bT_{I_\uf}} \Z_{\geq 0}[X^{(t)}_i \mid i \in I] \subset \cO(\X_{\sfS})$ is called a \emph{universally positive Laurent polynomial}.

\begin{dfn}\label{d:cluster atlas}
A \emph{cluster Poisson atlas} on a variety (or  scheme, stack) $V$ over $\C$ is a collection $(\sfS_\alpha)_{\alpha \in A}$ of seeds (here $A$ is an index set) in the field $\Rat(V)$ of rational functions on $V$ such that
\begin{itemize}
    \item each seed $\sfS_\alpha=(Q_\alpha,\mathbf{X}_\alpha)$ gives rise to a birational isomorphism $\mathbf{X}_\alpha: V \dashrightarrow \bG_m^I$ which admits an open embedding $\psi_\alpha: \bG_m^I \hookrightarrow V$ as a birational inverse;
    \item the seeds $\sfS_\alpha$ for $\alpha \in A$ are mutation-equivalent to each other.
\end{itemize}
\end{dfn}
From the second condition, the collection $(\sfS_\alpha)_{\alpha \in A}$ can be extended to a unique seed pattern $\sfS=(\sfS^{(t)})_{t \in \bT_{I_\uf}}$. In particular we get a birational isomorphism $V \cong \X_{\sfS}$. We call the seed pattern $\sfS$ a \emph{cluster Poisson structure} on $V$, as it is a maximal cluster Poisson atlas. Note that the conditions do not imply an existence of an open embedding $\X_{\sfS} \hookrightarrow V$ when $(\sfS_\alpha)_{\alpha\in A} \subsetneq \sfS$.

A rational function on $V$ can be regarded as a rational function on $\X_{\sfS}$, and we can ask whether it is a universally (positive) Laurent polynomial.

\subsection{Amalgamations}\label{subsec:amalgamation}
We recall the \emph{amalgamation} procedure of weighted quivers, following \cite{FG06}.
\begin{dfn}\label{d:amalgamation}
Let $Q=(I,I_0,\sigma,d)$, $Q'=(I',I'_0,\sigma',d')$ be two weighted quivers. Fix two subsets $F \subset I_0$, $F' \subset I'_0$ and a bijection $\phi:F \to F'$ such that $d'(\phi(i))=d(i)$ for all $i \in I$, which we call the \emph{gluing data}. Then the \emph{amalgamation} of $Q$ and $Q'$ with respect to the gluing data $(F,F',\phi)$ produces the weighted quiver $Q \ast_\phi Q'=(J,J_0,\tau,c)$ defined as follows
\begin{itemize}
    \item $J:=I \cup_\phi I'$, $J_0 \subset I_0 \cup_\phi I'_0$. 
    \item $c(i):= 
    \begin{cases}
    d(i) & \text{ if $i \in I$}, \\
    d'(i) & \text{ if $i \in I' \setminus F'$}. 
    \end{cases}$
    \item The entry $\tau_{ij}$ is given by: 
    \begin{tabular}{c|ccc}
         & $j \in I \setminus F$ & $j \in I' \setminus F'$ & $j \in F$ \\ \hline
        $i \in I \setminus F$ & $\sigma_{ij}$ & $0$ & $\sigma_{ij}$ \\
        $i \in I' \setminus F'$ & $0$ & $\sigma'_{ij}$ & $\sigma'_{ij}$ \\
        $i \in F$ & $\sigma_{ij}$ & $\sigma'_{ij}$ & $\sigma_{ij}+\sigma'_{ij}$
    \end{tabular}
\end{itemize}
Here we can choose any subset $J_0$ of $I_0 \cup_\phi I'_0$
such that $\sigma_{ij}$ is integral unless $(i,j) \in J_0 \times J_0$. 
In this paper, we consider the minimal $J_0$ given by
$$
  J_0 = I_0 \cup_\phi I'_0 \setminus J_1, \quad
  J_1 := \{ i \in I_0 \cup_\phi I'_0 \mid 
  \sigma_{ij} \in \Z \text{ for all $j \in J$} \}.
$$
\end{dfn}
The amalgamation procedure can be upgraded to that for two seeds. Let $(Q,\mathbf{X})$ and $(Q',\mathbf{X}')$ be two seeds, $(F,F',\phi)$ a gluing data as above. Then we define a new seed $(Q \ast_\phi Q',\mathbf{Y})$, where the weighted quiver $Q \cup_\phi Q'$ is given as above and the variables $\mathbf{Y}=(Y_i)_{i \in J}$ is defined by
\begin{align*}
    Y_i:= \begin{cases}
    X_i & \mbox{if $i \in I \setminus F$} \\
    X'_i & \mbox{if $i \in I' \setminus F'$} \\
    X_i \cdot X'_{\phi(i)} & \mbox{if $i \in F$}.
    \end{cases}
\end{align*}
Then it is not hard to check that the amalgamation of seeds commutes with the mutation at any vertex $k \in (I \setminus I_0) \sqcup (I' \setminus I'_0)$. Thus for two seed patterns $\sfS$ and $\sfS'$ and a gluing data as above, we have a dominant morphism
\begin{align*}
    \alpha_\phi: \X_{\sfS} \times \X_{\sfS'} \to \X_{\sfS \ast_\phi \sfS'}.
\end{align*}
Here the seed pattern $\sfS \ast_\phi \sfS'$ is obtained by the amalgamation of the seeds $\sfS^{(t)}$ and ${\sfS'}^{(t)}$ for $t \in \bT_{I_\uf}$. 

\subsection{Weighted quivers from reduced words}
Let us fix a finite dimensional complex semisimple Lie algebra $\mathfrak{g}$. Let $C(\mathfrak{g})=(C_{st})_{s,t \in S}$ be the associated Cartan matrix. For $s \in S$, we define a weighted quiver $\bJ^+(s)=(J(s),J_0(s),\sigma(s),d(s))$ as follows.
\begin{itemize}
    \item $J(s)=J_0(s):=(S \setminus \{s\})\cup\{s^L,s^R\}$, where $s^L,s^R$ are new elements.
    \item The skew-symmetric matrix $\sigma(s)=(\sigma_{tu})_{t,u \in J(s)}$ is given by
    \begin{align*}
            &\sigma_{s^R,s^L}=1, \\
            &\sigma_{s^L,u}=\sigma_{u,s^R}= \begin{cases} 
            1/2 & \text{if $u \neq s$ and $C_{su} \neq 0$}, \\
            0 & \text{if $u \neq s$ and $C_{su}=0$}.
            \end{cases}
    \end{align*}
    Note that other entries are determined by the skew-symmetricity. 
    \item $d(s)$ is given by $d(s)_{s^{L,R}}:=d_s$, $d(s)_{t}:=d_t$ for $t \neq s$.
\end{itemize}
Let $\bJ^-(s):=\bJ^+(s)^{\mathrm{op}}$. We call $\bJ^\pm(s)$ the \emph{elementary quivers} associated with $\mathfrak{g}$. 

For each elementary quiver $\bJ^\epsilon(s)$ with $s \in S$ and $\epsilon \in \{+,-\}$, we define a function $\delta(s): J(s) \to S$ on the set of vertices by $\delta(s)_{s^{L,R}}:=s$ and $\delta(s)_t:=t$ for $t \in S \setminus \{s^L,s^R\}$. We call $\delta$ the \emph{Dynkin labeling} of vertices of $\bJ^\epsilon(s)$.

\begin{ex}
Here are some examples of the elementary quivers.
\begin{enumerate}
\item Type $A_3$: $S=\{1,2,3\}$ and the Cartan matrix is given by
\begin{align*}
    C(A_3) = \begin{pmatrix}
    2&-1&0\\
    -1&2&-1\\
    0&-1&2
    \end{pmatrix}.
\end{align*}
The elementary quivers $\bJ^+(1)$, $\bJ^+(2)$ and $\bJ^+(3)$ are given as follows:
\[
\scalebox{0.9}{
\begin{tikzpicture}

\begin{scope}[>=latex]
%quiver #1
\draw (3,0) circle(2pt) coordinate(B) node[below]{$1^R$};
\draw (1,0) circle(2pt) coordinate(C) node[below]{$1^L$};
\draw (2,1) circle(2pt) coordinate(D) node[above]{$2$};
\draw (2,2) circle(2pt) node[above]{$3$};
\qarrow{B}{C};
\qdarrow{C}{D};
\qdarrow{D}{B};
\draw (2,-1) node{$\bJ^+(1)$};
%quiver #2
\draw (6,1) circle(2pt) coordinate(E) node[right]{$2^R$};
\draw (4,1) circle(2pt) coordinate(F) node[left]{$2^L$};
\draw (5,0) circle(2pt) coordinate(G) node[below]{$1$};
\draw (5,2) circle(2pt) coordinate(H) node[above]{$3$};
\qarrow{E}{F};
\qdarrow{F}{G};
\qdarrow{G}{E};
\qdarrow{F}{H};
\qdarrow{H}{E};
\draw (5,-1) node{$\bJ^+(2)$};
%quiver #3
\draw (9,2) circle(2pt) coordinate(B) node[above]{$3^R$};
\draw (7,2) circle(2pt) coordinate(C) node[above]{$3^L$};
\draw (8,1) circle(2pt) coordinate(D) node[below]{$2$};
\draw (8,0) circle(2pt) node[below]{$1$};
\qarrow{B}{C};
\qdarrow{C}{D};
\qdarrow{D}{B};
\draw (8,-1) node{$\bJ^+(3)$};
\end{scope}
\end{tikzpicture}}
\] 
\item Type $C_3$: $S=\{1,2,3\}$ and the Cartan matrix is given by\footnote{Here we changed the convention from \cite{IIO19}: for type $C_n$, the long root is chosen to be $\alpha_n$. Similarly for type $B_n$, the short root is chosen to be $\alpha_n$.}
\begin{align*}
    C(C_3) = \begin{pmatrix}
    2&-1&0\\
    -1&2&-2\\
    0&-1&2
    \end{pmatrix}.
\end{align*}
The elementary quivers $\bJ^+(1)$, $\bJ^+(2)$, $\bJ^+(3)$ are given as follows.
\[
\scalebox{0.9}{
\begin{tikzpicture}

\begin{scope}[>=latex]
%quiver #1
\draw (3,0) circle(2pt) coordinate(B) node[below]{$1^R$};
\draw (1,0) circle(2pt) coordinate(C) node[below]{$1^L$};
\draw (2,1) circle(2pt) coordinate(D) node[above]{$2$};
\dnode{2,2} node[above=0.2em]{$3$};
\qarrow{B}{C};
\qdarrow{C}{D};
\qdarrow{D}{B};
\draw (2,-1) node{$\bJ^+(1)$};
%quiver #2
\draw (6,1) circle(2pt) coordinate(E) node[right]{$2^R$};
\draw (4,1) circle(2pt) coordinate(F) node[left]{$2^L$};
\draw (5,0) circle(2pt) coordinate(G) node[below]{$1$};
\dnode{5,2} coordinate(H) node[above=0.2em]{$3$};
\qarrow{E}{F};
\qdarrow{F}{G};
\qdarrow{G}{E};
\qshdarrow{F}{H};
\qstdarrow{H}{E};
\draw (5,-1) node{$\bJ^+(2)$};
%quiver #3
\dnode{9,2} coordinate(B) node[above=0.2em]{$3^R$};
\dnode{7,2} coordinate(C) node[above=0.2em]{$3^L$};
\draw (8,1) circle(2pt) coordinate(D) node[below]{$2$};
\draw (8,0) circle(2pt) node[below]{$1$};
\qsarrow{B}{C};
\qstdarrow{C}{D};
\qshdarrow{D}{B};
\draw (8,-1) node{$\bJ^+(3)$};
\end{scope}
\end{tikzpicture}}
\] 
\end{enumerate}
Note that the vertices with the same Dynkin label are drawn on the same level in the pictures.
\end{ex}
For a reduced word $\bs=(s_1 \ldots s_l)$ of $u \in W(\mathfrak{g})$ and $\epsilon \in \{+,-\}$, we construct a weighted quiver
$\bJ^\epsilon(\bs)=\bJ^\epsilon(s_1,\dots,s_l)$ by amalgamating the elementary quivers 
$\bJ^\epsilon(s_1),\cdots,\bJ^\epsilon(s_l)$ in the following way: 
for $k=1,\dots,l-1$, amalgamate $\bJ^\epsilon(s_k)$ and $\bJ^\epsilon(s_{k+1})$ by setting the gluing data in \cref{d:amalgamation} as 
\begin{align*}
    F &:= J(s_k) \setminus \{s_k^L\}, \quad
        F':= J(s_{k+1}) \setminus \{s_{k+1}^R\}, \\
    \phi&: F \to F',\quad s_k^R \mapsto s_k,~ s_{k+1} \mapsto s_{k+1}^L,~ t \mapsto t \mbox{ for $t \neq s_k,s_{k+1}$}.
\end{align*}
Note that the Dynkin labelings $\delta(s_k)$ are preserved under this amalgamation. Hence, these functions combine to give an $S$-valued function on the set of vertices of $J^\epsilon(\bs)$, which we call the \emph{Dynkin labeling} again. In the weighted quiver $\bJ^+(\bs)=\bJ^+(s_1)\ast\cdots\ast \bJ^+(s_l)$, let $v_i^s$ be the $(i+1)$-st vertex with Dynkin label $s$ from the left, for $s \in S$ and $i=0,\dots,n^s(\bs)$. Here $n^s(\bs)$ is the number of $s$ which appear in the word $\bs$. We also use the labelling $v_i^s=: v_{k(s,i)}$ for $s \in S$ and $i=1,\dots,n^s(\bs)$, where $k(s,i) \in \{1,\dots,l\}$ denotes the $i$-th number $k$ such that $s_k=s$ in the word $\bs$. Similarly, the vertices of $\bJ^-(\bs_{\mathrm{op}}^\ast)$ are labeled as $\bar{v}_i^s=\bar{v}_{k(s,i)}$ where the index $i$ runs from the left to the right. 

\begin{ex}
Here are some examples of the weighted quiver $\bJ^+(\bs)$ (left) and the corresponding quiver $\bJ^-(\bs_{\mathrm{op}}^\ast)$ (right).
\begin{enumerate}
\item Type $A_3$, $\bs=(1,2,3,1,2,1)$ and $\bs_{\mathrm{op}}^\ast=(3,2,3,1,2,3)$.
\[
\scalebox{0.9}{
\begin{tikzpicture}

\begin{scope}[>=latex]
%vertices
\foreach \i in {0,1,2,3}
\draw (2*\i+1,0) circle(2pt) node[below]{$v_{\i}^1$};
\foreach \i in {0,1,2}
\draw (2*\i+2, 1) circle(2pt) node[above=0.2em]{$v_{\i}^2$};
\foreach \i in {0,1}
\draw (2*\i+3, 2) circle(2pt) node[above]{$v_{\i}^3$};
%horizontal arrows
\foreach \i in {1,2,3}
\qarrow{2*\i+1,0}{2*\i-1,0};
\foreach \i in {1,2}
\qarrow{2*\i+2,1}{2*\i,1};
\qarrow{5,2}{3,2};
%left-down arrows
\foreach \y in {1,2}
	{
	\qdarrow{\y,\y-1}{\y+1,\y};
	\qarrow{\y+2,\y-1}{\y+3,\y};
	}
\qarrow{5,0}{6,1};
%left-up arrows
\foreach \y in {1,2}
	{
	\qarrow{-\y+5,\y}{-\y+6,\y-1};
	\qdarrow{-\y+7,\y}{-\y+8,\y-1};
	}
\qarrow{2,1}{3,0};
    {\begin{scope}[xshift=9cm]
    %vertices
    \foreach \i in {0,1,2,3}
    \draw (2*\i+1,2) circle(2pt) node[above]{$\bar{v}_{\i}^3$};
    \foreach \i in {0,1,2}
    \draw (2*\i+2, 1) circle(2pt) node[above=0.2em]{$\bar{v}_{\i}^2$};
    \foreach \i in {0,1}
    \draw (2*\i+3, 0) circle(2pt) node[below]{$\bar{v}_{\i}^1$};
    %horizontal arrows
    \foreach \i in {1,2,3}
    \qarrow{2*\i-1,2}{2*\i+1,2};
    \foreach \i in {1,2}
    \qarrow{2*\i,1}{2*\i+2,1};
    \qarrow{3,0}{5,0};
    %left-up arrows
    \foreach \y in {1,2}
    	{
    	\qdarrow{-\y+4,\y-1}{-\y+3,\y};
    	\qarrow{-\y+6,\y-1}{-\y+5,\y};
    	}
    \qarrow{6,1}{5,2};
    %left-down arrows
    \foreach \y in {1,2}
        {
        \qarrow{\y+3,\y}{\y+2,\y-1};
        \qdarrow{\y+5,\y}{\y+4,\y-1};
        }
    \qarrow{3,2}{2,1};
    \end{scope}}
\end{scope}
\end{tikzpicture}}
\] 
Here the vertices $v_1^1$, $v_2^1$, $v_1^2$, $\bar{v}_1^3$, $\bar{v}_1^2$ and $\bar{v}_2^3$ are mutable.

\item Type $C_3$, $\bs=(1,2,3,1,2,3,1,2,3)$ and $\bs^\ast_\mathrm{op}=(3,2,1,3,2,1,3,2,1)$.
\[
\scalebox{0.9}{
\begin{tikzpicture}
\begin{scope}[>=latex]
%vertices
\foreach \i in {0,1,2,3}
	\dnode{2*\i+1,3} node[above=0.2em]{$v_{\i}^3$};
\foreach \i in {0,1,2,3}
{
    \draw(2*\i+1,0) circle(2pt) node[below]{$v_{\i}^1$};
    \draw(2*\i+1,1.5) circle(2pt) node[above right]{$v_{\i}^2$};    
}
%horizontal arrows
\foreach \i in {0,1,2}
    {\qarrow{2*\i+3,0}{2*\i+1,0};
     \qarrow{2*\i+3,1.5}{2*\i+1,1.5};
     \qsarrow{2*\i+3,3}{2*\i+1,3};
    }
%other arrows
\qdarrow{7,0}{7,1.5}
\qarrow{5,1.5}{7,0}
\qarrow{5,0}{5,1.5}
\qarrow{3,1.5}{5,0}
\qarrow{3,0}{3,1.5}
\qarrow{1,1.5}{3,0}
\qdarrow{1,0}{1,1.5}

\qshdarrow{7,1.5}{7,3}
\qstarrow{5,3}{7,1.5}
\qsharrow{5,1.5}{5,3}
\qstarrow{3,3}{5,1.5}
\qsharrow{3,1.5}{3,3}
\qstarrow{1,3}{3,1.5}
\qshdarrow{1,1.5}{1,3}

    {\begin{scope}[xshift=9cm]
    %vertices
    \foreach \i in {0,1,2,3}
    	\dnode{2*\i+1,3} node[above=0.2em]{$\bar{v}_{\i}^3$};
    \foreach \i in {0,1,2,3}
    {
        \draw(2*\i+1,0) circle(2pt) node[below]{$\bar{v}_{\i}^1$};
        \draw(2*\i+1,1.5) circle(2pt) node[above left]{$\bar{v}_{\i}^2$};    
    }
    %horizontal arrows
    \foreach \i in {0,1,2}
        {\qarrow{2*\i+1,0}{2*\i+3,0};
         \qarrow{2*\i+1,1.5}{2*\i+3,1.5};
         \qsarrow{2*\i+1,3}{2*\i+3,3};
        }
    %other arrows
    \qdarrow{7,0}{7,1.5}
    \qarrow{7,1.5}{5,0}
    \qarrow{5,0}{5,1.5}
    \qarrow{5,1.5}{3,0}
    \qarrow{3,0}{3,1.5}
    \qarrow{3,1.5}{1,0}
    \qdarrow{1,0}{1,1.5}
    
    \qshdarrow{7,1.5}{7,3}
    \qstarrow{7,3}{5,1.5}
    \qsharrow{5,1.5}{5,3}
    \qstarrow{5,3}{3,1.5}
    \qsharrow{3,1.5}{3,3}
    \qstarrow{3,3}{1,1.5}
    \qshdarrow{1,1.5}{1,3}
    
    \end{scope}}
\end{scope}
\end{tikzpicture}}
\] 
Here the vertices $v_i^s$ and $\bar{v}_i^s$ for $s=1,2,3$, $i=1,2$ are mutable.
\end{enumerate}
\end{ex}
Note that the weighted quivers $\bJ^+(\bs)$ and $\bJ^-(\bs_{\mathrm{op}}^\ast)$ are isomorphic. 

Next we recall the quiver $\widetilde{J}^+(\bs)$, which is called the \emph{decorated quiver} in \cite{IIO19} in special cases. We follow a more general construction given in \cite{GS19}. 

Let $\bs=(s_1,\dots,s_l)$ be a reduced word of $u \in W(\mathfrak{g})$. For $k=1,\dots,l$, let $\alpha^\bs_k$ (resp. $\beta^\bs_k$) be the root (resp. coroot) defined by
\begin{align*}
    \alpha^\bs_k:= r_{s_l}\dots r_{s_{k+1}}\alpha_{s_k}, \quad 
    \beta^\bs_k:= r_{s_l}\dots r_{s_{k+1}}\alpha^\vee_{s_k}.
\end{align*}
Then for each $s \in S$, there exists a unique $k=k(s)$ such that $\beta^\bs_k=\alpha^\vee_s$. Note that from \cite[(3.10.3)]{Kac}, we have $\alpha^\bs_{k(s)}=\alpha_s$ at the same time. Let $\mathbf{H}(\bs)=(H(\bs),H_0(\bs),\ve(\bs),d(\bs))$ be the weighted quiver defined as follows.
\begin{itemize}
    \item $H(\bs)=H_0(\bs):=S$.
    \item The exchange matrix $\ve(\bs)=(\ve_{st})_{s,t \in S}$ is given by
    \begin{align*}
        \ve_{st}:=\frac{\mathrm{sgn}(k(t)-k(s))}{2}C_{st}.
    \end{align*}
    \item $d(\bs)=(d(\bs)_s)_{s \in S}$ is given by $d(\bs)_s:=d_s$. 
\end{itemize}
The vertex of $\mathbf{H}(\bs)$ corresponding to $s \in S$ is denoted by $y_s$. 
Then we define $\widetilde{\bJ}^+(\bs)$ to be the weighted quiver obtained from the disjoint union of the quivers $\bJ^+(\bs)$ and $\mathbf{H}(\bs)$ by adding the arrows $v_{k(s)-1} \to y_s$ and $y_s \to v_{k(s)}$ for each $s \in S$. As a convention, these additional arrows and the quiver $\mathbf{H}(\bs)$ are shown in blue in figures. 

\begin{ex}
Here are some examples of the quiver $\widetilde{\bJ}^+(\bs)$. 
\begin{enumerate}
    \item Type $A_3$, $\bs=(1,2,3,1,2,1)$. The sequence $(\alpha_k^\bs)_{k=1,\dots,6}$ is computed as 
    \begin{align*}
        &\alpha_6^\bs= \alpha_1, \quad \alpha_5^\bs=\alpha_1+\alpha_2, \quad \alpha_4^\bs=\alpha_2, \\
        &\alpha_3^\bs=\alpha_1+\alpha_2+\alpha_3, \quad \alpha_2^\bs=\alpha_2+\alpha_3, \quad \alpha_1^\bs=\alpha_3.
    \end{align*}
    Thus we get $k(1)=6,~k(2)=4,~k(3)=1$, and the quiver $\widetilde{\bJ}^+(1,2,3,1,2,1)$ is given by
\[
\scalebox{0.9}{
\begin{tikzpicture}

\begin{scope}[>=latex]
%vertices
\foreach \i in {0,1,2,3}
\draw (2*\i+1,0) circle(2pt) node[above=0.2em]{$v_{\i}^1$};
\foreach \i in {0,1,2}
\draw (2*\i+2, 1) circle(2pt) node[above=0.2em]{$v_{\i}^2$};
\foreach \i in {0,1}
\draw (2*\i+3, 2) circle(2pt) node[above]{$v_{\i}^3$};
    {\color{myblue}
    \foreach \j in {1,2,3}
    \draw (8-2*\j,-1) circle(2pt) node[below]{$y_\j$};
    }
%horizontal arrows
\foreach \i in {1,2,3}
\qarrow{2*\i+1,0}{2*\i-1,0};
\foreach \i in {1,2}
\qarrow{2*\i+2,1}{2*\i,1};
\qarrow{5,2}{3,2};
%left-down arrows
\foreach \y in {1,2}
	{
	\qdarrow{\y,\y-1}{\y+1,\y};
	\qarrow{\y+2,\y-1}{\y+3,\y};
	}
\qarrow{5,0}{6,1};
%left-up arrows
\foreach \y in {1,2}
	{
	\qarrow{-\y+5,\y}{-\y+6,\y-1};
	\qdarrow{-\y+7,\y}{-\y+8,\y-1};
	}
\qarrow{2,1}{3,0};
%frozen arrows
{\color{myblue}
\qarrow{6,-1}{7,0};
\qarrow{5,0}{6,-1};
\qarrow{4,-1}{5,0};
\qarrow{3,0}{4,-1};
\qarrow{2,-1}{3,0};
\qarrow{1,0}{2,-1};
\qdarrow{4,-1}{2,-1};
\qdarrow{6,-1}{4,-1};
}
\end{scope}
\end{tikzpicture}}
\] 

    \item Type $C_3$, $\bs=(1,2,3,1,2,3,1,2,3)$. The sequence $(\alpha_k^\bs)_{k=1,\dots,9}$ is computed as
    \begin{align*}
        &\alpha_9^\bs= \alpha_3, \quad \alpha_8^\bs=\alpha_2+\alpha_3, \quad \alpha_7^\bs=\alpha_1+\alpha_2+\alpha_3, \\
        &\alpha_6^\bs= 2\alpha_2+\alpha_3, \quad \alpha_5^\bs=\alpha_1+2\alpha_2+\alpha_3, \quad \alpha_4^\bs=\alpha_2, \\
        &\alpha_3^\bs=2(\alpha_1+\alpha_2)+\alpha_3, \quad \alpha_2^\bs=\alpha_1+\alpha_2, \quad \alpha_1^\bs=\alpha_1.
    \end{align*}
    Thus we get $k(1)=1,~k(2)=4,=k(3)=9$, and the quiver $\widetilde{\bJ}^+((1,2,3)^3)$ is given by
\[
\scalebox{0.9}{
\begin{tikzpicture}
\begin{scope}[>=latex]
%vertices
\foreach \i in {0,1,2,3}
	\dnode{2*\i+1,3} node[above right=0.2em]{$v_{\i}^3$};
\foreach \i in {0,1,2,3}
{
    \draw(2*\i+1,0) circle(2pt) node[above right]{$v_{\i}^1$};
    \draw(2*\i+1,1.5) circle(2pt) node[above right]{$v_{\i}^2$};    
}
{\color{myblue}
    \foreach \j in {1,2}
        \draw(2*\j+1,-1.5) circle(2pt) node[below]{$y_\j$};
    \dnode{5,4.5} node[above=0.2em]{$y_3$};
}
%horizontal arrows
\foreach \i in {0,1,2}
    {\qarrow{2*\i+3,0}{2*\i+1,0};
     \qarrow{2*\i+3,1.5}{2*\i+1,1.5};
     \qsarrow{2*\i+3,3}{2*\i+1,3};
    }
%other arrows
\qdarrow{7,0}{7,1.5}
\qarrow{5,1.5}{7,0}
\qarrow{5,0}{5,1.5}
\qarrow{3,1.5}{5,0}
\qarrow{3,0}{3,1.5}
\qarrow{1,1.5}{3,0}
\qdarrow{1,0}{1,1.5}

\qstdarrow{7,1.5}{7,3}
\qsharrow{5,3}{7,1.5}
\qstarrow{5,1.5}{5,3}
\qsharrow{3,3}{5,1.5}
\qstarrow{3,1.5}{3,3}
\qsharrow{1,3}{3,1.5}
\qstdarrow{1,1.5}{1,3}
%frozen arrows
{\color{myblue}
\foreach \j in {1,2}
    {
    \qarrow{2*\j-1,0}{2*\j+1,-1.5};
    \qarrow{2*\j+1,-1.5}{2*\j+1,0};
    }
\qdarrow{5,-1.5}{3,-1.5};
\qsarrow{5,3}{5,4.5};
\qsarrow{5,4.5}{7,3};
\draw[->,dashed,shorten >=3pt,shorten <=4pt] (5,4.5) to[out=-45, in=90] (6,3) to[out=270, in=90] (6,-0.5) to[out=270, in=0] (5,-1.5);
}
\end{scope}
\end{tikzpicture}}
\] 
\end{enumerate}
\end{ex}

\section{A short review on quotient stacks}\label{sec:stacks}
We shortly recall some basic facts on the \emph{quotient stacks}, to the minimal extent we need to recognize the moduli spaces $\P_{G,\Sigma}$ correctly. We refer the reader to \cite{Gomez,stacks_proj} for a self-contained presentation of the general theory of (algebraic) stacks. 
The lecture note \cite{Heinloth} will be also useful to get an intuition for stacks for the readers more familiar with the differential geometry or the algebraic topology than the algebraic geometry.  

Let $X$ be an algebraic variety (or more generally, a scheme), and $G$ an affine algebraic group acting on $X$ algebraically. 
In order to study the quotient of $X$ by $G$ from the viewpoint of algebraic geometry, a good way is to define it as a quotient stack $[X/G]$. Morally, the geometry of $[X/G]$ is the $G$-equivariant geometry of $X$. Several lemmas below will justify this slogan. 
When the action of $G$ is free, one can think of $[X/G]$ as an algebraic variety (\cref{lem:geometric_quotient}); in general, the quotient stack $[X/G]$ also contains the information on the stabilizers.

Let $X$ be a scheme over $\C$ and $G$ an affine algebraic group acting on $X$. Then the quotient stack $\X=[X/G]$ is a category fibered in groupoids (\cite[Definition 2.15]{Gomez}) where the objects over a scheme $B$ are pairs $(E,f)$ of a principal $G$-bundle $E \to B$ and a $G$-equivariant morphism $f:E \to X$; morphisms over $B$ are Cartesian diagrams of $G$-bundles which respect the equivariant morphisms to $X$. 

Note that an object $(E,f)$ over $B=\Spec\C$ can be viewed as a $G$-orbit in $X$. Thus the set $X/G$ of orbits is recovered as the set of images of $f:E \to X$, that is, the isomorphism classes of the objects of $\X(\Spec\C)$. Yoneda's lemma for stacks implies that a morphism $u: B \to \X$ from a scheme $B$ corresponds to an object of $\X(B)$. 

It is known that $\X$ is an Artin stack (\cite[Definition 2.26]{Gomez}): an atlas is defined by the morphism $X \to \X$ given by the pull-back of the trivial bundle $X \times G$ (see \cite[Example 2.29]{Gomez}). The structure sheaf on $\X$ and the $\C$-algebra $\cO(\X)$ of global functions (\emph{function algebra}) are defined, for example as in \cite[Section 4]{Heinloth}. One can verify the following: 

\begin{lem}
%[e.g. {\cite[Section 4]{Heinloth}}]
\label{lem:stack_functions}
%The category of quasi-coherent sheaves on the Artin stack $\X=[X/G]$ is equivalent to the category of $G$-equivariant quasi-coherent sheaves on $X$. 
%the groupoid space %$[X \times G,X]$ defined by the action of $G$ on $X$ \cite[Proposition 94.13.1]{stacks_proj}. 
$\cO(\X) \cong \cO(X)^G$.
%The function algebra $\cO(\X)$ of $\X$ is isomorphic to the ring $\cO(X)^G$ of $G$-invariant regular functions on $X$.
\end{lem}
A scheme $V$ can be regarded as an Artin stack whose objects over $B$ are morphisms $B \to V$; the only morphism over $B$ is the identity. A stack is said to be \emph{representable} if it is isomorphic to a stack arising from a scheme. 
For two Artin stacks $\X$ and $\mathcal{Y}$, a morphism $\phi: \X \to \mathcal{Y}$ of stacks is said to be \emph{representable} if for any morphism $B \to \mathcal{Y}$, the fiber product $B \times_{\mathcal{Y}} \X$ is representable. Informally speaking, a morphism $B \to \mathcal{Y}$ can be viewed as a \lq\lq local chart'' on $\mathcal{Y}$, and the induced morphism $B \times_{\mathcal{Y}} \X \to B$ is the \lq\lq local expression'' of $\phi$. 
Here is an easy example:
\begin{lem}\label{lem:quotient_example}
Let $V$ be a scheme over $\C$ and $G$ an affine algebraic group. Consider the $G$-action on $V \times G$ given by the trivial action on the first factor and left multiplication on the second. Then the quotient stack $[V \times G/G]$ is representable by $V$.
\end{lem}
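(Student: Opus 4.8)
The plan is to construct an explicit equivalence between the category fibered in groupoids $[V\times G/G]$ and the stack represented by $V$, by decomposing the defining data of an object and using the $G$-factor of $X = V \times G$ to trivialize the underlying bundle. Recall that an object of $[V \times G/G]$ over a scheme $B$ is a pair $(E, f)$, where $\pi \colon E \to B$ is a principal $G$-bundle and $f \colon E \to V \times G$ is a $G$-equivariant morphism; write $f = (f_V, f_G)$ according to the two factors. Since $G$ acts trivially on $V$, the morphism $f_V \colon E \to V$ is $G$-invariant and hence descends, by the universal property of the bundle quotient $\pi$, to a unique morphism $\bar f_V \colon B \to V$. This assignment $(E, f) \mapsto \bar f_V$ defines a functor $\Phi \colon [V\times G/G] \to V$, whose effect on morphisms is forced by the compatibility of the descended maps with pullbacks of bundles.

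For the quasi-inverse, I would define $\Psi \colon V \to [V\times G/G]$ by sending a morphism $u \colon B \to V$ to the trivial bundle $E_0 = B \times G$ equipped with the $G$-equivariant map $f_0 \colon B \times G \to V \times G$, $(b, g) \mapsto (u(b), g)$. One checks immediately that $\Phi \circ \Psi$ is naturally isomorphic to the identity on $V$. The crux is the other composite: given $(E, f)$, the component $f_G \colon E \to G$ is equivariant for the translation action of $G$ on itself, and on each fiber $E_b$ (a $G$-torsor) it restricts to a $G$-equivariant map of torsors $E_b \to G$, which is necessarily an isomorphism. Consequently $(\pi, f_G) \colon E \to B \times G$ is a morphism of principal $G$-bundles that is an isomorphism on fibers, hence an isomorphism of bundles; under it, $f$ is carried precisely to the map $(b,g) \mapsto (\bar f_V(b), g)$. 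This produces a natural isomorphism $(E, f) \cong \Psi(\Phi(E, f))$, completing the equivalence.

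The main technical obstacle is the verification that $(\pi, f_G)$ is an isomorphism of schemes, and not merely a fiberwise bijective bundle map: one must argue that a $G$-bundle morphism which is an isomorphism on geometric fibers is an isomorphism, which in the algebraic category is cleanest to phrase via faithfully flat descent. Alternatively---and this is the route I would actually take to keep the argument short---I would observe that $G$ acts freely on $V \times G$ (the action being free already on the $G$-factor), and that the first projection $V \times G \to V$ realizes $V$ as the geometric quotient, with $V \times G \to V$ the trivial principal $G$-bundle. The representability of $[V\times G/G]$ by $V$ then follows immediately from \cref{lem:geometric_quotient}, which identifies the quotient stack of a free action admitting such a geometric quotient with that quotient. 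This reduces the whole statement to checking freeness of the action and that the projection is a principal bundle, both of which are transparent.
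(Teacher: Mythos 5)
The paper states this lemma without proof (it is offered as an ``easy example''), so there is no proof of record to compare against; judged on its own, your first argument is correct and is surely the intended one. Decomposing $f=(f_V,f_G)$, descending the $G$-invariant component $f_V$ along the fppf quotient $E\to B$, and using $f_G$ to produce the trivialization $(\pi,f_G)\colon E\xrightarrow{\sim} B\times G$ gives a complete equivalence; the technical point you flag is handled exactly as you say, since after an fppf base change trivializing $E$ the map $(\pi,f_G)$ takes the form $(b,g)\mapsto (b,\varphi(b)g)$ for some $\varphi\colon B'\to G$, visibly an isomorphism, and being an isomorphism is fppf-local on the base. In effect you are proving the general principle that $[X/G]\cong Y$ whenever $X\to Y$ is a $G$-torsor, applied to the trivial torsor $V\times G\to V$.

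However, the route you say you would ``actually take'' --- reducing to \cref{lem:geometric_quotient} --- does not work as stated, and this is a genuine gap in that branch of your proposal. That lemma assumes $X$ is an \emph{affine} algebraic variety and $G$ is \emph{reductive}, neither of which is hypothesized here: $V$ is an arbitrary scheme over $\C$ and $G$ an arbitrary affine algebraic group (take $G=\bG_a$, which is not reductive, or $V$ projective, so that $V\times G$ is not affine). Moreover \cref{lem:geometric_quotient} only asserts representability by the geometric quotient produced via GIT; it is not the statement that a torsor quotient represents the stack, which is the fact you actually need and which your first two paragraphs prove directly. So keep the explicit equivalence as the proof and drop the citation, or at most remark that freeness plus the evident trivial bundle structure $V\times G\to V$ makes the direct argument transparent.
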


The following lemma tells us that we can obtain morphisms between quotient stacks from equivariant morphisms of varieties. 

\begin{lem}\label{lem:equivariant_morphism}
Let $\X=[X/H]$ and $\mathcal{Y}=[Y/G]$ be two quotient stacks. 
Let $\phi:X \to Y$ be a morphism equivariant with respect to an embedding $\tau:H \to G$. Then it induces a representable morphism $\phi_\ast: \X \to \mathcal{Y}$ of Artin stacks. More precisely, for any morphism $u: B \to \mathcal{Y}$ from a scheme $B$ which corresponds to an object $(E,f) \in \mathcal{Y}(B)$, the diagram
\begin{equation*}
    \begin{tikzcd}
    E \times_{G} (G/H) \ar[r,"u_H"] \ar[d,"\phi"'] & {\X} \ar[d,"\phi_\ast"] \\
    B \ar[r,"u"'] & {\mathcal{Y}}
    \end{tikzcd}
\end{equation*}
is Cartesian. Here $u_H$ is a morphism corresponding to an $H$-bundle $E \to E \times (G/H) \to E \times_G (G/H)$. 
\end{lem}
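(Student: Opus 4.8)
The plan is to construct $\phi_*$ explicitly at the level of fibered categories, then establish representability by factoring it through an intermediate quotient stack, and finally identify the fiber product by hand. Recall that an object of $\X=[X/H]$ over a test scheme $T$ is a pair $(P,g)$ consisting of a principal $H$-bundle $P\to T$ and an $H$-equivariant morphism $g\colon P\to X$. First I would define $\phi_*$ on objects by \emph{extension of the structure group} along $\tau\colon H\hookrightarrow G$: send $(P,g)$ to the pair $(P\times_H G,\Phi_g)$, where $P\times_H G\to T$ is the associated principal $G$-bundle and $\Phi_g\colon P\times_H G\to Y$ is the unique $G$-equivariant morphism determined by $\Phi_g([p,1])=\phi(g(p))$. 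This is well defined precisely because $\phi$ is $\tau$-equivariant. On morphisms (Cartesian squares of $H$-bundles) the construction is the evident one, and since extension of the structure group commutes with pullback, $\phi_*$ respects the fibration over schemes; hence $\phi_*\colon\X\to\mathcal Y$ is a morphism of stacks.

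For representability I would factor $\phi_*$ as the composite $[X/H]\xrightarrow{[\phi]_H}[Y/H]\xrightarrow{p}[Y/G]$, where $[\phi]_H$ is induced by the $H$-equivariant morphism $\phi$ and $p$ enlarges the structure group along $\tau$. The morphism $[\phi]_H$ is representable because it is the base change of the morphism of schemes $\phi\colon X\to Y$, while $p$ is the standard change-of-groups morphism, which is representable with fibers $G/H$. A composite of representable morphisms is representable, so $\phi_*$ is representable, which is the first assertion.

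The explicit Cartesian square is then obtained by computing the base change in two steps. Given $u\colon B\to\mathcal Y$ classifying $(E,f)$, the classical identification of $H$-reductions of a principal $G$-bundle with sections of the associated $(G/H)$-bundle yields $B\times_{[Y/G]}[Y/H]\cong E\times_G(G/H)$, the tautological object being the principal $H$-bundle $E\to E\times_G(G/H)$ described in the statement together with the pullback of $f$. Pulling back $[\phi]_H$ along the resulting morphism $E\times_G(G/H)\to[Y/H]$ produces the morphism $u_H$, whose defining datum is this same $H$-bundle equipped with the induced $H$-equivariant map to $X$ covering $f$ through $\phi$, and exhibits $E\times_G(G/H)$ as the fiber product $B\times_{\mathcal Y}\X$. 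The main obstacle is this final identification: one must match the $2$-categorical fiber product, whose $T$-points are triples consisting of a map to $B$, an object of $\X(T)$, and an isomorphism in $\mathcal Y(T)$, with the honest scheme of reductions together with their equivariant lifts, carefully tracking the left/right equivariance conventions throughout and invoking \cref{lem:quotient_example} to recognize the auxiliary fiber products as representable.
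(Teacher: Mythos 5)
Your first two steps are correct, and they take a genuinely different route from the paper. The paper's own proof constructs $\phi_\ast$ in one line by setting $\phi_\ast(E,f):=(E,\phi\circ f)$ (which is strictly speaking ill-typed, since $E$ is an $H$-bundle while objects of $[Y/G]$ require $G$-bundles) and then identifies the fiber product directly via the correspondence between $H$-reductions and sections of the associated $G/H$-bundle. You instead build $\phi_\ast$ honestly by extension of structure group, $(P,g)\mapsto(P\times_H G,\Phi_g)$, and obtain representability by factoring $\phi_\ast=p\circ[\phi]_H$ through $[Y/H]$; this is cleaner and repairs the paper's abuse of notation. One wording fix: $[\phi]_H$ is not a base change of $\phi$; rather, its base change along the atlas $Y\to[Y/H]$ is $\phi$, and more generally its base change along $T\to[Y/H]$ classifying $(P,g)$ is representable because, after trivializing $P$ locally on $T$, it becomes $T\times_{Y}X$. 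With that correction, your factorization does prove the first assertion of the lemma, and your identification $B\times_{[Y/G]}[Y/H]\cong E\times_G(G/H)$ agrees in substance with the paper's key observation.

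The gap is in your final sentence. Your own two-step computation gives $B\times_{\mathcal{Y}}\X\cong\bigl(E\times_G(G/H)\bigr)\times_{[Y/H]}[X/H]$, and pulling $[\phi]_H$ back along $E\times_G(G/H)\to[Y/H]$ (which classifies the $H$-bundle $E\to E/H$ together with $f$) yields $(E\times_Y X)/H$, the space of $H$-reductions \emph{equipped with an equivariant lift of $f$ through $\phi$} --- not $E\times_G(G/H)$. The ``induced $H$-equivariant map to $X$ covering $f$ through $\phi$'' that you posit does not exist canonically for general $\phi$: the lift is extra data, and $\bigl(E\times_Y X\bigr)/H\cong E\times_G(G/H)$ only when $E\times_Y X\to E$ is an isomorphism (e.g.\ when $\phi$ is an isomorphism, or when lifts of $f$ through $\phi$ exist uniquely). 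A sanity check makes this vivid: take $H=G=\{e\}$, so the claimed square would assert $B\times_Y X\cong B$ for every $\phi:X\to Y$ and every $u:B\to Y$, which fails whenever $\phi$ is not an isomorphism; note also that the claimed corner $E\times_G(G/H)$ involves neither $X$ nor $\phi$, which already signals the problem. To be fair, you have reproduced a defect of the lemma itself: the paper's proof elides exactly the same lifting data inside ``it is not hard to see,'' and the morphism $u_H$ in the statement is not determined by the $H$-bundle alone. The corrected square has $(E\times_Y X)/H$ (representable, as $H$ acts freely via its free action on $E$) in the upper-left corner, collapsing to $E\times_G(G/H)$ precisely in the special situations just described; this correction does not affect the representability of $\phi_\ast$, which your factorization establishes in full.
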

We give a proof for our convenience.

\begin{proof}
    For an object $(E,f)$ over $B$ in $\X$, the pair $\phi_*(E,f):=(E,\phi\circ f)$ is an object over $B$ in $\mathcal{Y}$. This correspondence is clearly compatible with pull-backs, and defines a morphism $\phi_\ast: \X \to \mathcal{Y}$. It is not hard to see that the fiber product $B \times_{\mathcal{Y}}\X$ is isomorphic to $E\times_G (G/H)$, with a notice that an $H$-sub-bundle of a $G$-bundle $P \to B$ is in one-to-one correspondence with a section of the associated bundle $P \times_G (G/H)$. 
    Thus $\phi_\ast$ it is representable. 
\end{proof}
We call $\phi$ a \emph{presentation} of the morphism $\phi_*$. 
\begin{rem}
When $\tau$ is not an embedding, the morphism $\phi_\ast$ is not representable in general. For instance, the morphism $(\mathrm{id}_{pt})_\ast: [pt/G] \to [pt/e] = pt$ is not representable for a non-trivial group $G$, where $pt$ denotes the point scheme and $e$ is the trivial group. 
\end{rem}

A property of morphisms of schemes that are local in nature on the target and stable under base-change can be defined for representable morphisms of stacks. For instance, $\phi_\ast$ is said to be an open embedding if $\phi:X=Y \times_{[Y/G]} [X/G] \to Y$ is an open embedding of varieties.

\begin{lem}\label{lem:geometric_quotient}
Suppose that $X$ is an affine algebraic variety, $G$ is reductive, and the $G$-action on $X$ is free. Then the quotient stack $\X=[X/G]$ is representable by the geometric quotient $X/G$.
\end{lem}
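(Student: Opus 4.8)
The plan is to identify the candidate base $Y := X/G = \Spec\cO(X)^G$, to show that $\pi\colon X \to Y$ is a principal $G$-bundle, and then to observe that a quotient stack by a free action along a principal bundle is nothing but its base. This last step is a direct generalization of \cref{lem:quotient_example}, which handles the trivial bundle $V \times G \to V$; the present statement upgrades ``trivial bundle'' to ``arbitrary principal bundle.''

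First I would invoke Geometric Invariant Theory. Since $G$ is reductive and $X$ is affine, the invariant subalgebra $\cO(X)^G$ is finitely generated, so $Y := \Spec\cO(X)^G$ is an affine variety of finite type and the canonical morphism $\pi\colon X \to Y$ is a categorical (good) quotient. By \cref{lem:stack_functions} we already have $\cO([X/G]) \cong \cO(X)^G = \cO(Y)$, which is the function-theoretic shadow of the isomorphism we are after. The crucial point is to promote $\pi$ to a principal $G$-bundle, and this is where freeness enters: I would verify the two defining properties in the fppf topology, namely that $\pi$ is faithfully flat and that the action map $G \times X \to X \times_Y X$, $(g,x) \mapsto (g.x, x)$, is an isomorphism. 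The second property is precisely the statement that $G$ acts simply transitively on the fibres of $\pi$, which is freeness together with the fact that the fibres of a geometric quotient are the orbits; the first follows from reductivity via the flatness of good quotients by free actions (e.g.\ through Luna's \'etale slice theorem, whose hypotheses hold here since the free orbits are closed).

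Finally I would assemble the equivalence of stacks. The pair $(\pi\colon X \to Y,\ \mathrm{id}_X\colon X \to X)$ is an object of $[X/G](Y)$ and hence defines a morphism $\Psi\colon Y \to [X/G]$ by Yoneda. For the quasi-inverse, given any object $(E \xrightarrow{p} B,\ f\colon E \to X) \in [X/G](B)$, the composite $\pi \circ f\colon E \to Y$ is $G$-invariant and therefore descends along the fppf quotient $p$ to a unique morphism $\bar{f}\colon B \to Y$, producing $\Phi\colon [X/G] \to Y$. That $\Phi$ and $\Psi$ are mutually inverse is checked by descent: pulling back the bundle $X \to Y$ along $\bar{f}$ recovers $(E,f)$ up to canonical isomorphism, using the identification $G \times X \cong X \times_Y X$ established in the previous step, while $\Phi \circ \Psi = \mathrm{id}_Y$ holds by construction.

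The main obstacle is the principal-bundle claim of the second paragraph. Freeness of a reductive group action on an affine variety does not by itself produce a well-behaved geometric quotient; one genuinely needs the (here automatic) closedness of the free orbits in order to apply the slice theorem and deduce local triviality of $\pi$ in the \'etale topology, hence faithful flatness and the fibre-product isomorphism. Once that input is secured, the remaining descent argument is formal, and the comparison with \cref{lem:quotient_example} and \cref{lem:equivariant_morphism} makes the identification $[X/G] \cong X/G$ transparent.
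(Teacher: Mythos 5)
Your proof is correct and takes essentially the same route as the paper, whose entire argument is the observation that freeness forces every point of $X$ to be $G$-stable (the free orbits are closed, since a boundary orbit would have dimension smaller than $\dim G$) together with a citation of \cite[Proposition 1.26]{Brion} for the geometric quotient, leaving the torsor-and-descent formalities you spell out implicit as standard. One small caution in your second paragraph: simple transitivity on fibres gives only \emph{bijectivity} of $G \times X \to X \times_Y X$, not an isomorphism of schemes, but this is harmless here because the \'etale-local triviality supplied by Luna's slice theorem (applicable exactly because the free orbits are closed, as you note) yields both faithful flatness and that isomorphism at once.
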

Indeed, the assumptions imply that all points of $X$ are $G$-stable, and the geometric quotient $X/G$ exists (see, for instance, \cite[Proposition 1.26]{Brion}).

\bibliographystyle{alpha}

\end{document}